\renewcommand{\Re}{\mathop{\rm Re}\nolimits}
\def\S{\mathhexbox278}
\theoremstyle{plain}
\newtheorem{theorem}{Theorem}[section]
\newtheorem{lemma}[theorem]{Lemma}
\newtheorem{proposition}[theorem]{Proposition}
\newtheorem{corollary}[theorem]{Corollary}
\theoremstyle{definition}
\newtheorem{definition}[theorem]{Definition}
\theoremstyle{remark}
\newtheorem{remark}[theorem]{Remark}
\newtheorem{example}[theorem]{Example}
\newtheorem{assumption}[theorem]{Assumption}
\newcommand{\R}{{\mathbb R}}
\newcommand{\Z}{{\mathbb Z}}
\newcommand{\N}{{\mathbb N}}
\def\im{{\rm i}}
\newcommand{\C}{\mathbb{C}}
\newcommand{\sech}{{\mathrm{sech}}}
\def\({\left(}
\def\){\right)}
\def\<{\left\langle}
\def\>{\right\rangle}
\numberwithin{equation}{section}
\begin{document}

\title{  On selection  of standing wave  at small energy  in the 1D Cubic Schr\"odinger Equation with a trapping potential }

\author{Scipio Cuccagna, Masaya Maeda}
\maketitle

\begin{abstract} Combining virial inequalities by Kowalczyk, Martel and Munoz  and Kowalczyk, Martel, Munoz  and Van Den Bosch with our theory on how to derive nonlinear induced dissipation on discrete modes, and in particular the notion of Refined Profile, we show how to extend the theory by  Kowalczyk, Martel, Munoz  and Van Den Bosch to the case when there is a large number of discrete modes in    the cubic NLS with a trapping potential which is associated to  a repulsive potential by a series of  Darboux  transformations. Even though, by its
non  translation invariance,   our model avoids some of the difficulties related to the
effect that translation has on virial inequalities  of the  kink stability problem for wave equations, it  still is a classical model and it retains some of the main difficulties.
\end{abstract}

%\tableofcontents
%\newpage

\section{Introduction}
In this paper, we consider the cubic nonlinear Schr\"odinger equation (NLS) with potential,
\begin{equation}\label{nls}
\im \partial_t u = -\partial_x^2u +V u +  |u|^2 u,\quad (t,x)\in \R^{1+1},
\end{equation}
where the potential $V$ satisfies
\begin{align}\label{ass:V}
&V\in \mathcal S(\R, \R)\ \text{(Schwartz functions)},\  |V(x)|+|V'(x)|\leq C e^{-a_0|x|}\  \text{for some }C, a_0>0,\end{align}
and we assume that for $\sigma_{\mathrm{d}}(H)$, which is the set of discrete spectrum of $H:=-\partial_x^2+V$, we have
\begin{align}
\label{ass:discsp}
\sigma_{\mathrm{d}}(H)=\{\omega_j\ |\ j=1,\cdots,N\},\ \omega_1<\cdots<\omega_N<0,\ N\geq 2.
\end{align}

 \begin{remark}
  It is well known that $\sigma_{\mathrm{d}}(H)$ is finite. Our assumption is that $H$ has more than two eigenvalues. The case  $N=0 $
  has been treated by Naumkin  \cite{naumkin2016}, by    Germain \textit{{et al.}} \cite{GPR18}, by Delort \cite{Delort} and by Chen--Pusateri \cite{ChenPus}. See also Masaki  \textit{{et al.}} \cite{MMS2} along with \cite{CM19SIMA} for the case of a repulsive $\delta$ potential.  The case  $N=1 $   in the case of an attractive $\delta$ potential with rather general nonlinear terms, which include as a special case also the cubic  nonlinearity, is  treated in \cite{CM19SIMA}. The  case of a generic potential with  $N=1 $,  hence  a less stringent hypothesis than Assumption \ref{ass:VN} below,    is discussed in Chen \cite{Chen}.  In this paper we focus only on the case $N\ge 2$, which is more delicate than the cases $N=0,1$.

  \end{remark}

\begin{remark}
In the sequel we will often use the notation $\dot f=\partial _t f$ and $  f'=\partial _x f$.
We also use the notation $a \lesssim b$, which means that there exists $C>0$ s.t.\ $a\leq Cb$ with $C$ not depending on important quantities.
We also write $a\sim b$ if $a\lesssim b$ and $b\lesssim a$, and $a\lesssim_\alpha b$ if $a\leq C_\alpha b$ with the implicit constant $C_\alpha$ depending on $\alpha$.
Finally, $a\sim_\alpha b$ will mean $a\lesssim_\alpha b$ and $b\lesssim_\alpha a$.
\end{remark}

The aim of this paper is to study the long time behavior of small solutions of \eqref{nls}.
Here, we recall that from the energy
\begin{align}\label{def:energy}
E(u):=\frac{1}{2}\int_{\R}\(|\partial_xu|^2+V|u|^2+\frac{1}{2}|u|^4\)\,dx
\end{align}
and mass
\begin{align}\label{def:mass}
Q(u):=\frac{1}{2}\int_{\R}|u|^2\,dx
\end{align}
conservation, if we have $u_0\in H^1$, then
\begin{align}\label{eq:energybound}
\|u(t)\|_{H^1}\lesssim \|u_0\|_{H^1}.
\end{align}
Thus, global well-posedness of small solutions is trivial.
In this paper, we seek a  more precise  understanding of the asymptotic behavior of $u(t)$.
By elementary bifurcation argument, under appropriate hypotheses   there exist $N$ families of standing wave solutions (i.e.\ solutions with the form $u(t,x)=e^{\im \omega t}\phi(x)$) bifurcating from the eigenvalues of $H$.
That is, there exist $\phi_j[z](x)=z\phi_j(x)+O(|z|^3)$ and $\omega_j(|z|^2)=\omega_j+O(|z|^2)$ ($j=1,\cdots,N$) for small $z\in \C$ s.t.\  $e^{-\im \omega_j(|z|^2)t}\phi_j[z](x)$  are standing wave solutions of \eqref{nls}.
In the linear case ($\im \partial_t u = Hu$), by the superposition principle, there also exist  quasi-periodic solutions which are given by the sum of standing waves.
However, it was shown that in the 3D case with smooth nonlinearity (corresponding to the 1D case with the nonlinearity $|u|^2u$ replaced by $g(|u|^2)u$ with $g\in C^\infty$ and $g(0)=g'(0)=0$, e.g.\ $|u|^4u$), the solutions locally converge  to the orbit of  one single  standing wave (selection of standing waves) and therefore there exist  no quasi-periodic solution, see \cite{CM15APDE} and the references therein.
Thus, even though the short time dynamics of small solutions of linear Schr\"odinger equation and NLS are similar, in 3D the long time dynamics are completely different.
The aim of this paper is to prove a similar  result also for 1D cubic NLS \eqref{nls}, under  an additional hypothesis on the potential, see \S \ref{sec:rephyp}. This is   more difficult than the 3D  case   because of  the fact that  $|u|^2 u$   is a long range nonlinearity  in 1D  and by the weakness of linear dispersion in 1D.  The main idea in this paper consists in a combined use of the dispersion theory of  Kowalczyk, Martel and Munoz  \cite{KMM3}, from which we draw extensively,  with the notion of Refined Profiles we introduced in \cite{CM20} and which we discuss now, before stating the main result.

\subsection{Set up}
For $a\in \R$  and $\<x\> := \sqrt{1+x^2}$, we set
\begin{align}&\label{eq:sigmas--}
\|u\|_{X_a}:=\|e^{a\<x\>}u\|_X,\ \text{for } \ X=H^s,\ L^p \text{  and  }a :=2^{-1}\sqrt{|\omega_N|},
\\&\label{eq:sigmas}
\Sigma^s:=
H^s_{a } ,
\end{align}
where we make the convention that,  when we write $L^p$,  $H^s$ or other analogous spaces like $L^{2,s}$ below, they are  $L^p(\R )$,  $H^s(\R )$, etc.

\noindent For any $s\in \R$, we will use also other weighted spaces, defined by the norm
\begin{align}\label{eq:withL2}
   \| u \| _{L^{2,s} }:= \|  \<x\> ^s u \| _{L^{2 }(\R )}.
\end{align}

We will consider repeatedly  several Bochner spaces of the form $L^p(I, X)$, with $p\in [1,\infty ]$, $I \subseteq \R $ an interval and $X$ a Banach space, see \cite[Chapter 1]{ch}, with norms
\begin{align}&\label{eq:spacetime}
\|u\|_{L^p(I, X)}:=\|   \|   u\|_X  \| _{L^p(I)} .
\end{align}
In particular,  we will use spaces like $X= \widetilde{\Sigma}$, introduced in   \eqref{eq:norm_rho},  and  $X= X_a$  like in \eqref{eq:sigmas--}.

We recall that all eigenvalues of $H$ are simple.
We pick  $\phi_j$ such that $\| \phi _j \| _{L^2} =1$, $\R$-valued eigenfunctions of $H$ associated with $\omega_j$.
Since $\phi_j^{(n)}\sim_n e^{-\sqrt{|\omega_j|}|x|}$, we have $\phi_j \in \cap_{s>0}\Sigma^s$ for all $j=1,\cdots,N$.

\subsubsection{Refined profile and Fermi Golden Rule assumption}
One of the keys   is the notion of refined profile introduced in \cite{CM20}.
We set
\begin{align}\label{def:bomega}
\boldsymbol{\omega}=(\omega_1,\cdots,\omega_N).
\end{align}
For the discrete spectrum $\sigma_{\mathrm{d}}(-\partial_x^2+V)$, we assume the following.
\begin{assumption}\label{ass:disc_ratio_indep}
For $\mathbf{m}:=(m_1,\cdots,m_N)\in \Z^N\setminus\{0\}$, $\mathbf{m}\cdot\boldsymbol{\omega} =\sum_{j=1}^{N}m_j\omega_j\neq 0$.
\end{assumption}

\begin{remark} In reality we need Assumption \ref{ass:disc_ratio_indep} for a restricted and finite set of indexes.
\end{remark}

In the following, for $\mathbf{x}=(x_1,\cdots,x_N)\in X^N$, for $X=\Z,\R,\C$ we write
 $\|\mathbf{x}\|:=\|\mathbf{x}\|_1=\sum_{j=1}^N|x_j|$.
We consider the sets of multi--indexes
\begin{align}\label{eq:RandNR}
\mathbf{NR}:=\{\mathbf{m}\in \Z^N\ |\ \sum_{j=1}^N m_j=1,\ \mathbf{m}\cdot \boldsymbol{\omega}<0\},\
\mathbf{R}:=\{\mathbf{m}\in \Z^N\ |\ \sum_{j=1}^N m_j=1,\ \mathbf{m}\cdot \boldsymbol{\omega} >0\} .
\end{align}
By Assumption \ref{ass:disc_ratio_indep}, we have $\{\mathbf{m}\in \Z^N\ |\ \sum_{j=1}^Nm_j=1\}=\mathbf{NR}\cup \mathbf{R}$.
Furthermore, we set
\begin{align}\label{def:Rmin}
\mathbf{R}_{\mathrm{min}}:=\{\mathbf{m}\in \mathbf{R}\ |\ \not\exists \mathbf{n}\in \mathbf{R}\ \mathrm{s.t.}\ \mathbf{n}\prec\mathbf{m}\},
\end{align}
where
\begin{align*}
\mathbf{n}\prec\mathbf{m}\ &\Leftrightarrow\ \mathbf{n}\preceq \mathbf{m}\ \text{and}\ \|\mathbf{n}\|<\|\mathbf{m}\|,\\
\mathbf{n}\preceq\mathbf{m}\ &\Leftrightarrow\ \forall j=1,\cdots,N,\ |n_j|\leq |m_j|.
\end{align*}
Related to $\mathbf{R}_{\mathrm{min}}$ are the sets  $\mathbf{I}$ and  $\mathbf{NR}_1$, defined  by
\begin{align}\label{def:NR1I}
\mathbf{I}:=\{\mathbf{m}\in \mathbf{R}\cup \mathbf{NR}\ |\ \exists\mathbf{n}\in \mathbf{R}_{\mathrm{min}}\ \text{s.t. }\mathbf{n}\prec\mathbf{m}\},\ \mathbf{NR}_1:=\mathbf{NR}\setminus\mathbf{I}.
\end{align}
\begin{remark}
The set $\mathbf{I}$ will be the collection of    negligible multi--indexes, in the sense that for $\mathbf{n}\in \mathbf{I}$,
\begin{align*}
|\mathbf{z}^{\mathbf{n}}|\leq \|\mathbf{z}\| \sum_{\mathbf{m}\in \mathbf{R}_{\mathrm{min}}}|\mathbf{z}^{\mathbf{m}}|\ \mathrm{for}\ \|\mathbf{z}\|\leq 1,
\end{align*}
where $\mathbf{z}^{\mathbf{n}}$ is defined right  below. Here $\mathbf{R}$ stands for resonant, while $\mathbf{NR}$ stands for non resonant. The most significant elements of  $\mathbf{NR}$  are those of $\mathbf{NR}_1$. The corresponding monomials $\mathbf{z} ^{\mathbf{m}}$ for $\mathbf{m}\in \mathbf{NR}_1$ are eliminated and do not appear in the key equation  \eqref{eq:rp} by a, rather elementary, normal forms argument, while the  $\mathbf{z} ^{\mathbf{m}}$ for $\mathbf{m}\in \mathbf{NR} \cap \mathbf{I}$
are small remainder terms, absorbed in $\mathcal{R}_{\mathrm{rp}}[\mathbf{z}]$ and  easy to bound in the course of the proof.
\end{remark}
For $\mathbf{z}\in \C^N$, we set
\begin{align}\label{def:zn}
\mathbf{z}^{\mathbf{m}}:=\prod_{j=1}^{N}z_j^{(m_j)},\ z_j^{(m_j)}:=\begin{cases}
z_j^{m_j} & \text{if}\ m_j\geq 0,\\ \overline{z_j}^{-m_j} & \text{if}\ m_j<0.
\end{cases}
\end{align}

We inductively define $G_{\mathbf{m}}$ for $\mathbf{m}\in \mathbf{R}_{\mathrm{min}} \cup \mathbf{NR}_1$ by $G_{\mathbf{m}}=0$ if $\|\mathbf{m}\|\leq 1$ and
\begin{align}\label{def:Gm0}
G_{\mathbf{m}}=\sum_{\substack{\mathbf{m}^1,\mathbf{m}^2,\mathbf{m}^3\in \mathbf{NR}_1,\\ \mathbf{m}^1-\mathbf{m}^2+\mathbf{m}^3=\mathbf{m},\  \|\mathbf{m}^1\|+\|\mathbf{m}^2\|+\|\mathbf{m}^3\|=\|\mathbf{m}\|}}\widetilde{\phi_{\mathbf{m}^1}}\widetilde{\phi_{\mathbf{m}^2}}\widetilde{\phi_{\mathbf{m}^3}},
\end{align}
where
\begin{align}\label{def:tilde}
\widetilde{\phi_{\mathbf{m}}}=\begin{cases}
0, & \mathbf{m}=0\\
\phi_j, & \mathbf{m}=\mathbf{e}^j:=(\delta_{j1},\cdots,\delta_{jN}),\\
-(H-\mathbf{m}\cdot \boldsymbol{\omega})^{-1}G_{\mathbf{m}}, & \|\mathbf{m}\|\geq 2
\end{cases}
\end{align}

\begin{example}
We give the first few $G_{\mathbf{m}}$ for the case $N=2$.
When $\mathbf{m} \in \mathbf{R}_{\mathrm{min}} \cup \mathbf{NR}_1$ and $\|\mathbf{m}\|=3$, we have $\mathbf{m}=(2,-1)$ or $(-1,2)$ and
\begin{align*}
G_{(2,-1)}=\phi_1^2 \phi_2,\quad G_{(-1,2)}=\phi_1\phi_2^2.
\end{align*}
When $\mathbf{m} \in \mathbf{R}_{\mathrm{min}} \cup \mathbf{NR}_1$ and $\|\mathbf{m}\|=5$, we have $\mathbf{m}=(3,-2)$ or $(-2,3)$ and
\begin{align*}
G_{(3,-2)}&=-2\phi_1\phi_2(H-(2\omega_1-\omega_2))^{-1}\(\phi_1^2\phi_2\)-\phi_1^2(H-(-\omega_1+2\omega_2))^{-1}(\phi_1\phi_2^2),\\
G_{(-2,3)}&=-2\phi_1\phi_2(H-(-\omega_1+2\omega_2))^{-1}\(\phi_1 \phi_2^2\)-\phi_2^2(H-(2\omega_1-\omega_2))^{-1}(\phi_1^2\phi_2),
\end{align*}
\end{example}

By the inductive definition, we have the following.
\begin{lemma}\label{lem:GmRval}
	 For $\mathbf{m}\in \mathbf{R}_{\mathrm{min}} \cup \mathbf{NR}_1$  the $G_{\mathbf{m}}$ are $\R$-valued.
\end{lemma}

\begin{proof}
	If $\|\mathbf{m}\|\leq 1$, then the statement is obvious because we have chose $\phi_j$' to be $\R$-valued.
	Next, for  $\mathbf{m}\in \mathbf{R}_{\mathrm{min}} \cup \mathbf{NR}_1$, we assume that for all $\mathbf{n}\in \mathbf{NR}_1$ with $\|\mathbf{n}\|<\|\mathbf{m}\|$, $G_{\mathbf{n}}$ is $\R$-valued.
	Then, from \eqref{def:tilde}, $\widetilde{\phi_{\mathbf{n}}}$ is $\R$-valued and by \eqref{def:Gm0}, $G_{\mathbf{m}}$ is also $\R$-valued.
\end{proof}

An important assumption, related to the  Fermi Golden Rule (FGR), is the following.
\begin{assumption}\label{ass:FGR}
We assume that for all $\mathbf{m}\in \mathbf{R}_{\mathrm{min}}$,
\begin{align*}
\sum_{\pm}|\widehat{G_{\mathbf{m}}}\(\pm \sqrt{\boldsymbol{\omega}\cdot \mathbf{m}}  \)  |>0,
\end{align*}
where $\widehat{G_{\mathbf{m}}}$  is the distorted Fourier transform of $G_{\mathbf{m}}$ associated to the operator $H$, see \cite{weder}.
\end{assumption}

For a Banach space $X$ and $x\in X$, $r>0$, we set
\begin{align}\label{def:ball}
B_X(x,r):=\{y\in X\ |\ \|y-x\|_X<r\}.
\end{align}
For $F\in C^1(B_{\C^N}(0,\delta),X)$, $\mathbf{z}\in B_{\C^N}(0,\delta)$ and $\mathbf{w}\in \C^N$, we set $D_{\mathbf{z}}F(\mathbf{z})\mathbf{w}:=\left.\frac{d}{ds}\right|_{s=0}F(\mathbf{z}+s\mathbf{w})$.

The following is proved in \cite{CM20}.
\begin{proposition}[Refined profile]\label{prop:rp}
For any $s>0$, there exists $\delta_s>0$ s.t.\ there exists $\phi[\cdot]\in C^\infty(B_{\C^N}(0,\delta_s),\Sigma^s)$   of the form
\begin{align}\label{def:refpexp}
\phi[\mathbf{z}]= \sum _{\mathbf{m}\in \mathbf{NR}_1} \mathbf{z} ^{\mathbf{m}}\widetilde{\phi_{\mathbf{m}}} \text{  where }\widetilde{\phi_{\mathbf{m}}}\in \Sigma ^{s} \text{  for all $s$,}
\end{align}
 $\boldsymbol{\varpi}\in C^\infty(B_{\C^N}(0,\delta_s),\R^N)$ and $\mathcal{R}_{\mathrm{rp}}[\cdot]\in C^\infty(B_{\C^N}(0,\delta_s),\Sigma^s)$ s.t.
\begin{align}\label{eq:rp}
-\im D_{\mathbf{z}}\phi[\mathbf{z}]\(\im \boldsymbol{\varpi}(\mathbf{z})\mathbf{z}\)=H\phi[\mathbf{z}]+|\phi[\mathbf{z}]|^2\phi[\mathbf{z}]-\sum_{\mathbf{m}\in \mathbf{R}_{\mathrm{min}}}\mathbf{z}^{\mathbf{m}} G_{\mathbf{m}}-\mathcal{R}_{\mathrm{rp}}[\mathbf{z}],
\end{align}
and $\phi[0]=0$, $D_{\mathbf{z}}\phi[0]\mathbf{e}^j=\phi_j$, $D_{\mathbf{z}}^2\phi[0]=0$, $\boldsymbol{\varpi}(0)=\boldsymbol{\omega}$, $\phi[e^{\im \theta}\mathbf{z}]=e^{\im \theta}\phi[\mathbf{z}]$, $\boldsymbol{\varpi}(\mathbf{z})=\boldsymbol{\varpi}(|z_1|^2,\cdots,|z_N|^2)$,
\begin{align}\label{est:Rrp}
\|\mathcal{R}_{\mathrm{rp}}[\mathbf{z}]\|_{\Sigma^s}\lesssim_s \|\mathbf{z}\|^2\sum_{\mathbf{m}\in \mathbf{R}_{\mathrm{min}}}|\mathbf{z}^{\mathbf{m}}|,
\end{align}
and
$
\boldsymbol{\varpi}(\mathbf{z})\mathbf{z}:=(\varpi_1(\mathbf{z})z_1,\cdots,\varpi_N(\mathbf{z})z_N).
$
\end{proposition}
\qed

\begin{remark}\label{rem:refprofiles} The  Refined Profile generalizes the notion of   standing waves, which are    generated from the Refined Profile setting
\begin{align*}
\phi_j(z_j):=\phi(z _j\mathbf{e}_j)  \text{ for $z_j\in \mathcal{B}_{\C}  (0, \delta _s)$    and $\mathbf{e}_j= ( \delta _{1 j} ,..., \delta _{N j})$, with $\delta _{jk}$ the Kronecker delta.}
\end{align*}
It represents an effective way to  represent the discrete component because it
  provides a modulation  $u=\phi[\mathbf{z}]+\eta$ of the solution $u$,   where in the equation of  the continuous mode $\eta$, see \eqref{eq:modnls} below,
there are no monomials  $\mathbf{z} ^{\mathbf{m}}$  with $\mathbf{m}\in \mathbf{NR}_1$.  It plays an analogous role to that of  Fraiman's like ansatz   in  papers like Merle and Raphael \cite{MR4} where it allows to   bypass normal forms arguments in the course of the analysis of  the Fermi Golden Rules.

\end{remark}

\subsubsection{The Repulsivity Hypothesis}\label{sec:rephyp}

In order to use the dispersion theory of Kowalczyk  \textit{{et al.}}  \cite{KMM2}--\cite{KMM4} we need the following,    inspired by a more general notion in \cite{KMM4}.
\begin{definition}\label{def:repulPot} Let $\mathcal{V}$ be a potential like in \eqref{ass:V}. We say that  $\mathcal{V}$ is repulsive if
 $\mathcal{V}$ is not identically zero and   $x\mathcal{V}'(x)\leq 0$ for all $x\in \R$.
\end{definition}
Obviously the above notion,   framed in terms of the origin, could be reframed with respect to any other $x_0\in \R$, but we can always normalize by translation so that $x_0=0$.

Crucial in the theory in  Kowalczyk  \textit{{et al.}} \cite{KMM3,KMM4} is a mechanism of addition or subtraction of eigenvalues which can be traced to Darboux. Kowalczyk \textit{{et al.}}  \cite{KMM3,KMM4} treat some very special situations and  refer  to the theory in Sect.\ 3.2--3.3 in
Chang-Gustafson-Nakanishi-Tsai  \cite{Chang}.  In reality, a systematic  and quite general treatment  of this topic is in Sect.\ 3 Deift-Truwobitz \cite{DT}, to which we refer for the following, where we impose much stricter hypotheses than in  \cite{DT}.

\begin{proposition}\label{prop:Darboux}
Let $W\in \mathcal{S}(\R,\R)$ s.t $\sigma_{\mathrm{d}}(-\partial_x^2+W)\neq \emptyset$.
Let $\psi$ be the ground state (positive eigenfunction) of $-\partial_x^2+W$ and set $A_W=\frac{1}{\psi}\partial_x\(\psi \cdot\)$.
Then, there exists $W_1\in \mathcal{S}(\R,\R)$ s.t.
\begin{align*}
A_WA_W^*=-\partial_x^2+W-\omega,\ A_W^*A_W=-\partial_x^2 +W_1 -\omega,
\end{align*}
where $\omega=\min \sigma_{\mathrm{d}}(-\partial_x^2+W)$.
Further, we have $\sigma_{\mathrm{d}}(-\partial_x^2+W_1)=\sigma_{\mathrm{d}}(-\partial_x^2+W)\setminus\{\omega\}$.
\end{proposition}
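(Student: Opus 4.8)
The plan is to verify directly that the Darboux factorization works and then compute the spectrum of the conjugate operator $A_W^*A_W$. First I would check that $A_W A_W^* = -\partial_x^2 + W - \omega$ by a straightforward computation: since $\psi$ is the ground state of $-\partial_x^2 + W$ with eigenvalue $\omega$, we have $\psi'' = (W-\omega)\psi$, i.e.\ $\psi''/\psi = W - \omega$. Writing $A_W f = f' + (\psi'/\psi) f$ and $A_W^* f = -f' + (\psi'/\psi) f$ (the formal adjoint on $L^2$), a direct expansion gives $A_W^* A_W f = -f'' + \left( (\psi'/\psi)^2 - (\psi'/\psi)' \right) f$ and $A_W A_W^* f = -f'' + \left( (\psi'/\psi)^2 + (\psi'/\psi)' \right) f$. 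Using $(\psi'/\psi)' = \psi''/\psi - (\psi'/\psi)^2 = (W-\omega) - (\psi'/\psi)^2$, the combination in $A_W A_W^*$ collapses to exactly $W - \omega$, which verifies the first identity and also \emph{defines} $W_1 := (\psi'/\psi)^2 - (\psi'/\psi)' + \omega = 2(\psi'/\psi)^2 - (W - \omega) + \omega$.

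Next I would check that $W_1 \in \mathcal{S}(\R,\R)$. Since $\psi$ is the ground state of a Schwartz-potential Schr\"odinger operator with $\omega < 0$, standard Agmon/ODE asymptotics give $\psi(x) \sim c_\pm e^{-\sqrt{|\omega|}|x|}$ with all derivatives decaying at the same exponential rate, and $\psi$ is smooth and nowhere zero; hence $\psi'/\psi$ is smooth, bounded, and tends to $\mp\sqrt{|\omega|}$ as $x \to \pm\infty$, with all derivatives Schwartz. Therefore $(\psi'/\psi)^2 \to |\omega|$ and $(\psi'/\psi)' \to 0$, and one computes $W_1 = W + 2(\psi'/\psi)' \to 0$ (more directly, $W_1 - W = 2(\psi'/\psi)' = 2\left((W-\omega) - (\psi'/\psi)^2\right)$, which is a difference of Schwartz/exponentially-decaying terms); the derivatives of $W_1$ inherit Schwartz decay from those of $W$ and $\psi'/\psi$. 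So $W_1 \in \mathcal{S}(\R,\R)$, and $A_W^* A_W = -\partial_x^2 + W_1 - \omega$ by construction.

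Finally, the spectral statement. The key point is that $A_W A_W^*$ and $A_W^* A_W$ have the same nonzero spectrum (with multiplicities): if $A_W A_W^* u = \lambda u$ with $\lambda \neq 0$ then $A_W^* u \neq 0$ and $A_W^* A_W (A_W^* u) = \lambda (A_W^* u)$, and symmetrically. Since $-\partial_x^2 + W - \omega = A_W A_W^* \geq 0$ with the bottom eigenvalue $0$ (attained by $\psi$, which is \emph{not} in the range of $A_W$ — indeed $A_W \psi = \psi' + (\psi'/\psi)\psi = 2\psi' \ne 0$; rather the point is $A_W^* \psi \ne 0$ is fine, what matters is that $0$ is not in $\sigma(A_W^* A_W)$ because $A_W^* A_W v = 0 \Rightarrow A_W v = 0 \Rightarrow v \propto 1/\psi \notin L^2$). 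Hence $\sigma_{\mathrm d}(-\partial_x^2 + W_1) = \omega + \sigma_{\mathrm d}(A_W^* A_W) = \omega + \big(\sigma_{\mathrm d}(A_W A_W^*) \setminus \{0\}\big) = \sigma_{\mathrm d}(-\partial_x^2 + W) \setminus \{\omega\}$, using that the discrete spectrum of $-\partial_x^2 + W_1$ sits below $0$ and is in bijection with the nonzero discrete spectrum of $A_W A_W^*$. The main obstacle, such as it is, is the bookkeeping on domains and the claim that no new eigenvalue at $\omega$ (i.e.\ at $0$ for the shifted operator) is created — this is exactly the statement that $A_W^* A_W$ has trivial kernel in $L^2$, which follows because its kernel would force $A_W v = 0$, hence $v \in \mathrm{span}\{1/\psi\}$, and $1/\psi$ grows exponentially and is not in $L^2$; I would also note that the absence of a threshold resonance issue is irrelevant here since all relevant eigenvalues are strictly negative.
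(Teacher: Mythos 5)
Your argument is correct and is the standard Darboux/commutation computation; note that the paper does not prove this proposition at all but delegates it to Sect.\ 3 of Deift--Trubowitz, where essentially this same argument (factorization of the Schr\"odinger operator through the ground state, intertwining of the nonzero spectra of $A_WA_W^*$ and $A_W^*A_W$, and triviality of $\ker(A_W^*A_W)$ because $A_Wv=0$ forces $v\propto 1/\psi\notin L^2$) is carried out. Two small slips are worth fixing. First, with $u:=\psi'/\psi$ one has $W-\omega=u'+u^2$ and $W_1-\omega=-u'+u^2$, hence $W_1=W-2u'=W-2(\log\psi)''$, not $W+2(\psi'/\psi)'$; the sign does not affect the Schwartz claim, since $u'=(W-\omega)-u^2$ decays exponentially together with all its derivatives. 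Second, the parenthetical about $A_W\psi=2\psi'$ is a red herring, and the assertion that ``$A_W^*\psi\neq 0$ is fine'' is backwards: the relevant identity is $A_W^*\psi=-\psi'+(\psi'/\psi)\psi=0$, which is exactly what places $0$ in $\sigma(A_WA_W^*)$ with eigenfunction $\psi$ and matches $\omega=\min\sigma_{\mathrm d}(-\partial_x^2+W)$. The decisive steps — the two-sided bijection of nonzero eigenvalues via $u\mapsto A_W^*u$ and $v\mapsto A_Wv$, the identification of the essential spectra of the two Schwartz-potential operators, and the absence of a kernel for $A_W^*A_W$ — are all present and correctly argued.
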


Using Proposition \ref{prop:Darboux}, we inductively define $V_j \in \mathcal{S}(\R,\R)$ ($j=1,\cdots,N+1$) by
\begin{enumerate}
\item $V_1:=V$, $H_1:=-\partial_x^2+V_1$, $\psi_1=\phi_1$ and $A_1=A_{V_1}$.
\item Given $V_k$, we define
\begin{align}\label{def:Ak}
A_k:=A_{V_k}\text{ and  }H_{k+1}:=-\partial_x^2+V_{k+1}:=A_k^*A_k+\omega_k.
\end{align}
\end{enumerate}
From Proposition \ref{prop:Darboux}, we have
\begin{align*}
\sigma_{\mathrm{d}}(H_k)=\{\omega_j\ |\ j=k,\cdots,N\},\ k=1,\cdots,N,\ \text{and}\ \sigma_{\mathrm{d}}(H_{N+1})=\emptyset.
\end{align*}
If $\psi_k$ is the ground state of $H_k$  and  $A_k=\frac{1}{\psi_k}\partial_x\(\psi_k\cdot\)$ then,
from
\begin{align}\label{eq:DarConj1}
A_j^* H_j=A_j^*(A_jA_j^*+\omega_j)=(A_j^*A_j+\omega_j)A_j^*=H_{j+1}A_j^* ,
\end{align}
we have the conjugation relation
\begin{align}\label{eq:DarConj2}
\mathcal{A}^* H_1=H_{N+1}\mathcal{A}^*,
\end{align}
where
\begin{align}\label{def:calA}
\mathcal{A}=A_1\cdots A_N \text{  and }\mathcal{A}^*=A_N^*\cdots A_1^*.
\end{align}

The crucial assumption to implement the theory in  Kowalczyk  \textit{{et al.}}  \cite{KMM3,KMM4} and to overcome the   strength of the cubic nonlinearity in 1D is the following.
\begin{assumption}\label{ass:VN}
$V_{N+1}$ is  a repulsive potential in the sense of Definition  \ref{def:repulPot}.
\end{assumption}

\begin{remark}
By reverting the transformation given in Proposition \ref{prop:Darboux}, starting from any repulsive potential, for any $N$, one can construct a potential $V$ satisfying  Assumptions \ref{ass:disc_ratio_indep} and \ref{ass:VN}.
\end{remark}

\subsection{Main theorem}\label{sec:mainth}
We are now in the position to state the main theorem of this paper.
\begin{theorem}\label{thm:main} Assume \eqref{ass:V}--\eqref{ass:discsp} and Assumptions \ref{ass:disc_ratio_indep}, \ref{ass:FGR} and \ref{ass:VN}. Then for any $\epsilon >0$ and $a>0$ there exists $\delta _0>0$ s.t.\ for all $u_0\in H^1$ with $\|u_0\|_{H^1}:=\delta<\delta _0$ there are   $\mathbf{z}\in C^1(\R,\C^N)$, and  $\eta\in C(\R,H^1)$  s.t.
\begin{align}\label{eq:main1}
\|\mathbf{z}\|_{W^{1,\infty}\(\R \) }+\|\eta\|_{L^\infty\( \R ,  H^1 \)}\lesssim \delta,
\end{align}
with,  for all $t\geq 0$,
\begin{align}\label{eq:main11}
u(t)=\phi[\mathbf{z}(t)]+\eta(t).
\end{align}
Moreover, we have  for $I=[0,\infty)$
\begin{align}\label{eq:main2}
\|\dot {\mathbf{z}}+\im \boldsymbol{\varpi}(\mathbf{z})\mathbf{z}\|_{L^2(I)}+\sum_{\mathbf{m}\in \mathbf{R}_{\mathrm{min}}}\|\mathbf{z}^\mathbf{m}\|_{L^2(I)}+\|\eta\|_{L^2(I, H^1_{-a})}\le  \epsilon  .
\end{align}
Finally, there  exists $j(u_0)\in \{1,\cdots,N\}$ and $\rho_+(u_0)\geq 0$ such that
\begin{align}\label{eq:main3}
\lim_{t\to \infty}|z_k(t)|=\begin{cases}
0 & k\neq j(u_0),\\
\rho_+(u_0) & k=j(u_0).
\end{cases}
\end{align}
\end{theorem}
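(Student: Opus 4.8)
The plan is to follow the now-standard modulation-plus-virial architecture of Kowalczyk--Martel--Mu\~noz--Van Den Bosch, but with the linearized dynamics organized around the Refined Profile $\phi[\mathbf{z}]$ of Proposition \ref{prop:rp} rather than around the linear superposition of bound states. First I would set up the decomposition \eqref{eq:main11}: given $u_0$ small in $H^1$, apply the implicit function theorem to the $N$ orthogonality conditions $\langle \im \eta, D_{z_j}\phi[\mathbf{z}]\rangle = 0$ (the natural symplectic-orthogonality conditions adapted to $\phi[\cdot]$) to produce $\mathbf{z}(t)$ and $\eta(t)$ with the bounds \eqref{eq:main1}; this is a routine bifurcation argument using $D_{\mathbf{z}}\phi[0]\mathbf{e}^j=\phi_j$ and the nondegeneracy of the $\phi_j$'s. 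Substituting \eqref{eq:main11} into \eqref{nls} and using the Refined Profile equation \eqref{eq:rp}, the terms $H\phi[\mathbf{z}]+|\phi[\mathbf{z}]|^2\phi[\mathbf{z}]$ cancel and one is left with a modulation equation of the schematic form $-\im(\dot{\mathbf{z}}+\im\boldsymbol{\varpi}(\mathbf{z})\mathbf{z})\cdot D_{\mathbf{z}}\phi[\mathbf{z}] = (\text{equation for }\eta) + \sum_{\mathbf{m}\in\mathbf{R}_{\mathrm{min}}}\mathbf{z}^{\mathbf{m}}G_{\mathbf{m}} + \mathcal{R}_{\mathrm{rp}}[\mathbf{z}] + (\text{nonlinear interaction terms})$; projecting onto the discrete directions gives the ODE for $\mathbf{z}$, and the complementary projection gives the PDE for $\eta$, which is a Schr\"odinger equation with potential $V$ plus time-dependent coefficients of size $O(\|\mathbf{z}\|)$.

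The heart of the argument is the a priori estimate \eqref{eq:main2} on the interval $I=[0,\infty)$, established by a continuity/bootstrap argument. Here I would transport the continuous component $\eta$ through the conjugation \eqref{eq:DarConj2}: the chain of Darboux transformations $\mathcal{A}^*=A_N^*\cdots A_1^*$ intertwines $H_1=H$ with $H_{N+1}=-\partial_x^2+V_{N+1}$, and by Assumption \ref{ass:VN} the latter is repulsive in the sense of Definition \ref{def:repulPot}, so the virial inequalities of \cite{KMM3} (and the transformed/second virial functional of \cite{KMM4}) apply directly to $\mathcal{A}^*\eta$, yielding the local-decay bound $\|\eta\|_{L^2(I,H^1_{-a})}\lesssim \delta$ after one controls the error terms generated by commuting $\mathcal{A}^*$ past the nonlinearity and past $\phi[\mathbf{z}]$ (these are Schwartz-localized and hence absorbed by the exponential weight in $\Sigma^s$). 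Simultaneously, the discrete modes: one derives from the modulation ODE a normal-form/Fermi-Golden-Rule computation showing $\frac{d}{dt}(\text{suitable energy of }\mathbf{z}) \lesssim -\sum_{\mathbf{m}\in\mathbf{R}_{\mathrm{min}}}|\mathbf{z}^{\mathbf{m}}|^2 + (\text{coupling to }\eta)$, where the key structural input is that the $G_{\mathbf{m}}$ with $\mathbf{m}\in\mathbf{R}_{\mathrm{min}}$ are precisely the resonant source terms feeding energy into the continuous spectrum at frequency $\sqrt{\boldsymbol{\omega}\cdot\mathbf{m}}>0$, and Assumption \ref{ass:FGR} guarantees the dissipation coefficient $\sum_\pm|\widehat{G_{\mathbf{m}}}(\pm\sqrt{\boldsymbol{\omega}\cdot\mathbf{m}})|$ is strictly positive. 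Combining the virial bound for $\eta$ with the FGR dissipation for $\mathbf{z}$, and using that the multi-indexes in $\mathbf{I}=\mathbf{NR}\setminus\mathbf{NR}_1$ are negligible (the remark after \eqref{def:NR1I}), closes the bootstrap and gives \eqref{eq:main2} with $\epsilon$ as small as desired provided $\delta_0$ is small.

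Finally, for the selection statement \eqref{eq:main3}, the square-integrability in time of $\dot{\mathbf{z}}+\im\boldsymbol{\varpi}(\mathbf{z})\mathbf{z}$ from \eqref{eq:main2} implies that $\frac{d}{dt}|z_k|^2 = 2\Re(\overline{z_k}\dot z_k)$ is integrable on $[0,\infty)$ (since $\Re(\overline{z_k}(\im\varpi_k z_k))=0$), so each $|z_k(t)|^2$ converges to a limit $\rho_k^2\geq 0$ as $t\to\infty$. It then remains to show that at most one of these limits is nonzero. This I would argue by contradiction: if two limits $\rho_j,\rho_k$ were both positive, then $|\mathbf{z}^{\mathbf{m}}|$ would be bounded below along $t\to\infty$ for some $\mathbf{m}\in\mathbf{R}_{\mathrm{min}}$ that mixes the $j$-th and $k$-th modes (using Assumption \ref{ass:disc_ratio_indep} to guarantee that the relevant combination lies in $\mathbf{R}$ rather than being resonance-free, hence dominates some element of $\mathbf{R}_{\mathrm{min}}$), contradicting $\sum_{\mathbf{m}\in\mathbf{R}_{\mathrm{min}}}\|\mathbf{z}^{\mathbf{m}}\|_{L^2(I)}<\infty$; a little care is needed because $\dot{|z_k|}$ does not obviously go to zero, so one supplements this with the ODE to upgrade $L^2$-in-time smallness to genuine decay of the off-diagonal modes. \textbf{The main obstacle} I anticipate is precisely the interplay in the bootstrap between the error terms produced by the Darboux conjugation $\mathcal{A}^*$ — which is not unitary and genuinely changes the equation — and the virial functionals of \cite{KMM3,KMM4}, together with checking that the FGR dissipation extracted from the $G_{\mathbf{m}}$, $\mathbf{m}\in\mathbf{R}_{\mathrm{min}}$, really controls all the discrete modes and not merely a single pair; handling the full family of $N\geq 2$ modes (as opposed to the $N=1$ situation of \cite{Chen,CM19SIMA}) is where the combinatorial structure of $\mathbf{R}_{\mathrm{min}},\mathbf{NR}_1,\mathbf{I}$ does the essential work.
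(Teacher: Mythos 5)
Your overall architecture for \eqref{eq:main1}--\eqref{eq:main2} (modulation around the refined profile, Darboux conjugation to the repulsive $H_{N+1}$, virial estimates in the spirit of \cite{KMM3}, FGR dissipation of the $\mathbf{z}^{\mathbf{m}}$) matches the paper's. But two points are genuinely missing or wrong. First, the FGR step cannot be closed by virial estimates alone: the paper must split the transformed radiation $v$ as $v=-Z(\mathbf{z})+g$ with $Z(\mathbf{z})=-\sum_{\mathbf{m}}\mathbf{z}^{\mathbf{m}}R^{+}_{H_{N+1}}(\boldsymbol{\omega}\cdot\mathbf{m})\widetilde{G}_{\mathbf{m}}$, and the remainder $g$ is controlled by Kato smoothing and the limiting absorption principle (Sect.\ \ref{sec:smoothing}), precisely because, as the paper notes, no virial-type functional bounds $g$. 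Your plan jumps from ``virial bound for $\eta$'' to ``FGR dissipation for $\mathbf{z}$'' without this intermediate object; without it the coupling term $\langle\eta,\im\,\mathbf{z}^{\mathbf{m}}G_{\mathbf{m}}\rangle$ cannot be resolved into the sign-definite resonant part \eqref{mainFGR6} plus an integrable error.

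Second, your derivation of \eqref{eq:main3} contains an error. From $\dot z_k=-\im\varpi_k z_k+r_k$ with $\|r_k\|_{L^2([0,\infty))}\le\epsilon$ you get $\frac{d}{dt}|z_k|^2=2\Re(\overline{z_k}\,r_k)$, and $\overline{z_k}\,r_k$ is only in $L^2([0,\infty))$ (since $z_k$ is merely bounded), which does \emph{not} imply integrability on an infinite interval; so the convergence of each $|z_k(t)|$ does not follow this way. The paper instead proves convergence of the normal-form corrected energy $E(\phi[\mathbf{z}])-A_1(\mathbf{z})$, whose time derivative is a sum of terms that are genuinely $L^1$ in time because they are \emph{quadratic} in the $L^2$ quantities (e.g.\ $|\mathbf{z}^{\mathbf{m}}|^2$); this gives convergence of $\sum_l\omega_l|z_l|^2+O(\|\mathbf{z}\|^4)$ only, and the individual limits are then extracted by combining this with $z_jz_k\to0$ for $j\ne k$ (which you do identify, via $|z_jz_k|^M\in L^1\cap W^{1,\infty}$) and a continuity argument pinning down the surviving index $j(u_0)$. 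You would need to replace your integrability claim by this energy argument (or an equivalent normal form on the ODE for $|z_k|^2$) for the selection statement to hold.
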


\begin{remark} The fact that in the paper the cubic term is defocusing plays no role in our proof. Theorem \ref{sec:mainth}  holds also with a focusing cubic term.
Obviously, inverting time we conclude that \eqref{eq:main2} holds for $I=\R$.
Notice also that  it is not possible to prove decay rates because all the estimates need necessarily to be invariant by translation in time. Hence, all the literature which proves a rate of decay in time needs to take initial data in spaces smaller than $H^1(\R )$.
\end{remark}

The novel difficulties in Theorem \ref{thm:main}  come  from  the cubic nonlinearity, which   in dimension 1 is \textit{long range}.  For quintic or higher power, but always smooth,   nonlinearities, which are short range,  then the theory in \cite{CM21,CM20,CM15APDE,mizu08} can be applied.

  Assumption \ref{ass:VN} is  very important  in the theory developed by Kowalczyk  \textit{{et al.}} in \cite{KMM3,KMM4}.
Here we are able to  generalize their ideas thanks to the theory by  Deift  and Trubowitz \cite[Sect. 3]{DT}, which treats in   great   generality the transformations considered in
Chang \textit{{et al.}} \cite{Chang}.        It is then possible to develop the two virial inequalities of Kowalczyk  \textit{{et al.}} in \cite{KMM3} in the presence of    general discrete spectra and to combine the theory of dispersion in \cite{KMM3} with the theory of nonlinear dissipation of the discrete modes in \cite{CM21,CM20}. The latter  was initiated by Buslaev and Perelman \cite{BP2}, was then considered by  Soffer and Weinstein \cite{SW3}  and generalized in papers like \cite{CM15APDE}. The approach in \cite{CM21,CM20}, to which we refer for a more detailed discussion on this point,  is far simpler than the previous literature, thanks to the notion of Refined Profile.

 Recently, there has been a considerable interest on the asymptotic stability of patterns for dispersive equations with inhomogeneities and with long range nonlinearities in 1D, especially in view of the analysis of kinks of appropriate wave equations.

 Kowalczyk  \textit{{et al.}}  showed that a framework based on virial inequalities is very suitable and provides a very penetrating grasp of these problems, starting with  their   partial proof in   \cite{KMM2} of the asymptotic stability of the kinks of the $\phi ^4$ model, with other insightful contributions in papers like
   \cite{KMM3,KMM4}. See  also the work of  Alammari and    Snelson
\cite{AS1,AS2} and of Martinez  for  long range Schr\"odinger and Hartree equations \cite{Ma}  and for Zakharov systems   \cite{Ma1} in  one dimension.

Quite different set ups from that of  Kowalczyk  \textit{{et al.}}
are in \cite{ChenPus,GP20,GPR18,MMS2,naumkin2016},   in the absence of discrete modes.  See also  the series \cite{LS1}--\cite{LS4} and \cite{Sterbenz}. Recently Chen \cite{Chen} considered the case of our NLS \eqref{nls} with one eigenvalue mode, that is $N=1$, without the repulsivity Assumption \ref{ass:VN}
while the book by Delort and Masmoudi   \cite{DM20} looked at the  $\phi ^4$ model for long times but not asymptotically.
The methods  employed in all these works  need yet to be tested in the case where there are more than one discrete modes, which potentially  will  slow   the decay, see Gang  Zhou and Sigal  \cite{ZS}, creating additional difficulties.  For the      literature which uses     dispersive estimates   in the context of short range nonlinearities,  the case $N=2$    is significantly more complicated than the case $N=1$, see  Soffer and Weinstein \cite{SW04RMP},  the series by Tsai and Yau \cite{TY1}--\cite{TY4} and,  for special situations with $N>2$,  Nakanishi et  al. \cite{NPT}.  This seems to be related to the need of using different weighted norms as the solution evolves through different stages.  More general spectra than the special ones in these references are likely to complicate this kind of analysis. Obviously
long range nonlinearities will add further complications.

Our main contribution in this paper involves the use of the notion of Refined Profile, which is significant only in the case of two or more eigenvalues, and so is not relevant to the problem considered in Chen \cite{Chen} (where however, if the potential is repulsive after a Darboux transformation, the virial inequality argument provides an alternative proof of dispersion). As we show, the Refined Profile  notion allows to avoid normal forms arguments and the search of canonical coordinates. As we have shown also elsewhere in \cite{CM20} and expecially in \cite{CM21}, we provide a very simple alternative to  \cite{TY1}--\cite{TY4}, \cite{SW04RMP} and to  the more general   \cite{CM15APDE}.  The additional complication here, compared to \cite{CM20,CM21} is the   long range nonlinearity, which does not allow to treat dispersion with a simple perturbation argument.

  To prove dispersion we follow the framework of the virial inequalities of    Kowalczyk  \textit{{et al.}} \cite{KMM3} which,   while  subtle,  is  simple and  robust and, as a consequence, is shown here to apply easily in contexts with complicated spectra. Unfortunately, an important limitation  is the repulsivity Assumption \ref{ass:VN}. Kowalczyk  \textit{{et al.}} \cite{KMM4} discuss how to avoid it in some cases, but we do not consider here the analogous situations. Our NLS problem is in some respect simpler than kink problems because  virial inequalities like in \cite{KMM4}, which involve three distinct functionals,  are more complicated than the single one that suffices here.

For alternative proofs of dispersion, we notice  that a rather simple  framework is due to Ze Li \cite{LiZe}, but the argument does not apply to the cubic nonlinearity and it too, needs to be tested in the presence of eigenvalues. Similar limitations are  true for   \cite{CVV}.   We do not discuss here  the nonlinear Steepest Descent method of Deift and Zhou initiated in \cite{DZ93}, which has been used extensively for integrable systems  but  for non integrable systems,  to our knowledge, only in   \cite{DZ02}.
There is a large literature
on PDE methods in the context of integrable systems. Here we mention only the recent paper on kinks of sine-Gordon by L\"uhrmann and Schlag \cite{LuSchlag} and the paper on the
  black  soliton  for defocusing cubic NLS  by Gravejat and  Smets \cite{GS}.

In the context of stability problems  of ground states of  the NLS, virial inequalities were introduced by
Merle and Raphael \cite{MR1}--\cite{MR4}. The papers by  Kowalczyk  \textit{{et al.}} \cite{KMM2,KMM3,KMM4} developed further applications of virial inequalities in stability problems. In this paper we show that the theory can be applied in a relatively elementary fashion also in the presence of any number $N\ge 2$ of eigenvalues. The case $N=1$ with Assumption  \ref{ass:VN} should be analogous to \cite{KMM3}. An analogue of Soffer and Weinstein \cite{SW3}, or of the more general \cite{BC11AJM},   with Assumption  \ref{ass:VN},  should hold in 1D for real valued solutions of the  Nonlinear Klein Gordon with a quadratic nonlinearity, using arguments similar to the ones of this paper, which should simplify greatly \cite{SW3,BC11AJM} with the use of Refined Profiles. The same should hold for an analogue of \cite {CMP2016} for the unitary invariant NLKG in 1D where there are small complex valued standing waves. %It is reasonable to expect that readapting  Kowalczyk  \textit{{et al.}} \cite{KMM4} with ideas analogous to the ones on Refined Profiles and Fermi Golden Rule in the present paper, should allow a generalization of the results tokinks with  internal eigenvalues.

\section{Modulation coordinate and transformed problem}\label{sec:dar}

In this section we write the equation in modulation coordinates and consider the  transformed problem induced by the conjugation relation \eqref{eq:DarConj2}.
We start from the modulation coordinate.
First of all, let
\begin{align}
\(u,v\):=\int_{\R}u\overline{v}\,dx,\ \<u,v\>:=\Re \(u,v\),
\end{align}
and set
\begin{align}\label{def:PdPc}
P_{\mathrm{d}}:=\sum_{j=1}^N\(\cdot,\phi_j\)\phi_j,\ P_{\mathrm{c}}:=1-P_{\mathrm{d}}.
\end{align}
Then, the space $P_c L^2$ is the continuous space w.r.t.\ $H$. Recalling the refined profile $\phi[\mathbf{z}]$ from Proposition \ref{prop:rp},
we introduce the following analogue of  nonlinear continuous space of Gustafson, Nakanishi and Tsai \cite{GNT},
\begin{align}
\mathcal{H}_c[\mathbf{z}]:=\{v\in L^2\ |\ \forall \widetilde{\mathbf{z}}\in \C^N,\ \<\im v, D_{\mathbf{z}}\phi[\mathbf{z}]\widetilde{\mathbf{z}}\>=0\}. \label{genker}
\end{align}

The modulation coordinates are given by decomposing $u$ as follows.
\begin{lemma}\label{lem:lincor}
There exist  $\delta>0$ and $\mathbf z\in C^\infty (   \mathcal{B}_{  \Sigma^{-1}}  (0, \delta)  , \C^N)$ s.t.\ $\eta(u):=u-\phi[\mathbf{z}(u)]\in \mathcal{H}_c[\mathbf{z}(u)]$.
Further, we have
\begin{align}\label{eq:lincor1}
\|\mathbf{z}(u)\|+\|\eta(u)\|_{H^1}\sim \|u\|_{H^1}.
\end{align}
\end{lemma}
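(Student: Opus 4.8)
The plan is to apply the implicit function theorem to the map that encodes the orthogonality conditions defining $\mathcal{H}_c[\mathbf{z}]$. Concretely, for $u$ small in $\Sigma^{-1}$ and $\mathbf{z}\in\C^N$ small, define $F:\Sigma^{-1}\times\C^N\to\R^{2N}$ (identifying $\C^N\cong\R^{2N}$) by
\begin{align*}
F(u,\mathbf{z}) := \left(\<\im (u-\phi[\mathbf{z}]),D_{\mathbf{z}}\phi[\mathbf{z}]\,\im\mathbf{e}^k\>,\ \<\im(u-\phi[\mathbf{z}]),D_{\mathbf{z}}\phi[\mathbf{z}]\,\mathbf{e}^k\>\right)_{k=1,\dots,N},
\end{align*}
so that $\eta(u):=u-\phi[\mathbf{z}(u)]\in\mathcal{H}_c[\mathbf{z}(u)]$ is exactly the statement $F(u,\mathbf{z}(u))=0$. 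First I would check $F(0,0)=0$, which is immediate from $\phi[0]=0$. Then I would compute the partial derivative $D_{\mathbf{z}}F(0,0)$: using $\phi[0]=0$, $D_{\mathbf{z}}\phi[0]\mathbf{e}^j=\phi_j$ and $D_{\mathbf{z}}^2\phi[0]=0$ from Proposition \ref{prop:rp}, the derivative in the direction $\mathbf{w}\in\C^N$ has entries built from $-\<\im D_{\mathbf{z}}\phi[0]\mathbf{w},D_{\mathbf{z}}\phi[0](\cdot)\>$, i.e. from the quantities $\<\im\phi_j,\phi_k\>$ and $\<\phi_j,\phi_k\>$. Since the $\phi_j$ are real-valued and $L^2$-orthonormal, $\<\phi_j,\phi_k\>=\delta_{jk}$ and $\<\im\phi_j,\phi_k\>=0$, so $D_{\mathbf{z}}F(0,0)$ is (up to a harmless sign and block structure) the identity on $\R^{2N}$, hence invertible. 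Smoothness of $F$ in $\mathbf z$ follows from $\phi[\cdot]\in C^\infty(B_{\C^N}(0,\delta_s),\Sigma^s)$ and, crucially, continuity (indeed smoothness) in $u$ follows because the pairing $\<\im u,D_{\mathbf z}\phi[\mathbf z]\mathbf w\>$ is a bounded bilinear form in $(u,\cdot)$ on $\Sigma^{-1}\times\Sigma^{1}$ — here one uses that $D_{\mathbf z}\phi[\mathbf z]\mathbf w\in\Sigma^s$ for any $s$, in particular $s=1$, so it pairs with $u\in\Sigma^{-1}=H^{-1}_a$ against the exponential weight. The implicit function theorem then produces $\delta>0$ and $\mathbf{z}\in C^\infty(\mathcal{B}_{\Sigma^{-1}}(0,\delta),\C^N)$ with $\mathbf z(0)=0$ and $F(u,\mathbf z(u))=0$, which is the first assertion.

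For the norm equivalence \eqref{eq:lincor1}: the estimate $\|\mathbf z(u)\|+\|\eta(u)\|_{H^1}\lesssim\|u\|_{H^1}$ follows by noting $\|u\|_{\Sigma^{-1}}\lesssim\|u\|_{H^1}$, so $\|\mathbf z(u)\|\lesssim\|u\|_{\Sigma^{-1}}\lesssim\|u\|_{H^1}$ by the Lipschitz bound on $\mathbf z$ from the implicit function theorem (with $\mathbf z(0)=0$); then $\|\eta(u)\|_{H^1}\le\|u\|_{H^1}+\|\phi[\mathbf z(u)]\|_{H^1}\lesssim\|u\|_{H^1}+\|\mathbf z(u)\|\lesssim\|u\|_{H^1}$, using that $\phi[\mathbf z]=z_1\phi_1+\dots+z_N\phi_N+O(\|\mathbf z\|^3)$ in $\Sigma^s\supset H^1$. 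The reverse inequality $\|u\|_{H^1}\lesssim\|\mathbf z(u)\|+\|\eta(u)\|_{H^1}$ is immediate from $u=\phi[\mathbf z(u)]+\eta(u)$ and the same bound on $\|\phi[\mathbf z]\|_{H^1}$.

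The only genuinely delicate point is the functional-analytic setup around the weighted space $\Sigma^{-1}=H^{-1}_a$: one must be careful that the orthogonality pairings $\<\im v,D_{\mathbf z}\phi[\mathbf z]\widetilde{\mathbf z}\>$ extend continuously from $v\in L^2$ to $v\in\Sigma^{-1}$, which works precisely because $D_{\mathbf z}\phi[\mathbf z]\widetilde{\mathbf z}$ decays like $e^{-\sqrt{|\omega_N|}|x|}$ (being a combination of the $\phi_j$ plus a cubic remainder in $\Sigma^s$), beating the weight $e^{a\<x\>}$ with $a=\tfrac12\sqrt{|\omega_N|}$; so the bilinear form is bounded on $\Sigma^{-1}\times\Sigma^1$ and the whole implicit-function-theorem argument takes place with $u$ ranging over the weighted space $\Sigma^{-1}$ rather than $H^1$. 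Everything else — the computation of the linearization, invertibility, and the norm comparison — is routine once this is in place.
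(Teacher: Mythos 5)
Your implicit-function-theorem argument is exactly the ``standard'' proof the paper alludes to when it omits it: the map $F$, the computation of $D_{\mathbf z}F(0,0)$ via $\phi[0]=0$, $D_{\mathbf z}\phi[0]\mathbf e^j=\phi_j$, $D^2_{\mathbf z}\phi[0]=0$ and the orthonormality of the real-valued $\phi_j$, and the resulting norm equivalence are all correct. The only caveat is notational and originates in the paper: with the literal definition \eqref{eq:sigmas--}--\eqref{eq:sigmas} (growing weight $e^{a\<x\>}$) one does not have $\|u\|_{\Sigma^{-1}}\lesssim\|u\|_{H^1}$, so for the lemma to apply to $H^1$ data the domain must be read as a ball in the dual-type space $H^{-1}_{-a}$ (equivalently, one only uses that $D_{\mathbf z}\phi[\mathbf z]\widetilde{\mathbf z}\in\Sigma^1$ makes the pairing continuous on that larger space), which is precisely the point your last paragraph is getting at.
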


\begin{proof}
The proof is standard, so we omit it.
\end{proof}
In the following we write $\mathbf{z}=\mathbf{z}(u)$ and $\eta = \eta(u)$.
Notice that the bound \eqref{eq:main1} is a straightforward consequence of Mass and Energy conservation, which imply $\|u\|_{H^1} \lesssim \delta$, and of \eqref{eq:lincor1}.

Substituting $u=\phi[\mathbf{z}]+\eta$ in \eqref{nls} and using \eqref{eq:rp}, we obtain
\begin{align}\label{eq:modnls}
\im \partial_t \eta + \im D_{\mathbf{z}}\phi[\mathbf{z}]\( \dot {\mathbf{z}} + \im \boldsymbol{\varpi}(\mathbf{z})\mathbf{z}\)=H[\mathbf{z}]\eta + \sum_{\mathbf{R}_{\mathrm{min}}}\mathbf{z}^{\mathbf{m}}G_{\mathbf{m}} +\mathcal{R}_{\mathrm{rp}}[\mathbf{z}]+F[\mathbf{z},\eta]  + |\eta|^2\eta   ,
\end{align}
where $H[\mathbf{z}]:=H+ L[\mathbf{z}]$,
\begin{align} \label{linerror}&
L[\mathbf{z}]:= 2|\phi[\mathbf{z}]|^2+ \phi[\mathbf{z}]^2\mathrm{C},\text{  with the complex conjugation operator } \mathrm{C}u=\overline{u}\ \text{and}\
\\& F[\mathbf{z},\eta]:=2\phi[\mathbf{z}]|\eta|^2 + \overline{\phi[\mathbf{z}]}\eta^2 . \label{quadrat}
\end{align}
Following Gustafson, Nakanishi and Tsai \cite{GNT}, we can construct an inverse of $P_c$ on $\mathcal{H}_c[\mathbf{z}]$.

\begin{lemma}\label{lem:GNTR}
There exists $\delta>0$ s.t.\ there exists $\{a_{jA}\}_{j=1,\cdots,N,A=\mathrm{R},\mathrm{I}}\in C^\infty(B_{\C^N(0,\delta)},\Sigma^1)$ s.t.
\begin{align}\label{eq:estRa}
\|a_{jA}(\mathbf{z})\|_{\Sigma^1}\lesssim \|\mathbf{z}\|^2, \ j=1,\cdots,N,\ A=\mathrm{R},\mathrm{I}
\end{align}
and
\begin{align}\label{eq:GNTR1}
R[\mathbf{z}]:=\mathrm{Id}-\sum_{j=1}^N\(\<\cdot,a_{j\mathrm{R}}(\mathbf{z})\>\phi_j+\<\cdot,a_{j\mathrm{I}}(\mathbf{z})\>\im \phi_j\),
\end{align}
satisfies $\left.R[\mathbf{z}]P_c\right|_{\mathcal{H}_c[\mathbf{z}]}=\left.\mathrm{Id}\right|_{\mathcal{H}_c[\mathbf{z}]}$, $\left.P_cR[\mathbf{z}] \right|_{P_c L^2}=\left.\mathrm{Id} \right|_{P_c L^2}$.
\end{lemma}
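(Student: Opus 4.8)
The plan is to reproduce the Gustafson--Nakanishi--Tsai construction of a right inverse of $P_{\mathrm{c}}$, keeping track of norms in the weighted space $\Sigma^s$ and using the identity $D_{\mathbf{z}}^2\phi[0]=0$ from Proposition \ref{prop:rp} to obtain the quadratic smallness asserted in \eqref{eq:estRa}. Fix $s\ge 1$ and set $\xi_{2j-1}[\mathbf{z}]:=D_{\mathbf{z}}\phi[\mathbf{z}]\mathbf{e}^j$ and $\xi_{2j}[\mathbf{z}]:=D_{\mathbf{z}}\phi[\mathbf{z}](\im\mathbf{e}^j)$ for $j=1,\dots,N$, so that $\mathcal{H}_c[\mathbf{z}]=\{v\in L^2\mid \langle\im v,\xi_l[\mathbf{z}]\rangle=0,\ l=1,\dots,2N\}$. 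By Proposition \ref{prop:rp}, $\mathbf{z}\mapsto\xi_l[\mathbf{z}]$ lies in $C^\infty(B_{\C^N}(0,\delta_s),\Sigma^s)$, and $\xi_{2j-1}[0]=\phi_j$, $\xi_{2j}[0]=\im\phi_j$ (the latter because $D_{\mathbf{z}}\phi[0]$ is complex linear, which follows from $\phi[e^{\im\theta}\mathbf{z}]=e^{\im\theta}\phi[\mathbf{z}]$ and $D_{\mathbf{z}}\phi[0]\mathbf{e}^j=\phi_j$). The structural point is that $D_{\mathbf{z}}\xi_l[0]=D_{\mathbf{z}}^2\phi[0](\cdot,\mathbf{e}^j)=0$, so Taylor expansion in $\Sigma^s$ gives $\|\xi_l[\mathbf{z}]-\xi_l[0]\|_{\Sigma^s}\lesssim\|\mathbf{z}\|^2$.

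Next, for $w\in P_{\mathrm{c}}L^2$ I would seek the unique $v=w+\sum_{k=1}^N(\alpha_k\phi_k+\beta_k\im\phi_k)$ with $(\alpha,\beta)\in\R^{2N}$ and $v\in\mathcal{H}_c[\mathbf{z}]$. Writing out the $2N$ equations $\langle\im v,\xi_l[\mathbf{z}]\rangle=0$ and using $\im(\im\phi_k)=-\phi_k$ turns this into a linear system $N[\mathbf{z}](\alpha,\beta)=-(\langle\im w,\xi_l[\mathbf{z}]\rangle)_{l}$, where $N[\mathbf{z}]$ is the $2N\times2N$ matrix with entries $\langle\im\phi_k,\xi_l[\mathbf{z}]\rangle$ and $-\langle\phi_k,\xi_l[\mathbf{z}]\rangle$. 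Since the $\phi_k$ are real and $L^2$-orthonormal, $N[0]$ is block diagonal with $2\times2$ blocks $\left(\begin{smallmatrix}0&-1\\1&0\end{smallmatrix}\right)$, hence invertible, so $N[\mathbf{z}]$ is invertible with $N[\mathbf{z}]^{-1}$ smooth for $\|\mathbf{z}\|$ small. The key observation for \eqref{eq:estRa} is that $\langle\im w,\xi_l[0]\rangle=0$ for all $w\in P_{\mathrm{c}}L^2$ (because $\xi_l[0]\in\{\phi_j,\im\phi_j\}$ and $(w,\phi_j)=0$), so the right-hand side of the system equals $-(\langle\im w,\xi_l[\mathbf{z}]-\xi_l[0]\rangle)_l$ and thus $\|(\alpha,\beta)\|\lesssim\|\mathbf{z}\|^2\|w\|_{L^2}$. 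Using the elementary identities $\langle\im P_{\mathrm{c}}w,g\rangle=\langle\im w,P_{\mathrm{c}}g\rangle$, $\langle\im w,g\rangle=-\langle w,\im g\rangle$ and $P_{\mathrm{c}}\im=\im P_{\mathrm{c}}$, one rewrites $\alpha_k=-\langle w,a_{k\mathrm{R}}(\mathbf{z})\rangle$ and $\beta_k=-\langle w,a_{k\mathrm{I}}(\mathbf{z})\rangle$, where each $a_{kA}(\mathbf{z})$ is an explicit linear combination (with coefficients the entries of $N[\mathbf{z}]^{-1}$) of the vectors $P_{\mathrm{c}}(\xi_l[\mathbf{z}]-\xi_l[0])$. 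Since $P_{\mathrm{c}}$ is bounded on $\Sigma^1$ and $\Sigma^s\hookrightarrow\Sigma^1$, this gives $a_{kA}\in C^\infty(B_{\C^N}(0,\delta),\Sigma^1)$ and the bound \eqref{eq:estRa}.

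Finally, with these $a_{jA}$ one defines $R[\mathbf{z}]$ by \eqref{eq:GNTR1}. The identity $P_{\mathrm{c}}R[\mathbf{z}]|_{P_{\mathrm{c}}L^2}=\mathrm{Id}$ is immediate since $P_{\mathrm{c}}\phi_j=0$. For $R[\mathbf{z}]P_{\mathrm{c}}|_{\mathcal{H}_c[\mathbf{z}]}=\mathrm{Id}$, take $v\in\mathcal{H}_c[\mathbf{z}]$, put $w=P_{\mathrm{c}}v$ and write $(v,\phi_k)=p_k+\im q_k$, so $P_{\mathrm{d}}v=\sum_k(p_k\phi_k+q_k\im\phi_k)$; then $R[\mathbf{z}]P_{\mathrm{c}}v=w+\sum_k(\alpha_k\phi_k+\beta_k\im\phi_k)$ with $(\alpha,\beta)$ solving the system above, and a direct computation using $\sum_k(p_k\im\phi_k-q_k\phi_k)=\im P_{\mathrm{d}}v$ together with $v\in\mathcal{H}_c[\mathbf{z}]$ shows that $(p,q)$ solves the same system; invertibility of $N[\mathbf{z}]$ forces $(\alpha,\beta)=(p,q)$, hence $R[\mathbf{z}]P_{\mathrm{c}}v=P_{\mathrm{c}}v+P_{\mathrm{d}}v=v$.

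Apart from this, everything is elementary finite-dimensional linear algebra depending smoothly on $\mathbf{z}$; the step that needs genuine care is getting the quadratic rate $\|\mathbf{z}\|^2$ in \eqref{eq:estRa} rather than the a priori linear bound, and this rests exactly on the two facts that $\langle\im w,\xi_l[0]\rangle$ vanishes on $P_{\mathrm{c}}L^2$ and that $D_{\mathbf{z}}^2\phi[0]=0$, carried out in the weighted norm $\Sigma^s$ (which is legitimate because $\phi[\cdot]$ is smooth into $\Sigma^s$ and $P_{\mathrm{c}}$ preserves $\Sigma^1$).
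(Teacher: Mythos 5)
Your argument is correct and is precisely the Gustafson--Nakanishi--Tsai construction that the paper invokes without writing out (it only cites \cite{CM15APDE} and \cite{GNT}): the quadratic gain in \eqref{eq:estRa} coming from $D_{\mathbf{z}}^2\phi[0]=0$ together with the vanishing of $\<\im w,\xi_l[0]\>$ for $w\in P_{\mathrm{c}}L^2$, and the uniqueness argument via invertibility of $N[\mathbf{z}]$ for $\left.R[\mathbf{z}]P_{\mathrm{c}}\right|_{\mathcal{H}_c[\mathbf{z}]}=\mathrm{Id}$, are exactly the intended steps. You have also implicitly corrected the typo in \eqref{eq:GNTR1}, where the second occurrence of $a_{j\mathrm{R}}$ should read $a_{j\mathrm{I}}$.
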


\begin{proof} The proof, which we skip,  is an analogue of that in   \cite{CM15APDE}, which in turn generalizes the one in \cite{GNT}.
\end{proof}

We set $\widetilde{\eta}:=P_c \eta$.
Then, by Lemma \ref{lem:GNTR} we have $\eta=R[\mathbf{z}]\widetilde{\eta}$ and furthermore, from the estimate \eqref{eq:estRa}, we have
\begin{align}\label{eq:etaequiv}
\|\eta\|_{H^s_a}\sim \|\widetilde{\eta}\|_{H^s_a},
\end{align}
for $s=0,1$ and $|a|\leq a_1$.
Applying $P_c$ to \eqref{eq:modnls}, we obtain
\begin{align}\label{eq:tildeeta}
\im \partial_t \widetilde{\eta}=&H\widetilde{\eta}+\sum_{\mathbf{m}\in \mathbf{R}_{\mathrm{min}}}\mathbf{z}^{\mathbf{m}}P_c G_{\mathbf{m}}+\mathcal{R}_{\widetilde{\eta}},
\end{align}
where
\begin{align}\label{eq:Reta}
\mathcal{R}_{\widetilde{\eta}}=P_c\(-\im D_{\mathbf{z}}\phi[\mathbf{z}]\(\dot {\mathbf{z}}+\im \boldsymbol{\varpi}(\mathbf{z})\mathbf{z}\) +\mathcal{R}_{\mathrm{rp}}[\mathbf{z}]+F[\mathbf{z},\eta]+L[\mathbf{z}] \eta  +|\eta | ^2 \eta \).
\end{align}
The rest of this section is framed like in \cite{KMM3}.

\noindent We will consider constants  $A, B,\varepsilon >0$ satisfying
 \begin{align}\label{eq:relABg}
\log(\delta ^{-1})\gg\log(\epsilon ^{-1})\gg  A\gg    B^2\gg B \gg  \exp \( \varepsilon ^{-1} \) \gg 1.
 \end{align}
We will denote by    $o_{\varepsilon}(1)$  constants depending on $\varepsilon$ such that
 \begin{align}\label{eq:smallo}
 \text{ $o_{\varepsilon}(1) \xrightarrow {\varepsilon  \to 0^+   }0.$}
 \end{align}
 For the two virial inequalities,  we will use  the approximations    in    \cite{KMM3} of   $2^{-1}\< \im \( 1/2+x\partial _x\) u(t), u(t)\>$, which is the quantized analogue of the form $2 ^{-1}x\cdot \xi$ for a finite dimensional  hamiltonian system $\dot x= \nabla _{\xi} E$ and $\dot \xi= -\nabla _{x} E$ with energy $2^{-1}|\xi |^2+V(x)$ (recall that $\frac{d}{dt}(2 ^{-1}x\cdot \xi) =|\xi |^2-x\cdot \nabla V(x)  $ where, if $V$ is repulsive as in Definition  \ref{def:repulPot},
  then $x\cdot \xi$ is strictly increasing for all $t$: this simple classical argument explains the heuristics around the notion of Virial Inequalities).

\noindent The first virial inequality, Sect. \ref{sec:1virial},   involves  a truncation of   the function $x$  outside an interval of size $\sim A ^{-1}$  around 0. The fact that the initial  potential $V=V_1$  is obviously not repulsive, makes necessary another virial inequality.  This involves applying  the operator $\mathcal{A}^*$ to \eqref{eq:tildeeta}, see \eqref{def:calA},  in order to exploit the conjugation \eqref{eq:DarConj2}, which transforms $H_1$ into the repulsive operator $H_{N+1}$. However, to offset the loss of regularity due to $\mathcal{A}^*$, which is a differential operator of order $N$, it is necessary to use a regularization and consider
\begin{align}\label{def:Tg}
\mathcal{T} :=\< \im \varepsilon\partial_x\>^{-N} \mathcal{A}^* ,
\end{align}
  for $\varepsilon>0$.  However,  this does not work either, because symmetries  of the nonlinear  term $P_c(|\eta | ^2 \eta )$, used to get estimates like \eqref{eq:1stvcub} below, do not hold any more.  This is why the argument considers $\chi\in C_c^\infty(\R,\R)$ such that
\begin{align}\label{def:chi}
x \chi'(x)\leq 0\ \text{ and }1_{|x|\leq 1}\leq \chi \leq 1_{|x|\leq 2},
\end{align}
and  $\chi_C:=\chi(\cdot/C)$ for $C>0$. Then   multiplying  equation \eqref{eq:tildeeta}  by $\chi _{B^2}$, obtaining
\begin{align}\label{eq:tildeetacutoff}
\im \partial_t     (\chi _{B^2}\widetilde{\eta})=&H(\chi _{B^2}\widetilde{\eta}) + \(2\chi_{B^2}'\partial_x + \chi_{B^2}''\)\tilde{\eta} +\sum_{\mathbf{m}\in \mathbf{R}_{\mathrm{min}}}\mathbf{z}^{\mathbf{m}}\chi _{B^2}P_c G_{\mathbf{m}}+\chi _{B^2}\mathcal{R}_{\widetilde{\eta}},
\end{align}
 setting
\begin{align}\label{def:vBg}
v  :=\mathcal{T}  \chi_{B^2}\widetilde{\eta},
\end{align}
 and applying $\mathcal{T}  $   to  \eqref{eq:tildeetacutoff},  we obtain
\begin{align}\label{eq:vBg}
\im \partial_t v  =H_{N+1}v   + \sum_{\mathbf{m}\in \mathbf{R}_{\mathrm{min}}}\mathbf{z}^{\mathbf{m}}\widetilde{G} _{\mathbf{m}}+\mathcal{R}_{v  },
\end{align}
where
\begin{align}\label{def:Gbgm}
 {\widetilde{G}}_{ \mathbf{m}}&:=\mathcal{T} \chi_{B^2} P_c G_{\mathbf{m}},\\
\mathcal{R}_{v   }&:=\mathcal{T}  \chi_{B^2}\mathcal{R}_{\widetilde{\eta}} +\< \im \varepsilon\partial_x\>^{-N}[V_{N+1},\< \im \varepsilon \partial_x\>^N]v  +\mathcal{T}  \(2\chi_{B^2}'\partial_x + \chi_{B^2}''\)\tilde{\eta}.\label{def:RvBg}
\end{align}
Here it is possible to apply the second virial inequality, which involves a truncation of $x$ in an interval centered in 0 of size $\sim B$. The technical fact that $A\gg B^2$ is required to work out the argument, see also  \cite{KMM3}.

 \section{The continuation argument}

The proof of \eqref{eq:main2} in Theorem \ref{thm:main} is by means of a continuation argument. In particular, we will show the following.
\begin{proposition}\label{prop:continuation}
There exists  a    $\delta _0= \delta _0(\epsilon )   $ s.t.\  if    \eqref{eq:main2}
holds  for $I=[0,T]$ for some $T>0$ and for $\delta  \in (0, \delta _0)$
then in fact for $I=[0,T]$    inequality   \eqref{eq:main2} holds   for   $\epsilon$ replaced by $\epsilon/2$.
\end{proposition}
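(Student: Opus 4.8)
The strategy is the standard bootstrap/continuation scheme of Kowalczyk--Martel--Mu\~noz, adapted to the multi-mode setting via the Refined Profile. Assume \eqref{eq:main2} holds on $I=[0,T]$ with the constant $\epsilon$; we must recover the same three quantities bounded by $\epsilon/2$. The proof rests on two virial inequalities plus a Fermi-Golden-Rule (FGR) estimate. First I would establish the \emph{first virial inequality}, controlling the discrete-mode part: multiplying \eqref{eq:modnls} (or rather the modulation equations obtained by pairing \eqref{eq:modnls} against $D_{\mathbf z}\phi[\mathbf z]\widetilde{\mathbf z}$) by suitable weights, one derives
\begin{align}\label{eq:firstvirial-plan}
\|\dot{\mathbf z}+\im\boldsymbol{\varpi}(\mathbf z)\mathbf z\|_{L^2(I)}^2 + \sum_{\mathbf m\in\mathbf R_{\mathrm{min}}}\|\mathbf z^{\mathbf m}\|_{L^2(I)}^2 \lesssim \|\eta\|_{L^2(I,H^1_{-a})}^2 + (\text{higher order}),
\end{align}
using \eqref{est:Rrp}, \eqref{eq:estRa} and the structure of $\mathcal R_{\widetilde\eta}$ in \eqref{eq:Reta} to absorb the error terms, so the discrete-mode norms are slaved to the dispersive norm up to harmless cubic-and-higher corrections. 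The FGR part — showing that $\sum_{\mathbf m\in\mathbf R_{\mathrm{min}}}\|\mathbf z^{\mathbf m}\|_{L^2(I)}$ is actually controlled with a \emph{small} constant — comes from Assumption \ref{ass:FGR}: the nonresonant modes couple to $\widetilde\eta$ through the terms $\mathbf z^{\mathbf m}P_cG_{\mathbf m}$ in \eqref{eq:tildeeta}, and extracting the imaginary part of the appropriate quadratic form yields a coercive contribution proportional to $|\widehat{G_{\mathbf m}}(\pm\sqrt{\boldsymbol\omega\cdot\mathbf m})|^2>0$.

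Second, I would prove the \emph{second (transformed) virial inequality} for the dispersive component. This is where Assumption \ref{ass:VN} and the Darboux conjugation \eqref{eq:DarConj2} enter: after conjugating by $\mathcal T=\langle\im\varepsilon\partial_x\rangle^{-N}\mathcal A^*$ and cutting off by $\chi_{B^2}$, equation \eqref{eq:vBg} has the repulsive potential $V_{N+1}$ in its linear part, for which the KMM virial functional (built from a weight like $\chi_A\, x\,\partial_x$ or the smoothed variant) is coercive. One shows
\begin{align}\label{eq:secondvirial-plan}
\|\eta\|_{L^2(I,H^1_{-a})}^2 \lesssim o_\varepsilon(1)\Big(\text{output of first virial}\Big) + (\text{boundary terms}) + (\text{higher order}),
\end{align}
i.e.\ the dispersive norm is in turn controlled by the discrete-mode norms (which by the first inequality are small) plus a term that is $o_\varepsilon(1)$ times itself and can be absorbed, together with boundary contributions at $t=0,T$ bounded by $\delta^2$. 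The scale hierarchy \eqref{eq:relABg} is precisely what makes all cross-terms (e.g.\ the commutator $\langle\im\varepsilon\partial_x\rangle^{-N}[V_{N+1},\langle\im\varepsilon\partial_x\rangle^N]v$ and the cutoff-derivative terms $\mathcal T(2\chi_{B^2}'\partial_x+\chi_{B^2}'')\widetilde\eta$ in \eqref{def:RvBg}) negligible: $\chi_{B^2}'$ is supported where $|x|\sim B^2$, so it is exponentially small against the weight $e^{a\langle x\rangle}$ built into $\phi_j$ and $G_{\mathbf m}$.

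Third, I would combine the two inequalities. Feeding \eqref{eq:firstvirial-plan} into \eqref{eq:secondvirial-plan} and vice versa, choosing first $A$ large, then $B$ large, then $\varepsilon$ small (in that order, per \eqref{eq:relABg}), and finally $\delta_0$ small depending on $\epsilon$, the closed loop yields
\begin{align}\label{eq:closed-plan}
\|\dot{\mathbf z}+\im\boldsymbol{\varpi}(\mathbf z)\mathbf z\|_{L^2(I)} + \sum_{\mathbf m\in\mathbf R_{\mathrm{min}}}\|\mathbf z^{\mathbf m}\|_{L^2(I)} + \|\eta\|_{L^2(I,H^1_{-a})} \lesssim \delta + o_\varepsilon(1)\cdot\epsilon \le \epsilon/2,
\end{align}
where the $\delta$ comes from the boundary terms and the $o_\varepsilon(1)\epsilon$ from the self-absorbed pieces, both made $<\epsilon/4$ by the smallness choices. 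Crucially the estimate is time-translation invariant, so no decay rate is claimed — only the square-integrability.

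\textbf{Main obstacle.} The hard part is the second virial inequality in the multi-mode case: one must show that the KMM coercivity for the repulsive $V_{N+1}$ survives the Darboux push-forward through $N$ transformations and the regularization $\langle\im\varepsilon\partial_x\rangle^{-N}$, controlling the commutator errors in \eqref{def:RvBg} and verifying that the transformed source terms $\widetilde G_{\mathbf m}=\mathcal T\chi_{B^2}P_cG_{\mathbf m}$ and the quadratic/cubic remainders are genuinely lower order on the relevant time/space scales. Equivalently: decoupling the (many) discrete channels from the continuous channel uniformly in $N$, so that the virial weights designed for one repulsive potential still close the estimate after conjugation. The FGR bookkeeping for $\mathbf R_{\mathrm{min}}$ — ensuring every resonant multi-index genuinely dissipates and that the sets $\mathbf I$, $\mathbf{NR}_1$ correctly isolate the negligible indices — is the second, more combinatorial, source of difficulty.
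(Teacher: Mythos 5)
Your overall architecture (two virial inequalities, an FGR mechanism, and a bootstrap closed by the hierarchy $A\gg B^2\gg B\gg e^{\varepsilon^{-1}}$ with $\delta$ chosen last) matches the paper's, but there is a genuine missing idea in how you propose to obtain the FGR estimate, and it is the one ingredient that cannot be supplied by virial methods. You assert that the smallness of $\sum_{\mathbf m\in\mathbf R_{\mathrm{min}}}\|\mathbf z^{\mathbf m}\|_{L^2(I)}$ follows from "extracting the imaginary part of the appropriate quadratic form" coupling $\mathbf z^{\mathbf m}P_cG_{\mathbf m}$ to $\widetilde\eta$. This does not close: the pairing $\<\eta,\im\,\mathbf z^{\mathbf m}G_{\mathbf m}\>$ produces a sign-definite term only after the transformed radiation $v$ is split as $v=g-Z(\mathbf z)$ with $Z(\mathbf z)=-\sum\mathbf z^{\mathbf m}\rho_{\mathbf m}$, $\rho_{\mathbf m}=-R^{+}_{H_{N+1}}(\boldsymbol\omega\cdot\mathbf m)\widetilde G_{\mathbf m}$; the coercive constant $\Gamma_0$ then comes from the Sokhotski--Plemelj evaluation $\<P_cG_{\mathbf m},\delta(H-\boldsymbol\omega\cdot\mathbf m)G_{\mathbf m}\>$ (Lemma \ref{prop:Fermi}), and the leftover piece involving $g$ must be bounded by Kato smoothing and the limiting absorption principle (Section \ref{sec:smoothing}), not by either virial functional. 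The paper stresses that $g$ cannot be controlled by virial-type inequalities, so without this decomposition your loop has an uncontrolled $O(1)\cdot\sum\|\mathbf z^{\mathbf m}\|_{L^2(I)}$ source in the second virial inequality (compare \eqref{eq:2ndv}, where that term carries no small constant) and the bootstrap does not improve $\epsilon$ to $\epsilon/2$.

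Two further structural points. First, your "first virial inequality" is not what the paper proves: the estimate $\|\dot{\mathbf z}+\im\boldsymbol\varpi(\mathbf z)\mathbf z\|_{L^2(I)}\lesssim\sum\|\mathbf z^{\mathbf m}\|_{L^2(I)}+\delta^2\epsilon$ comes from the modulation equations (Proposition \ref{lem:estdtz}), while the actual first virial (Proposition \ref{prop:1stvirial}) bounds $\|w'\|_{L^2(I,L^2)}$ for $w=\zeta_A\widetilde\eta$ in terms of $\|w\|_{L^2(I,L^2_{-a/10})}$ and the discrete modes; the discrete modes are never "slaved to the dispersive norm" in the direction you write. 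Second, to feed the second virial back into the first you need the coercivity statement $\|w\|_{L^2_{-a/10}}\lesssim\|\xi\|_{\widetilde\Sigma}+e^{-B/20}\|w'\|_{L^2}$ (Proposition \ref{lem:coer}), which requires partially inverting the Darboux chain via $\mathcal A\mathcal A^*=\prod_j(H-\omega_j)$ and uniform-in-$\varepsilon$ bounds on $\prod_jR_H(\omega_j)P_c\mathcal A\<\im\varepsilon\partial_x\>^N$ in exponentially weighted spaces; this inversion step is absent from your plan and is not a routine consequence of the scale hierarchy.
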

By completely routine arguments, which we skip,
 it is possible to show that {Proposition} \ref{prop:continuation}  implies   \eqref{eq:main2} with $I=[0,\infty)$.
 We reformulate  the continuation argument. Let $a>0$ be the one given in Theorem \ref{thm:main}.
 Without loss of generality we can assume $a\leq 2^{-1}\min(a_0,a_1)$ where $a_0$ is given in \eqref{ass:V} and $a_1$ is given in \eqref{eq:sigmas}.
  We introduce the following norm,
 \begin{align}\label{eq:norm_rho}
 \|f\|_{\widetilde{\Sigma} }^2=  \<\(-\partial_x^2+ \sech ^2\( \frac{a  x}{10}   \)   \)  f,f\> \sim \|f\|_{\dot{H}^1}^2+\|f\|_{L^2_{-\frac{a}{10}}}^2.
 \end{align}
 For $C=A,B$, we set
 \begin{align}\label{eq:zeta}
  &    \zeta _C (x):=\exp   \(  - \frac{|x|}{C}   \(1-\chi (x)\)   \) .
 \end{align}
% and
% \begin{align}
%  \varphi _C(x) =\int _0^x  \zeta _C ^2(t)dt,\ \psi_C(x):=\chi_{C^2}(x)^2\varphi_{C^2}(x)
% \end{align}
We consider the main variables in \cite{KMM3}, given by
 \begin{align}\label{def:wAxiB}&
 w :=\zeta_A \widetilde{\eta}  \text{  and  }
 \xi :=\chi_{B^2}\zeta_B v   \text{ . }
 \end{align}
 We will prove the following  continuation argument.
 \begin{proposition}\label{prop:contreform} For any small $\epsilon>0 $
there exists  a    $\delta _0 = \delta _0(\epsilon )   $ s.t.\  if    in  $I=[0,T]$ we have
\begin{align}&
\|\dot {\mathbf{z}}+\im \boldsymbol{\varpi}(\mathbf{z})\mathbf{z}\|_{L^2(I)}+\sum_{\mathbf{m}\in \mathbf{R}_{\mathrm{min}}}\|\mathbf{z}^\mathbf{m}\|_{L^2(I)}+ \| \xi  \|_{L^2(I, \widetilde{\Sigma})}+\|  w \|_{L^2(I, \widetilde{\Sigma})}\le   \epsilon   \label{eq:main11}
\end{align}
then  for $\delta  \in (0, \delta _0)$
     inequality   \eqref{eq:main11} holds   for   $\epsilon$ replaced by $ o_{\delta}(1)   \epsilon $    where $o_{\delta}(1) \xrightarrow {\delta  \to 0^+   }0 $.
\end{proposition}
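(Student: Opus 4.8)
The plan is to establish Proposition \ref{prop:contreform} by deriving, under the bootstrap assumption \eqref{eq:main11}, four separate estimates---one for each of the four terms on the left-hand side---each of which is bounded by $o_\delta(1)\epsilon$, and then summing. The key point is that smallness is gained from two independent sources: the smallness $\delta$ of the data (which enters all the nonlinear remainder terms, since everything is at least quadratic in $(\mathbf z,\eta)$ via \eqref{est:Rrp}, \eqref{eq:estRa}, \eqref{quadrat}, and the cubic term), and the largeness of the parameters $A,B,\varepsilon^{-1}$ governed by the hierarchy \eqref{eq:relABg}. The architecture is exactly that of Kowalczyk--Martel--Mu\~noz \cite{KMM3}: (i) two virial/Morawetz inequalities, one for the exterior variable $w=\zeta_A\widetilde\eta$ controlled on the scale $A$, and one for the interior, Darboux-transformed variable $\xi=\chi_{B^2}\zeta_B v$ controlled on the scale $B$, where the repulsivity Assumption \ref{ass:VN} on $V_{N+1}$ makes the main virial commutator $[H_{N+1},\text{(weight)}]$ coercive; (ii) a Fermi Golden Rule computation using Assumption \ref{ass:FGR} that converts the discrete-mode driving terms $\sum_{\mathbf m\in\mathbf R_{\mathrm{min}}}\mathbf z^{\mathbf m}\widetilde G_{\mathbf m}$ in \eqref{eq:vBg} into the dissipative bound $\sum_{\mathbf m\in\mathbf R_{\mathrm{min}}}\|\mathbf z^{\mathbf m}\|_{L^2(I)}^2\lesssim o_\delta(1)\epsilon^2 + (\text{small multiples of the other three norms})$; and (iii) a modulation estimate bounding $\|\dot{\mathbf z}+\im\boldsymbol\varpi(\mathbf z)\mathbf z\|_{L^2(I)}$ in terms of $\|w\|_{L^2(I,\widetilde\Sigma)}$ and $\sum\|\mathbf z^{\mathbf m}\|_{L^2(I)}$, obtained by differentiating the orthogonality condition defining $\mathcal H_c[\mathbf z]$.

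Concretely, the steps I would carry out, in order, are: First, the modulation equation. Taking the inner product of \eqref{eq:modnls} with $D_{\mathbf z}\phi[\mathbf z]\widetilde{\mathbf z}$ and using that $\eta\in\mathcal H_c[\mathbf z]$ kills the leading term, one extracts $\dot{\mathbf z}+\im\boldsymbol\varpi(\mathbf z)\mathbf z$ up to errors that are either quadratic in $\eta$ localized near the origin (hence controlled by $\|w\|_{\widetilde\Sigma}$ since $\zeta_A\sim 1$ there), or of the form $\mathbf z^{\mathbf m}$ with $\mathbf m\in\mathbf R_{\mathrm{min}}$, plus genuinely higher-order terms absorbed by $\delta$. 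This gives $\|\dot{\mathbf z}+\im\boldsymbol\varpi(\mathbf z)\mathbf z\|_{L^2(I)}\lesssim \|w\|_{L^2(I,\widetilde\Sigma)}+\sum_{\mathbf R_{\mathrm{min}}}\|\mathbf z^{\mathbf m}\|_{L^2(I)}+\delta\epsilon$. Second, the exterior virial estimate for $w$: run a Morawetz multiplier against \eqref{eq:tildeeta} with a weight of scale $A$; the linear part produces $\|w\|_{L^2(I,\widetilde\Sigma)}^2$ with a good sign up to the bad (positive) continuous-spectrum contribution of $H$ near zero energy, which must be absorbed; the source $\sum \mathbf z^{\mathbf m}P_cG_{\mathbf m}$ and the remainder $\mathcal R_{\widetilde\eta}$ contribute terms estimated by Cauchy--Schwarz and the bootstrap hypothesis. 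Third, the interior virial estimate for $\xi=\chi_{B^2}\zeta_B v$: this is where Proposition \ref{prop:Darboux}/\eqref{eq:DarConj2} and Assumption \ref{ass:VN} are used---the transformed equation \eqref{eq:vBg} has $H_{N+1}$ with a repulsive potential, so a virial multiplier of scale $B$ yields coercivity $\gtrsim\|\xi\|_{L^2(I,\widetilde\Sigma)}^2$; the commutator remainders in $\mathcal R_v$ from \eqref{def:RvBg} (the smoothing commutator $[V_{N+1},\langle\im\varepsilon\partial_x\rangle^N]$, which is $O(\varepsilon)$, and the cutoff-derivative term $(2\chi'_{B^2}\partial_x+\chi''_{B^2})\widetilde\eta$ supported in $B^2\le|x|\le 2B^2$) are controlled using the hierarchy \eqref{eq:relABg}. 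Fourth, the FGR estimate as in step (ii) above. Finally, combine: the four inequalities form a linear system in the four unknown $L^2(I)$-norms whose off-diagonal coupling constants are all $o_\delta(1)$ or controlled by the large parameters, so one closes to get each $\le o_\delta(1)\epsilon$, hence the sum $\le o_\delta(1)\epsilon$, which is $\le\epsilon/2$ for $\delta$ small.

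The main obstacle, and the genuinely new point compared to \cite{KMM3}, is handling the interaction between the $N\ge 2$ discrete modes both in the FGR computation and in the coupling terms of the virial inequalities. With several eigenvalues there are many resonant combinations $\mathbf m\in\mathbf R_{\mathrm{min}}$, and the driving terms $\mathbf z^{\mathbf m}G_{\mathbf m}$ are built from the iterated construction \eqref{def:Gm0}--\eqref{def:tilde}; one must verify that the distorted-Fourier-transform evaluation $\widehat{G_{\mathbf m}}(\pm\sqrt{\boldsymbol\omega\cdot\mathbf m})$ in Assumption \ref{ass:FGR} indeed produces a strictly dissipative sign for \emph{each} resonant mode simultaneously, that the negligible indices in $\mathbf I$ truly are absorbable by the stated bound $|\mathbf z^{\mathbf n}|\le\|\mathbf z\|\sum_{\mathbf R_{\mathrm{min}}}|\mathbf z^{\mathbf m}|$, and that the quadratic-in-$\eta$ coupling terms $F[\mathbf z,\eta]$ and $L[\mathbf z]\eta$ in \eqref{eq:Reta} do not spoil the virial positivity---these must be shown small either by $\delta$ or by localization (since $\phi[\mathbf z]$ is exponentially localized, $L[\mathbf z]$ is effectively supported near the origin and its contribution to the $A$- and $B$-scale virials is a lower-order perturbation). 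A secondary technical difficulty is the low-energy behavior of the continuous spectrum of $H$ (resp.\ $H_{N+1}$): in 1D the resolvent is singular at zero energy, so the virial coercivity must be extracted carefully, and the long-range cubic nonlinearity means the remainder $|\eta|^2\eta$ in \eqref{eq:Reta} is only borderline integrable---this is precisely why the weighted space $\widetilde\Sigma$ with its $L^2_{-a/10}$ component, rather than a pure $\dot H^1$ or $L^2$ norm, is the right object, and why the two-scale ($A$ exterior, $B$ interior) structure is needed to close.
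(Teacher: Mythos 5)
Your overall architecture (modulation estimate, exterior virial for $w$, interior virial for the Darboux--transformed variable $\xi$, Fermi Golden Rule, then close) matches the paper's, but there are two genuine gaps that would prevent the argument from closing as you describe it.

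First, your claim that the exterior virial ``produces $\|w\|_{L^2(I,\widetilde\Sigma)}^2$ with a good sign'' and that the four inequalities then form ``a linear system \ldots whose off-diagonal coupling constants are all $o_\delta(1)$'' is not correct. The first virial estimate (Proposition \ref{prop:1stvirial}) only produces $\|w'\|_{L^2}^2$ with a good sign; since $V$ itself is \emph{not} repulsive, the potential term $\langle \varphi_A(2\zeta_A^2)^{-1}V'w,w\rangle$ has no sign and is dumped onto the right-hand side as $\|w\|_{L^2(I,L^2_{-a/10})}^2$ with an $O(1)$ constant. This coupling is not small, so the system cannot be closed by a smallness-of-off-diagonal argument. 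What is missing is the coercivity step of Section \ref{sec:coerc}: one must \emph{invert} the conjugation $\mathcal{T}$ via the identity $P_c(\chi_{B^2}\widetilde\eta)=\prod_{j}R_H(\omega_j)P_c\mathcal{A}\langle\im\varepsilon\partial_x\rangle^N v$ (Lemma \ref{lem:coer6}) together with the uniform weighted bound of Lemma \ref{lem:coer7}, to obtain $\|w\|_{L^2_{-a/10}}\lesssim\|\xi\|_{\widetilde\Sigma}+e^{-B/20}\|w'\|_{L^2}$ (Proposition \ref{lem:coer}). Only after this substitution does the $\|w'\|$-contribution become absorbable (coefficient $e^{-B/20}$) and the remaining $\|\xi\|_{\widetilde\Sigma}$ get fed into the second virial, whose couplings back to $w$ genuinely are $o_\varepsilon(1)$; the closing is then triangular (FGR $\Rightarrow$ $\dot{\mathbf z}$ $\Rightarrow$ $w'$ in terms of $\xi$ $\Rightarrow$ $\xi$ $\Rightarrow$ $w$), not a small-perturbation linear system.

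Second, your FGR step is underspecified in a way that hides a real difficulty. To extract the dissipative coefficient $-\Gamma_0\sum|\mathbf z^{\mathbf m}|^2$ one cannot work with $\eta$ or $v$ directly: one must decompose $v=g-Z(\mathbf z)$ with $Z(\mathbf z)=-\sum_{\mathbf m}\mathbf z^{\mathbf m}\rho_{\mathbf m}$, $\rho_{\mathbf m}=-R^+_{H_{N+1}}(\boldsymbol\omega\cdot\mathbf m)\widetilde G_{\mathbf m}$, so that the resonant part produces the sign-definite term via the Sokhotski--Plemelj formula and Assumption \ref{ass:FGR}, while the remainder $g$ must be shown small in $L^2(I,L^{2,-S})$. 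The paper is explicit that $g$ \emph{cannot} be bounded by virial-type inequalities; it requires the Kato smoothing machinery of Section \ref{sec:smoothing} (Lemmas \ref{lem:smooth}, \ref{lem:lemg9}, \ref{lem:smoothest} and Proposition \ref{prop:estg1}), including the limiting absorption principle for $H_{N+1}$ and careful weight bookkeeping for the cutoff-derivative term \eqref{eq:decomp1}. Without this auxiliary variable and its smoothing estimate, the term $\sum\mathbf m\cdot\boldsymbol\omega\langle\eta,\im\mathbf z^{\mathbf m}G_{\mathbf m}\rangle$ in the energy identity cannot be converted into the coercive bound of Lemma \ref{prop:Fermi}, and the FGR step does not go through.
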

Notice that Proposition \ref{prop:contreform} implies Proposition \ref{prop:continuation}.
In the following, we always assume the assumptions of the claim of Proposition \ref{prop:contreform}.

 In complete analogy to \cite{KMM3}, we consider two virial estimates, one for $w$ and the other for $\xi$.  The first is based directly on the equation for $\widetilde{\eta}$,  \eqref{eq:tildeeta}.

 \begin{proposition}[Virial estimate for $\widetilde{\eta}$]\label{prop:1stvirial}
 We have
 \begin{align}\label{eq:1stv}
 \|  w '\|_{L^2(I, L^2)}\lesssim A^{1/2}\delta  +\|w \|_{L^2(I,L^2_{-\frac{a}{10}})}+\sum_{\mathbf{m}\in \mathbf{R}_{\min}}\|\mathbf{z}^{\mathbf{m}}\|_{L^2(I)}+ \epsilon ^2.
 \end{align}
 \end{proposition}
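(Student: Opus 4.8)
The plan is to run the standard Morawetz--virial computation of Kowalczyk--Martel--Munoz on the equation \eqref{eq:tildeeta} for $\widetilde\eta$, localized with the weight $\zeta_A$, and to read off the control of $\|w'\|_{L^2(I,L^2)}$ from the leading (kinetic) term. Concretely, I would introduce a virial functional of the form $\mathcal I(t):=\<\im\,\varphi_A\,\widetilde\eta,\widetilde\eta'\>+\tfrac12\<\im\,\varphi_A'\widetilde\eta,\widetilde\eta\>$ with $\varphi_A$ a bounded antisymmetric weight adapted to the scale $A$ (an antiderivative of $\zeta_A^2$, or of $\operatorname{sech}^2$-type on scale $A$, matching the weight defining $w$), differentiate in $t$, and substitute $\im\partial_t\widetilde\eta=H\widetilde\eta+\sum_{\mathbf m\in\mathbf R_{\min}}\mathbf z^{\mathbf m}P_cG_{\mathbf m}+\mathcal R_{\widetilde\eta}$. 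First I would isolate the contribution of the free/linear part $H=-\partial_x^2+V$: integrating by parts, the $-\partial_x^2$ part produces the positive term $\int\varphi_A'|\widetilde\eta'|^2$, which on the region where $\varphi_A'\sim\zeta_A^2$ is comparable to $\|w'\|_{L^2}^2$ up to a commutator error $\|w\|_{L^2_{-a/10}}^2$ coming from moving the weight $\zeta_A$ across $\partial_x$; the potential term $V$ contributes $-\int\varphi_A' V|\widetilde\eta|^2$ plus $-\tfrac12\int\varphi_A''' |\widetilde\eta|^2$, both of which are exponentially localized and absorbed into $\|w\|_{L^2_{-a/10}}^2$ since $V$ decays and $\varphi_A''',\varphi_A'$ beyond scale $O(1)$ are handled by the weight.

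Next I would bound the time integral $\int_I \dot{\mathcal I}\,dt = \mathcal I(T)-\mathcal I(0)$ on the left: since $|\varphi_A|\lesssim A$ and $|\varphi_A'|\lesssim 1$, one has $|\mathcal I(t)|\lesssim A\|\widetilde\eta(t)\|_{H^1}\|\widetilde\eta(t)\|_{L^2}\lesssim A\delta^2$ by \eqref{eq:energybound}, \eqref{eq:lincor1} and \eqref{eq:etaequiv}; this is the origin of the $A^{1/2}\delta$ term after taking square roots (it appears as $A\delta^2$ on the level of the squared quantity $\|w'\|_{L^2(I,L^2)}^2$). Then the source term $\sum_{\mathbf m\in\mathbf R_{\min}}\mathbf z^{\mathbf m}P_cG_{\mathbf m}$ contributes, after pairing against $\varphi_A\widetilde\eta'+\tfrac12\varphi_A'\widetilde\eta$ and using that $G_{\mathbf m}$ is Schwartz (hence $\zeta_A$-localizable) together with \eqref{eq:main11}, a term bounded by $C\sum_{\mathbf m}\|\mathbf z^{\mathbf m}\|_{L^2(I)}\big(\delta + \|w'\|_{L^2(I,L^2)}\big)$, where the $\|w'\|$ factor is absorbed by Young's inequality into the left side and the $\delta$ factor is dominated by $A^{1/2}\delta$ (or by $\epsilon^2$ after using \eqref{eq:main11}); similarly the remainder $\mathcal R_{\widetilde\eta}$ from \eqref{eq:Reta}, which collects $D_{\mathbf z}\phi[\mathbf z](\dot{\mathbf z}+\im\boldsymbol\varpi\mathbf z)$, $\mathcal R_{\mathrm{rp}}[\mathbf z]$, $F[\mathbf z,\eta]$, $L[\mathbf z]\eta$ and $|\eta|^2\eta$, is estimated using \eqref{est:Rrp}, \eqref{quadrat}, the exponential localization of $\phi[\mathbf z]$ from Proposition \ref{prop:rp}, the bootstrap bound \eqref{eq:main11}, and the a priori smallness \eqref{eq:main1}; every piece is either quadratic in the bootstrapped quantities (giving the $\epsilon^2$ term, after possibly using $\|\mathbf z\|\lesssim\delta\le\epsilon$), or carries an extra factor of $\|w'\|$ or $\|w\|_{L^2_{-a/10}}$ that is absorbed, with the cubic term $|\eta|^2\eta$ handled by $\|\eta\|_{L^\infty H^1}\lesssim\delta$ together with the $L^2_tL^2_{x,\mathrm{loc}}$ control in \eqref{eq:main11}.

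The main obstacle, as usual in these virial arguments, is the treatment of the nonlinear remainder $\mathcal R_{\widetilde\eta}$ and in particular the term $-\im D_{\mathbf z}\phi[\mathbf z](\dot{\mathbf z}+\im\boldsymbol\varpi(\mathbf z)\mathbf z)$: one must verify that pairing it against the virial multiplier produces only $\|\dot{\mathbf z}+\im\boldsymbol\varpi\mathbf z\|_{L^2(I)}$ times a controlled factor (using that $D_{\mathbf z}\phi[\mathbf z]$ is exponentially localized and $O(\|\mathbf z\|)$, hence the pairing is $\lesssim \|\dot{\mathbf z}+\im\boldsymbol\varpi\mathbf z\|_{L^2(I)}(\delta + \|w\|_{L^2(I,L^2_{-a/10})})$), and that no uncontrolled $\partial_x$ derivative lands on a term without a weight. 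A secondary delicate point is the bookkeeping of the $A$-dependent constants: one must keep the positive kinetic term with an $A$-independent constant while all error weights $\varphi_A',\varphi_A'''$ are $A$-independent on the localization region, so that the only place $A$ enters with a power is the boundary term $\mathcal I(T)-\mathcal I(0)\lesssim A\delta^2$; this is exactly what produces the claimed $A^{1/2}\delta$ and is consistent with the hierarchy \eqref{eq:relABg}. I would organize the proof so that, after these estimates, one has
$$\|w'\|_{L^2(I,L^2)}^2 \lesssim A\delta^2 + \|w\|_{L^2(I,L^2_{-a/10})}^2 + \Big(\sum_{\mathbf m\in\mathbf R_{\min}}\|\mathbf z^{\mathbf m}\|_{L^2(I)}\Big)^2 + \epsilon^4 + \tfrac12\|w'\|_{L^2(I,L^2)}^2,$$
and absorbing the last term and taking square roots yields \eqref{eq:1stv}.
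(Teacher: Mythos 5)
Your functional and overall scheme coincide with the paper's: the proof there uses $\mathcal I(\widetilde\eta)=\tfrac12\<\widetilde\eta,\im S_A\widetilde\eta\>$ with $S_A=\tfrac{\varphi_A'}{2}+\varphi_A\partial_x$ and $\varphi_A=\int_0^x\zeta_A^2$, establishes the differential inequality \eqref{eq:1stvirialdiff}, and integrates it using $|\mathcal I(\widetilde\eta)|\lesssim A\|\widetilde\eta\|_{H^1}^2\lesssim A\delta^2$, exactly as you propose. Your treatment of the kinetic term, of the boundary term (the origin of $A^{1/2}\delta$), of the source $\sum_{\mathbf m}\mathbf z^{\mathbf m}P_cG_{\mathbf m}$ (the paper pairs it against $S_A\widetilde\eta$ and gets $|\mathbf z^{\mathbf m}|\,\|w\|_{L^2_{-a/10}}$, Lemma \ref{lem:1stv2}), and of the modulation term $D_{\mathbf z}\phi[\mathbf z](\dot{\mathbf z}+\im\boldsymbol\varpi(\mathbf z)\mathbf z)$ all match Lemmas \ref{lem:1stv1}--\ref{lem:1stv3}, and the final $\epsilon^2$ indeed arises from $\delta^2\|\dot{\mathbf z}+\im\boldsymbol\varpi\mathbf z\|_{L^2(I)}^2\lesssim\delta^2\epsilon^2$.

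The one genuine gap is your treatment of the cubic term, which is precisely the term that makes the 1D cubic problem long range and is the heart of this estimate. After integration by parts one has
\begin{align*}
\<|\widetilde\eta|^2\widetilde\eta,S_A\widetilde\eta\>=4^{-1}\<|\widetilde\eta|^4,\zeta_A^2\>=4^{-1}\int_\R\zeta_A^{-2}|w|^4\,dx,
\end{align*}
and $\zeta_A^2$ is of order one on the whole region $|x|\lesssim A$. Hence this quantity is not dominated by $\|\eta\|_{L^\infty H^1}^2\|w\|_{L^2_{-a/10}}^2$ (the weight $e^{-\frac{a}{5}\<x\>}$ is exponentially small exactly where $\zeta_A^2\sim1$), and the crude bound $\|\widetilde\eta\|_{L^\infty}^2\|\widetilde\eta\|_{L^2}^2\lesssim\delta^4$ gives only $\delta^4\,T$ after time integration, which is useless for $T$ large. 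So the clause ``handled by $\|\eta\|_{L^\infty H^1}\lesssim\delta$ together with the $L^2_tL^2_{x,\mathrm{loc}}$ control'' does not close the argument. The step you are missing is the Gagliardo--Nirenberg type inequality used in \eqref{eq:1stvcub} (quoted from \cite{KMM3} and Lemma 2.7 of \cite{CM19SIMA}), namely $|\<|\widetilde\eta|^2\widetilde\eta,S_A\widetilde\eta\>|\lesssim\delta^{2/3}\|w'\|_{L^2}^2$: the cubic term must be absorbed into the coercive kinetic term $\tfrac12\|w'\|_{L^2}^2$ on the left-hand side, not estimated by the localized norms on the right. With that correction the rest of your outline is consistent with the paper's proof.
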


The second  virial estimate,  involves  the transformed problem \eqref{eq:vBg}.

\begin{proposition}[Virial estimate for $v  $]\label{prop:2ndvirial}
Let $A\gg B^2$.
We have
\begin{align}\label{eq:2ndv}
\|\xi \|_{L^2 (I,\widetilde{\Sigma})}\lesssim B   \varepsilon^{-N}    \delta   + \sum_{\mathbf{m}\in \mathbf{R}_{\mathrm{min}}} \|\mathbf{z}^{\mathbf{m}}\|_{L^2(I)}+ o_{\varepsilon}(1) \(   \| w  \| _{L^2(I,\widetilde{\Sigma})}    +   \|\dot {\mathbf{z}}+\im \boldsymbol{\varpi}(\mathbf{z})\mathbf{z}\|  _{L^2(I)} \).
\end{align}
\end{proposition}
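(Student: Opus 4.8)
The plan is to derive \eqref{eq:2ndv} by a virial (localized Morawetz) computation applied to the transformed variable $v$ solving \eqref{eq:vBg}, exploiting crucially that $H_{N+1}=-\partial_x^2+V_{N+1}$ has \emph{no discrete spectrum} and that $V_{N+1}$ is repulsive (Assumption \ref{ass:VN}). First I would introduce the standard Kowalczyk--Martel--Mu\~noz virial weight adapted to the scale $B$: a function $\varphi_B(x)$ with $\varphi_B'(x)\sim \chi_B(x)$ and a companion weight for the zeroth-order term, so that the quadratic form $\frac{d}{dt}\langle \im (\varphi_B \partial_x + \frac12\varphi_B')v, v\rangle$ produces, up to errors, the positive term $\|\partial_x(\zeta_B v)\|_{L^2}^2$ plus $-\frac12\langle \varphi_B V_{N+1}' \cdot\, , \cdot\rangle$ which is $\geq 0$ by repulsivity after centering at $x_0=0$ (here the condition $x\mathcal{V}'(x)\leq 0$ and the sign of $\chi'$ in \eqref{def:chi} are what give coercivity), plus a lower-order term controlled by the $L^2_{-a/10}$ piece of $\|\xi\|_{\widetilde\Sigma}$. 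Integrating in $t$ over $I=[0,T]$ and using that the boundary term is bounded by $\|v\|_{L^\infty L^2}^2 \lesssim \|\widetilde\eta\|_{L^\infty L^2}^2 \lesssim \delta^2$ (with the loss $\varepsilon^{-N}$ coming from the operator $\langle \im\varepsilon\partial_x\rangle^{-N}$ in $\mathcal{T}$, cf. \eqref{def:Tg}, which is bounded on $L^2$ but whose commutator with $V_{N+1}$ costs powers of $\varepsilon^{-1}$, and the factor $B$ from the weight), one obtains $\|\xi\|_{L^2(I,\widetilde\Sigma)} \lesssim B\varepsilon^{-N}\delta + (\text{error terms})$.

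The error terms come from the inhomogeneity $\sum_{\mathbf{m}\in\mathbf{R}_{\mathrm{min}}} \mathbf{z}^{\mathbf{m}}\widetilde{G}_{\mathbf{m}}$ and from $\mathcal{R}_v$ in \eqref{def:RvBg}. The source term $\sum \mathbf{z}^{\mathbf{m}}\widetilde{G}_{\mathbf{m}}$, when paired against the virial multiplier, is bounded using $\|\widetilde{G}_{\mathbf{m}}\|_{\Sigma^1}\lesssim 1$ (from $G_{\mathbf{m}}\in\Sigma^s$ and boundedness of $\mathcal{T}\chi_{B^2}P_c$) and Cauchy--Schwarz by $\sum_{\mathbf{m}\in\mathbf{R}_{\mathrm{min}}}\|\mathbf{z}^{\mathbf{m}}\|_{L^2(I)}$ times the positive virial output, which by Young's inequality can be absorbed except for a clean $\sum\|\mathbf{z}^{\mathbf{m}}\|_{L^2(I)}$ contribution — matching the second term on the right of \eqref{eq:2ndv}. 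For $\mathcal{R}_v$, the three pieces of \eqref{def:RvBg} must each be shown to carry a small prefactor $o_\varepsilon(1)$: the commutator term $\langle\im\varepsilon\partial_x\rangle^{-N}[V_{N+1},\langle\im\varepsilon\partial_x\rangle^N]v$ is $O(\varepsilon)$ in the relevant norm by pseudodifferential/commutator estimates (this is precisely where the mollifier is designed to gain smallness); the cutoff-derivative term $\mathcal{T}(2\chi_{B^2}'\partial_x+\chi_{B^2}'')\widetilde\eta$ is supported in $B^2\leq|x|\leq 2B^2$ where, thanks to $A\gg B^2$, the weight $\zeta_A$ is essentially $1$, so it is controlled by $\|w\|_{L^2(I,\widetilde\Sigma)}$ with a small constant coming from the $B^{-2}$-type gain in $\chi_{B^2}'$; and the term $\mathcal{T}\chi_{B^2}\mathcal{R}_{\widetilde\eta}$ is split, via \eqref{eq:Reta}, into the modulation part $\lesssim \|\dot{\mathbf{z}}+\im\boldsymbol\varpi(\mathbf{z})\mathbf{z}\|_{L^2(I)}$ (with small constant since $\|D_{\mathbf{z}}\phi[\mathbf{z}]\|\lesssim\|\mathbf{z}\|\lesssim\delta$), the quadratic/cubic nonlinear terms $F[\mathbf{z},\eta]$, $L[\mathbf{z}]\eta$, $|\eta|^2\eta$ which are $O(\delta)\cdot\|w\|$ or $O(\delta)$ in exponentially weighted norms and hence $o_\varepsilon(1)$ after comparing constants via \eqref{eq:relABg}, and $\mathcal{R}_{\mathrm{rp}}[\mathbf{z}]$ estimated by \eqref{est:Rrp} as $\lesssim\|\mathbf{z}\|^2\sum_{\mathbf{m}\in\mathbf{R}_{\mathrm{min}}}|\mathbf{z}^{\mathbf{m}}|$, absorbed into $\sum\|\mathbf{z}^{\mathbf{m}}\|_{L^2(I)}$.

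The key structural input is that $V_{N+1}$ is repulsive, so the virial form is \emph{coercive} on $P_c L^2(H_{N+1}) = L^2(\R)$ without any spectral projection subtlety — this is the whole point of passing through the Darboux chain $\mathcal{A}^*$ in \S\ref{sec:dar}, and it is why \eqref{eq:vBg} rather than \eqref{eq:tildeeta} is used for the second estimate. I expect the \textbf{main obstacle} to be the bookkeeping of the mollifier $\langle\im\varepsilon\partial_x\rangle^{-N}$: one needs simultaneously that it is harmless at leading order (so the repulsivity coercivity survives after conjugation — i.e. $\langle\im\varepsilon\partial_x\rangle^{-N}$ nearly commutes with the virial multiplier $\varphi_B\partial_x$ up to $O(\varepsilon)$ and up to the benign $B$-loss), that its commutator with $V_{N+1}$ is genuinely $o_\varepsilon(1)$ in the localized norm $\widetilde\Sigma$ despite $V_{N+1}$ only being Schwartz (so one uses $V_{N+1}\in\mathcal{S}$ heavily here), and that the $L^2\to L^2$ operator norm bound that transfers $\|v\|_{L^\infty L^2}\lesssim\varepsilon^{-N}\|\widetilde\eta\|_{L^\infty L^2}$ does not spoil the other estimates; reconciling these requires carefully tracking the order in which $\varepsilon$, then $B$, then $A$, then $\delta$ are sent to their limits, exactly as encoded in the hierarchy \eqref{eq:relABg}. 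Once the commutator and mollifier estimates are in place, the virial identity itself is a routine integration by parts and the absorption of error terms is standard.
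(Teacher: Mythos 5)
Your proposal follows essentially the same route as the paper: the virial functional $\mathcal{J}(v)=\tfrac12\langle v,\im\widetilde S_B v\rangle$ with $\widetilde S_B=\tfrac{\psi_B'}{2}+\psi_B\partial_x$, coercivity of the main term via repulsivity of $V_{N+1}$ and the absence of discrete spectrum for $H_{N+1}$ (Lemmas \ref{lem:2ndv1v} and \ref{lem:equiv_rho}), Cauchy--Schwarz and partial absorption for the $\mathbf{z}^{\mathbf m}\widetilde G_{\mathbf m}$ source, and term-by-term $o_\varepsilon(1)$ bounds on the seven pieces of $\mathcal{R}_v$, with the mollifier commutator estimates (Lemmas \ref{lem:pd01}, \ref{claim:l2boundII}, \ref{lem:coer3}) correctly identified as the technical crux. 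No substantive divergence from the paper's argument.
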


The term $\dot {\mathbf{z}}+\im \varpi(\mathbf{z})\mathbf{z}$  can be controlled in term of the $\mathbf{z}^{\mathbf{m}}$, for $\mathbf{m}\in \mathbf{R}_{\mathrm{min}}$.
\begin{proposition}\label{lem:estdtz}
 We have
 \begin{align}
 \|\dot {\mathbf{z}}+\im \varpi(\mathbf{z})\mathbf{z}\|_{L^2(I)}\lesssim \sum_{\mathbf{m}\in \mathbf{R}_{\mathrm{min}}}\|\mathbf{z}^{\mathbf{m}}\|_{L^2(I)}+ \delta^2\epsilon .   \label{eq:lem:estdtz}
 \end{align}
 \end{proposition}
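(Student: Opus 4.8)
\textbf{Proof proposal for Proposition \ref{lem:estdtz}.}

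The plan is to extract an ODE for $\dot{\mathbf z}$ from the modulation equation \eqref{eq:modnls} by testing against the generators of the symplectic-orthogonality constraint defining $\mathcal H_c[\mathbf z]$. Concretely, since $\eta=u-\phi[\mathbf z]\in\mathcal H_c[\mathbf z]$, we have $\<\im\eta,D_{\mathbf z}\phi[\mathbf z]\widetilde{\mathbf z}\>=0$ for every $\widetilde{\mathbf z}\in\C^N$. Differentiating this identity in $t$ and inserting \eqref{eq:modnls} will produce a linear system of the schematic form
\begin{align*}
\mathcal M(\mathbf z)\(\dot{\mathbf z}+\im\boldsymbol\varpi(\mathbf z)\mathbf z\)=\<\text{(RHS of \eqref{eq:modnls})},D_{\mathbf z}\phi[\mathbf z]\,\cdot\,\>+\text{(terms from }\partial_tD_{\mathbf z}\phi[\mathbf z]\text{)},
\end{align*}
where $\mathcal M(\mathbf z)$ is, to leading order, the identity (using $D_{\mathbf z}\phi[0]\mathbf e^j=\phi_j$, $\|\phi_j\|_{L^2}=1$, $\<\im\phi_j,\phi_k\>=0$), hence invertible for $\|\mathbf z\|$ small with $\mathcal M(\mathbf z)^{-1}=\mathrm{Id}+O(\|\mathbf z\|^2)$. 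So the first step is to write down this system carefully and invert it.

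The second step is to estimate each term on the right-hand side. The key structural point is that the $\mathbf z$-dependent parts of $\phi[\mathbf z]$, $G_{\mathbf m}$, $\mathcal R_{\mathrm{rp}}[\mathbf z]$, $L[\mathbf z]$ are all exponentially localized (they live in $\Sigma^s$), so pairing them with $D_{\mathbf z}\phi[\mathbf z]\widetilde{\mathbf z}\in\Sigma^s$ gives honest numbers controlled by powers of $\|\mathbf z\|$ and weighted norms of $\eta$; in particular the troublesome long-range nature of $|\eta|^2\eta$ is tamed because it is paired against an exponentially decaying function, producing a bound like $\|\eta\|_{L^2_{-a}}^2\|\eta\|_{H^1}\lesssim\delta^2\|\eta\|_{L^2_{-a}}^2$ or similar. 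The term $H[\mathbf z]\eta$ paired with the localized generator contributes $\lesssim\|\eta\|_{L^2_{-a}}$; the term $\sum_{\mathbf m\in\mathbf R_{\min}}\mathbf z^{\mathbf m}G_{\mathbf m}$ contributes exactly $\lesssim\sum_{\mathbf m\in\mathbf R_{\min}}|\mathbf z^{\mathbf m}|$; the $\mathcal R_{\mathrm{rp}}$ term is $\lesssim\|\mathbf z\|^2\sum_{\mathbf m\in\mathbf R_{\min}}|\mathbf z^{\mathbf m}|$ by \eqref{est:Rrp}; and the quadratic-in-$\eta$ terms $F[\mathbf z,\eta]$ and $L[\mathbf z]\eta$ give $\lesssim\|\mathbf z\|\,\|\eta\|_{L^2_{-a}}^2+\|\mathbf z\|^2\|\eta\|_{L^2_{-a}}$. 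Collecting everything one gets a pointwise-in-$t$ bound
\begin{align*}
\|\dot{\mathbf z}+\im\boldsymbol\varpi(\mathbf z)\mathbf z\|\lesssim\sum_{\mathbf m\in\mathbf R_{\min}}|\mathbf z^{\mathbf m}|+\delta\,\|\eta\|_{L^2_{-a}}^2+\delta^2\|\eta\|_{L^2_{-a}}.
\end{align*}

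The third and final step is to take the $L^2(I)$ norm in time. The first term is already in the desired form. For the remaining terms I would use the a priori smallness $\|\eta\|_{L^\infty H^1}\lesssim\delta$ together with $\|\eta\|_{L^2(I,H^1_{-a})}\lesssim\|\eta\|_{L^2(I,\widetilde\Sigma)}\lesssim\|w\|_{L^2(I,\widetilde\Sigma)}+\|\xi\|_{L^2(I,\widetilde\Sigma)}\lesssim\epsilon$ (via \eqref{eq:etaequiv} and the relation between $\eta$, $w$, $v$, $\xi$): this gives $\|\delta\|\eta\|_{L^2_{-a}}^2\|_{L^2(I)}\lesssim\delta\,\|\eta\|_{L^\infty L^2_{-a}}\|\eta\|_{L^2(I,L^2_{-a})}\lesssim\delta^2\epsilon$, and likewise $\|\delta^2\|\eta\|_{L^2_{-a}}\|_{L^2(I)}\lesssim\delta^2\epsilon\lesssim\delta^2$. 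Folding these into the bound yields \eqref{eq:lem:estdtz}, where I have been slightly generous and absorbed $\delta^2\epsilon$ and $\delta^2$ into a single $\delta^2\epsilon$ (noting $\epsilon$ can be taken $\le1$, or more honestly the $\delta^2$ without $\epsilon$ is what survives — one should double-check which constant the authors intend, but either way it is $O(\delta^2)$ and hence negligible compared to the resonant sum).

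The main obstacle I anticipate is bookkeeping rather than conceptual: one must verify that every nonlinear and error term, once paired against the \emph{localized} generator $D_{\mathbf z}\phi[\mathbf z]\widetilde{\mathbf z}$, genuinely loses the long-range obstruction — i.e. that no term survives at order $\|\mathbf z\|\,\|\eta\|_{H^1}^2$ \emph{without} a weight on $\eta$ — and that the terms coming from $\partial_t\bigl(D_{\mathbf z}\phi[\mathbf z]\bigr)$, which involve $\dot{\mathbf z}$ itself, are small enough (order $\|\mathbf z\|\|\dot{\mathbf z}\|$) to be absorbed back into the left-hand side after inverting $\mathcal M(\mathbf z)$. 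Both are handled by the smallness of $\|\mathbf z\|\lesssim\delta$, but the absorption step requires care to avoid circularity.
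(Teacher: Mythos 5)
Your overall strategy coincides with the paper's: test \eqref{eq:modnls} against $D_{\mathbf z}\phi[\mathbf z]\widetilde{\mathbf z}$, use the time derivative of the constraint defining $\mathcal H_c[\mathbf z]$, invert the near-identity Gram matrix, and close in $L^2(I)$ via the bootstrap hypothesis. The bookkeeping of the nonlinear and remainder terms, and the final integration in time, are essentially what the paper does. However, there is one genuine gap: your estimate ``the term $H[\mathbf z]\eta$ paired with the localized generator contributes $\lesssim\|\eta\|_{L^2_{-a}}$'' is too weak and, taken at face value, destroys the result. If that term really carried an $O(1)$ constant, the collected bound would contain a bare $\|\eta\|_{L^2_{-a}}$, whose $L^2(I)$ norm is only $\lesssim\epsilon$, and you would end up with $\|\dot{\mathbf z}+\im\boldsymbol\varpi(\mathbf z)\mathbf z\|_{L^2(I)}\lesssim\sum_{\mathbf m}\|\mathbf z^{\mathbf m}\|_{L^2(I)}+\epsilon$ instead of $+\delta^2\epsilon$; this is useless for the continuation argument, where $\epsilon$ must come back multiplied by a small constant. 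Your final displayed pointwise bound silently drops this term, so the proposal is internally inconsistent precisely at the one place where a nontrivial structural input is needed.

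The missing input is the gain of a factor $\|\mathbf z\|^2$ on the linear term. The paper obtains it by differentiating the refined-profile equation \eqref{eq:rp} in $\mathbf z$, pairing the resulting identity \eqref{eq:rfderiv} with $\eta$, and using the self-adjointness of $H[\mathbf z]$: this rewrites $\<H[\mathbf z]\eta,D_{\mathbf z}\phi[\mathbf z]\widetilde{\mathbf z}\>$ exactly as a sum of terms each involving either $D_{\mathbf z}^2\phi[\mathbf z]=O(\|\mathbf z\|)$, $D_{\mathbf z}(\mathbf z^{\mathbf m})=O(\|\mathbf z\|^2)$, or $D_{\mathbf z}\mathcal R_{\mathrm{rp}}$, all paired against the localized $\eta$, hence of size $O(\delta^2\|\eta\|_{L^2_{-a}})$ or $O(\delta^2\sum|\mathbf z^{\mathbf m}|)$. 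A more elementary repair along your lines is also possible: write $H[\mathbf z]=H+L[\mathbf z]$ with $L[\mathbf z]=O(\|\mathbf z\|^2)$ localized, and for the main piece use $\<H\eta,\phi_j\>=\omega_j\<\eta,\phi_j\>$ together with the near-orthogonality $(\eta,\phi_j)=O(\|\mathbf z\|^2\|\eta\|_{L^2_{-a}})$, which follows from $\eta\in\mathcal H_c[\mathbf z]$ and $D_{\mathbf z}\phi[\mathbf z]=D_{\mathbf z}\phi[0]+O(\|\mathbf z\|^2)$ in $\Sigma^1$; the correction $\<H\eta,(D_{\mathbf z}\phi[\mathbf z]-D_{\mathbf z}\phi[0])\widetilde{\mathbf z}\>$ is handled by moving $H$ onto the localized factor. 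Either route supplies the $\delta^2$; without one of them the proof does not close. (A minor further remark: the terms from $\partial_t(D_{\mathbf z}\phi[\mathbf z])$ are indeed harmless, but not because $\|\dot{\mathbf z}\|$ is small in $L^2(I)$ --- it is only $O(\delta)$ pointwise --- rather because they come multiplied by $\eta$ and by $D_{\mathbf z}^2\phi[\mathbf z]=O(\|\mathbf z\|)$, so the $L^2(I)$ integrability is supplied by $\|\eta\|_{L^2(I,L^2_{-a})}\lesssim\epsilon$.)
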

 To bound the $\mathbf{z}^{\mathbf{m}}$, for $\mathbf{m}\in \mathbf{R}_{\mathrm{min}}$, we will use the   Fermi Golden Rule.
 \begin{proposition}[FGR estimate]\label{prop:FGR}
 We have
 \begin{align}\label{eq:FGRint}
 \sum_{\mathbf{m}\in \mathbf{R}_{\mathrm{min}}}\|\mathbf{z}^{\mathbf{m}}\|_{L^2(I)}\lesssim  \varepsilon ^{-N}B ^{2+2\tau}\delta +B ^{-\frac{1}{2}}   \epsilon   + \epsilon ^2.
 \end{align}
 \end{proposition}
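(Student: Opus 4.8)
The plan is to prove Proposition~\ref{prop:FGR} by isolating the leading resonant oscillation in each mode $z_j$, substituting it into the equation \eqref{eq:vBg} for the transformed profile $v$, and extracting from the projection of $v$ onto the continuous spectrum the dissipative quadratic form whose positivity is exactly Assumption~\ref{ass:FGR}. First I would derive from \eqref{eq:modnls} (equivalently, from the modulation equations obtained by pairing \eqref{eq:modnls} with $D_{\mathbf z}\phi[\mathbf z]\mathbf e^k$ and using the normalization conditions in Proposition~\ref{prop:rp}) the system of ODEs
\begin{align*}
\dot z_k + \im \varpi_k(\mathbf z) z_k = \sum_{\mathbf m\in \mathbf R_{\mathrm{min}}}\mathbf z^{\mathbf m}\, c_{k,\mathbf m} + (\text{remainder}),
\end{align*}
where $c_{k,\mathbf m}$ involves the pairing of $G_{\mathbf m}$ with the discrete eigenfunction $\phi_k$, and the remainder is controlled by Proposition~\ref{lem:estdtz} and contains contributions bilinear in $(\mathbf z,\widetilde\eta)$ and cubic in $\widetilde\eta$. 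The point, as in \cite{KMM3} and \cite{CM20}, is that the resonant monomials $\mathbf z^{\mathbf m}$, $\mathbf m\in\mathbf R_{\mathrm{min}}$, oscillate at frequencies $\boldsymbol\omega\cdot\mathbf m>0$ that lie in the continuous spectrum of $H$ (hence of $H_{N+1}$), so the corresponding forced component of $v$ does not stay bounded but instead drains energy at a definite rate.

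Next I would write $v = v_{\mathrm{osc}} + v_{\mathrm{rem}}$, where $v_{\mathrm{osc}}:=\sum_{\mathbf m\in\mathbf R_{\mathrm{min}}}\mathbf z^{\mathbf m}\,g_{\mathbf m}$ is the approximate solution of \eqref{eq:vBg} obtained by solving $(\boldsymbol\omega\cdot\mathbf m - H_{N+1})g_{\mathbf m} = -\widetilde G_{\mathbf m}$ in a limiting-absorption sense — i.e.\ $g_{\mathbf m}=(\boldsymbol\omega\cdot\mathbf m - H_{N+1} - \im 0)^{-1}(-\widetilde G_{\mathbf m})$ — and $v_{\mathrm{rem}}$ absorbs the error, which includes the commutator and cutoff terms in $\mathcal R_v$ from \eqref{def:RvBg}, the nonresonant and higher-order pieces of $\mathcal R_{\widetilde\eta}$, and the time derivatives $\frac{d}{dt}\mathbf z^{\mathbf m}$ generated when $\im\partial_t$ hits $v_{\mathrm{osc}}$. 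The time derivative of a suitable quadratic functional — the analogue of the Fermi Golden Rule identity, schematically $\frac{d}{dt}\,\Im\langle v, v_{\mathrm{osc}}\rangle$ or a weighted variant — then produces, after integrating over $I$ and using the spectral representation via the distorted Fourier transform of \cite{weder}, a term of the form
\begin{align*}
\sum_{\mathbf m\in\mathbf R_{\mathrm{min}}}\int_I |\mathbf z^{\mathbf m}|^2\,\Big(\sum_\pm \big|\widehat{\widetilde G_{\mathbf m}}(\pm\sqrt{\boldsymbol\omega\cdot\mathbf m})\big|^2\Big)\,dt,
\end{align*}
whose coefficient is bounded below by a positive constant thanks to Assumption~\ref{ass:FGR} together with the fact that $\mathcal T = \langle\im\varepsilon\partial_x\rangle^{-N}\mathcal A^*$ and the cutoff $\chi_{B^2}$ only perturb $\widehat{G_{\mathbf m}}$ by $o_\varepsilon(1)$ and $o_B(1)$ amounts near the fixed frequencies $\pm\sqrt{\boldsymbol\omega\cdot\mathbf m}$ (here one uses that $\boldsymbol\omega\cdot\mathbf m>0$ is bounded away from $0$, again by Assumption~\ref{ass:disc_ratio_indep} on the finite index set). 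This is what converts control of the left-hand side of \eqref{eq:FGRint} into a bootstrap.

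The remaining work is bookkeeping on the error terms. The boundary terms from integration by parts in $t$ are bounded by $\sup_t\|v\|_{L^2}^2 + \sup_t\|\mathbf z\|^{2\cdot(\text{something})}\lesssim \delta^2$, and after dividing through they contribute the $\varepsilon^{-N}B^{2+2\tau}\delta$ term — the powers of $B$ and $\varepsilon^{-1}$ track the operator norm of $\mathcal T$ and of the weights $\zeta_B,\chi_{B^2}$ appearing in the definition \eqref{def:vBg}–\eqref{def:wAxiB} of $\xi$, and $\tau$ is a small loss from the limiting absorption/smoothing estimate for $H_{N+1}$ (available because $V_{N+1}$ is repulsive, Assumption~\ref{ass:VN}). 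The cross terms pairing $v_{\mathrm{osc}}$ against the genuinely dispersive remainder $v_{\mathrm{rem}}$ are estimated by Cauchy–Schwarz in $t$ and the virial bounds \eqref{eq:2ndv} for $\|\xi\|_{L^2(I,\widetilde\Sigma)}$ (hence for a local-in-space norm of $v$), Proposition~\ref{prop:1stvirial} for $w$, and Proposition~\ref{lem:estdtz}; each such term carries an extra factor of $\epsilon$ or $\delta$ and is therefore either absorbed into the left-hand side (when it reproduces $\sum_{\mathbf m}\|\mathbf z^{\mathbf m}\|_{L^2(I)}$ with small constant, after invoking \eqref{eq:relABg}) or relegated to the $B^{-1/2}\epsilon + \epsilon^2$ terms on the right of \eqref{eq:FGRint}. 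The main obstacle, and the place where genuine care is needed, is precisely the interplay of the three large parameters: one must check that the constant multiplying $\sum_{\mathbf m}\|\mathbf z^{\mathbf m}\|_{L^2(I)}^2$ coming from all error contributions is strictly smaller than the FGR lower constant \emph{uniformly} as $\varepsilon\to0$ then $B\to\infty$ then $\delta\to0$, i.e.\ that no error term scales like $\varepsilon^{-N}$ or a positive power of $B$ times $\sum_{\mathbf m}\|\mathbf z^{\mathbf m}\|_{L^2(I)}$ without a compensating power of $\delta$ or $\epsilon$; this is the reason for the specific hierarchy \eqref{eq:relABg} and for inserting the regularizing factor $\langle\im\varepsilon\partial_x\rangle^{-N}$ in \eqref{def:Tg}, which keeps $\widetilde G_{\mathbf m}$ in $L^{2,s}$ and makes the distorted-Fourier evaluation at $\pm\sqrt{\boldsymbol\omega\cdot\mathbf m}$ legitimate.
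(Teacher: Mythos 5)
Your overall architecture matches the paper's: the resonant component of $v$ is extracted via the outgoing resolvent $R^{+}_{H_{N+1}}(\boldsymbol{\omega}\cdot\mathbf{m})\widetilde{G}_{\mathbf{m}}$ (the paper's $\rho_{\mathbf{m}}$ in \eqref{eq:def_zetam}--\eqref{eq:expan_v}), and the Sokhotski--Plemelj formula together with Assumption \ref{ass:FGR} turns the diagonal terms into the negative-definite quantity $-\pi\langle P_cG_{\mathbf{m}},\delta(H-\boldsymbol{\omega}\cdot\mathbf{m})G_{\mathbf{m}}\rangle$. (The paper's Lyapunov functional is $E(\phi[\mathbf{z}])-A_1(\mathbf{z})$ rather than your $\Im\langle v,v_{\mathrm{osc}}\rangle$, with the coupling term $\sum_{\mathbf{m}}\mathbf{m}\cdot\boldsymbol{\omega}\langle\eta,\im\mathbf{z}^{\mathbf{m}}G_{\mathbf{m}}\rangle$ produced by the refined-profile identity \eqref{eq:discfund} evaluated at $\widetilde{\mathbf{z}}=\im\boldsymbol{\varpi}(|\mathbf{z}|^2)\mathbf{z}$; this is a difference of bookkeeping, not of mechanism.)

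The genuine gap is in your treatment of the non-resonant remainder $v_{\mathrm{rem}}$ (the paper's $g$ in \eqref{eq:expan_v}). You propose to estimate the cross terms pairing $v_{\mathrm{osc}}$ against $v_{\mathrm{rem}}$ by Cauchy--Schwarz in $t$ together with the virial bound \eqref{eq:2ndv}. But the virial estimates only give $\|\xi\|_{L^2(I,\widetilde{\Sigma})}\le\epsilon$ under the bootstrap, hence at best $\|v_{\mathrm{rem}}\|_{L^2(I,L^{2,-S})}\lesssim\epsilon+\sum_{\mathbf{m}}\|\mathbf{z}^{\mathbf{m}}\|_{L^2(I)}$; the resulting cross term is then of size $\epsilon\sum_{\mathbf{m}}\|\mathbf{z}^{\mathbf{m}}\|_{L^2(I)}$, which after Young's inequality yields only $\sum_{\mathbf{m}}\|\mathbf{z}^{\mathbf{m}}\|_{L^2(I)}\lesssim\epsilon$ --- no improvement over the bootstrap hypothesis and certainly not the $o(1)\epsilon$ right-hand side of \eqref{eq:FGRint}. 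The paper is explicit (end of Section 3 and opening of Section \ref{sec:smoothing}) that $g$ cannot be controlled by virial inequalities: one needs the independent smoothing estimate of Proposition \ref{prop:estg1}, $\|g\|_{L^2(I,L^{2,-S})}\lesssim\varepsilon^{-N}B^{2+2\tau}\delta+\varepsilon\epsilon+\epsilon^2$, proved from the Duhamel representation \eqref{eq:equation_g11}--\eqref{eq:equation_g14} using Kato smoothing for $e^{-\im tH_{N+1}}$ (Lemma \ref{lem:smooth}), the $\langle t\rangle^{-3/2}$ local decay of $e^{-\im tH_{N+1}}R^{+}_{H_{N+1}}(\lambda)$ (Lemma \ref{lem:lemg9}), the limiting absorption principle (Lemma \ref{lem:LAP}) and Mizumachi's identity (Lemma \ref{lem:lemma11}). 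Without this ingredient the coefficient of $\sum_{\mathbf{m}}\|\mathbf{z}^{\mathbf{m}}\|^2_{L^2(I)}$ coming from the errors cannot be beaten by the FGR constant $\Gamma_0$, and the argument does not close. Incidentally, the leading term $\varepsilon^{-N}B^{2+2\tau}\delta$ in \eqref{eq:FGRint} enters through this bound on $g$ (ultimately from $\|v(0)\|_{L^2}\lesssim\varepsilon^{-N}B^2\delta$ in \eqref{eq:smooth1}), not from boundary terms of an integration by parts in $t$ as you suggest.
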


In Sect.\ \ref{sec:1virial} we prove Proposition \ref{prop:1stvirial}. In Sect.\ \ref{sec:vir2} we prove  Proposition \ref{prop:2ndvirial}.
Sections \ref{sec:1virial}--\ref{sec:vir2} are very close to \cite{KMM3}.  Sect.\      \ref{sec:coerc} is the analogue of   \cite[Sect. 5.1]{KMM3}.
The proofs of Propositions
\ref{lem:estdtz}  and \ref{prop:FGR} are very similar to the discussion in \cite{CM21}. As is in \cite{BP2,SW3} and many other papers, most referenced in \cite{CM15APDE}, at some point the continuous mode, in fact in this paper the variable $v$,  needs to be decomposed in a part which resonates with the discrete mode $\mathbf{z}$ and a remainder which is supposed to be very small, and which we denote by $g$, see \eqref{eq:expan_v}.  To bound $g$ we use Kato  smoothing estimates, as in the previous literature. So, for example, Lemmas \ref{lem:smooth} and  \ref{lem:lemg9} are a typical tool, see  \cite{BP2,SW3} or the references in \cite{CM15APDE}. Some attention is needed in the use of the weights, to guarantee that some of the terms, i.e. the term in \eqref{eq:decomp1}, are small. Finally,  in Sect.\  \ref{sec:energystab1}, we prove the last sentence of Theorem \ref{thm:main}.

\section{Proof of Proposition \ref{prop:1stvirial}} \label{sec:1virial}
In this section, we prove the 1st virial estimate Proposition \ref{prop:1stvirial}, which is a consequence of the estimate of the time derivative of the following functional,
\begin{align}\label{eq:def I}
\mathcal{I}   (\widetilde{\eta}):=\frac{1}{2}\<\widetilde{\eta},\im S_A\widetilde{\eta}\>,
\end{align}
where the anti-symmetric operator $S_A$ is defined by
\begin{align}\label{eq:defSA}
S_A:=\frac{\varphi_A'}{2}+\varphi_A\partial_x,\ \text{where}\
\varphi_A(x)=\int_0^x \zeta_A    ^2(y)\,dy.
\end{align}

Proposition \ref{prop:1stvirial} is a direct consequence of the following estimate

\begin{proposition}[1st virial estimate in differential form]\label{prop:1stvirial2}
Under the assumptions of Proposition \ref{prop:continuation}, for sufficiently small $\delta>0$  we have
\begin{align}\label{eq:1stvirialdiff}
 \frac{d}{dt}{\mathcal{I}} (\widetilde{\eta})  +\frac{1}{2}\|w  '\|_{L^2}^2 \lesssim  \|w  \|_{L^2_{-\frac{a}{10}}}^2 +\sum_{\mathbf{m}\in \mathbf{R}_{\mathrm{min}}}|\mathbf{z}^{\mathbf{m}}|^2+\delta ^2\|\dot {\mathbf{z}}+\im \boldsymbol{\varpi}(\mathbf{z})\mathbf{z}\|^2.
\end{align}
\end{proposition}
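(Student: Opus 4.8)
The plan is to differentiate the functional $\mathcal{I}(\widetilde\eta) = \frac12\langle\widetilde\eta, \im S_A\widetilde\eta\rangle$ along the flow and estimate every resulting term. First I would compute $\frac{d}{dt}\mathcal{I}(\widetilde\eta)$ using the equation \eqref{eq:tildeeta} for $\widetilde\eta$. Since $S_A$ is anti-symmetric, $\frac{d}{dt}\mathcal{I} = \langle \partial_t\widetilde\eta, \im S_A\widetilde\eta\rangle = \langle \im\partial_t\widetilde\eta, -S_A\widetilde\eta\rangle$ (being careful about the real-part convention), and substituting $\im\partial_t\widetilde\eta = H\widetilde\eta + \sum_{\mathbf{m}\in\mathbf{R}_{\min}}\mathbf{z}^{\mathbf{m}}P_cG_{\mathbf{m}} + \mathcal{R}_{\widetilde\eta}$ splits the derivative into three groups: the linear/Hamiltonian term $-\langle H\widetilde\eta, S_A\widetilde\eta\rangle$, the resonant source term, and the remainder term with $\mathcal{R}_{\widetilde\eta}$.

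The main term is $-\langle H\widetilde\eta, S_A\widetilde\eta\rangle$, which is the classical Kowalczyk--Martel--Munoz virial quantity. Here I would commute $H = -\partial_x^2 + V$ through $S_A = \frac{\varphi_A'}{2} + \varphi_A\partial_x$; the standard computation produces the positive term $\int \varphi_A' |\partial_x\widetilde\eta|^2 = \int \zeta_A^2|\partial_x\widetilde\eta|^2 \sim \|w'\|_{L^2}^2$ (after absorbing the difference between $\zeta_A\partial_x\widetilde\eta$ and $\partial_x(\zeta_A\widetilde\eta)$, which costs only lower order $\|w\|_{L^2_{-a/10}}^2$ type terms since $\zeta_A' $ is $O(A^{-1})$ and supported away from the origin), plus a term with $\varphi_A'''$ which is again controlled by $\|w\|_{L^2_{-a/10}}^2$, plus the potential term $-\int(\varphi_A V' + \ldots)|\widetilde\eta|^2$. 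The repulsivity Assumption \ref{ass:VN} — transmitted through the Darboux conjugation — is what makes the potential contribution have a favorable sign or at worst be dominated by the exponentially localized term $\|w\|_{L^2_{-a/10}}^2$; this is precisely where one uses that $V_{N+1}$ is repulsive and the structure of \S\ref{sec:dar}. For the source term $\sum_{\mathbf{m}}\mathbf{z}^{\mathbf{m}}\langle P_cG_{\mathbf{m}}, S_A\widetilde\eta\rangle$, since $G_{\mathbf{m}}$ is Schwartz and exponentially localized, I would bound it by $\sum_{\mathbf{m}}|\mathbf{z}^{\mathbf{m}}|\cdot\|\widetilde\eta\|_{L^2_{-a/10}}$ (using that $S_A\widetilde\eta$ pairs against a localized function so one gains a weight on $\widetilde\eta$, and $\varphi_A$ grows at most linearly so $A^{1/2}$-type constants appear at worst), then Cauchy--Schwarz / Young to split into $\sum_{\mathbf{m}}|\mathbf{z}^{\mathbf{m}}|^2 + \|w\|_{L^2_{-a/10}}^2$.

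The remainder term $\langle \mathcal{R}_{\widetilde\eta}, S_A\widetilde\eta\rangle$, with $\mathcal{R}_{\widetilde\eta}$ given in \eqref{eq:Reta}, is handled piece by piece: the modulation piece $-\im P_cD_{\mathbf{z}}\phi[\mathbf{z}](\dot{\mathbf{z}} + \im\boldsymbol{\varpi}(\mathbf{z})\mathbf{z})$ contributes $\delta^2\|\dot{\mathbf{z}} + \im\boldsymbol{\varpi}(\mathbf{z})\mathbf{z}\|^2 + \|w\|_{L^2_{-a/10}}^2$ (since $D_{\mathbf{z}}\phi[\mathbf{z}]$ is localized and $O(\|\mathbf{z}\|)=O(\delta)$); the term $\mathcal{R}_{\mathrm{rp}}[\mathbf{z}]$ is localized and bounded by $\|\mathbf{z}\|^2\sum_{\mathbf{m}\in\mathbf{R}_{\min}}|\mathbf{z}^{\mathbf{m}}|$ from \eqref{est:Rrp}, hence dominated by $\sum_{\mathbf{m}}|\mathbf{z}^{\mathbf{m}}|^2$; the terms $F[\mathbf{z},\eta]$ and $L[\mathbf{z}]\eta$ involve $\phi[\mathbf{z}]$ which is localized, so they come with weights and give $\delta\|w\|_{L^2_{-a/10}}^2$ plus products absorbable by the other terms; and the genuinely nonlinear cubic term $|\eta|^2\eta$ — the long-range one — requires the most care: after integration by parts against $\varphi_A\partial_x$ one gets $\int\varphi_A'|\eta|^4 \sim \int \zeta_A^2|\eta|^4 \lesssim \|\eta\|_{L^\infty}^2\|w\|_{L^2}^2$, and using $\|\eta\|_{L^\infty}\lesssim\|\eta\|_{H^1}\lesssim\delta$ together with the continuation hypothesis $\|w\|_{L^2(I,\widetilde\Sigma)}\le\epsilon$ gives a contribution of size $\lesssim\epsilon^2\delta^2$ after time integration, which is absorbed into the stated $\sum|\mathbf{z}^{\mathbf{m}}|^2$-plus-$\|w\|^2$ right-hand side (note the $\epsilon^2$ in Proposition \ref{prop:1stvirial} after integrating). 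I expect the main obstacle to be the sign analysis of the potential term $\int(x V_{N+1}'$-type$)|\cdot|^2$ after the Darboux conjugation — making the repulsivity hypothesis actually yield the differential inequality with the correct sign on $\|w'\|^2$ and only a harmless localized error — together with the bookkeeping needed to ensure the cubic term genuinely lands in $L^1_t$ with the small constant; everything else is a weighted Cauchy--Schwarz exercise. Finally, integrating \eqref{eq:1stvirialdiff} over $I=[0,T]$, using $|\mathcal{I}(\widetilde\eta)|\lesssim A\|\widetilde\eta\|_{L^2}^2\lesssim A\delta^2$ from $\|\varphi_A\|_{L^\infty}\lesssim A$ and \eqref{eq:main1}, and applying the continuation hypothesis \eqref{eq:main11} yields Proposition \ref{prop:1stvirial}.
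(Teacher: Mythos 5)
Your overall decomposition (the three-way split of $\tfrac{d}{dt}\mathcal I$, the localization of the source and modulation terms, Young's inequality at the end) matches the paper's proof. But there are two genuine problems. The first is structural: you invoke Assumption \ref{ass:VN} (repulsivity of $V_{N+1}$, ``transmitted through the Darboux conjugation'') to control the potential contribution in $\<H\widetilde\eta,S_A\widetilde\eta\>$. The first virial estimate involves the \emph{original} operator $H=-\partial_x^2+V$ and no Darboux transformation at all; the potential term that appears is $-\tfrac12\<\varphi_A V'\widetilde\eta,\widetilde\eta\>=-\tfrac12\<\tfrac{\varphi_A}{\zeta_A^2}V'w,w\>$ (Lemma \ref{lem:1stv1}), and it is controlled with no sign information whatsoever, simply because $V'$ decays exponentially (hypothesis \eqref{ass:V}) so that $\tfrac{\varphi_A}{\zeta_A^2}V'$ is dominated by $e^{-a\<x\>/5}$ for $A$ large; hence it is absorbed into $\|w\|_{L^2_{-a/10}}^2$. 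Repulsivity enters only in the second virial estimate for the transformed variable $v$, where the coercivity of $-\partial_x^2-\tfrac12\chi_{B^2}^2xV_{N+1}'$ is needed. Misplacing this does not break Proposition \ref{prop:1stvirial2}, but it signals a misunderstanding of why the two-virial architecture exists.

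The second problem is a real gap, and it concerns exactly the term you flag as delicate: the cubic term. After the integration by parts one has $\<|\widetilde\eta|^2\widetilde\eta,S_A\widetilde\eta\>=\tfrac14\<|\widetilde\eta|^4,\zeta_A^2\>$, and you bound this by $\|\widetilde\eta\|_{L^\infty}^2\|w\|_{L^2}^2$ and then appeal to the continuation hypothesis $\|w\|_{L^2(I,\widetilde\Sigma)}\le\epsilon$ ``after time integration.'' This fails for two reasons. First, the unlocalized norm $\|w\|_{L^2(\R)}$ is \emph{not} controlled by $\|w\|_{\widetilde\Sigma}$, which by \eqref{eq:norm_rho} only sees $\|w'\|_{L^2}$ and the exponentially localized $\|w\|_{L^2_{-a/10}}$; the only available bound is $\|w\|_{L^2}\lesssim\|\widetilde\eta\|_{L^2}\lesssim\delta$, so your estimate gives $\delta^4$ pointwise in time, which is not integrable on $[0,\infty)$ — this is precisely the long-range obstruction the paper must overcome. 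Second, the statement to be proved is a differential inequality, so a bound ``after time integration'' is not of the right form anyway. The paper's resolution (inequality \eqref{eq:1stvcub}, quoting \cite{KMM3} and Lemma 2.7 of \cite{CM19SIMA}) is the interpolation estimate $\<|\widetilde\eta|^4,\zeta_A^2\>\lesssim\delta^{2/3}\|w'\|_{L^2}^2$, which is then absorbed into the coercive term on the left — this is why the left-hand side of \eqref{eq:1stvirialdiff} carries the coefficient $\tfrac12$ rather than $1$ in front of $\|w'\|_{L^2}^2$. Without this (or an equivalent) step your argument does not close.
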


We first prove Proposition \ref{prop:1stvirial} from Proposition \ref{prop:1stvirial2}.
\begin{proof}[Proof of Proposition \ref{prop:1stvirial}]
We have $|\mathcal{I}   (\widetilde{\eta})|\lesssim A\|\widetilde{\eta}\|_{H^1}^2\lesssim A \delta^2$.
Thus, integrating \eqref{eq:1stvirialdiff} over $[0,T]$, we obtain \eqref{eq:1stv}.
\end{proof}

The rest of this section is devoted for the proof of Proposition \ref{prop:1stvirial2}.
First, from \eqref{eq:tildeeta}, we have
\begin{align}
 \frac{d}{dt}\mathcal{I}  (\widetilde{\eta})&=-\<\im \partial_t\widetilde{\eta},S_A\widetilde{\eta}\>\nonumber\\&=-\<H\widetilde{\eta},S_A \widetilde{\eta}\> - \sum_{\mathbf{m}\in \mathbf{R}_{\mathrm{min}}}\<\mathbf{z}^{\mathbf{m}}P_c G_{\mathbf{m}},S_A \widetilde{\eta}\>-\<\mathcal{R}_{\widetilde{\eta}},S_A\widetilde{\eta}\>.\label{eq:1stv1}
\end{align}
We will compute each terms in \eqref{eq:1stv1}

\begin{lemma}\label{lem:1stv1}
We have
\begin{align}\label{Quadrmain1}
\<H\widetilde{\eta},S_A \widetilde{\eta}\>=\<\(-\partial_x^2 -\frac{\varphi_A}{2\zeta_A^2}V'\)w  ,w   \>+\frac{1}{2A}\<V_0w  ,w  \>,
\end{align}
where
\begin{align}\label{def:V0}
V_0(x):=\(\chi'' |x| +2\chi' \frac{x}{|x|}\) .
\end{align}
\end{lemma}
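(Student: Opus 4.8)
The plan is to compute $\langle H\widetilde\eta, S_A\widetilde\eta\rangle$ directly by expanding $H = -\partial_x^2 + V$ and using the definition of $S_A = \tfrac{\varphi_A'}{2} + \varphi_A\partial_x$, where $\varphi_A' = \zeta_A^2$. The standard mechanism (as in Kowalczyk--Martel--Munoz) is that the weighted commutator $\langle (-\partial_x^2)\widetilde\eta, S_A\widetilde\eta\rangle$ produces, after integration by parts, the ``good'' positive term $\int \varphi_A' |(\widetilde\eta)'|^2$ up to corrections, plus a term involving $\varphi_A''$ and $\varphi_A'''$. First I would recall the classical identity: for an antisymmetric $S_A$ of this form and a self-adjoint $-\partial_x^2 + V$, one has
\begin{align*}
\langle (-\partial_x^2 + V)\widetilde\eta, S_A\widetilde\eta\rangle = \int \varphi_A' |\partial_x\widetilde\eta|^2\,dx - \frac14\int \varphi_A''' |\widetilde\eta|^2\,dx - \frac12\int \varphi_A' V' |\widetilde\eta|^2\,dx,
\end{align*}
where all quantities are real because $S_A$ is antisymmetric (so $\langle H\widetilde\eta, S_A\widetilde\eta\rangle = \tfrac12\langle [H,S_A]\widetilde\eta,\widetilde\eta\rangle$ with $[H,S_A]$ self-adjoint). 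This is the place to be careful: the sign conventions and the exact coefficient on the $\varphi_A'''$ term must be tracked, and one must note $\langle V\widetilde\eta, S_A\widetilde\eta\rangle = \tfrac12\langle [V,S_A]\widetilde\eta,\widetilde\eta\rangle = -\tfrac12\int \varphi_A V' |\widetilde\eta|^2$, which after writing $\varphi_A = \varphi_A$ (not differentiated) needs to be matched against the claimed $-\tfrac{\varphi_A}{2\zeta_A^2}V'$ weight acting on $w$.

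Next I would convert from $\widetilde\eta$-weighted integrals to $w = \zeta_A\widetilde\eta$. Since $\varphi_A' = \zeta_A^2$, we have $\int \varphi_A'|\partial_x\widetilde\eta|^2 = \int \zeta_A^2 |\partial_x\widetilde\eta|^2$; writing $\partial_x\widetilde\eta = \partial_x(\zeta_A^{-1}w) = \zeta_A^{-1}\partial_x w - \zeta_A^{-2}\zeta_A' w$ and expanding the square, the cross term integrates by parts and the $|w|^2$ remainder is $-\int \partial_x(\zeta_A'\zeta_A^{-1})|w|^2 + \ldots$; after the dust settles this should yield $\int |\partial_x w|^2$ plus terms of the form $\int (\zeta_A'/\zeta_A)^2 |w|^2$ and $\int (\zeta_A''/\zeta_A)|w|^2$. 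The key observation is that $\zeta_A(x) = \exp(-\tfrac{|x|}{A}(1-\chi(x)))$, so $\zeta_A'/\zeta_A = -\tfrac{1}{A}\partial_x(|x|(1-\chi(x)))$ is $O(A^{-1})$ and supported where $|x|\gtrsim 1$ (because $\chi \equiv 1$ near the origin), hence the quadratic-in-$\zeta_A'/\zeta_A$ terms are $O(A^{-2})$, while $\zeta_A''/\zeta_A$ contributes a term of order $A^{-1}$ times $(\chi'' |x| + 2\chi' \tfrac{x}{|x|})$, which is exactly $\tfrac{1}{2A} V_0$ with $V_0$ as defined in \eqref{def:V0}. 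I would organize this so that all the $O(A^{-2})$ pieces are absorbed and only the $\tfrac{1}{2A}\langle V_0 w, w\rangle$ term is kept explicitly, together with the potential term $-\tfrac{\varphi_A}{2\zeta_A^2}V'$ acting on $w$ (noting $\tfrac{\varphi_A'}{2}V' |\widetilde\eta|^2 = \tfrac{\zeta_A^2}{2}V' |\widetilde\eta|^2 = \tfrac{1}{2}V'|w|^2$, but the claimed form has $\varphi_A/\zeta_A^2$, so I must double-check whether the identity I quoted had $\varphi_A$ or $\varphi_A'$ on the $V'$ term — the right version is that the commutator $[V, S_A]$ produces $-\varphi_A V'$, giving $-\tfrac12\int \varphi_A V'|\widetilde\eta|^2 = -\tfrac12\int \tfrac{\varphi_A}{\zeta_A^2} V' |w|^2$, consistent with the statement).

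The main obstacle I expect is bookkeeping the third-derivative term $\varphi_A''' = (\zeta_A^2)'' = 2(\zeta_A')^2 + 2\zeta_A\zeta_A''$ correctly and seeing that it reorganizes, together with the terms coming from the change of variables $\widetilde\eta \mapsto w$, into precisely the combination $\tfrac{1}{2A}V_0$ plus $O(A^{-2})$ errors — nothing deep, but the cancellations are delicate and it is easy to misplace a factor of $2$ or a sign. A secondary point is justifying that the boundary terms at $\pm\infty$ vanish, which follows from $\widetilde\eta \in H^1$ and the exponential decay built into $\zeta_A$ and $\varphi_A$ (here $\varphi_A$ is bounded since $\zeta_A^2$ is integrable). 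Once the identity \eqref{Quadrmain1} is established with these explicit terms, the remaining $O(A^{-2})$ contributions are harmless because in \eqref{eq:1stvirialdiff} they are dominated by $\|w\|_{L^2_{-a/10}}^2$ (as $A^{-2}\zeta_A^{-2} \lesssim \langle x\rangle^{-2}$ type bounds, or more simply $A^{-1}$ times an exponential weight is $\lesssim \mathrm{sech}^2(ax/10)$-weighted norm once $A$ is large) — but that absorption step belongs to the later assembly of the proof of Proposition \ref{prop:1stvirial2}, not to this lemma, so here I would stop at the clean identity \eqref{Quadrmain1}.
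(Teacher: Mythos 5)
Your route is the same as the paper's: the classical virial identity
\begin{align*}
\<H\widetilde{\eta},S_A\widetilde{\eta}\>=\<\varphi_A'\partial_x\widetilde{\eta},\partial_x\widetilde{\eta}\>-\tfrac14\<\varphi_A'''\widetilde{\eta},\widetilde{\eta}\>-\tfrac12\<\widetilde{\eta},\varphi_AV'\widetilde{\eta}\>,
\end{align*}
followed by the substitution $w=\zeta_A\widetilde{\eta}$, and your identification of the potential term $-\tfrac12\varphi_A V'|\widetilde{\eta}|^2=-\tfrac{\varphi_A}{2\zeta_A^2}V'|w|^2$ is correct. The one point to fix is your bookkeeping of the $\(\zeta_A'/\zeta_A\)^2$ contributions: you propose to treat them as $O(A^{-2})$ errors to be absorbed later, but the lemma asserts an \emph{exact} identity, and indeed no remainder survives. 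After the change of variables the kinetic and $\varphi_A'''$ terms combine into $\<-\partial_x^2w,w\>+\tfrac12\<\(\tfrac{\zeta_A''}{\zeta_A}-\(\tfrac{\zeta_A'}{\zeta_A}\)^2\)w,w\>$, and the bracket is precisely $(\log\zeta_A)''$; since $\log\zeta_A=-A^{-1}|x|\(1-\chi(x)\)$ is exactly linear in $A^{-1}$ (and $\chi\equiv1$ near $0$ removes the $\delta_0$ from $|x|''$), one gets $(\log\zeta_A)''=A^{-1}V_0$ with no $O(A^{-2})$ correction. So do not discard the quadratic terms as errors: they cancel exactly against the corresponding piece of $\zeta_A''/\zeta_A$, which is what makes \eqref{Quadrmain1} an identity rather than an estimate.
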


\begin{proof}
By direct computation, we have
 \begin{align}
 \<H\widetilde{\eta},S_A \widetilde{\eta}\>=
  \<  \varphi '_A \partial_x\widetilde{\eta} ,\partial_x\widetilde{\eta} \> - \frac{1}{4}\<  \varphi  ^{\prime\prime\prime}_A \widetilde{\eta}   ,\widetilde{\eta}    \>  -\frac{1}{2} \<    \widetilde{\eta}  ,   \varphi _A      V  '     \widetilde{\eta}  \>  . \label{eq:lem1:1}
 \end{align}
  Following Lemma 1 of  \cite{KMM3},   we have
 \begin{align}
   \<  \varphi ' _A \partial_x\widetilde{\eta} ,\partial_x\widetilde{\eta} \>  =
%    \<\zeta_A^2\(\frac{w}{\zeta_A}\)', \(\frac{w}{\zeta_A}\)'\>= \< w' - \frac{\zeta _A'}{\zeta _A} w ,w' - \frac{\zeta _A'}{\zeta _A} w \> \nonumber \\&  =\< w'   ,w'     \>  +\<  \( \frac{\zeta _A'}{\zeta _A}\)^2 w ,w  \> -2 \< w'   ,  \frac{\zeta '_A}{\zeta_A} w \>   \nonumber     \\& = \< w'   ,w'     \>  +  \< \(  \( \frac{\zeta '_A}{\zeta _A}\) ' + \( \frac{\zeta '_A}{\zeta _A}\) ^2 \)     w   ,w      \>  =
      \< w'   ,w'     \>  +  \<   \frac{\zeta ^{\prime\prime} _A}{\zeta _A}     w   ,w     \> ,  \label{eq:lem1:1--}
 \end{align}
 and
 \begin{align}
  - \frac{1}{4}\<  \varphi   ^{\prime\prime\prime}_A \widetilde{\eta}    ,\widetilde{\eta}   \> =
%  - \frac{1}{4}\<   \frac{(\zeta ^2_A) ^{\prime\prime}}{\zeta ^2_A} w   ,w   \>  =
   -\frac{1}{2}\<  \( \frac{ \zeta ^{\prime\prime}_A}{\zeta  _A}  + \( \frac{ \zeta ^{\prime } _A}{\zeta  _A} \) ^2  \) w  , w  \>   , \label{eq:lem1:2--}
 \end{align}
 so that
 \begin{align}\label{eq:lem1:2}
  \<  \varphi '_A \partial_x\widetilde{\eta}   ,\partial_x\widetilde{\eta}    \> - \frac{1}{4}\<  \varphi  ^{\prime\prime\prime}_A \widetilde{\eta}   ,\widetilde{\eta}     \> =\<  -\partial _x^2w,w\>+\frac12\< \(\frac{\zeta_A''}{\zeta_A}-\(\frac{\zeta_A'}{\zeta_A}\)^2\)w,w\>.
 \end{align}
 Substituting $w=\zeta_A\widetilde{\eta}$ also in $\<    \widetilde{\eta}   ,   \varphi _A      V  '     \widetilde{\eta}   \> $  and using the identity
 \begin{align}
 A\(\frac{ \zeta ^{\prime\prime}_A}{\zeta  _A}  - \( \frac{ \zeta ^{\prime } _A}{\zeta  _A} \) ^2\)=\chi''(x)|x|+2\chi'(x)\frac{x}{|x|}=V_0(x),
 \end{align}
we obtain \eqref{Quadrmain1}.
\end{proof}

\begin{lemma}\label{lem:1stv2}
We have
\begin{align}
\left| \<\mathbf{z}^{\mathbf{m}}P_c G_{\mathbf{m}},S_A \widetilde{\eta}\>\right|\lesssim |\mathbf{z}^{\mathbf{m}}| \| w\|_{L^2_{-\frac{a}{10}}}.
\end{align}
\end{lemma}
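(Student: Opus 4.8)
\textbf{Proof plan for Lemma \ref{lem:1stv2}.}
The goal is the bound $|\<\mathbf{z}^{\mathbf{m}}P_c G_{\mathbf{m}},S_A\widetilde{\eta}\>|\lesssim |\mathbf{z}^{\mathbf{m}}|\,\|w\|_{L^2_{-a/10}}$, where $w=\zeta_A\widetilde{\eta}$. The key structural fact is that $G_{\mathbf{m}}$ is built, via the recursion \eqref{def:Gm0}--\eqref{def:tilde}, out of the eigenfunctions $\phi_j$ and resolvents $(H-\mathbf{m}^i\cdot\boldsymbol{\omega})^{-1}$ applied to such products; since each $\phi_j^{(n)}\sim_n e^{-\sqrt{|\omega_j|}|x|}$ and the resolvents (evaluated at the negative numbers $\mathbf{m}^i\cdot\boldsymbol{\omega}$, which lie in the resolvent set of $H$) preserve exponential decay, we get $G_{\mathbf{m}}\in\Sigma^s$ for every $s$; in particular $G_{\mathbf{m}}$ decays like $e^{-2a\<x\>}$ up to constants, hence so does $P_cG_{\mathbf{m}}$ (the projection $P_c$ subtracts a finite combination of the $\phi_j$, which decay even faster).

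First I would write out $S_A\widetilde{\eta}=\frac{\varphi_A'}{2}\widetilde{\eta}+\varphi_A\partial_x\widetilde{\eta}$ and pair it against $\mathbf{z}^{\mathbf{m}}P_cG_{\mathbf{m}}$, pulling the scalar $\mathbf{z}^{\mathbf{m}}$ out so that it remains only to estimate $|\<P_cG_{\mathbf{m}},S_A\widetilde{\eta}\>|\lesssim\|w\|_{L^2_{-a/10}}$. For the zeroth-order term $\frac{1}{2}\<P_cG_{\mathbf{m}},\varphi_A'\widetilde{\eta}\>$, note $\varphi_A'=\zeta_A^2$, so this equals $\frac12\<P_cG_{\mathbf{m}},\zeta_A w\>=\frac12\<\zeta_A P_cG_{\mathbf{m}},w\>$; since $\zeta_A\le 1$ and $P_cG_{\mathbf{m}}$ has exponential decay stronger than $e^{-a\<x\>/10}$, the Cauchy--Schwarz estimate $\le\|e^{a\<x\>/10}\zeta_A P_cG_{\mathbf{m}}\|_{L^2}\|w\|_{L^2_{-a/10}}\lesssim\|w\|_{L^2_{-a/10}}$ closes it. The first-order term $\<P_cG_{\mathbf{m}},\varphi_A\partial_x\widetilde{\eta}\>$ I would handle by integration by parts, moving $\partial_x$ onto $\varphi_A P_cG_{\mathbf{m}}$: this produces $-\<\partial_x(\varphi_A P_cG_{\mathbf{m}}),\widetilde{\eta}\>$, and since $\varphi_A$ is bounded (with $|\varphi_A|\lesssim A$, but more importantly it grows at most linearly while $\partial_x(P_cG_{\mathbf{m}})$ and $P_cG_{\mathbf{m}}$ decay exponentially) and $\varphi_A'=\zeta_A^2$, the function $\partial_x(\varphi_A P_cG_{\mathbf{m}})=\zeta_A^2 P_cG_{\mathbf{m}}+\varphi_A\partial_x(P_cG_{\mathbf{m}})$ still has exponential decay beating $e^{-a\<x\>/10}$. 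Writing $\widetilde{\eta}=\zeta_A^{-1}w$ and using that on the support considerations $\zeta_A^{-1}e^{-c|x|}$ is still integrable-against-$L^2$ with weight $e^{-a\<x\>/10}$ (because $\zeta_A^{-1}\le e^{|x|/A}$ grows much slower than the exponential decay of $G_{\mathbf{m}}$, as $A$ is large but fixed and $a$ is fixed with the decay rate of $G_{\mathbf{m}}$ being $\ge 2a > a/10 + 1/A$ effectively), Cauchy--Schwarz against $\|w\|_{L^2_{-a/10}}$ again gives the bound.

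The one point requiring care, and the main (minor) obstacle, is bookkeeping the competition between the linear/at-most-linear growth of $\varphi_A$ and the factor $\zeta_A^{-1}$ (which grows like $e^{|x|/A}$ away from the origin) against the exponential decay $e^{-2a\<x\>}$ of $G_{\mathbf{m}}$: one must check that $2a>\tfrac{a}{10}+\tfrac1A$ — which holds since $a\le 2^{-1}\min(a_0,a_1)$ is fixed and $A$ is taken large — so that the product $\varphi_A\zeta_A^{-1}e^{a\<x\>/10}P_cG_{\mathbf{m}}$ is in $L^2$ with an $A$-independent (or harmlessly $A$-dependent) bound; since the implicit constant in the lemma is allowed to depend on $A$ (it is absorbed in $\lesssim$ at this stage of the paper, cf.\ \eqref{eq:relABg}), even a polynomial-in-$A$ loss is acceptable. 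All other steps are routine integrations by parts and Cauchy--Schwarz.
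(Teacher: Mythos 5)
Your argument is correct and is essentially the paper's proof: the paper compresses the whole thing into the single observation that $\|\zeta_A^{-1}S_AP_cG_{\mathbf{m}}\|_{L^2_{a/10}}\lesssim 1$ (uniformly in large $A$, since $\varphi_A$ grows at most linearly, $\zeta_A^{-1}\le e^{|x|/A}$, and $G_{\mathbf{m}}\in\Sigma^s$ decays like $e^{-2a\<x\>}$), after which Cauchy--Schwarz against $w=\zeta_A\widetilde\eta$ gives the claim. Your integration by parts on the $\varphi_A\partial_x$ term is just the anti-symmetry of $S_A$ that the paper uses implicitly, and your decay bookkeeping is exactly the content of that one-line claim.
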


\begin{proof}
Since $\|\zeta_A^{-1}S_A P_c G_{\mathbf{m}}\|_{L^2_{\frac{a}{10}}}\lesssim 1$, the conclusion is obvious.
\end{proof}

\begin{lemma}\label{lem:1stv3}
We have
\begin{align}\label{eq:Retaest}
|\<\mathcal{R}_{\widetilde{\eta}},S_A\widetilde{\eta}\>|\lesssim \delta ^2\( \(\sum_{\mathbf{m}\in \mathbf{R}_{\mathrm{min}}}|\mathbf{z}^{\mathbf{m}}|+ \|\dot {\mathbf{z}}+\im \boldsymbol{\varpi}(\mathbf{z})\mathbf{z}\|\) \| w   \|_{L^2_{-\frac{a}{10}}}+\|w   \|_{L^2_{-\frac{a}{10}}}^2\)+\delta ^{2/3}\|w'\|_{L^2}^2.
\end{align}
\end{lemma}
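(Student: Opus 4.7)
The plan is to split $\mathcal{R}_{\widetilde{\eta}}$ in \eqref{eq:Reta} into the five natural summands: (i) the modulation term $-\im D_{\mathbf{z}}\phi[\mathbf{z}](\dot{\mathbf{z}}+\im\boldsymbol{\varpi}(\mathbf{z})\mathbf{z})$, (ii) the refined-profile residual $\mathcal{R}_{\mathrm{rp}}[\mathbf{z}]$, (iii) the quadratic localized source $F[\mathbf{z},\eta]$, (iv) the linear localized error $L[\mathbf{z}]\eta$, and (v) the cubic, long-range term $|\eta|^2\eta$, and to pair each against $S_A\widetilde{\eta}=\tfrac12\varphi_A'\widetilde\eta+\varphi_A\widetilde\eta'$ separately.

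For (i)--(iv), the integrands are spatially localized and each carries at least two powers of smallness. The modulation piece, once projected by $P_c$, is $O(\|\mathbf{z}\|^2)$ in $\Sigma^s$ because $D_{\mathbf{z}}\phi[0]\mathbf{e}^j=\phi_j$ lies in the range of $P_d$ and $D^2_\mathbf{z}\phi[0]=0$ by Proposition \ref{prop:rp}; estimate \eqref{est:Rrp} supplies the factor $\delta^2\sum|\mathbf{z}^{\mathbf{m}}|$ for (ii); and in (iii)--(iv) the prefactor $\phi[\mathbf{z}]$, respectively $|\phi[\mathbf{z}]|^2$ and $\phi[\mathbf{z}]^2$, combined with $\|\eta\|_{L^\infty}\lesssim\delta$, furnishes the required $\delta^2$. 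Pairing with $\varphi_A'\widetilde\eta/2=\zeta_A^2\widetilde\eta/2$ by weighted Cauchy--Schwarz yields the desired $\delta^2(\dots)\|w\|_{L^2_{-a/10}}$ and $\delta^2\|w\|_{L^2_{-a/10}}^2$ contributions. For the pairing with $\varphi_A\widetilde\eta'$ I integrate by parts once; in the linear case (iv) the identity $\mathrm{Re}(\widetilde\eta\bar{\widetilde\eta}')=\tfrac12\partial_x|\widetilde\eta|^2$ produces the clean cancellation
\[
\<|\phi[\mathbf{z}]|^2\widetilde\eta,S_A\widetilde\eta\>=-\tfrac12\int(|\phi[\mathbf{z}]|^2)'\varphi_A|\widetilde\eta|^2,
\]
bounded by $\lesssim\delta^2\|w\|_{L^2_{-a/10}}^2$ (the stray factor of $A$ from $\varphi_A$ is absorbed thanks to the hierarchy \eqref{eq:relABg}); the cross terms from $\phi[\mathbf{z}]^2\mathrm{C}$ and from replacing $\eta$ with $\widetilde\eta+(\eta-\widetilde\eta)$ via Lemma \ref{lem:GNTR} contribute only $O(\delta^{5})$-localized remainders.

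For the cubic term (v), which is not localized and is the principal difficulty, I first write
\[
\<|\eta|^2\eta,S_A\widetilde\eta\>=\<|\eta|^2\eta,S_A\eta\>-\<|\eta|^2\eta,S_A(\eta-\widetilde\eta)\>,
\]
noting that $\eta-\widetilde\eta$ is a finite-rank combination of $\{\phi_j,\im\phi_j\}$ with coefficients of size $O(\|\mathbf{z}\|^2\|\widetilde\eta\|_{L^2})=O(\delta^{3})$ by \eqref{eq:estRa}, so the correction is manifestly harmless. For the main term a direct computation using $\mathrm{Re}(\eta\bar\eta')=\tfrac12\partial_x|\eta|^2$ and one integration by parts collapses the two pieces of $S_A$ into
\[
\<|\eta|^2\eta,S_A\eta\>=\tfrac14\int\zeta_A^2|\eta|^4,
\]
in which the $A$-growing weight $\varphi_A$ has been entirely replaced by the bounded, integrable $\zeta_A^2=\varphi_A'$. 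Combining $\int\zeta_A^2|\eta|^4\le\|\zeta_A\eta\|_{L^\infty}^2\|\eta\|_{L^2}^2$ with $\|\zeta_A\eta\|_{L^\infty}^2=\|w\|_{L^\infty}^2+O(\delta^{6})$, the 1D Gagliardo--Nirenberg inequality $\|w\|_{L^\infty}^2\le 2\|w\|_{L^2}\|w'\|_{L^2}\lesssim\delta\|w'\|_{L^2}$ and $\|\eta\|_{L^2}\lesssim\delta$ yield $\int\zeta_A^2|\eta|^4\lesssim\delta^{3}\|w'\|_{L^2}$, which Young's inequality splits into $\tfrac12\delta^{2/3}\|w'\|_{L^2}^2$ plus a negligible residual absorbed in the $\delta^2\|w\|_{L^2_{-a/10}}^2$ terms.

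The main obstacle is the cubic piece (v): this is precisely where the 1D long-range character of $|\eta|^2\eta$ interacts badly with the $\varphi_A\sim A\,\mathrm{sgn}(x)$ tail of $S_A$. A naive pairing of $|\eta|^2\eta$ with $\varphi_A\widetilde\eta'$ would produce a term growing with $A$ that cannot be compensated by the $\delta^{2/3}$ coefficient available in front of $\|w'\|_{L^2}^2$. The algebraic cancellation in $\<|\eta|^2\eta,S_A\eta\>=\tfrac14\int\zeta_A^2|\eta|^4$ is the decisive step; it is independent of the sign in front of $|\eta|^2\eta$, consistent with the remark after Theorem \ref{thm:main} that focusing and defocusing cubics are treated identically.
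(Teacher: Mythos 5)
Your decomposition of $\mathcal{R}_{\widetilde{\eta}}$ and your treatment of the localized terms (i)--(iv) follow the paper's proof essentially verbatim: the smallness $\|P_cD_{\mathbf{z}}\phi[\mathbf{z}]\widetilde{\mathbf{z}}\|_{\Sigma^1}\lesssim\delta^2\|\widetilde{\mathbf{z}}\|$, the use of \eqref{est:Rrp}, and the integration by parts converting $\varphi_A\partial_x$ into localized multiplication operators are all there. The identity
\[
\langle|\eta|^2\eta,S_A\eta\rangle=\tfrac14\int\zeta_A^2|\eta|^4
\]
is also exactly the paper's key cancellation (stated there for $\widetilde{\eta}$, with the difference $|\eta|^2\eta-|\widetilde{\eta}|^2\widetilde{\eta}$ handled through an explicit expansion equivalent to your finite-rank correction). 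A minor looseness: in (iv) the factor $A$ is best avoided altogether by using $|\varphi_A(x)|\le|x|$ against the exponential decay of $(|\phi[\mathbf{z}]|^2)'$, rather than "absorbed via the hierarchy," since $A\delta^2\not\lesssim\delta^2$ with an $A$-independent constant.

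The genuine gap is in your final estimate of the quartic integral. From
\[
\int\zeta_A^2|\eta|^4\le\|w\|^2_{L^\infty}\|\eta\|^2_{L^2}\lesssim\delta^2\,\|w\|_{L^2}\|w'\|_{L^2}\lesssim\delta^3\|w'\|_{L^2}
\]
you apply Young's inequality, which yields $\tfrac12\delta^{2/3}\|w'\|^2_{L^2}+\tfrac12\delta^{16/3}$. The residual $\delta^{16/3}$ is an additive constant and cannot be "absorbed in the $\delta^2\|w\|^2_{L^2_{-a/10}}$ terms": every term on the right-hand side of \eqref{eq:Retaest} vanishes with $w$ and $w'$, so there is nothing to absorb it into. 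More importantly, the lemma is used in the differential inequality \eqref{eq:1stvirialdiff}, which is then integrated over $[0,T]$ with $T$ arbitrary; a time-independent remainder $\delta^{16/3}$ contributes $T\delta^{16/3}$ to $\|w'\|^2_{L^2([0,T],L^2)}$, which is unbounded in $T$ and destroys Proposition \ref{prop:1stvirial}. This is precisely the long-range difficulty of the 1D cubic term, and unweighted Gagliardo--Nirenberg followed by Young does not resolve it. The paper closes this step by invoking the sharper interpolation inequality of \cite{KMM3} (see also Lemma 2.7 of \cite{CM19SIMA}), which exploits the specific structure $w=\zeta_A\widetilde{\eta}$ and the hierarchy \eqref{eq:relABg} to bound $\int\zeta_A^2|\widetilde{\eta}|^4$ directly by $\delta^{2/3}\|w'\|^2_{L^2}$ (modulo terms controlled by $\|w\|^2_{L^2_{-\frac{a}{10}}}$), with no additive remainder. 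You must either import that lemma or reprove it; as written, step (v) does not establish \eqref{eq:Retaest}.
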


\begin{proof}
We will estimate the contribution of each term in $\mathcal{R}_{\widetilde{\eta}}$, see \eqref{eq:Reta}.
First, since, by $D_{\mathbf{z}}\phi[0]\widetilde{\mathbf{z}}=\boldsymbol{\phi}\cdot \widetilde{\mathbf{z}}$
\begin{align}\label{eq:Reta10}\|P_c D_{\mathbf{z}}\phi[\mathbf{z}] \widetilde{\mathbf{z}}\|_{\Sigma^1}= \|P_c \( D_{\mathbf{z}}\phi[\mathbf{z}] -D_{\mathbf{z}}\phi[0]\) \widetilde{\mathbf{z}}\|_{\Sigma^1}\lesssim \delta ^2\|\widetilde{\mathbf{z}}\| ,
\end{align}
 we have
\begin{align}\label{eq:Reta1}
|\<P_c\(-\im D_{\mathbf{z}}\phi[\mathbf{z}]\( \dot {\mathbf{z}}+\im \boldsymbol{\varpi}(\mathbf{z})\mathbf{z}\)\),S_A\widetilde{\eta}\>|\lesssim \delta  ^2\|\dot {\mathbf{z}}+\im \boldsymbol{\varpi}(\mathbf{z})\mathbf{z}\|  \|w   \|_{L^2_{-\frac{a}{10}}}.
\end{align}
Next, from \eqref{est:Rrp}, we have
\begin{align}\label{eq:Reta2}
|\<P_c \mathcal{R}_{\mathrm{rp}}[\mathbf{z}],S_A\widetilde{\eta}\>|\lesssim \delta  ^2 \sum_{\mathbf{m}\in \mathbf{R}_{\min}}|\mathbf{z}^{\mathbf{m}}| \|w   \|_{L^2_{-\frac{a}{10}}}.
\end{align}
We next estimate the contribution of $P_c\(F[\mathbf{z},\eta]+ L[\mathbf{z}]\eta   +|\eta |^2 \eta  \)$.
First, since $P_c=1-P_d$ and $\|P_d S_A \widetilde{\eta}\|_{\Sigma^1}\lesssim \|w  \|_{L^2_{-\frac{a}{10}}}$, we have
\begin{align}
|\<P_d\(F[\mathbf{z},\eta]+ L[\mathbf{z}]\eta   +|\eta |^2 \eta \),S_A\widetilde{\eta}\>|&\lesssim \|F[\mathbf{z},\eta]+ L[\mathbf{z}]\eta   +|\eta |^2 \eta \|_{\Sigma^{-1}}\|w  \|_{L^2_{-\frac{a}{10}}}\nonumber\\&\lesssim \delta  ^2 \|w \|_{L^2_{-\frac{a}{10}}}^2.\label{eq:1stvdisc}
\end{align}
Next, by elementary integration by parts we have
\begin{align*}
 \<|\widetilde{\eta}|^2\widetilde{\eta},S_A \widetilde{\eta}\> = 2^{-1}\<|\widetilde{\eta}|^4, \varphi _A'\>  +  2^{-2}\< \( |\widetilde{\eta}|^4 \) ', \varphi _A\> = 4^{-1}\<|\widetilde{\eta}|^4, \zeta _A^2\>
\end{align*}
and by \cite{KMM3} and \eqref{eq:main1}, see also Lemma 2.7 \cite{CM19SIMA},   we have
\begin{align}\label{eq:1stvcub}
|\<|\widetilde{\eta}|^2\widetilde{\eta},S_A \widetilde{\eta}\>|\lesssim \delta  ^{2/3}\| w '\|_{L^2}^2.
\end{align}
For the remaining terms, by $\eta = \widetilde{\eta}+\widetilde{\eta_1}$ with $\widetilde{\eta}_1=(R[z]-1)\widetilde{\eta}$, we can expand
\begin{align}
&F[\mathbf{z},\eta]+ L[\mathbf{z}]\eta   +|\eta |^2 \eta-|\widetilde{\eta}|^2\widetilde{\eta}  =v_0 \widetilde{\eta_1}+v_1\overline{\widetilde{\eta_1}}+v_2 \widetilde{\eta}+v_3\overline{\widetilde{\eta}}+v_4\widetilde{\eta}^2+v_5|\widetilde{\eta}|^2,\ \text{where}\nonumber \\
&v_0=2|\phi[\mathbf{z}]|^2+2\phi[\mathbf{z}]\overline{\widetilde{\eta_1}}+\overline{\phi[\mathbf{z}]}\widetilde{\eta_1}+|\widetilde{\eta_1}|^2,\ v_1=\phi[\mathbf{z}]^2,\
v_2=2|\phi[\mathbf{z}]|^2+2\phi[\mathbf{z}]\overline{\widetilde{\eta_1}}+\overline{\phi[\mathbf{z}]}\widetilde{\eta_1}+2|\widetilde{\eta_1}|,\nonumber \\
&v_3=\phi[\mathbf{z}]^2+2\phi[\mathbf{z}]\widetilde{\eta_1}+\widetilde{\eta_1}^2,\
v_4=\overline{\phi[\mathbf{z}]}+\overline{\widetilde{\eta_1}},\
v_5=2\phi[\mathbf{z}]+\widetilde{\eta_1}. \label{eq:F1stvirial}
\end{align}
By Lemma \ref{lem:GNTR}, we have  $\|\widetilde{\eta_1}\|_{\Sigma^1}\lesssim \|w \|_{L^2_{-\frac{a}{10}}}$ and
\begin{align}
&  \text{$\|v_j\|_{\Sigma^1}\lesssim \delta ^2$ for $j=0,1,2,3$ and $\|v_j\|_{\Sigma^1}\lesssim \delta  $ for $j=4,5$.}    \label{eq:vjstvirial}
\end{align}
Thus, we have
\begin{align}
&|\<v_0\widetilde{\eta_1}+v_1\overline{\widetilde{\eta_1}},S_A\widetilde{\eta}\>|\lesssim \|v_0\widetilde{\eta_1}+v_1\overline{\widetilde{\eta_1}}\|_{L^2_{\frac{a}{5}}}\|w   \|_{L^2_{-\frac{a}{10}}}\lesssim \delta  ^2 \|w  \|_{L^2_{-\frac{a}{10}}}^2,\label{eq:v1quad1}\\
&|\<v_2\widetilde{\eta}+v_3\overline{\widetilde{\eta}}+v_4\widetilde{\eta}^2+v_5|\widetilde{\eta}|^2,\varphi_A'\widetilde{\eta}\>|\lesssim \delta ^2 \|  w  \|_{L^2_{-\frac{a}{10}}}^2,\label{eq:v1quad2}
\end{align}
and
\begin{align}
&\left|\<\(v_2+v_4\widetilde{\eta}+v_5\overline{\widetilde{\eta}}\)\widetilde{\eta},\varphi_A\partial_x\widetilde{\eta}\>\right|=\frac{1}{2}\left|\int \zeta_A^{-2}\partial_x\(\varphi_A\(v_2+v_4\widetilde{\eta}+v_5\overline{\widetilde{\eta}}\)\)|w |^2\right|\lesssim \delta  ^2\|w  \|_{L^2_{-\frac{a}{10}}}^2,\label{eq:v1quad3}\\
&|\<v_3\overline{\widetilde{\eta}},\varphi_A \partial_x\widetilde{\eta}\>|=\frac{1}{2}\left|\int \zeta_A^{-2} \partial_x\(\varphi_A v_3\)\overline{w}^2\right|\lesssim \delta  ^2\|w  \|_{L^2_{-\frac{a}{10}}}^2.\label{eq:v1quad4}
\end{align}
Combining \eqref{eq:1stvdisc}, \eqref{eq:1stvcub}, \eqref{eq:v1quad1}, \eqref{eq:v1quad2}, \eqref{eq:v1quad3} and \eqref{eq:v1quad4} we obtain
\begin{align}\label{eq:estF}
|\<F[\mathbf{z},\eta]+ L[\mathbf{z}]\eta   +|\eta |^2 \eta-|\widetilde{\eta}|^2\widetilde{\eta},S_A\widetilde{\eta}\>|\lesssim \delta  ^2\|   w \|_{L^2_{-\frac{a}{10}}}^2+\delta  ^{2/3}\|   w  '\|_{L^2}.
\end{align}

Therefore, from \eqref{eq:Reta1}, \eqref{eq:Reta2} and \eqref{eq:estF} we have the conclusion.
\end{proof}

\begin{proof}[Proof of Proposition \ref{prop:1stvirial2}]
By \eqref{eq:1stv1}, Lemmas \ref{lem:1stv1}, \ref{lem:1stv2} and \ref{lem:1stv3} we obtain the estimate \eqref{eq:1stvirialdiff} for sufficiently small $\delta>0$ and large $A$, satisfying \eqref{eq:relABg}.
\end{proof}

%\begin{remark}For the proof of \ref{prop:1stvirial2}, we have not used the bootstrap bound \eqref{eq:main2}.\end{remark}

Before proving  Proposition \ref{prop:2ndvirial} we need some technical preliminaries, which we state in Sect.\  \ref{sec:Preliminary}.

\section{Technical estimates }\label{sec:Preliminary}

In this section, we collect estimates used in the sequel of the paper.

\begin{lemma}\label{lem:equiv_rho}  Let $U\ge 0$ be a non--zero   potential $U\in L^1(\R , \R )$. Then
   there exists a constant $C  _{U}>0$ such that
for any    function $0\le W$  such that $ \<x\> W\in L^1(\R )$ then
 \begin{align}\label{eq:lem:rhoequiv}
& \<  W f,f\> \le    C_U \| \<x\> W  \| _{L^1(\R )}  \< (-\partial _x^2+ U )f,f\>.\end{align}
In particular,  for  $a>0$  the constant in the norm $\|\cdot\|_{\widetilde{\Sigma}}$ in \eqref{eq:norm_rho},   there exists a constant $C (a )>0$ such that
 \begin{align}\label{eq:lem:rhoequiva}
& \<  W f,f\> \le    C(a ) \| \<x\> W  \| _{L^1(\R )}   \|f\|_{\widetilde{\Sigma}}^2.\end{align}
\end{lemma}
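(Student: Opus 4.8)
The plan is to prove the main inequality \eqref{eq:lem:rhoequiv} by a one-dimensional Sobolev-type estimate that controls the pointwise values of $f$ by the quadratic form $\langle(-\partial_x^2+U)f,f\rangle$, and then to integrate against the weight $W$. First I would record the basic fact that, since $U\ge 0$ is not identically zero and lies in $L^1$, the quadratic form $q(f):=\langle(-\partial_x^2+U)f,f\rangle=\|f'\|_{L^2}^2+\int U|f|^2\,dx$ is nonnegative and controls $\|f'\|_{L^2}^2$ from above. The key elementary step is a weighted embedding of the form
\begin{align}\label{eq:planpointwise}
\|f\|_{L^\infty(\R)}^2\lesssim_U q(f),
\end{align}
which holds in one dimension: indeed, for any $x_0$ in the support of $U$ one has $|f(x_0)|^2\lesssim_U \int U|f|^2$ (by a mean-value/averaging argument on a fixed interval where $\int U>0$), and then for general $x$, $|f(x)|^2 = |f(x_0)|^2 + \int_{x_0}^{x}\partial_y(|f|^2)\,dy \le |f(x_0)|^2 + 2\|f\|_{L^2(\mathrm{loc})}\|f'\|_{L^2}$; the $\|f\|_{L^2}$-type term is absorbed back using Cauchy--Schwarz against $q(f)$ after a further interpolation. (A cleaner route is to first establish $\|f\|_{L^2(I_0)}^2\lesssim_U q(f)$ on the interval $I_0$ where $U$ is bounded below, then propagate via the fundamental theorem of calculus and Cauchy--Schwarz to get $\|f\|_{L^\infty}^2 \lesssim_U q(f)$ directly.)

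Granting \eqref{eq:planpointwise}, the conclusion \eqref{eq:lem:rhoequiv} is immediate:
\begin{align}\label{eq:planconclusion}
\langle Wf,f\rangle = \int_{\R} W|f|^2\,dx \le \|f\|_{L^\infty}^2\int_{\R} W\,dx \le \|\langle x\rangle W\|_{L^1}\,\|f\|_{L^\infty}^2 \lesssim_U \|\langle x\rangle W\|_{L^1}\, q(f),
\end{align}
which is the asserted bound with $C_U$ the implicit constant from \eqref{eq:planpointwise}. For the ``in particular'' statement \eqref{eq:lem:rhoequiva}, one applies the general inequality with the specific choice $U(x)=\sech^2(ax/10)$, which is a nonzero nonnegative $L^1$ potential, and uses the stated norm equivalence $\langle(-\partial_x^2+\sech^2(ax/10))f,f\rangle\sim\|f\|_{\widetilde\Sigma}^2$ from \eqref{eq:norm_rho}; this produces the constant $C(a)$. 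Note one does not even need the full equivalence, only that $q(f)$ there equals $\|f\|_{\widetilde\Sigma}^2$ by definition of the $\widetilde\Sigma$-norm.

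The main obstacle is the pointwise estimate \eqref{eq:planpointwise}: one must be careful that the constant depends only on $U$ (through, say, a lower bound $\int_{I_0}U\,dx$ on a fixed compact interval $I_0$ and $\|U\|_{L^1}$) and not on $f$, and that no decay of $f$ at infinity is assumed beyond $f\in\dot H^1$ with $q(f)<\infty$ — so the argument must genuinely use that $\int U|f|^2$ pins down the size of $f$ somewhere, preventing the additive constant in $|f(x)|^2=|f(x_0)|^2+\int_{x_0}^x(|f|^2)'$ from being uncontrolled. Everything else is routine: Cauchy--Schwarz, the fundamental theorem of calculus, and the trivial bound $\int W\le\|\langle x\rangle W\|_{L^1}$.
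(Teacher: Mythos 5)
There is a genuine gap: the key intermediate claim \eqref{eq:planpointwise}, namely $\|f\|_{L^\infty(\R)}^2\lesssim_U \langle(-\partial_x^2+U)f,f\rangle$, is false. Take $U$ supported in $[-1,1]$ and $f_n$ equal to $0$ on $[-n,n]$, equal to $1$ for $|x|\ge 2n$, and linear in between: then $\|f_n'\|_{L^2}^2=2/n\to 0$ and $\int U|f_n|^2=0$, yet $\|f_n\|_{L^\infty}=1$. (One can even have $q(f)<\infty$ with $f$ unbounded, e.g.\ $f\sim\log\log x$ at infinity.) The only uniform pointwise bound available from the two ingredients you correctly identify — the averaged bound $|f(x_0)|^2\le I_U^{-1}\int U|f|^2$ at some $x_0$ in a fixed compact interval, and the fundamental theorem of calculus with Cauchy--Schwarz, $|f(x)-f(x_0)|\le|x-x_0|^{1/2}\|f'\|_{L^2}$ — is $|f(x)|^2\le 2\bigl(|x-x_0|\,\|f'\|_{L^2}^2+I_U^{-1}\langle Uf,f\rangle\bigr)$, which grows linearly in $|x|$. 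Your attempt to ``absorb the $\|f\|_{L^2(\mathrm{loc})}$-type term back by a further interpolation'' cannot remove this growth, since nothing in $q(f)$ controls $f$ far from the support of $U$ beyond the $|x-x_0|^{1/2}\|f'\|_{L^2}$ drift.

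This is precisely why the lemma is stated with $\|\langle x\rangle W\|_{L^1}$ rather than $\|W\|_{L^1}$: the paper multiplies the pointwise bound $W(x)|f(x)|^2\le C_U\,\langle x\rangle W(x)\,\langle(-\partial_x^2+U)f,f\rangle$ by $W$ \emph{before} integrating, so the linear growth in $x$ is paid for by the weight on $W$. Your closing remark that $\int W\le\|\langle x\rangle W\|_{L^1}$ is ``trivial slack'' is the tell that the weight's role was missed. The fix is short: replace \eqref{eq:planpointwise} by the $x$-dependent bound above and integrate against $W$; the rest of your argument (including the deduction of \eqref{eq:lem:rhoequiva} with $U=\sech^2(ax/10)$) then goes through and coincides with the paper's proof.
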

\proof Let $J$ be a compact interval where $I  _{U}:= \int _{J}U(x) dx>0$.
  Let then  $x_0\in J$ s.t.
\begin{align*}
 |f(x_0)|^2\le I  _{U } ^{-1} \int _{J}|f(x )|^2U(x)  dx .
\end{align*}
Then,
\begin{align*}
  |f(x )| \le  |x-x_0|   ^{\frac{1}{2}}     \| f' \| _{L^2(\R )} + |f(x_0 )| \le  |x-x_0|   ^{\frac{1}{2}}     \| f' \| _{L^2(\R )} +I _U ^{-1/2} \<  U f,f\> ^{\frac{1}{2}}.
\end{align*}
Taking second power and multiplying by $W$
it is easy to conclude the following, which after integration yields  \eqref{eq:lem:rhoequiv},
\begin{align*}
W(x)  |f(x )| ^2\le C_U \<x\> W (x)    \< (-\partial _x^2+ U) f,f\> \text{ where }C_U=2\(  1+|x_0| + I _U ^{-1}   \).
\end{align*}
\qed

A direct consequence of Lemma \ref{lem:equiv_rho} is the following.
Recall $A\gg B^2\gg B\gg a^{-1}$, with $a$ like in the previous lemma.

\begin{corollary}\label{cor:rhoequiv}
We have
\begin{align*}
\|\widetilde{\eta}\|_{L^{2}_{-\frac{a}{10}}}+\|w  \|_{L^2_{-\frac{a}{20}}}\lesssim \|w \|_{\widetilde{\Sigma}} \text{   and }
\|\zeta_B^{-1}\xi \|_{L^2_{-\frac{a}{10}}}\lesssim \|\xi \|_{\widetilde{\Sigma}}.
\end{align*}
\end{corollary}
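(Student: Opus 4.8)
The corollary is an immediate consequence of Lemma \ref{lem:equiv_rho} applied with the compactly--supported inner potential $U(x)=\sech^2(ax/10)$, so that by \eqref{eq:lem:rhoequiva} every term of the form $\langle W f,f\rangle$ with $\langle x\rangle W\in L^1$ is bounded by $C(a)\|\langle x\rangle W\|_{L^1}\|f\|_{\widetilde\Sigma}^2$. The only work is to rewrite each left--hand side norm as such a quadratic form and to check that the relevant weight lies in $\langle x\rangle^{-1}L^1$, using the relations $A\gg B^2\gg B\gg a^{-1}$ from \eqref{eq:relABg}.

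For the first summand I would write $\|\widetilde\eta\|_{L^2_{-a/10}}^2=\langle e^{-(a/5)\langle x\rangle}\zeta_A^{-2}\, w,w\rangle$, recalling $w=\zeta_A\widetilde\eta$ from \eqref{def:wAxiB}; since $\zeta_A(x)=\exp(-|x|(1-\chi(x))/A)\le 1$ has $\zeta_A^{-2}(x)=e^{2|x|(1-\chi(x))/A}$ growing only subexponentially in $|x|/A$ with $A$ large, the weight $W(x)=e^{-(a/5)\langle x\rangle}\zeta_A^{-2}(x)$ still decays exponentially (here one uses $a^{-1}\ll A$ so that the factor $e^{-a\langle x\rangle/5}$ dominates $\zeta_A^{-2}$), hence $\langle x\rangle W\in L^1$ with $L^1$--norm bounded uniformly in $A$; Lemma \ref{lem:equiv_rho} then gives the bound by $\|w\|_{\widetilde\Sigma}^2$. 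The term $\|w\|_{L^2_{-a/20}}^2=\langle e^{-(a/10)\langle x\rangle}w,w\rangle$ is even simpler: here $W(x)=e^{-(a/10)\langle x\rangle}$ manifestly satisfies $\langle x\rangle W\in L^1$, and moreover $\langle x\rangle W$ is dominated by the weight $\sech^2(ax/10)$ up to a constant (both being exponentially decaying, with $\sech^2$ decaying like $e^{-a|x|/5}$, which is stronger), so in fact $\|w\|_{L^2_{-a/20}}^2\lesssim \langle \sech^2(a\cdot/10)\,w,w\rangle\le\|w\|_{\widetilde\Sigma}^2$ directly from the equivalence in \eqref{eq:norm_rho}, bypassing Lemma \ref{lem:equiv_rho} altogether.

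For the last inequality I would set $f=\zeta_B^{-1}\xi$ and note that $\xi=\chi_{B^2}\zeta_B v$ from \eqref{def:wAxiB}, so $\zeta_B^{-1}\xi=\chi_{B^2}v$ is supported in $|x|\le 2B^2$; thus $\|\zeta_B^{-1}\xi\|_{L^2_{-a/10}}^2=\langle e^{-(a/5)\langle x\rangle}\, f,f\rangle$ with $f$ compactly supported. One then applies Lemma \ref{lem:equiv_rho} with $W(x)=e^{-(a/5)\langle x\rangle}$ to the function $\xi$ after absorbing $\zeta_B^{\pm1}$: writing $|\zeta_B^{-1}(x)\xi(x)|^2 = \zeta_B^{-2}(x)|\xi(x)|^2$ and using that $\zeta_B^{-2}(x)\le e^{2|x|/B}$ is again subexponential with $B\gg a^{-1}$, the weight $\zeta_B^{-2}(x)e^{-(a/5)\langle x\rangle}$ still has finite $\langle x\rangle L^1$ norm, uniformly in $B$; Lemma \ref{lem:equiv_rho} bounds $\langle \zeta_B^{-2}e^{-(a/5)\langle x\rangle}\xi,\xi\rangle$ by $C(a)\|\xi\|_{\widetilde\Sigma}^2$, which is the claim.

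\textbf{Main obstacle.} There is no genuine difficulty here; the only point requiring a moment's care is the bookkeeping of the conjugating weights $\zeta_A^{-2}$ and $\zeta_B^{-2}$, which grow as $e^{2|x|/A}$ and $e^{2|x|/B}$ respectively. One must verify that the scale $a^{-1}$ of the fixed exponential decay supplied by the weighted space is much smaller than the scales $A$ and $B$ at which these conjugating factors blow up, so that the products $\langle x\rangle e^{-(a/5)\langle x\rangle}\zeta_A^{-2}$ and $\langle x\rangle e^{-(a/5)\langle x\rangle}\zeta_B^{-2}$ still have $L^1$ norms bounded \emph{independently} of $A,B$; this is exactly what the ordering $A\gg B^2\gg B\gg a^{-1}$ in \eqref{eq:relABg} guarantees, and it is why the implicit constants in the corollary do not degrade.
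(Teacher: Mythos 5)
Your argument is the paper's: the corollary is stated there as a direct consequence of Lemma \ref{lem:equiv_rho}, and your reduction of each norm to a quadratic form $\langle Wf,f\rangle$ with $\langle x\rangle W\in L^1(\R)$ uniformly in $A,B$ (using $A,B\gg a^{-1}$ to absorb $\zeta_A^{-2}\le e^{2|x|/A}$ and $\zeta_B^{-2}\le e^{2|x|/B}$) is exactly what is intended. One side-remark is wrong, though harmless: for the middle term you claim $\langle x\rangle e^{-\frac{a}{10}\langle x\rangle}$ is dominated by $\sech^2(ax/10)$ so that Lemma \ref{lem:equiv_rho} can be bypassed; the domination goes the other way, since $\sech^2(ax/10)\sim e^{-\frac{a}{5}|x|}$ decays \emph{faster} than $e^{-\frac{a}{10}\langle x\rangle}$, so no pointwise comparison with the potential term of $\|\cdot\|_{\widetilde\Sigma}$ is available and the $-\partial_x^2$ part of the quadratic form (i.e.\ Lemma \ref{lem:equiv_rho} itself) is genuinely needed there as well. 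Since you had already verified $\langle x\rangle W\in L^1$ for that weight, the lemma applies and the proof stands; just delete the bypass.
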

We will use the following standard fact.
\begin{lemma}\label{lem:pd01}  Consider a  0   order Pseudodifferential Operator ($\Psi$DO)
\begin{align}\label{eq:pd011}
&  p(x,\im \partial _x)f(x) = \int _{\R ^2 } e^{ -\im k  ( x-y)}p(x,k)  f(y) dk dy
\end{align}
with symbol $p(x,k)$ such that
\begin{align}\label{eq:pd0110}
&  | \partial _x^\alpha  \partial _k^\beta   p(x,k) |\le C_{\alpha\beta}\<k  \>^{-\beta} \text{ for all $(x,k)\in \R ^2$ and for all $(\alpha,\beta).$}
\end{align}
Then, for any $m\in \R$ and for any $\varepsilon \in (0,1]$
\begin{align}\label{eq:pd012}
&  \|  \< \varepsilon  x  \>^{-m}   p(\varepsilon x,\im \partial _x)f \| _{L^2(\R )}\le  C_m  \|  \< \varepsilon x  \>^{-m}   f \| _{L^2(\R )} \text{  for all $f \in L^2(\R )$,}
\end{align}
where each constant $C_m$ depends on finitely many of the constants $C_{\alpha\beta}$ and is independent from $\varepsilon \in (0,1]$.
\end{lemma}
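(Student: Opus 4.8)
The plan is to reduce the claim to the Calder\'on--Vaillancourt $L^2$--boundedness theorem for the rescaled symbol, combined with a Schur test that exploits the slow variation of the weight $\langle\varepsilon x\rangle^{-m}$ on the scale of the kernel. First I would set $p_\varepsilon(x,k):=p(\varepsilon x,k)$, so that $p(\varepsilon x,\im\partial_x)=p_\varepsilon(x,\im\partial_x)$. Since $\partial_x^\alpha\partial_k^\beta p_\varepsilon(x,k)=\varepsilon^{|\alpha|}(\partial_x^\alpha\partial_k^\beta p)(\varepsilon x,k)$ and $\varepsilon\le 1$, hypothesis \eqref{eq:pd0110} gives $|\partial_x^\alpha\partial_k^\beta p_\varepsilon(x,k)|\le C_{\alpha\beta}\langle k\rangle^{-\beta}$ with the very same constants, uniformly in $\varepsilon\in(0,1]$; in particular the seminorms $\sup_{x,k}|\partial_x^\alpha\partial_k^\beta p_\varepsilon|\le C_{\alpha\beta}$ are uniform, so Calder\'on--Vaillancourt yields $\|p_\varepsilon(x,\im\partial_x)\|_{L^2\to L^2}\le C$ uniformly in $\varepsilon$ (this already settles $m=0$). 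Writing $K_\varepsilon(x,y)=\int e^{-\im k(x-y)}p_\varepsilon(x,k)\,dk$ for the Schwartz kernel, $N$ integrations by parts in $k$ together with $|\partial_k^N p_\varepsilon|\lesssim_N\langle k\rangle^{-N}$ give, for every $N\ge 2$, the uniform bound $|K_\varepsilon(x,y)|\lesssim_N|x-y|^{-N}$ on $|x-y|\ge 1$, while one integration by parts gives the uniform bound $|(x-y)K_\varepsilon(x,y)|\lesssim 1+\bigl|\log|x-y|\bigr|$ on $0<|x-y|\le 1$.

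For $m\ne 0$ it then suffices to show that $\widetilde P_\varepsilon:=\langle\varepsilon x\rangle^{-m}p_\varepsilon(x,\im\partial_x)\langle\varepsilon x\rangle^{m}$ is bounded on $L^2$ uniformly in $\varepsilon$, because then $\|\langle\varepsilon x\rangle^{-m}p_\varepsilon(x,\im\partial_x)f\|_{L^2}=\|\widetilde P_\varepsilon(\langle\varepsilon x\rangle^{-m}f)\|_{L^2}\lesssim\|\langle\varepsilon x\rangle^{-m}f\|_{L^2}$, which is \eqref{eq:pd012}. The kernel of $\widetilde P_\varepsilon$ is $\widetilde K_\varepsilon(x,y)=\langle\varepsilon x\rangle^{-m}\langle\varepsilon y\rangle^{m}K_\varepsilon(x,y)$, and I would split it according to $|x-y|>1$ and $|x-y|\le 1$. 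On $|x-y|>1$, Peetre's inequality and $\varepsilon\le 1$ give $\langle\varepsilon x\rangle^{-m}\langle\varepsilon y\rangle^{m}\lesssim_m\langle\varepsilon(x-y)\rangle^{|m|}\le\langle x-y\rangle^{|m|}$, so $|\widetilde K_\varepsilon(x,y)|\lesssim_N\langle x-y\rangle^{|m|}|x-y|^{-N}$ there; choosing $N>|m|+1$ makes this $L^1$ in each variable, uniformly in $\varepsilon$, and Schur's test applies.

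The substantive point is the region $|x-y|\le 1$, where the order-$0$ kernel $K_\varepsilon$ is merely a Calder\'on--Zygmund kernel and cannot be estimated pointwise. Here I would use that the weight is slowly varying: writing $\langle\varepsilon y\rangle^{m}=\langle\varepsilon x\rangle^{m}+(y-x)\int_0^1 m\,\varepsilon^2\bigl(x+s(y-x)\bigr)\langle\varepsilon(x+s(y-x))\rangle^{m-2}\,ds$ and using Peetre's inequality on $|x-y|\le 1$, one gets $\langle\varepsilon x\rangle^{-m}\langle\varepsilon y\rangle^{m}=1+(y-x)g_\varepsilon(x,y)$ with $|g_\varepsilon(x,y)|\lesssim\varepsilon\langle\varepsilon x\rangle^{-1}\lesssim 1$ uniformly in $\varepsilon$ and in $(x,y)$ with $|x-y|\le 1$. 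Hence on $|x-y|\le 1$,
\begin{align*}
\widetilde K_\varepsilon(x,y)=K_\varepsilon(x,y)+g_\varepsilon(x,y)\bigl((y-x)K_\varepsilon(x,y)\bigr).
\end{align*}
The first term is the kernel of $p_\varepsilon(x,\im\partial_x)$ minus its far-diagonal part, so the associated operator is bounded on $L^2$ by the Calder\'on--Vaillancourt bound of the first step together with a Schur estimate (as in the previous paragraph, with $m=0$) for the far part. The second term satisfies $|g_\varepsilon(x,y)\,(y-x)K_\varepsilon(x,y)|\lesssim 1+\bigl|\log|x-y|\bigr|$ on $|x-y|\le 1$ by the first step, which is integrable in each variable with a bound uniform in $\varepsilon$, so Schur's test again gives $L^2$-boundedness. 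Combining the two regions, $\widetilde P_\varepsilon$ is bounded on $L^2$ uniformly in $\varepsilon$, and tracking the estimates shows $C_m$ depends only on finitely many of the $C_{\alpha\beta}$.

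The main obstacle is exactly this near-diagonal analysis: one cannot bound $\widetilde K_\varepsilon$ there directly, and the whole argument hinges on the observation that $\langle\varepsilon x\rangle^{-m}\langle\varepsilon y\rangle^{m}-1$ carries an extra factor $(y-x)$ on $|x-y|\le 1$ — a consequence of $\varepsilon\le 1$, i.e.\ of the weight being constant on the unit scale up to $O(\varepsilon)$ — which upgrades that contribution to (the kernel of) an order-$(-1)$ $\Psi$DO, whose singularity in dimension one is only logarithmic and hence harmless for Schur's test. An alternative to the last two paragraphs, which I would keep in reserve, is to compute the full symbol of $\widetilde P_\varepsilon$ by the symbolic calculus: since the conjugating factor $\langle\varepsilon x\rangle^{m}$ is $k$-independent and each of its $x$-derivatives gains a power of $\varepsilon$ together with a power of $\langle k\rangle^{-1}$ from $\partial_k p_\varepsilon$, the conjugated symbol lies in the same class \eqref{eq:pd0110} uniformly in $\varepsilon$, and Calder\'on--Vaillancourt applies directly; but the kernel route above avoids the delicate remainder estimates inherent to the $S^0_{0,0}$ calculus.
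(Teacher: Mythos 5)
Your argument is correct, and it follows the same overall skeleton as the paper's sketch: split the operator into near-diagonal and far-diagonal parts, handle the far part by pointwise kernel decay plus Schur/Young, and conjugate the near part by the weight. The treatment of the conjugated near-diagonal piece, however, is genuinely different. The paper absorbs the ratio $\langle\varepsilon x\rangle^{-m}\langle\varepsilon x'\rangle^{m}$ (bounded on the support of the cutoff by Peetre's inequality) directly into the amplitude, observes that the resulting amplitude lies in $S^0_{1,0,0}$ uniformly in $\varepsilon\in(0,1]$, and invokes the $L^2$-boundedness theory for such amplitudes from Taylor's book in one stroke. You instead keep the weight ratio separate, expand it as $1+(y-x)g_\varepsilon(x,y)$ with $g_\varepsilon$ uniformly bounded near the diagonal, and reduce to (i) Calder\'on--Vaillancourt for the unweighted operator and (ii) a Schur test on the kernel of an order $-1$ operator, whose singularity in dimension one is only logarithmic. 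Your route is more hands-on and isolates cleanly why $\varepsilon\le 1$ matters (the weight is flat on the unit scale up to $O(\varepsilon)$), at the cost of still needing a Calder\'on--Vaillancourt-type input for the base case $m=0$ --- essentially the same black box the paper uses, applied to a symbol rather than an amplitude. One small imprecision: the bound $|(x-y)K_\varepsilon(x,y)|\lesssim 1+\bigl|\log|x-y|\bigr|$ does not follow from a single integration by parts alone, since the resulting integrand $\partial_k p_\varepsilon$ is only $O(\langle k\rangle^{-1})$ and hence not absolutely integrable in $k$; one also needs the standard splitting at $|k|\sim|x-y|^{-1}$, integrating by parts once more on the high-frequency piece. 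This is routine and does not affect the conclusion.
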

 \textit{Proof (sketch)}.   We write  \begin{align}&
 p ( \varepsilon x, \im \partial _x) f (x) =   P _{1 }f + P_{2 }f \nonumber \\&  P _{j }f (x) =     \int _{\R ^2}e^{\im  k(x-x') }  p (\varepsilon x, k)\chi _j \( x-x'\)  f (x') dkdx' ,\label{eq:pd013}
  \end{align}
with $\chi _1 \in C^\infty _c(\R ,[0,1])$ a cutoff with $ \chi _1=1$ near 0 and with $\chi _2:=1-\chi _1$. Then
\begin{align*}&    \| \( \< \varepsilon x \> ^{-m} P _{1} \< \varepsilon x \> ^{ m} \) \< \varepsilon  x \> ^{-m}   f \| _{L^2(\R )}\lesssim     \|  \< \varepsilon x \> ^{-m}   f \| _{L^2(\R )} ,
  \end{align*}
  because $ \< \varepsilon x \> ^{-m} P _{1 } \< \varepsilon  x  '\> ^{ m}$ is a $\Psi$DO with symbol
 \begin{align*}&    p (\varepsilon  x, k)\chi _1 \( x-x'\) \< \varepsilon x \> ^{- m}\< \varepsilon x' \> ^{ m}\in S^{0}_{1,0,0}(\R \times \R \times \R),
  \end{align*}
   see  Definition 3.5 p. 43 \cite{taylor pdo}, with, for fixed constants $C_{\alpha \alpha '  \beta}$,
  \begin{align*}
&  \left | \partial _x^\alpha \partial _{x'} ^{\alpha '} \partial _k^\beta  \( p ( \varepsilon x, k)\chi _1 \( x-x'\) \< \varepsilon x \> ^{- m}\<  \varepsilon x' \> ^{ m}  \) \right |\le C_{\alpha \alpha '  \beta}\<k  \>^{-\beta} \text{  , }
\end{align*} for all  $(\alpha ,\alpha ', \beta , x,x',k)$ and all $\varepsilon \in (0,1].$
     Then, by the theory in Ch. II \cite{taylor pdo}, this
  $\Psi$DO defines  an operator from $L^2(\R )$ into itself whose norm can  be bounded in terms of finitely many of the $C_{\alpha \alpha '  \beta}$, and so has a finite upper bound independent from   $\varepsilon \in (0,1 ] .$
  We also have, integrating by parts with respect to $k$ in \eqref{eq:pd013} for $j=2$,
\begin{align*}&     \|  \< \varepsilon   x \> ^{-m} P _{2 }     f \| _{L^2(\R )}\lesssim
\left \|  \<  \varepsilon x\> ^{-m}  \int _{\R} \<  x -x'\> ^{-10-m}    | f (x' )| dx'  \right \| _{L^2(\R )}\\&  \lesssim \left \|    \int _{\R} \<  x -x'\> ^{-10}  \<  \varepsilon  x '\> ^{-m}  | f (x' )| dx'  \right \| _{L^2(\R )}\le     \|  \<    x \> ^{-10}     \| _{L^1(\R )} \|  \< \varepsilon  x \> ^{-m}   f \| _{L^2(\R )} ,
  \end{align*}
where   the last inequality follows from Young's inequality for convolutions, and all the constants are independent from $\varepsilon \in (0,1].$ \qed

Following \cite{KMM3} we will will use a regularizing operator $\< \im  \varepsilon \partial _x \> ^{-N} $, with $\varepsilon >0$ a small constant. We will use the following lemma.

\begin{lemma}\label{claim:l2boundII}  Consider a Schwartz function   $\mathcal{V}\in \mathcal{S}(\R, \C)$. Then, for any $L\in \N\cup \{ 0\}$ there exists a constant $C_L$ s.t. we have for all $\varepsilon \in (0,1]$
\begin{align}\label{eq2stestJ21II}
 \|    \< \im \varepsilon  \partial _x \> ^{-N}  [   \mathcal{V} , \< \im \varepsilon  \partial _x \> ^{N} ] \| _{L ^{2,-L}(\R ) \to L ^{2, L}(\R )} \le C_L \varepsilon , \end{align}
 where $L^{2,s}(\R ) $  is defined  in \eqref{eq:withL2}.

  \end{lemma}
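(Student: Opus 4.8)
The plan is to reduce the commutator bound to an application of the pseudodifferential calculus of Lemma~\ref{lem:pd01}, after a rescaling that turns $\varepsilon \partial_x$ into a genuine $\Psi$DO in the variable $\varepsilon x$. First I would observe that the symbol of $\langle \im \varepsilon \partial_x\rangle^{N}$ is $\langle \varepsilon k\rangle^{N} = (1+\varepsilon^2 k^2)^{N/2}$, so $\langle \im\varepsilon\partial_x\rangle^{-N}[\mathcal{V},\langle\im\varepsilon\partial_x\rangle^{N}]$ has, by the standard composition/commutator formula, a full symbol whose leading term is $\frac{1}{\im}\,\langle\varepsilon k\rangle^{-N}\,\partial_k\big(\langle\varepsilon k\rangle^{N}\big)\,\mathcal{V}'(x) = \varepsilon^2 k\,\langle\varepsilon k\rangle^{-2}\,\mathcal{V}'(x)/\im + (\text{lower order})$, and every term in the asymptotic expansion carries at least one factor of $\varepsilon$ coming from one $x$-derivative hitting the Schwartz function $\mathcal{V}$ after the change of variables $x\mapsto \varepsilon x$. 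The key structural point is that the commutator kills the ``$\varepsilon^0$'' part, so the whole operator is $O(\varepsilon)$ in symbol size.

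Concretely, I would change variables: writing $y=\varepsilon x$, the operator $\langle\im\varepsilon\partial_x\rangle^{\pm N}$ becomes the Fourier multiplier $\langle\im\partial_y\rangle^{\pm N}$, and $\mathcal{V}(x)=\mathcal{V}(y/\varepsilon)$ becomes multiplication by a function $\mathcal{V}_\varepsilon(y):=\mathcal{V}(y/\varepsilon)$ whose $y$-derivatives satisfy $\partial_y^\alpha \mathcal{V}_\varepsilon(y) = \varepsilon^{-\alpha}(\partial^\alpha\mathcal{V})(y/\varepsilon)$; since $\mathcal{V}\in\mathcal{S}$, for $\alpha\ge 1$ these are uniformly bounded (indeed rapidly decaying) and the $\alpha=1$ derivative gains a factor $\varepsilon^{-1}$ which will be compensated. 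Applying the exact commutator expansion $\langle\im\partial_y\rangle^{-N}[\mathcal{V}_\varepsilon,\langle\im\partial_y\rangle^{N}] = \sum_{1\le \alpha\le N}\frac{1}{\alpha!}\,(\partial_y^\alpha\mathcal{V}_\varepsilon)\cdot\langle\im\partial_y\rangle^{-N}\,(\im\partial_k)^\alpha\langle\im k\rangle^N\big|_{k=\im\partial_y}$, one sees that the resulting $\Psi$DO has symbol $\sum_\alpha c_\alpha\,\varepsilon^{-\alpha}(\partial^\alpha\mathcal{V})(y/\varepsilon)\,m_\alpha(\partial_y)$ with $m_\alpha(k)=\langle k\rangle^{-N}\partial_k^\alpha\langle k\rangle^N$ bounded together with all derivatives and decaying like $\langle k\rangle^{-\alpha}$. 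Each term is thus a product of a bounded-and-rapidly-decaying function of $y=\varepsilon x$ (after pulling out a factor $\varepsilon$: $\varepsilon^{-\alpha}(\partial^\alpha\mathcal V)(y/\varepsilon)=\varepsilon\cdot \varepsilon^{-\alpha-1}(\partial^\alpha\mathcal V)(y/\varepsilon)$, but better: keep $\varepsilon^{1-\alpha}$ and note $\varepsilon^{1-\alpha}|(\partial^\alpha\mathcal V)(y/\varepsilon)|\le \varepsilon \langle y/\varepsilon\rangle^{-M}$ for any $M$ since $\alpha\ge1$) and a $0$-order Fourier multiplier; by Lemma~\ref{lem:pd01} with the weight $\langle\varepsilon x\rangle^{-m}$ it is bounded on weighted $L^2$ with norm $\lesssim\varepsilon$. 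Reverting to the $x$ variable (which rescales the $L^{2,\pm L}$ norms by a power of $\varepsilon$ that is the same on both sides, hence harmless) gives \eqref{eq2stestJ21II}.

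The main obstacle, and the step deserving the most care, is the bookkeeping that shows the weights $\langle x\rangle^{\pm L}$ can be absorbed: the Fourier multipliers $m_\alpha(\partial_y)$ do not commute with the polynomial weights, so one must either use that $\langle x\rangle^{L}\langle\im\partial_x\rangle^{-N}\langle x\rangle^{-L}$ is again a nice $\Psi$DO (for $L\le$ something, or for all $L$ by iterating, since $\langle\im\partial_x\rangle^{-N}$ is smoothing and $N\ge 1$), or — more cleanly — exploit that $\mathcal{V}$ and all its derivatives are Schwartz, so the kernel of each term decays rapidly in $|x-x'|$ and one can bound the operator directly by Schur's test with the polynomial weights, exactly as in the ``$P_2$'' estimate at the end of the proof of Lemma~\ref{lem:pd01}. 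I would present the Schur-test route since it is the most robust: the nonlocal part of each $m_\alpha(\partial_y)$ has a kernel $O(\langle y-y'\rangle^{-M})$ for every $M$ (it is the Fourier transform of a Schwartz-away-from-origin symbol of order $-\alpha\le-1$, smoothed), the local part is harmless, and the rapidly decaying prefactor $\varepsilon\langle\varepsilon x\rangle^{?}$ together with $\langle y/\varepsilon\rangle^{-M}$ beats any polynomial weight $\langle x\rangle^{L}$ uniformly in $\varepsilon\in(0,1]$.
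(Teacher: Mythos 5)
Your strategy (rescale so that $\varepsilon\partial_x$ becomes a unit-scale derivative, expand the commutator symbolically so that every term carries at least one derivative of $\mathcal V$, and absorb the polynomial weights via the Schwartz decay of $\partial^\alpha\mathcal V$ plus a Schur test) is a legitimate and genuinely different route from the paper's: the paper never expands the commutator symbolically, but instead writes the operator as an explicit integral kernel on the Fourier side, $H(k,\ell)=\langle\varepsilon k\rangle^{-N}\widehat{\mathcal V}(k-\ell)\bigl(\langle\varepsilon k\rangle^{N}-\langle\varepsilon\ell\rangle^{N}\bigr)$, extracts the factor $\varepsilon$ algebraically by multiplying and dividing by $\langle\varepsilon k\rangle^{N}+\langle\varepsilon\ell\rangle^{N}$, and for $L\ge 1$ gains the spatial weights by integrating by parts in $k$ and $\ell$ against the Schwartz factor $\widehat{\mathcal V}(k-\ell)$. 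Your approach trades that explicit algebra for the standard $\Psi$DO calculus; both yield the uniform $O(\varepsilon)$ bound, and yours arguably makes the mechanism (one derivative must fall on $\mathcal V$) more transparent.

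However, as written your central estimate fails because the change of variables is inverted. With $y=\varepsilon x$ one has $\partial_y=\varepsilon^{-1}\partial_x$, so $\langle\im\varepsilon\partial_x\rangle$ becomes $\langle\im\varepsilon^{2}\partial_y\rangle$, not $\langle\im\partial_y\rangle$; and $\mathcal V_\varepsilon(y)=\mathcal V(y/\varepsilon)$ varies on scale $\varepsilon$, so $\partial_y^\alpha\mathcal V_\varepsilon=\varepsilon^{-\alpha}(\partial^\alpha\mathcal V)(y/\varepsilon)$ is \emph{large}, of size $\varepsilon^{-\alpha}$. The inequality you rely on, $\varepsilon^{1-\alpha}\lvert(\partial^\alpha\mathcal V)(y/\varepsilon)\rvert\le\varepsilon\langle y/\varepsilon\rangle^{-M}$, is false: for $\alpha=1$ and $y=0$ it reads $\lvert\mathcal V'(0)\rvert\le\varepsilon$. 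The correct substitution is the opposite one, $x=\varepsilon y$ (as the paper uses elsewhere, e.g.\ in the proof of \eqref{eq:estimates116}): then $\langle\im\varepsilon\partial_x\rangle^{\pm N}$ does become $\langle\im\partial_y\rangle^{\pm N}$, and $\partial_y^\alpha\bigl(\mathcal V(\varepsilon y)\bigr)=\varepsilon^{\alpha}(\partial^\alpha\mathcal V)(\varepsilon y)=\varepsilon^{\alpha}(\partial^\alpha\mathcal V)(x)$, which for $\alpha\ge1$ supplies both the factor $\varepsilon$ and the rapid decay in the \emph{original} variable $x$ needed to beat $\langle x\rangle^{2L}$. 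With this fix the rest of your plan goes through, with two further points to attend to: (i) the commutator expansion of $[\mathcal V_\varepsilon,\langle\im\partial_y\rangle^{N}]$ is exact only when $N$ is even (so that $\langle k\rangle^{N}$ is a polynomial); for odd $N$ it is an asymptotic expansion and the remainder, which carries even more derivatives of $\mathcal V_\varepsilon$ and hence more powers of $\varepsilon$, must be estimated separately; and (ii) the leftover factor $\langle\im\partial_y\rangle^{-N}$ sitting to the left of the multiplication operator must be commuted past the weight $\langle x\rangle^{L}=\langle\varepsilon y\rangle^{L}$, which is exactly the conjugation bound of Lemma \ref{lem:pd01} (or your Schur-test variant).
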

\proof Let us consider   case   $L=0$,     \begin{align}\label{eq2stestJ21}
& \left \| \< \im \varepsilon  \partial _x \> ^{-N} \left  [   \mathcal{V} , \< \im \varepsilon  \partial _x \> ^{N}\right  ] \right \| _{L^2(\R )\to L^2(\R )}\lesssim  \varepsilon .
\end{align}  Taking Fourier transform, it  is equivalent to prove the above  $L^2\to L^2$  bound for the operator
 \begin{align*}& \int_{\R} H(k,\ell ) f(\ell )d\ell \text{  with }  H(k,\ell )= \< \varepsilon k\> ^{-N} \widehat{\mathcal{V}}  (k-\ell )
 \( \< \varepsilon  k \> ^{ N}-\< \varepsilon  \ell \> ^{ N}\) .
\end{align*}
 Multiplying numerator and denominator by $\< \varepsilon  k \> ^{ N}+\< \varepsilon  \ell \> ^{ N}$    we obtain $H(k,\ell )=   \varepsilon \widetilde{H}(k,\ell )$
 \begin{align*}&   \widetilde{H}(k,\ell )=     \< \varepsilon k\> ^{-N} \widehat{\mathcal{V}}  (k-\ell )
 (k-\ell )   \frac{ P (\varepsilon  k,\varepsilon  \ell )} { \< \varepsilon k \> ^{ N}+\< \varepsilon  \ell \> ^{ N}}
\end{align*}
 where $P $ is a $2N-1$ degree polynomial.  It is elementary that the integral operator with integral  kernel  $\widetilde{H}(k,\ell )$  is uniformly bounded in $\varepsilon $ from $L^p (\R )$ to itself, for any $p\in [1,\infty ]$, by Young's inequality, see Theorem 0.3.1 \cite{sogge}. Indeed, it is enough to prove that there exists a constant $C>0$  independent from small $\varepsilon >0$ s.t.
 \begin{align}&  \sup _{k\in \R} \int _{\R} |\widetilde{H}(k,\ell ) |d\ell <C   ,\label{eq:young1}
\end{align}
since by symmetry a similar  bound  can be proved interchanging the role of $k$  and $\ell$. Now, for $M\ge N+1$, we have for fixed $k$
\begin{align*}
  \int _{\R} |\widetilde{H}(k,\ell ) |d\ell &\lesssim   \int _{|\ell |\in \left [ \frac{|k |}{2} , 2|k | \right ]}     \< \varepsilon  k\> ^{-N}  \<k-\ell  \> ^{-M}
  \( \<  \varepsilon  k \> ^{ N-1}+\<  \varepsilon  \ell \> ^{ N-1} \)      d\ell   \\& + \int _{|\ell |\not \in \left [ \frac{|k |}{2} , 2|k | \right ]}     \< \varepsilon  k\> ^{-N}  \<k-\ell  \> ^{-M}
  \( \<  \varepsilon  k \> ^{ N-1}+\<  \varepsilon  \ell \> ^{ N-1} \)      d\ell .\end{align*}
The first integral can be bounded above, for appropriate $C_N$,  by  the elementary inequality
\begin{align*}
  C_N \int _{|\ell |\in \left [ \frac{|k |}{2} , 2|k | \right ]}     \< \varepsilon  k\> ^{-1}  \<k-\ell  \> ^{-M}
         d\ell  \le    C_N \int _{\R }      \< \ell  \> ^{-M}
         d\ell  = C_N  \| \< x \> ^{-M} \| _{L^1(\R )} ,\end{align*}
while the second can
be bounded above by
\begin{align*} &
  2  \int _{|\ell|\le  \frac{|k |}{2}}     \< \varepsilon  k\> ^{-N}   \frac{\<  \varepsilon  k \> ^{ N-1} } { \<    \ell \> ^{M}}
         d\ell  + 2 \int _{|\ell|\ge  2   |k |  }   \frac{ \<  \varepsilon  \ell \> ^{ N-1}} {  \<    \ell \> ^{M}}
   \\&    \le 4 \int _{\R }   \<    \ell \> ^{N-1-M} d\ell  = 4\| \< x \> ^{-M-1+N}\| _{L^1(\R )} ,\end{align*}
with all the constants independent from $\varepsilon >0$. This completes the case $L=0$.

  Let us consider now the case with   $L\ge 1$.  From the proof in the case $L=0$, we have
\begin{align*}
 \< \im \varepsilon  \partial _x \> ^{-N}  [  \mathcal{ V} , \< \im \varepsilon   \partial _x \> ^{N} ]  v =\varepsilon \int _{\R  }K^{0}(x,y) v(y) dy \end{align*}
  where for $ \sigma \ge 0$  we set
  \begin{align}&
 K^{\sigma}(x,y) =    \int _{\R ^2  }
  e^{\im x k - \im y \ell }  \< \varepsilon  k\> ^{-N-\sigma } \widehat{\mathcal{V}} (k-\ell ) (k-\ell )   \frac{ P (\varepsilon  k,\varepsilon  \ell )}{\< \varepsilon  k \> ^{ N}+\< \varepsilon  \ell \> ^{ N}} dk d\ell .\label{eq2stestJ24II}
\end{align}
Notice that the integral is absolutely convergent for $\sigma >0$. Let us consider case  $\sigma>0$.
 When $|x|\ge 1$, integrating by parts we have, up to constants which we ignore,
 \begin{align*}&
 K^{\sigma}(x,y) = \frac{1}{x ^{ L}}    K_{1,0}^{\sigma }(x,y)    \text{  with }  K_{a,b}^{\sigma }(x,y) =\int _{\R  }
  e^{\im x k - \im y \ell }  \widetilde{H}_{a,b}(k,\ell )dk d\ell \text{  with }\\&
  \widetilde{H} _{a,b}^{\sigma }(k,\ell )= (\partial _{k} ^{ L}) ^a (\partial _{\ell} ^{ L} ) ^b \(      \< \varepsilon  k\> ^{-N-\sigma} \widehat{\mathcal{V}} (k-\ell ) (k-\ell )   \frac{ P (\varepsilon  k,\varepsilon   \ell )}{\< \varepsilon  k \> ^{ N}+\< \varepsilon  \ell \> ^{ N}} \)    ,
\end{align*}
 where   $\widetilde{H} _{a,b}^{\sigma }(k,\ell )$  for $a,b\in \{ 0,1\} $ has the properties of $ \widetilde{H} (k,\ell )$.
 Then,  for $   \chi _{0} \in C_c^\infty (\R, [0,1])   $  with $\chi _{0} =1$ near 0        and  $   \chi _{1}  = 1-\chi _{0}    $ and ignoring irrelevant constants,  we have
 \begin{align*}&
 K^{\sigma}(x,y)  =\sum _{a,b =0,1}   \chi _{a} (x)  \chi _{b}(y)   x^{- aL}y^{- bL} K_{a,b}^{\sigma}(x,y) .
\end{align*}
Then, for ${f} _{b} (y)=  \chi _{b}(y)y^{- bL}  f(y)$,
\begin{align*} &
  \left \| \<x \> ^{L}    \int  K^{\sigma}(x,y)   f(y) dy \right \| _{L^2(\R ) }  \le    \sum _{a,b =0,1}        \left \| \<x \> ^{L} x^{- aL}  \chi _{a} (x)   \int  K_{a,b}^{\sigma}(x,y)  \chi _{b}(y)y^{- bL}  f(y) dy \right \| _{L^2(\R ) } \\&
  \le \sum _{a,b =0,1}        \left \|     \int  K_{a,b}^{\sigma}(x,y)  \chi _{b}(y)y^{- bL}  f(y) dy \right \| _{L^2(\R ) }  \le \sum _{a,b =0,1}        \left \|     \int        \widetilde{H} _{a,b}^{\sigma }(k,\ell )    \widehat{f} _{b}(\ell ) d\ell  \right \| _{L^2(\R ) }
\\&\le C _L \sum _{ b =0,1} \|  {f} _{b}   \| _{L^2(\R ) } \lesssim C  _L \|  \<x \> ^{-L} f  \| _{L^2(\R ) },
\end{align*}
where the constants in the last line are uniform  on $\sigma$ by the argument used to prove \eqref{eq:young1}.
Since, furthermore, for a sequence $\sigma _n\to 0$  then   $ \int _{\R  }K^{\sigma _n}(x,y) f(y) dy      \xrightarrow{n\to  +\infty }     \int _{\R  }K^{0}(x,y) f(y) dy$ point--wise  for  $f\in C^0_c(\R )$,
we can assume, by the Fatou lemma and  by the density   of  $  C^0_c(\R )$  in $L^{2,-L}(\R )$,
\begin{align*}&
  \left \| \<x \> ^{L}    \int  K^{0}(x,y)   f(y) dy \right \| _{L^2(\R ) } \le    C _L  \|  \<x \> ^{-L} f \| _{L^2(\R ) } \text{ for all }f\in L^{2,-L}(\R ).
\end{align*}

 This yields  \eqref{eq2stestJ21II} and ends the proof of Lemma \ref{claim:l2boundII}.

 \qed

We will need in Appendix \ref{sec:comm} a variation  of Lemma \ref{claim:l2boundII}.

\begin{lemma}\label{claim:l2boundIII}  Suppose that the function $\mathcal{V} $ in Lemma \ref{claim:l2boundII}  has the additional property that for    $M\ge N+1$ its Fourier transform
satisfies
\begin{align}\label{eq2stestJ22III}& |      \widehat{{\mathcal{V}}} (k_1+ik_2) |\le  C_M \< k_1 \> ^{-M-1} \text{ for all $(k_1, k_2)\in \R \times [  \mathbf{b} ,  \mathbf{b} ]$    and}\\&  \widehat{{\mathcal{V}}}    \in C^0 ( \R \times [ -\mathbf{b} ,  \mathbf{b} ]) \cap H ( \R \times ( - \mathbf{b} ,  \mathbf{b} )), \nonumber
   \end{align}
  with $H(\Omega ) $ the set of holomorphic functions in an open subset $\Omega \subseteq \C$ and with a number  $\mathbf{b}>0$.
    Then
\begin{align}\label{eq2stestJ21II}
 \|     \<  \im \varepsilon  \partial _x \> ^{-N}  [   \mathcal{V} , \< \im \varepsilon    \partial _x \> ^{N} ]     e^{\mathbf{b} \<y \>} \| _{L ^{2  }(\R ) \to L ^{2 }(\R )} \le C_\mathbf{b} \varepsilon . \end{align}
  \end{lemma}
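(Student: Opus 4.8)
The argument parallels the case $L\ge1$ of Lemma~\ref{claim:l2boundII}: the integrations by parts used there to extract polynomial decay in $x,y$ are replaced by a contour deformation that exploits the analyticity hypothesis \eqref{eq2stestJ22III} of $\widehat{\mathcal V}$ and produces the exponential gain needed to absorb $e^{\mathbf b\<y\>}$. For Schwartz $v$ (and then for general $v$ by density) recall from that proof the representation
\[
\<\im\varepsilon\partial_x\>^{-N}\big[\mathcal V,\<\im\varepsilon\partial_x\>^N\big]v=\varepsilon\int_\R K^0(x,y)\,v(y)\,dy ,\qquad K^0=\lim_{\sigma\to0^+}K^\sigma ,
\]
where $K^\sigma(x,y)=\int_{\R^2}e^{\im xk-\im y\ell}\,\widetilde H^\sigma(k,\ell)\,dk\,d\ell$ with $\widetilde H^\sigma(k,\ell):=\<\varepsilon k\>^{-N-\sigma}\widehat{\mathcal V}(k-\ell)(k-\ell)P(\varepsilon k,\varepsilon\ell)\big(\<\varepsilon k\>^N+\<\varepsilon\ell\>^N\big)^{-1}$ as in \eqref{eq2stestJ24II}, the double integral being absolutely convergent for $\sigma>0$. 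Applying the operator to $e^{\mathbf b\<\cdot\>}g$ for Schwartz $g$, it then suffices to show that the kernel $e^{\mathbf b\<y\>}K^\sigma(x,y)$ defines an operator on $L^2$ of norm $\le C_\mathbf b$, uniformly in $\varepsilon\in(0,1]$ and $\sigma\ge0$; the prefactor $\varepsilon$ yields \eqref{eq2stestJ21II} and the limit $\sigma\to0^+$ concludes.

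To bound that kernel, split $g$ according to $\operatorname{sgn}(y)$ and, for fixed real $k$ (and fixed $x,y$), deform the inner $\ell$--integral from $\R$ to the horizontal line $\ell=s-\im\mathbf b\operatorname{sgn}(y)$, $s\in\R$. This is legitimate since: (i) on the closed strip between the two lines $\mathrm{Im}(k-\ell)\in[-\mathbf b,\mathbf b]$, so $\widehat{\mathcal V}(k-\ell)$ is holomorphic in the interior and continuous up to the boundary by \eqref{eq2stestJ22III}, while the remaining factor is holomorphic in $\ell$ there --- for $N$ even because $\<\varepsilon\ell\>^N=(1+\varepsilon^2\ell^2)^{N/2}$ is a polynomial, and for general $N$ provided $\varepsilon\mathbf b<1$, in which case $1+\varepsilon^2\ell^2$ stays in the open right half--plane on the strip so that the principal branch is analytic and $\<\varepsilon k\>^N+\<\varepsilon\ell\>^N$ does not vanish; and (ii) the contributions of the vertical sides $\mathrm{Re}\,\ell=\pm R$, $R\to\infty$, vanish because $|\widehat{\mathcal V}(k-\ell)|\le C_M\<\mathrm{Re}(k-\ell)\>^{-M-1}$ uniformly on the closed strip, the rest of the integrand growing only polynomially. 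On the new contour $e^{-\im y\ell}=e^{-\im ys}e^{-\mathbf b|y|}$, whence
\[
e^{\mathbf b\<y\>}K^\sigma(x,y)=e^{\mathbf b(\<y\>-|y|)}\int_{\R^2}e^{\im xk-\im ys}\,\widetilde H^\sigma\big(k,\,s-\im\mathbf b\operatorname{sgn}(y)\big)\,dk\,ds ,
\]
with $e^{\mathbf b(\<y\>-|y|)}\le e^{\mathbf b}$. Since this prefactor is bounded, matters reduce to the $L^2$--boundedness, uniform in $\varepsilon$ and $\sigma$, of the operator whose Fourier--side kernel is $\widetilde H^\sigma(k,s\mp\im\mathbf b)$.

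This kernel has exactly the structure treated in the $L=0$ part of the proof of Lemma~\ref{claim:l2boundII}: the shifted factor $\widehat{\mathcal V}(\,\cdot\mp\im\mathbf b)$ still satisfies $|\widehat{\mathcal V}(\,\cdot\mp\im\mathbf b)|\le C_M\<\cdot\>^{-M-1}$ with $M\ge N+1$, so that after absorbing $|k-\ell|\lesssim_\mathbf b\<k-s\>$ one recovers the decay $\<k-s\>^{-M}$; and the remaining factor still obeys $|P(\varepsilon k,\varepsilon\ell)|\lesssim_\mathbf b(1+\varepsilon|k|+\varepsilon|s|)^{2N-1}$ together with $\big|\<\varepsilon k\>^N+\<\varepsilon\ell\>^N\big|\gtrsim_\mathbf b(1+\varepsilon|k|+\varepsilon|s|)^N$ (the latter valid once $\varepsilon\mathbf b$ is below a threshold depending only on $N$, so that $\arg\<\varepsilon\ell\>^N$ stays away from $\pm\pi$). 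Hence the same near/far split of the $s$--integral and the Young/Schur estimate behind \eqref{eq:young1} apply verbatim, giving $\sup_k\int_\R|\widetilde H^\sigma(k,s\mp\im\mathbf b)|\,ds+\sup_s\int_\R|\widetilde H^\sigma(k,s\mp\im\mathbf b)|\,dk\le C_\mathbf b$, which is the required bound. The step I expect to demand the most care is precisely the justification of the contour deformation --- the analyticity and non--degeneracy of $\<\varepsilon k\>^N+\<\varepsilon\ell\>^N$ on $\{\mathrm{Im}\,\ell\in[-\mathbf b,0]\}$ for odd $N$; this is automatic in the small--$\varepsilon$ regime, the only one arising in the paper (see \eqref{eq:relABg}), and the complementary compact range of $\varepsilon$ (non--empty only for large $\mathbf b$) can, if one insists on the full range $\varepsilon\in(0,1]$, be disposed of separately by an elementary kernel estimate combining the exponential decay of $\mathcal V$ and its derivatives --- a consequence of \eqref{eq2stestJ22III} --- with the $\varepsilon$--scale localization of $\mathcal F^{-1}[\<\varepsilon\xi\>^{\pm N}]$.
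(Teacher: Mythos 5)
Your proof is correct and follows essentially the same route as the paper's: shift the $\ell$--contour to $\mathrm{Im}\,\ell=\mp\mathbf{b}$ using the analyticity of $\widehat{\mathcal V}$ in the strip, extract the factor $e^{-\mathbf{b}|y|}$ that absorbs $e^{\mathbf{b}\<y\>}$ up to the harmless constant $e^{\mathbf{b}}$, and conclude with the $L=0$ Schur/Young bound of Lemma \ref{claim:l2boundII} before letting $\sigma\to0^+$. If anything, you are more careful than the paper, which states the deformed path integral without justifying the deformation (choice of branch of $\<\varepsilon\ell\>^N$ for odd $N$, vanishing of the vertical sides); these points you address correctly.
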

 \proof Formula \eqref{eq2stestJ24II} continues to hold for all  $\sigma\in (0,1]$, as a path integral
 \begin{align}&
 K^{\sigma}(x,y) =  e^{   y \ell _2 }    \int _{\R ^2 }
  e^{\im x k_1 - \im y \ell _1}  \< \varepsilon  k\> ^{-N-\sigma} \widehat{\mathcal{V}} (k-\ell ) (k-\ell )   \frac{ P (\varepsilon  k,\varepsilon  \ell )}{\< \varepsilon  k \> ^{ N}+\< \varepsilon  \ell \> ^{ N}} dk_1 d\ell _1 \label{eq2stestJ24III}
\end{align}
with $  k_2=0  $   and  $ |\ell_2|\le  \mathbf{b}  $.  Then, adjusting to the sign of   $y$, we conclude that
\begin{align*}&
 |K^{\sigma}(x,y)  |\le   e^{  - |y| \mathbf{b} } \left |   \int _{\R ^2 }
  e^{\im x k_1 - \im y \ell _1}  \< \varepsilon  k\> ^{-N-\sigma} \widehat{\mathcal{V}} (k-\ell ) (k-\ell )   \frac{ P (\varepsilon k,\varepsilon  \ell )}{\< \varepsilon  k \> ^{ N}+\< \varepsilon  \ell \> ^{ N}} dk_1 d\ell _1  \right | .
\end{align*}
This implies
\begin{align*}&
  \left \|      \int  K^{\sigma}(x,y) e^{    |y|\mathbf{ b} }  e^{  - |y| \mathbf{b}  }  f(y) dy \right \| _{L^2(\R ) }      \le C  _{\mathbf{b}} \|  e^{  - |x| \mathbf{b}  } f  \| _{L^2(\R ) }.
\end{align*}
Like in the proof of Lemma \ref{claim:l2boundII} we can take the limit $\sigma \to 0^+$  obtaining  \eqref{eq2stestJ21II}.
\qed

We next give estimates on the operator $\mathcal{T} $.

\begin{lemma}\label{lem:coer5} There exist constants $C_0$ and  $C_N$ such that for  $\varepsilon>$ small enough
we have
\begin{align}\label{eq:coer51}
\|\mathcal{T} \|_{L^2\to L^2}\le C_0 \varepsilon^{-N} \text{   and }  \|\mathcal{T}  \|_{\Sigma^N \to \Sigma^0}\le C_N.
\end{align}
Furthermore, let $K_{\varepsilon}(x,y) \in \mathcal{D}'(\R \times \R)$  be the Schwartz kernel of  $\mathcal{T} $. Then, we have
\begin{align}\label{eq:coer52}
| K_{\varepsilon}(x,y)    | \le C_0  e^{-\frac{|x-y|}{3\varepsilon}}  \text{  for all $x,y$ with $|x-y|\ge 1$.}
\end{align} \end{lemma}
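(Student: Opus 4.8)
The plan is to exploit the factorization $\mathcal{T}=\<\im\varepsilon\partial_x\>^{-N}\mathcal{A}^*$ together with the elementary structure of its two factors. First I would record that $\mathcal{A}^*=A_N^*\cdots A_1^*$ is a differential operator of order $N$ with coefficients in $C_b^\infty(\R)$: indeed $A_ku=u'+b_ku$ with $b_k:=\psi_k'/\psi_k$, and since each ground state $\psi_k$ of $H_k$ decays like $e^{-\sqrt{|\omega_k|}|x|}$ together with all its derivatives (equivalently, $b_k'=V_k-\omega_k-b_k^2$ by the Riccati identity, which is bounded with all derivatives because $V_k$ is Schwartz), all the $b_k$ lie in $C_b^\infty(\R)$. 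Hence, moving all derivatives to the left, $\mathcal{A}^*=\sum_{\ell=0}^N\partial_x^\ell\,\widetilde a_\ell(x)$ with $\widetilde a_\ell\in C_b^\infty(\R)$, and, the coefficients being real, its transpose is $\mathcal{A}=A_1\cdots A_N=\sum_{\ell=0}^N\gamma_\ell(x)\partial_x^\ell$ with $\gamma_\ell\in C_b^\infty(\R)$ and $\gamma_N\equiv1$. The second factor, $\<\im\varepsilon\partial_x\>^{-N}$, is convolution with $\varepsilon^{-1}g_N(\cdot/\varepsilon)$, $g_N:=\mathcal F^{-1}[\<k\>^{-N}]$. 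The only genuinely analytic input is that $\<k\>^{-N}=(1+k^2)^{-N/2}$ is holomorphic in the strip $\{|\Im k|<1\}$ and is $O(|k|^{-N})$ there; shifting the contour to $\Im k=\pm\tfrac56$ gives $g_N\in C^\infty(\R\setminus\{0\})$ with $|g_N^{(\ell)}(s)|\lesssim_\ell e^{-\frac56|s|}$ for all $|s|\ge1$ and all $\ell\ge0$. (For $\ell$ close to $N$, $g_N^{(\ell)}$ may carry in addition a $\delta$- or principal-value contribution supported at $s=0$, but this plays no role because every estimate below is used only on $\{|x-y|\ge1\}$.)

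For the first bound in \eqref{eq:coer51} I would write $\mathcal{T}f=\sum_{\ell\le N}\<\im\varepsilon\partial_x\>^{-N}\partial_x^\ell(\widetilde a_\ell f)$ and use the multiplier bound $\|\<\im\varepsilon\partial_x\>^{-N}\partial_x^\ell\|_{L^2\to L^2}=\sup_{k\in\R}|k|^\ell\<\varepsilon k\>^{-N}\lesssim\varepsilon^{-\ell}\le\varepsilon^{-N}$ for $\ell\le N$ and $\varepsilon\le1$; combined with $\|\widetilde a_\ell f\|_{L^2}\le\|\widetilde a_\ell\|_{L^\infty}\|f\|_{L^2}$ this gives $\|\mathcal{T}\|_{L^2\to L^2}\le C_0\varepsilon^{-N}$. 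For the second bound, note that $\mathcal{A}^*$ maps $\Sigma^N\to\Sigma^0$ with a bound uniform in $\varepsilon$: it is of order $N$ with $C_b^\infty$ coefficients, and conjugating it by $e^{a\<x\>}$ only replaces $\partial_x$ by $\partial_x-a\<x\>'$ with $\<x\>'=x/\<x\>\in C_b^\infty(\R)$, so $e^{a\<x\>}\mathcal{A}^*e^{-a\<x\>}\colon H^N\to L^2$ is bounded. It therefore suffices that $\<\im\varepsilon\partial_x\>^{-N}\colon\Sigma^0\to\Sigma^0$ with norm uniformly bounded for $\varepsilon$ small; conjugating by $e^{a\<x\>}$ turns its kernel into $e^{a(\<x\>-\<y\>)}\varepsilon^{-1}g_N((x-y)/\varepsilon)$, and using $|\<x\>-\<y\>|\le|x-y|$, the decay $|g_N(s)|\lesssim e^{-\frac56|s|}$ for $|s|\ge1$ (and local integrability of $g_N$ near $0$), a Schur test reduces the matter to $\sup_x\int_\R e^{a\varepsilon|s|}|g_N(s)|\,ds<\infty$, which holds once $\varepsilon$ is small enough that $a\varepsilon\le\tfrac12$. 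This yields $\|\mathcal{T}\|_{\Sigma^N\to\Sigma^0}\le C_N$.

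Finally, for \eqref{eq:coer52} I would use that $\mathcal{A}^*$ is local: from $\mathcal{T}f(x)=\int\varepsilon^{-1}g_N((x-y)/\varepsilon)(\mathcal{A}^*f)(y)\,dy$, integrating by parts to move $\mathcal{A}^*$ onto the kernel in $y$ produces its transpose $\mathcal{A}$, so
\begin{align*}
K_\varepsilon(x,y)=\mathcal{A}_y\Bigl(\tfrac1\varepsilon g_N\bigl(\tfrac{x-y}{\varepsilon}\bigr)\Bigr)=\sum_{\ell=0}^{N}\gamma_\ell(y)\,(-1)^\ell\varepsilon^{-\ell-1}g_N^{(\ell)}\bigl(\tfrac{x-y}{\varepsilon}\bigr),\qquad \gamma_\ell\in C_b^\infty(\R).
\end{align*}
On $\{|x-y|\ge1\}$ we have $|x-y|/\varepsilon\ge1$, so the decay estimate for $g_N^{(\ell)}$ applies and gives $|K_\varepsilon(x,y)|\lesssim\sum_{\ell\le N}\varepsilon^{-\ell-1}e^{-\frac56|x-y|/\varepsilon}$. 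Writing $\tfrac56=\tfrac12+\tfrac13$ and using $|x-y|\ge1$, each term is $\le\varepsilon^{-\ell-1}e^{-1/(2\varepsilon)}\,e^{-|x-y|/(3\varepsilon)}$, and $\sup_{\varepsilon\in(0,1]}\varepsilon^{-\ell-1}e^{-1/(2\varepsilon)}<\infty$ for each fixed $\ell\le N$; hence $|K_\varepsilon(x,y)|\le C_0 e^{-|x-y|/(3\varepsilon)}$.

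I do not expect a serious obstacle: the single substantive ingredient is the strip-holomorphy of $\<k\>^{-N}$ and the resulting exponential decay of $g_N$ and its derivatives, while everything else is the composition of a differential operator with $C_b^\infty$ coefficients and a Fourier multiplier, controlled by elementary symbol estimates and a Schur test. The only point needing mild bookkeeping is the trade-off between the factors $\varepsilon^{-\ell-1}$ generated when the derivatives in $\mathcal{A}$ hit $g_N((x-y)/\varepsilon)$ and the exponential gain; this is precisely why the decay in \eqref{eq:coer52} is stated with the safe rate $\tfrac13$ rather than the nearly optimal rate of $g_N$ itself.
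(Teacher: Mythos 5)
Your proposal is correct and follows essentially the same route as the paper: both arguments rest on the multiplier bound $\sup_k |k|^{\ell}\langle\varepsilon k\rangle^{-N}\lesssim\varepsilon^{-\ell}$ for the $L^2$ estimate, on shifting the contour in the inverse Fourier integral of $\langle\varepsilon k\rangle^{-N}(\im k)^{\ell}$ (equivalently, the strip-holomorphy of $\langle k\rangle^{-N}$) to obtain the off-diagonal kernel decay \eqref{eq:coer52}, and on conjugating by $e^{a\langle x\rangle}$ plus a Schur/Young argument for the $\Sigma^N\to\Sigma^0$ bound. Your write-up is somewhat more explicit than the paper's (e.g.\ in justifying $b_k=\psi_k'/\psi_k\in C^\infty_b$ via the Riccati identity and in flagging the distributional part of $g_N^{(\ell)}$ at the origin), but the mathematical content is the same.
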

\proof   First, for $\mathcal{T}_1= \<\im \partial_x\>^{-N} \mathcal{A}^*$, we have
\begin{align*}
\|\mathcal{T}  \|_{L^2\to L^2}\leq \|\< \im \varepsilon\partial_x\>^{-N}\<\im \partial_x\>^N\|_{L^2\to L^2}\|  \mathcal{T}_1     \|_{L^2\to L^2}.
\end{align*}
Since $\|\<\im \varepsilon\partial_x\>^{-N}\<\partial_x\>^N\|_{L^2\to L^2} = \|\<\varepsilon k\>^{-N}\<k\>^N\|_{L^\infty (\R )}  \lesssim \varepsilon^{-N}$ and $\|\mathcal{T}_1\|_{L^2\to L^2}\lesssim 1$ because $\mathcal{T}_1$ is a degree $0$ $\Psi$DO, we have the first inequality in \eqref{eq:coer51}.

\noindent It is enough to prove \eqref{eq:coer52}  for  operators $\<\im \varepsilon\partial_x\>^{-N} (\im \partial_x) ^m$ for $0\le m\le N$, which, up to irrelevant constant factors,  are convolutions, for $x\neq  0$ by the generalized integrals
\begin{align*}
 K_{\varepsilon}(x)= \int _{\R} e^{\im x k_1}    \frac{k_1^n}{ \(  1+ \varepsilon ^2 k_1^2\) ^{N/2}  }dk_1=   e^{- x k_2}     \int _{\R} e^{\im x k_1}    \frac{(k_1 +\im k_2)^n}{ \(  1 -\varepsilon ^2 k_2^2   + \varepsilon ^2 k_1^2 +2\varepsilon ^2\im k_1k_2\) ^{N/2}  }dk_1,
\end{align*}
which are well defined for $|k_2|\le 2^{-1} \varepsilon ^{-1}$. For $|x|\ge 1$  and  if $k_2=2^{-1} \varepsilon ^{-1} \text{sign}(x)     $,    it is elementary to show, by standard arguments with cutoffs and integration by parts, that   the above is $\lesssim  e^{-\frac{|x |}{3\varepsilon}}$, yielding \eqref{eq:coer52}.

\noindent We turn now to the second inequality in \eqref{eq:coer51}. We have, for functions $b_n(x)$ bounded with all their derivatives,
\begin{align*}
\|\mathcal{T} f \|_{  \Sigma^0}= \|  \widetilde{ \mathcal{T}}          \(  e^{a  \<x\>} \sum _{n=0}^{N} b_n  \partial_x ^{n}f  \)\|_{  L^2} \text{  where } \widetilde{ \mathcal{T}} :=       e^{a \<x\>}   \< \im \varepsilon\partial_x\>^{-N}       e^{-a  \<x\>}.
\end{align*}
For $ \widetilde{ \mathcal{T}} (x,y)$ the integral kernel of $\widetilde{ \mathcal{T}}$, we have, for $m(k) =  \<k\>^{-N}$ and for $m^{\vee}$  its inverse Fourier transform, using  \eqref{eq:coer52} we obtain
\begin{align*} &
 |\widetilde{ \mathcal{T}} (x,y)|= |\widetilde{ \mathcal{T}} (x,y)|\chi _{[0,1]}(|x-y|) +   |\widetilde{ \mathcal{T}} (x,y)|\chi _{[ 1,\infty )}(|x-y|)\\& \lesssim \varepsilon ^{-1} m^{\vee} \(  \frac{x-y}{\varepsilon} \) \chi _{[0,1]}(|x-y|) +  e^{-\frac{|x-y|}{3\varepsilon}}   e^{a \<x\>}  e^{-a  \<y\>} \\& \lesssim \varepsilon ^{-1} m^{\vee} \(  \frac{x-y}{\varepsilon} \) \chi _{[0,1]}(|x-y|) +  e^{-\frac{|x-y|}{4\varepsilon}}
\end{align*}
for $\varepsilon >0$ small enough.
This implies, from Young's inequality,    \cite[Theorem 0.3.1]{sogge}, that
\begin{align*}
\|\mathcal{T} f \|_{  \Sigma^0} \lesssim  \|    e^{a  \<x\>} \sum _{n=0}^{N}b_n  \partial_x ^{n}f   \|_{  L^2} \lesssim   \| f  \|_{\Sigma^N },
\end{align*}
yielding the   2nd inequality  in \eqref{eq:coer51}.
   \qed

The following technical estimates are related to analogous ones in Sect.\  4.4 \cite{KMM3}.
\begin{lemma}\label{lem:coer3}
We have \begin{align}&  \| w \| _{L^2(|x|\le 2B^2)}\lesssim B^2  \| w  \| _{\widetilde{\Sigma}}    \text{  for any $w$,} \label{eq:estimates111}
%\\&  \| w \| _{\rho}\lesssim    \| w' \| _{L^2(\R )} +    \|      \sqrt{\sech \( \frac{x}{2} \)} w  \| _{L^2(\R )}\label{eq:estimates112}
\\& \| \xi \| _{\widetilde{\Sigma}}^2\lesssim      \< (-\partial ^2_x- 2^{-2}  \chi _{B^2} ^{2}xV' _{N+1})\xi, \xi \> \lesssim   \| \xi \| _{\widetilde{\Sigma}}^2  \  \text{  for any $\xi$},  \label{eq:estimates113}\\&
\| v \| _{L^2(\R )} \lesssim \varepsilon ^{-N} B^2  \|  w      \| _{\widetilde{\Sigma}}  \ , \label{eq:estimates115}\\&  \| v ' \| _{L^2(\R )} \lesssim    \varepsilon ^{-N}\|  w     \| _{\widetilde{\Sigma}} \ ,  \label{eq:estimates116} \\& \|   \< x \> ^{-M}    v \|_{H^1(\R )}  \lesssim   \|      \xi   \|_{\widetilde{\Sigma}}    +\varepsilon ^{-N} \< B \> ^{-M+3}    \|    w       \| _{\widetilde{\Sigma}}  \text{ for $M\in \N$, $M\ge 4$}.\label{eq2stestJ21-4}
\end{align}
\end{lemma}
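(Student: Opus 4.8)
I would prove \eqref{eq:estimates111}--\eqref{eq2stestJ21-4} one after the other, the recurring ingredients being Lemma~\ref{lem:equiv_rho} and Corollary~\ref{cor:rhoequiv} (to pass between $\|\cdot\|_{\widetilde{\Sigma}}$ and weighted $L^2$ norms), Lemma~\ref{lem:coer5} (mapping properties of $\mathcal{T}$), Assumption~\ref{ass:VN}, and the scale hierarchy $A\gg B^2\gg B\gg a^{-1}$ of \eqref{eq:relABg}. Estimate \eqref{eq:estimates111} is a local Agmon bound obtained exactly as Lemma~\ref{lem:equiv_rho}: choose $x_0\in[-1,1]$ with $|w(x_0)|^2\le\int_{-1}^1|w|^2\lesssim\|w\|_{L^2_{-a/10}}^2$; then for $|x|\le 2B^2$, $|w(x)|\le|w(x_0)|+|x-x_0|^{1/2}\|w'\|_{L^2}\lesssim B\|w\|_{\widetilde{\Sigma}}$, so $\|w\|_{L^\infty(|x|\le 2B^2)}\lesssim B\|w\|_{\widetilde{\Sigma}}$ and, integrating over $\{|x|\le 2B^2\}$, $\|w\|_{L^2(|x|\le 2B^2)}\lesssim B^2\|w\|_{\widetilde{\Sigma}}$.

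For \eqref{eq:estimates113}, write $\<(-\partial_x^2-2^{-2}\chi_{B^2}^2xV'_{N+1})\xi,\xi\>=\|\xi'\|_{L^2}^2+2^{-2}\<\chi_{B^2}^2(-xV'_{N+1})\xi,\xi\>$. By Assumption~\ref{ass:VN}, $-xV'_{N+1}\ge 0$, is not identically $0$, and (as $V_{N+1}\in\mathcal{S}$) satisfies $\langle x\rangle|xV'_{N+1}|\in L^1$. The upper bound follows from $\chi_{B^2}^2\le 1$, Lemma~\ref{lem:equiv_rho} with $W=2^{-2}|xV'_{N+1}|$, and $\|\xi'\|_{L^2}\le\|\xi\|_{\widetilde{\Sigma}}$. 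For the lower bound, fix a compact $J$ with $\int_J(-xV'_{N+1})>0$; since $B$ is large one has $J\subset\{|x|\le B^2\}$, hence $2^{-2}\chi_{B^2}^2(-xV'_{N+1})\ge U_0:=2^{-2}\mathbf{1}_J(-xV'_{N+1})\ge 0$, and Lemma~\ref{lem:equiv_rho} with this $U_0$ and $W=\sech^2(ax/10)$ gives $\<\sech^2(ax/10)\xi,\xi\>\lesssim\<(-\partial_x^2+U_0)\xi,\xi\>\le\<(-\partial_x^2-2^{-2}\chi_{B^2}^2xV'_{N+1})\xi,\xi\>$; combining with $\|\xi'\|_{L^2}^2\le\<(-\partial_x^2-2^{-2}\chi_{B^2}^2xV'_{N+1})\xi,\xi\>$ and \eqref{eq:norm_rho} gives the claim.

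Since $\chi_{B^2}$ is supported in $\{|x|\le 2B^2\}$, where $\zeta_A^{-1}=e^{|x|(1-\chi)/A}\lesssim 1$ and $|\partial_x\zeta_A^{-1}|\lesssim A^{-1}$ because $A\gg B^2$, the identity $\widetilde{\eta}=\zeta_A^{-1}w$ together with $|\chi_{B^2}'|\lesssim B^{-2}$ and \eqref{eq:estimates111} yields $\|\chi_{B^2}\widetilde{\eta}\|_{L^2}\lesssim B^2\|w\|_{\widetilde{\Sigma}}$ and $\|\partial_x(\chi_{B^2}\widetilde{\eta})\|_{L^2}\lesssim\|w\|_{\widetilde{\Sigma}}$; then \eqref{eq:estimates115} is immediate from $\|v\|_{L^2}\le\|\mathcal{T}\|_{L^2\to L^2}\|\chi_{B^2}\widetilde{\eta}\|_{L^2}$ and \eqref{eq:coer51}. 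For \eqref{eq:estimates116} I would use that $\partial_x$ commutes with $\< \im \varepsilon\partial_x\>^{-N}$ to split $v'=\mathcal{T}\,\partial_x(\chi_{B^2}\widetilde{\eta})+\< \im \varepsilon\partial_x\>^{-N}[\partial_x,\mathcal{A}^*](\chi_{B^2}\widetilde{\eta})$; the first term is $\lesssim\varepsilon^{-N}\|\partial_x(\chi_{B^2}\widetilde{\eta})\|_{L^2}\lesssim\varepsilon^{-N}\|w\|_{\widetilde{\Sigma}}$ by \eqref{eq:coer51}. For the second, the crucial point is a twofold gain in the commutator: writing $\mathcal{A}^*=\prod_k(-\partial_x+\psi_k'/\psi_k)$, the top-order coefficient of $\mathcal{A}^*$ is constant so $[\partial_x,\mathcal{A}^*]$ has order $N-1$, and each of its coefficients is a polynomial in derivatives of the $\psi_k'/\psi_k$; since $\psi_k\sim e^{-\sqrt{|\omega_k|}|x|}$, $(\psi_k'/\psi_k)'=V_k-\omega_k-(\psi_k'/\psi_k)^2$, and every $V_k$ inherits from $V$ an exponential decay of rate $\ge a_0\ge 2a$ through the Darboux transformations, these coefficients are $O(e^{-2a\langle x\rangle})$. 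Hence $[\partial_x,\mathcal{A}^*]=e^{-2a\langle x\rangle}\mathcal{B}_0$ with $\mathcal{B}_0$ of order $\le N-1$ and coefficients bounded together with all derivatives, so $[\partial_x,\mathcal{A}^*](\chi_{B^2}\widetilde{\eta})=\mathcal{B}_1(e^{-a\langle x\rangle}\chi_{B^2}\widetilde{\eta})$ for another order-$\le(N-1)$ operator $\mathcal{B}_1$ with bounded coefficients (conjugating the remaining $e^{-a\langle x\rangle}$ through $\mathcal{B}_0$); since $\< \im \varepsilon\partial_x\>^{-N}\mathcal{B}_1$ maps $H^1\to L^2$ with norm $\lesssim\varepsilon^{-N}$, and since $\|e^{-a\langle x\rangle}\chi_{B^2}\widetilde{\eta}\|_{H^1}\lesssim\|\widetilde{\eta}\|_{L^2_{-a/10}}+\|e^{-a\langle x\rangle}\widetilde{\eta}'\|_{L^2}\lesssim\|w\|_{\widetilde{\Sigma}}$ (by Corollary~\ref{cor:rhoequiv}, using $e^{-a\langle x\rangle}\zeta_A^{-1}\lesssim e^{-a\langle x\rangle/2}$ and $|\partial_x\zeta_A^{-1}|\lesssim A^{-1}$), \eqref{eq:estimates116} follows.

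Finally, for \eqref{eq2stestJ21-4} I would fix $\widetilde{\chi}\in C_c^\infty(\R)$ with $\widetilde{\chi}=1$ on $\{|x|\le B\}$, $\operatorname{supp}\widetilde{\chi}\subset\{|x|\le 2B\}$, $|\widetilde{\chi}'|\lesssim B^{-1}$, and split $\langle x\rangle^{-M}v=\widetilde{\chi}\langle x\rangle^{-M}v+(1-\widetilde{\chi})\langle x\rangle^{-M}v$. On $\operatorname{supp}\widetilde{\chi}$ we have $\chi_{B^2}=1$ and $\zeta_B\in[e^{-2},1]$, so $v=\zeta_B^{-1}\xi$ there and $\widetilde{\chi}\langle x\rangle^{-M}v=(\widetilde{\chi}\zeta_B^{-1})(\langle x\rangle^{-M}\xi)$ with $\widetilde{\chi}\zeta_B^{-1}$ bounded together with its derivative ($\lesssim B^{-1}\lesssim 1$); as multiplication by it is bounded on $H^1$, $\|\widetilde{\chi}\langle x\rangle^{-M}v\|_{H^1}\lesssim\|\langle x\rangle^{-M}\xi\|_{H^1}\lesssim\|\xi\|_{\widetilde{\Sigma}}$, the last step because $\|\langle x\rangle^{-M}\xi'\|_{L^2}\le\|\xi'\|_{L^2}$ while $\|\langle x\rangle^{-M}\xi\|_{L^2}$ and $\|\langle x\rangle^{-M-1}\xi\|_{L^2}$ are $\lesssim\|\xi\|_{\widetilde{\Sigma}}$ by Lemma~\ref{lem:equiv_rho} (applied with $W=\langle x\rangle^{-2M}$, resp.\ $\langle x\rangle^{-2M-2}$, using $M\ge 2$ so that $\langle x\rangle^{1-2M}\in L^1$). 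On $\operatorname{supp}(1-\widetilde{\chi})\subset\{|x|\ge B\}$ one has $\langle x\rangle^{-M}\le\langle B\rangle^{-M}$ and $|(\langle x\rangle^{-M})'|\lesssim\langle B\rangle^{-M-1}$, so by \eqref{eq:estimates115}--\eqref{eq:estimates116}, $\|(1-\widetilde{\chi})\langle x\rangle^{-M}v\|_{H^1}\lesssim\langle B\rangle^{-M}(\|v'\|_{L^2}+\|v\|_{L^2})+\langle B\rangle^{-M-1}\|v\|_{L^2}\lesssim\varepsilon^{-N}\langle B\rangle^{-M+2}\|w\|_{\widetilde{\Sigma}}\le\varepsilon^{-N}\langle B\rangle^{-M+3}\|w\|_{\widetilde{\Sigma}}$; adding the two pieces gives \eqref{eq2stestJ21-4}. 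The one genuinely delicate estimate is \eqref{eq:estimates116}: one must beat the factor $B^2$ present in \eqref{eq:estimates115}, and this is exactly what the commutator structure of $\mathcal{A}^*$ (drop of one order plus exponential localization of its coefficients) delivers, so that the spread-out, $B^2$-sized part of $\chi_{B^2}\widetilde{\eta}$ never enters; everything else is bookkeeping with the weights and \eqref{eq:relABg}.
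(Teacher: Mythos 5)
Your proof is correct, and for \eqref{eq:estimates111}, \eqref{eq:estimates113}, \eqref{eq:estimates115} and the term $\mathcal{T}(\chi_{B^2}\widetilde{\eta})'$ in \eqref{eq:estimates116} it is essentially the argument of the paper (which for the first two simply invokes Lemma 4 of \cite{KMM3} and Lemma \ref{lem:equiv_rho}). Where you genuinely diverge is in the commutator term $\langle \im\varepsilon\partial_x\rangle^{-N}[\partial_x,\mathcal{A}^*]\chi_{B^2}\widetilde{\eta}$ and in \eqref{eq2stestJ21-4}. The paper rescales $x=\varepsilon y$, treats the resulting operators as $\Psi$DOs uniformly bounded in $\varepsilon$ via Lemmas \ref{lem:pd01} and \ref{claim:l2boundII}, and absorbs the $B^2$-spread of $\chi_{B^2}\widetilde{\eta}$ through the polynomial weight $\langle x\rangle^{-3}$ carried by $(\log\psi_j)''$ combined with Lemma \ref{lem:equiv_rho}; you instead use the full exponential decay of the commutator coefficients to insert $e^{-a\langle x\rangle}$ directly and reduce to elementary Fourier multiplier bounds. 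That decay does hold, but its rate is governed by $\min(a_0,2\sqrt{|\omega_j|})$ rather than by $a_0$ alone (it comes from the Deift--Trubowitz estimates on $m_\pm$ and on $(\log\psi_j)''=V_j-\omega_j-((\log\psi_j)')^2$), so you should say this explicitly; since $a\le 2^{-1}\min(a_0,a_1)$ with $a_1=2^{-1}\sqrt{|\omega_N|}$, the rate is still $\ge 2a$ and your argument goes through. You also need to commute the smooth bounded coefficients of your $\mathcal{B}_1$ past $\langle\im\varepsilon\partial_x\rangle^{-N}$ to get the uniform $\varepsilon^{-N}$ bound, which is routine. For \eqref{eq2stestJ21-4} your far-field estimate is cruder but sufficient: you pay $\varepsilon^{-N}B^2\|w\|_{\widetilde{\Sigma}}$ from \eqref{eq:estimates115} against $\langle B\rangle^{-M}$ and land at $\langle B\rangle^{-M+2}\le\langle B\rangle^{-M+3}$, avoiding the paper's second pass through the $\Psi$DO machinery to prove $\|\langle x\rangle^{-3}v\|_{H^1}\lesssim\varepsilon^{-N}\|w\|_{\widetilde{\Sigma}}$. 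Both routes rest on the same structural fact --- the commutator $[\partial_x,\mathcal{A}^*]$ drops an order and has localized coefficients --- but yours is more elementary, while the paper's intermediate weighted bound on $\langle x\rangle^{-3}v$ is set up so it can be recycled elsewhere in Section \ref{sec:vir2}.
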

\proof     The proof of \eqref{eq:estimates111}--\eqref{eq:estimates113} is exactly the same in Lemma 4 \cite{KMM3} and is   a consequence of Lemma \ref{lem:equiv_rho}.  Now we consider, still following \cite{KMM3}, the proof of \eqref{eq:estimates115}, which is rather immediate. Indeed,  by  \eqref{eq:coer51}, \eqref{eq:estimates111} and $A\gg B^2$,  we have
 \begin{align}
&  \| v \| _{L^2(\R )}  \lesssim  \varepsilon ^{-N}\| \chi _{B^2}\widetilde{\eta}  \| _{L^2(\R )}  \lesssim \varepsilon ^{-N}\left \| \frac{\chi _{B^2}}{\zeta _A}w   \right \| _{L^2(\R )}\lesssim  \varepsilon ^{-N}B^2  \|  w       \| _{\widetilde{\Sigma}}.\label{eq:estimates1151}
\end{align}
More complicated is the proof of  \eqref{eq:estimates116}.  We have
\begin{align}
&   v' = \mathcal{T}\(  \chi _{B^2}\widetilde{\eta} \) ' + \< \im \varepsilon   \partial _x \> ^{-N} [\partial _x,    \mathcal{A}^* ] \chi _{B^2}\widetilde{\eta}  . \label{eq:estimates1161}
\end{align}
To bound the first term in the right hand side, we use the inequality
\begin{align*}
&    |(\chi _{B^2}\widetilde{\eta} )' |  = \left |\(\frac{\chi _{B^2}}{\zeta _A}w   \)' \right |\le  \left | \frac{\chi _{B^2}}{\zeta _A} w'    \right |  +   \left | \( \frac{\chi _{B^2}}{\zeta _A} \) ' w   \right | \lesssim  \left |  w'   \right | + B ^{-2}|w  | \chi _{|x|\le 2 B^2},
\end{align*}
where we used $A\gg B^2$. Then
\begin{align}
     \|  \mathcal{T}\(  \chi _{B^2}\widetilde{\eta} \) ' \| _{L^2(\R )} &\lesssim    \varepsilon ^{-N}   \|  \(  \chi _{B^2}\widetilde{\eta} \) ' \| _{L^2(\R )}   \lesssim    \varepsilon ^{-N}  \(  \| w '  \| _{L^2(\R )}   + B ^{-2}  \| w    \| _{L^2(|x|\le 2 B^2)}\)  \label{eq:estimates1162}\\&\lesssim   \varepsilon ^{-N}   \| w    \| _{ \widetilde{\Sigma} }   \nonumber
\end{align}
by \eqref{eq:estimates111}. To bound the second term in the right hand side in \eqref{eq:estimates1161}, we use formula
\begin{align*}
&   [\partial _x,   \mathcal{A}^* ] =   \sum _{j=1}^{N} \prod _{i=0} ^{N-1-j} A^{*}_{N-i} \( \log \psi _j \) ^{\prime\prime}  \prod _{i= 1}^{j-1} A^{*}_{j-i} .
\end{align*}
with  the convention $ \prod _{i=0} ^{l}B_i =B_0\circ ...\circ B_l$.
Then we have
\begin{align}\nonumber
&      \left \|\< \im \varepsilon   \partial _x \> ^{-N}\prod _{i=0} ^{N-1-j} A^{*}_{N-i} \( \log \psi _j \) ^{\prime\prime}  \prod _{i= 1}^{j-1} A^{*}_{j-i}   \chi _{B^2}\eta    \right \| _{L^2(\R )}   =  \varepsilon ^{\frac{3}{2}-N}   \left \|\<  \im  \partial _y \> ^{-N}\prod _{i=0} ^{N-1-j}    {\psi _{N-i} (\varepsilon y ) } \circ \right .  \\& \left . \partial_{y} \circ \frac{1}{\psi _{N-i} (\varepsilon y )}             \( \log \psi _j \) ^{\prime\prime} (\varepsilon  y  )   \prod _{i= 1}^{j-1}   {\psi _{j-i} (\varepsilon y ) } \circ  \partial_{y} \circ \frac{1}{\psi _{j-i} (\varepsilon y )}      \chi _{B^2} (\varepsilon y )\eta  (\varepsilon y )   \right \| _{L^2(\R )}  . \label{eq:estimates11620}
\end{align}
Now write the operator inside the last term as
\begin{align*}
&      \mathcal{P}  \<  \im  \partial _y \> ^{-j+1} \( \log \psi _j \) ^{\prime\prime} (\varepsilon  y  )   \prod _{i= 1}^{j-1}   {\psi _{j-i} (\varepsilon y ) } \circ  \partial_{y} \circ \frac{1}{\psi _{j-i} (\varepsilon y )}      \chi _{B^2} (\varepsilon y )  \text{ , where}\\
&      \mathcal{P}  := \<  \im  \partial _y \> ^{-N}\prod _{i=0} ^{N-1-j}    {\psi _{N-i} (\varepsilon y ) }     \partial_{y} \circ \frac{1}{\psi _{N-i} (\varepsilon y )}  \<  \im  \partial _y \> ^{j-1}  .  \end{align*}
 We have  $\|  \mathcal{P} \| _{L^2\to L^2} \lesssim 1 $. Then the term in  \eqref{eq:estimates11620} is
\begin{align} \nonumber
&
\lesssim \varepsilon ^{\frac{3}{2}-N}  \left \|               \( \log \psi _j \) ^{\prime\prime} (\varepsilon  y  )     \<    \im \partial _y \> ^{-j+1}  \prod _{i= 1}^{j-1}   {\psi _{j-i} (\varepsilon y ) } \circ  \partial_{y} \circ \frac{1}{\psi _{j-i} (\varepsilon y )}      \chi _{B^2} (\varepsilon y )\eta  (\varepsilon y )   \right \| _{L^2(\R )}  \\&   +\varepsilon ^{\frac{3}{2}-N}  \left \|  \<   \im  \partial _y \> ^{-j+1}      \left [       \( \log \psi _j \) ^{\prime\prime} (\varepsilon y  )  , \<    \partial _y \> ^{ j-1}  \right ] \right .  \label{eq:estimates116200}\\& \left .\<   \im \partial _y \> ^{-j+1}  \prod _{i= 1}^{j-1}   {\psi _{j-i} (\varepsilon y ) } \circ  \partial_{y} \circ \frac{1}{\psi _{j-i} (\varepsilon y )}      \chi _{B^2} (\varepsilon  y )\eta  (\varepsilon y )   \right \| _{L^2(\R )}. \nonumber
\end{align}
By Lemma \ref{lem:equiv_rho} and  $A\gg B^2$, the   term   in the first   line  is
\begin{align*}
& \lesssim     \varepsilon ^{\frac{3}{2}-N}\left \|       \< \varepsilon y  \> ^{-3}        \chi _B (\varepsilon y )\eta  (\varepsilon y )   \right \| _{L^2(\R )} =  \varepsilon ^{ 1-N}\left \|       \< x  \> ^{-3}        \frac{\chi _{B^2}}{\zeta _A} w    \right \| _{L^2(\R )}
  \lesssim  \varepsilon ^{1 -N} \left \|        w     \right \| _{\widetilde{\Sigma}}.
\end{align*}
 By Lemmas \ref{lem:equiv_rho}, \ref{lem:pd01}  and \ref{claim:l2boundII}, the term in the last two lines  of \eqref{eq:estimates116200} is
\begin{align*}
& \le  \varepsilon ^{j  -N}   \left \|  \<  \im \varepsilon \partial _x \> ^{-j+1}      \left [       \( \log \psi _j \) ^{\prime\prime}    , \<   \im \varepsilon \partial _x \> ^{ j-1}  \right ] \<   \im \varepsilon \partial _x \> ^{-j+1}  \prod _{i= 1}^{j-1}   {\psi _{j-i} } \circ  \partial_{x} \circ \frac{1}{\psi _{j-i}  }      \chi _{B^2}  \eta    \right \| _{L ^{2,3}(\R )} \\& \lesssim  \varepsilon ^{j+1- N} \left \|   \< x \> ^{-3}   \<   \im \varepsilon \partial _x \> ^{-j+1}  \prod _{i= 1}^{j-1}   {\psi _{j-i} } \circ  \partial_{x} \circ \frac{1}{\psi _{j-i}  }      \chi _{B^2}  \eta    \right \| _{L^2(\R )} \\& =     \varepsilon ^{  5/2-N}     \left \|   \< \varepsilon y \> ^{-3}   \<   \im \partial _y \> ^{-j+1}  \prod _{i= 1}^{j-1}   {\psi _{j-i} (\varepsilon y ) } \circ  \partial_{y} \circ \frac{1}{\psi _{j-i}  (\varepsilon y ) }      \chi _{B^2}  (    \varepsilon y ) \eta   (\varepsilon y )  \right \| _{L^2(\R )}\\& \lesssim     \varepsilon ^{ 5/2-N}     \left \|   \< \varepsilon y \> ^{-3}       \chi_{B^2}  (\varepsilon  y ) \eta   (\varepsilon y )  \right \| _{L^2(\R )} =  \varepsilon ^{  2-N}     \left \|   \< x \> ^{-3}       \chi _{B^2}   \eta      \right \| _{L^2(\R )} \lesssim \varepsilon ^{  2-N}    \| w   \| _{ \widetilde{\Sigma }} .
\end{align*}
This completes the proof of \eqref{eq:estimates116}.

\noindent We finally consider the proof of \eqref{eq2stestJ21-4}.  We have
    \begin{align}&
\|   \< x \> ^{-M}    v \|_{H^1(\R )}  \le \|   \< x \> ^{-M}  \chi_B   \xi   \|_{H^1(\R )}     +  \|   \< x \> ^{-M}  \( 1-\chi_B \)   v    \|_{H^1(\R )}\nonumber  \\& \lesssim   \|      \xi   \|_{\widetilde{\Sigma}}    + \< B \> ^{-M+3}    \| \< x \> ^{-3}v  \| _{H^1(\R )},\label{eq2stestJ21-}
  \end{align}
  where we used Lemma \ref{lem:equiv_rho}.
  To evaluate the last term, we observe, introducing $ x=\varepsilon y$,  that
\begin{align*}&
  \< x \> ^{-3} (\mathcal{T} f ) (x) = (-1)^N \< \varepsilon y \> ^{-3} \varepsilon ^{-N}   \<    \im \partial _y \> ^{-N} \prod _{i=0} ^{N-1 }   {\psi _{N-i} (\varepsilon  y ) }\circ  \partial_{y} \circ \frac{1}{\psi _{N-i} (\varepsilon y )} (f(\varepsilon  \cdot ) )(y)\\&  =\< \varepsilon y \> ^{-3} \varepsilon ^{-N} p ( \varepsilon y, \im \partial _y) (f(\varepsilon \cdot ) )(y)) .
  \end{align*}
Then, we can apply Lemma \ref{lem:pd01}, concluding that
\begin{align}&
 \| \< x \> ^{-3}\mathcal{T} f \| _{L^2_x} =  \varepsilon ^{\frac{1}{2}-N}  \| \< \varepsilon y \> ^{-3}  p ( \varepsilon y, \im \partial _y) (f(\varepsilon \cdot ) )(y)) \| _{L^2_y} \nonumber \\&  \lesssim \varepsilon ^{\frac{1}{2}-N}  \| \< \varepsilon y \> ^{-3}  f(\varepsilon y ) \| _{L^2_y} \lesssim \varepsilon ^{ -N}  \| \< x \> ^{-3}  f  \| _{L^2_x}.\label{eq2stestJ21-234}
  \end{align}
By $v=  \mathcal{T} \chi _{B^2}\widetilde{\eta} $,  Lemma \ref{lem:equiv_rho} and $A\gg B^2$, this by implies
 \begin{align}& \label{eq2stestJ21-2}
 B ^{-M+3}    \| \< x \> ^{-3}\mathcal{T} \chi _{B^2}\widetilde{\eta} \| _{L^2(\R )} \lesssim  B ^{-M+3}  \varepsilon ^{-N}  \left \| \< x \> ^{-3}  \frac{\chi _{B^2}}{\zeta _A}\zeta _A\widetilde{\eta} \right \| _{L^2(\R )}\\& \lesssim B ^{-M+3}  \varepsilon ^{-N}  \| \< x \> ^{-3} w  \| _{L^2(\R )}\lesssim B ^{-M+3}  \varepsilon ^{-N}  \|  w   \| _{\widetilde{\Sigma}}.\nonumber
  \end{align}
Next,
 \begin{align}&
 \(   \< x \> ^{-3}\mathcal{T} \(     \chi _{B^2}\widetilde{\eta}    \)   \) '=  \(   \< x \> ^{-3}  \) ' \mathcal{T} \(     \chi _{B^2}\widetilde{\eta }   \)  + [\partial _x, \mathcal{T} ] \(     \chi _{B^2}\widetilde{\eta }   \) +\< x \> ^{-3}\mathcal{T} \(     \chi _{B^2}\widetilde{\eta  }  \)'.\label{eq2stestJ21-233}
  \end{align}
  Since $\(   \< x \> ^{-3}  \) '\sim \< x \> ^{-4} $, the first term in the right can be treated like  \eqref{eq2stestJ21-234}.  The second term in the right hand side of \eqref{eq2stestJ21-233} coincides with the second term in the right in \eqref{eq:estimates1161}.
  So we conclude, using Lemma \ref{lem:equiv_rho} and \eqref{eq:estimates111},
 \begin{align}&
 \< B \> ^{-M+3}    \| \(   \< x \> ^{-3}\mathcal{T} \chi_{B^2}\widetilde{\eta}    \) ' \| _{L^2(\R )}\lesssim  \< B \> ^{-M+3} \varepsilon ^{-N} \(  \|   w   \| _{\widetilde{\Sigma}} +  \left  \| \< x \> ^{-3} \(   \frac{\chi _{B^2}}{\zeta _A} w  \) ' \right \| _{L^2(\R )} \) \nonumber\\& \lesssim  \< B \> ^{-M+3}  \varepsilon ^{-N} \(  \|   w \| _{\widetilde{\Sigma}} +   \|  \< x \> ^{-3} w     '\| _{L^2(\R )}  +\left  \|  \< x \> ^{-3}\(   \frac{\chi _{B^2}}{\zeta _A}   \) 'w    \right  \| _{L^2(|x|\le 2B^2  )}  \) \nonumber\\& \lesssim   \< B \> ^{-M+3}  \varepsilon ^{-N}\|  w  \| _{\widetilde{\Sigma}}.\label{eq2stestJ21-3}
  \end{align}
Entering \eqref{eq2stestJ21-2}--\eqref{eq2stestJ21-3}   in \eqref{eq2stestJ21-},  we obtain   \eqref{eq2stestJ21-4}.

\qed

\section{Proof of Proposition \ref{prop:2ndvirial}} \label{sec:vir2}

For the proof of the 2nd virial estimate Proposition \ref{prop:2ndvirial}, we use the following functional,
\begin{align}\label{def:JBg}
\mathcal{J}   (v ):=\frac{1}{2}\<v  ,\im  \widetilde{{S}}_B v \>,
\end{align}
where the anti-symmetric operator $ \widetilde{{S}}_B$ is defined by
\begin{align}\label{def:tildeSpsi}
 \widetilde{{S}}_B:=\frac{\psi_B'}{2}+\psi_B\partial_x,\ \psi_B(x):=\chi_{B^{2}}^2 (x)\varphi_{B}(x).
\end{align}
Then, by the equation of $v$ in  \eqref{eq:vBg}, we have
\begin{align}\label{eq:diffJ1}
  \frac{d}{dt}\mathcal{J}    (v )=-\<H_{N+1}v  , \widetilde{S}_B v\>-\<\mathcal{R}_{v   }, \widetilde{S}_B v\>     -\sum_{\mathbf{m}\in \mathbf{R}_{\mathrm{min}}} \<\mathbf{z}^{\mathbf{m}}\widetilde{G}_{ \mathbf{m}}, \widetilde{S}_B v\>
\end{align}
where by     remainder   formulas
\eqref{eq:Reta}   and \eqref{def:RvBg},
we have
\begin{align}\label{eq:remaindJB}
\<\mathcal{R}_{v  },\widetilde{S}_B v\>=\sum_{j=1}^{7}\<\mathcal{R} _{v  j},\widetilde{S}_Bv\>,
\end{align}
with
\begin{align}
\mathcal{R} _{v  1}&=-\im \mathcal{T} \chi_{B^2} P_c D_{\mathbf{z}}\phi[\mathbf{z}]\(\dot {\mathbf{z}}+\im \boldsymbol{\varpi}(\mathbf{z})\mathbf{z}\),\label{eq:remaindJB1}\\
\mathcal{R}_{v  2}&=\mathcal{T} \chi_{B^2}P_c\mathcal{R}_{\mathrm{rp}}[\mathbf{z}],     \label{eq:remaindJB2}      \\
\mathcal{R}_{v  3}&=\mathcal{T}\chi_{B^2} P_c\(2\phi[\mathbf{z}]|\eta|^2 + \overline{\phi[\mathbf{z}]}\eta^2\),\label{eq:remaindJB3}\\
\mathcal{R}_{v  4}&=\mathcal{T}\chi_{B^2} P_c |\eta|^2\eta,     \label{eq:remaindJB4}\\
\mathcal{R}_{v  5}&=\mathcal{T} \chi_{B^2}P_cL [\mathbf{z}] {\eta} ,\label{eq:remaindJB5}\\
\mathcal{R}_{v  6}&=\<\varepsilon \im \partial_x\>^{-N}[V_{N+1},\<\varepsilon \im \partial_x\>^N]v  ,\label{eq:remaindJB6}\\
\mathcal{R}_{v  7}&=\mathcal{T} \(2\chi_{B^2}'\partial_x + \chi_{B^2}''\)\tilde{\eta}. \label{eq:remaindJB7}
\end{align}

Proposition \ref{prop:2ndvirial} follows from the following three lemmas.
\begin{lemma}\label{lem:2ndv1v}
We have
\begin{align}\< H _{N+1} v ,  \widetilde{S}_Bv \>  \ge    2^{-1} \<  - \xi ^{\prime\prime} -  \frac{1}{2}  \chi _{B^2} ^{2}x V_{N+1}  '   \xi ,\xi     \>  + B ^{-1/2}  O\(  \|\xi \|_{\widetilde{\Sigma}} ^2+  \|  w   \|_{\widetilde{\Sigma}} ^2   \) .\label{eq:2ndvmainv}
\end{align}

\end{lemma}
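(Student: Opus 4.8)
The plan is to follow the computation of Lemma~1 in \cite{KMM3}, which is also the blueprint for Lemma~\ref{lem:1stv1} above. Since $\widetilde{S}_B=\frac12\psi_B'+\psi_B\partial_x$ is antisymmetric and $H_{N+1}=-\partial_x^2+V_{N+1}$ with $V_{N+1}$ real, two integrations by parts give the three--term identity
\begin{align*}
\<H_{N+1}v,\widetilde{S}_Bv\>=\int\psi_B'\,|v'|^2\,dx-\frac14\int\psi_B'''\,|v|^2\,dx-\frac12\int\psi_B\,V_{N+1}'\,|v|^2\,dx.
\end{align*}
I would set $g:=\chi_{B^2}\zeta_B$, so that $\xi=gv$, and split $\psi_B'=(\chi_{B^2}^2\varphi_B)'=g^2+h$ with $h:=2\chi_{B^2}\chi_{B^2}'\varphi_B$ supported in $\{B^2\le|x|\le 2B^2\}$; then $\psi_B'''=(g^2)''+h''$. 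Applying to $\int g^2|v'|^2-\frac14\int(g^2)''|v|^2$ the elementary identities of \eqref{eq:lem1:1--}--\eqref{eq:lem1:2} (with $\zeta_A,w$ replaced by $g,\xi$) gives $\int g^2|v'|^2-\frac14\int(g^2)''|v|^2=\<-\xi'',\xi\>+\frac12\int(\log g)''|\xi|^2\,dx$, whence
\begin{align*}
\<H_{N+1}v,\widetilde{S}_Bv\>=\<-\xi'',\xi\>-\frac12\<\chi_{B^2}^2xV_{N+1}'\xi,\xi\>+\mathcal{E},
\end{align*}
where $\mathcal{E}$ collects $\frac12\int(\log g)''|\xi|^2$, $\int h|v'|^2$, $-\frac14\int h''|v|^2$, and the discrepancy between $-\frac12\int\psi_BV_{N+1}'|v|^2$ and $-\frac12\<\chi_{B^2}^2xV_{N+1}'\xi,\xi\>$.

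The core of the argument is then to prove $\mathcal{E}=B^{-1/2}O\big(\|\xi\|_{\widetilde{\Sigma}}^2+\|w\|_{\widetilde{\Sigma}}^2\big)$. On the effective support of $V_{N+1}'$ one has $\chi_{B^2}\equiv1$, $\zeta_B\equiv1$ and $|\varphi_B-x|\lesssim\min(x^2/B,|x|)$, so the potential discrepancy involves a weight whose $L^1(\<x\>dx)$--norm is $\lesssim B^{-1}$, hence by Lemma~\ref{lem:equiv_rho} it is $O(B^{-1}\|\xi\|_{\widetilde{\Sigma}}^2)$. For $\frac12\int(\log g)''|\xi|^2$ one uses the explicit $\zeta_B$ and $\chi_{B^2}$: $(\log g)''\equiv0$ on $\{|x|\le1\}$ and on $\{2\le|x|\le B^2\}$ (there $\log g=-|x|/B$ is affine), it is $O(B^{-1})$ on the transition layer $\{1\le|x|\le2\}$, and on $\{|x|\ge B^2\}$ it is weighted by $|\xi|^2=g^2|v|^2$ with $g^2\lesssim e^{-2B}$; so this term is $O(B^{-1}\|\xi\|_{\widetilde{\Sigma}}^2)$ up to an exponentially small remainder. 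Finally $\|h\|_{L^\infty}\lesssim B^{-1}$ and $\|h''\|_{L^\infty}\lesssim B^{-5}$ (derivatives of $\varphi_B$ are exponentially small on $\{|x|\ge B^2\}$), so by \eqref{eq:estimates115}--\eqref{eq:estimates116} the terms $\int h|v'|^2$ and $\int h''|v|^2$ are, in absolute value, $\lesssim B^{-1}\varepsilon^{-2N}\|w\|_{\widetilde{\Sigma}}^2$; since \eqref{eq:relABg} forces $\varepsilon^{-2N}\le(\log B)^{2N}\le B^{1/2}$, these are $O(B^{-1/2}\|w\|_{\widetilde{\Sigma}}^2)$ as well (alternatively $x\chi_{B^2}'\le0$ makes $h\le0$, which could be exploited here). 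Collecting, and invoking $\<-\xi''-\frac12\chi_{B^2}^2xV_{N+1}'\xi,\xi\>\ge0$ --- a consequence of \eqref{eq:estimates113} together with the repulsivity $xV_{N+1}'\le0$ of Assumption~\ref{ass:VN} --- one obtains \eqref{eq:2ndvmainv}; the factor $2^{-1}$ there is just the slack of discarding half of this non--negative form.

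The step I expect to be the real obstacle is the middle one. Taken termwise, the contributions of $\mathcal{E}$ that genuinely live on the wide annulus $\{1\ll|x|\lesssim B^2\}$ are of size $\sim B^{-2}$ times the full $L^2$--mass of $\xi$ there, which is \emph{not} controlled by $\|\xi\|_{\widetilde{\Sigma}}^2$ (the weight defining $\widetilde{\Sigma}$ decays exponentially, whereas $\zeta_B$ decays only on scale $B$); what saves the estimate is the exact cancellation --- for $\zeta_B=e^{-|x|/B}$, affine in the logarithm on that annulus --- between the $\frac12(\log g)''|\xi|^2$ piece coming from $\int g^2|v'|^2$ and the $-\frac14(g^2)''|v|^2$ piece coming from $-\frac14\int\psi_B'''|v|^2$, precisely the mechanism that closes the first virial estimate in \cite{KMM3}. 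A secondary difficulty is the bookkeeping of the factors $\varepsilon^{-N}$ produced by the transformation $\mathcal{T}$ and by the estimates of Lemma~\ref{lem:coer3}, which must be absorbed into small powers of $B$ through the hierarchy \eqref{eq:relABg}.
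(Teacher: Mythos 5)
Your proposal is correct and follows the paper's argument essentially verbatim: the same antisymmetry identity $\<H_{N+1}v,\widetilde{S}_Bv\>=\int\psi_B'|v'|^2-\frac14\int\psi_B'''|v|^2-\frac12\int\psi_BV_{N+1}'|v|^2$, the same splitting of $\psi_B'$ into $\chi_{B^2}^2\zeta_B^2$ plus the annulus-supported remainder, the same conjugation producing $\<-\xi'',\xi\>$ plus a $(\log)''$-type potential that cancels on $2\le|x|\le B^2$, and the same absorption of the $\varepsilon^{-2N}$ losses from Lemma \ref{lem:coer3} through the hierarchy \eqref{eq:relABg}. The only substantive deviation is the treatment of the $V_{N+1}'$ term: the paper keeps $-2^{-1}\psi_B\zeta_B^{-2}V_{N+1}'$ exactly and uses the one-sided pointwise inequality $\varphi_B/\zeta_B^2\ge x$ for $x\ge 0$ together with the repulsivity $xV_{N+1}'\le 0$ to compare it with $-2^{-1}\chi_{B^2}^2xV_{N+1}'$, whereas you bound the discrepancy in $L^1(\<x\>dx)$ via Lemma \ref{lem:equiv_rho}, which also works but additionally relies on the exponential decay of $V_{N+1}'$ to beat the factor $\zeta_B^{-2}\le e^{2|x|/B}$ when converting $|v|^2$ into $|\xi|^2$.
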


\begin{proof}
Like in Lemma \ref{lem:1stv1}  the l.h.s.\  of \eqref{eq:2ndvmainv} equals
\begin{align}\label{eq:lem2ndv1_1v}
&
  \< H _{N+1} v ,  \widetilde{S}_Bv \>  = \<  \psi '_B  v' , v'   \> - \frac{1}{4}\<  \psi  ^{\prime\prime\prime}_B  v  , v  \>  -\frac{1}{2} \<    v ,   \psi _B     V_{N+1}  '      v \> .
%   \\& = -\<  \chi _{B^2} ^2\zeta _B ^2   g_{B,\gamma} ' , g_{B,\gamma} ' \> + \<     \xi  ,  \frac{ \psi _B }{2\zeta _B ^2}    V_{N+1}  '     \xi \>   + \frac{1}{4}\<  \chi _{B^2} ^2 (\zeta _B ^2) ^{\prime\prime}  g_{B,\gamma}   , g_{B,\gamma}    \> \\&  -\<  (\chi _{B^2} ^2)'\varphi  _B     g_{B,\gamma} ' , g_{B,\gamma} ' \> + \frac{1}{4}\<  \( 3(\chi _{B^2} ^2)'(\zeta _B ^2) ^{\prime }+ 3(\chi _{B^2} ^2)^{\prime\prime} \zeta _B ^2  - (\chi _{B^2} ^2)^{\prime\prime\prime}\varphi _B  \)  g_{B,\gamma}   , g_{B,\gamma}    \> ,
\end{align}
For the 1st and 2nd     term in the right hand side of \eqref{eq:lem2ndv1_1v}, we have
\begin{align}
&\<  \psi '_B  v' , v'  \> - \frac{1}{4}\<  \psi  ^{\prime\prime\prime}_B v  , v    \>=
\<\chi_{B^{2}}^2 \zeta_B^2 v',v'\>-\frac{1}{4}\<\chi_{B^{2}}^2\(\zeta_B^2\)'' v,v\>\nonumber\\&
+\<\(\chi_{B^{2}}^2\)'\varphi_B v', v'\>-
\frac{1}{4}\<\((\chi_{B^{2}}^2)'''\varphi_{B}+3\(\chi_{B^{2}}\)''\zeta_B^2+3(\chi_{B^{2}})'(\zeta_B^2)'\) v, v \>.\label{eq:lem2ndv1_2v}
%\<-\partial_x^2\xi_{B,\gamma}\xi_{B,\gamma}\>+\frac{1}{B}\<V_0\xi_{B,\gamma},\xi_{B,\gamma}\>+\mathcal{R}_{J,1},
\end{align}
By  Lemma \ref{lem:coer3}, the last term of \eqref{eq:lem2ndv1_2v} can by bounded as
\begin{align*}
&|\<\((\chi_{B^{2}}^2)'''\varphi_{B}+3\(\chi_{B^{2}}\)''\zeta_B^2+3(\chi_{B^{2}})'(\zeta_B^2)'\) v, v \>|\\&\lesssim \(B^{-5}+B^{-4}e^{-2B }+B^{-4}e^{-B }\)B^{4}\varepsilon ^{-2N} \| w\|_{\widetilde{\Sigma}} ^2 \lesssim
B^{-1/2} \| w \|_{\widetilde{\Sigma}} ^2.
\end{align*}
For the 1st term of the 2nd line of \eqref{eq:lem2ndv1_2v},  we have
\begin{align*} &
   \<   \left |  (\chi  _{B^2} ^2)'\varphi  _B \right |    , |v'|^2      \>   \lesssim   B ^{-1}\| v '  \| _{L^2(|x|\le 2 B^2)}^2\lesssim \varepsilon ^{-2N} B ^{-1}  \|  w     \| _{\widetilde{\Sigma}} ^2.
\end{align*}
 We  consider now  the 1st and 2nd term of \eqref{eq:lem2ndv1_2v}.
Using  $\chi_{B^2}\zeta_B v' =\partial_x \xi  -\chi_{B^2}\zeta_B' v-\chi_{B^2}' \zeta_B v$, we have
\begin{align}
&\<\chi_{B^2}^2 \zeta_B^2 v',v' \>-\frac{1}{4}\<\chi_{B^2}^2\(\zeta_B^2\)'' v,v\>=
\<- \xi  ^{\prime\prime},\xi \>+\frac{1}{2B}\<V_0\xi ,\xi   \>\nonumber\\&
+2\<  \xi ',\chi_{B^{2}}'\zeta_B v\>-2\<\zeta_B'\xi ,\chi_{B^{2}}'v\>+\<\chi_{B^{2}}'\zeta_B v ,\chi_{B^{2}}' \zeta_B v\>.
\label{eq:lem2ndv1_6v}
\end{align}
The 2nd line of \eqref{eq:lem2ndv1_6v} can be bounded as
\begin{align}\nonumber
&|\<  \xi  ',\chi_{B^{2}}'\zeta_B v\>|\lesssim B^{-2}B^{4}e^{-B }\|\xi \|_{\widetilde{\Sigma}}\|w\|_{\widetilde{\Sigma}}\lesssim  B^{-1}\|\xi \|_{\widetilde{\Sigma}}\|w  \|_{\widetilde{\Sigma}},\\&
|\<\zeta_B'\xi  ,\chi_{B^{2}}' v\>|\lesssim B^{-2}B^{8}e^{-B }  \|\<x\>^{-3}\xi \|_{L^2} \varepsilon ^{-N}\|w  \|_{\widetilde{\Sigma}}\lesssim  B^{-1}\|\xi \|_{\widetilde{\Sigma}}\|w\|_{\widetilde{\Sigma}},\label{eq:lem2ndv1_60v}\\&
|\<\chi_{B^{2}}'\zeta_B v,\chi_{B^{2}}' \zeta_B v\>|\lesssim B^{-2}e^{-B} B^4 \|  w   \|_{\widetilde{\Sigma}} ^2   \lesssim B^{-1}  \|  w   \|_{\widetilde{\Sigma}} ^2 ,\nonumber
\end{align}
where we have used $\|\zeta_B' \xi  \|_{L^2}\lesssim B \|\xi \|_{\widetilde{\Sigma}}$  by \eqref{eq:lem:rhoequiv}.

\noindent Summing up, we obtain
\begin{align*}
&
  \< H _{N+1} v ,  \widetilde{S}_Bv \>  =    \<  - \xi  ^{\prime\prime} + \frac{1}{2B} V_0\xi    -2 ^{-1}    \psi _B \zeta_B ^{-2}    V_{N+1}  '  \xi     ,\xi     \>  + B ^{-1/2}  O\(  \|\xi \|_{\widetilde{\Sigma}} ^2+  \|  w   \|_{\widetilde{\Sigma}} ^2   \) \\& \ge    \< (-\partial ^2_x -2 ^{-1}\chi _{B^2} ^{2}xV' _{N+1})\xi     ,\xi     \>  +  \frac{1}{2B}  \< V_0\xi ,\xi     \>  + B ^{-1/2}  O\(  \|\xi \|_{\widetilde{\Sigma}} ^2+  \|  w   \|_{\widetilde{\Sigma}} ^2   \) ,
\end{align*}
where , like in Lemma 3 in \cite{KMM3}, since      $  \frac{ \varphi _B (x) }{\zeta _B ^2(x)} \ge x$ for $x\ge 0$, for $B$ large enough, by Lemma \ref{lem:equiv_rho},
\begin{align*}
 \< (-\partial ^2_x -2 ^{-1}\chi _{B^2} ^{2}xV' _{N+1})\xi     ,\xi     \> \ge B ^{-1}  \< V_0\xi ,\xi     \>  .
\end{align*}
 So
\begin{align*}
& \<  - \xi  ^{\prime\prime} + \frac{1}{2B} V_0\xi    -2 ^{-1}    \psi _B \zeta_B ^{-2}    V_{N+1}  '  \xi     ,\xi     \> \ge 2^{-1} \<  -\xi  ^{\prime\prime} -  2^{-1}  \chi _{B^2} ^{2}x V_{N+1}  '   \xi     ,\xi     \> .
\end{align*}

\end{proof}

\begin{lemma}\label{lem:2ndv2}
We have
\begin{align}\label{eq:est_R_v1_2}
  \sum _{j=1,2}|\<\mathcal{R} _{v  j},\widetilde{S}_B v \>| & \lesssim    \varepsilon ^{-N}B  \delta ^2   \(   \| w  \| _{\widetilde{\Sigma}}  +  \| \xi    \| _{\widetilde{\Sigma}} \)
\(  \sum_{\mathbf{m}\in \mathbf{R}_{\mathrm{min}}}|\mathbf{z}^{\mathbf{m}}|+ \|\dot {\mathbf{z}}+\im \boldsymbol{\varpi}(\mathbf{z})\mathbf{z}\|\)  , \\ \label{eq:est_R_v3_5}\sum _{j=3,4,5}
 |\<\mathcal{R} _{v  j},\widetilde{S}_B v  \>| &\lesssim    \varepsilon ^{-N}B ^3  \delta   ^2  \| w  \| _{\widetilde{\Sigma}}    \(   \| w  \| _{\widetilde{\Sigma}}  +  \| \xi    \| _{\widetilde{\Sigma}} \); \\ \label{eq:est_R_v6}
 |\<\mathcal{R} _{v    6},\widetilde{S}_B v  \>| &\lesssim \varepsilon \(  \|      \xi   \|_{\widetilde{\Sigma}}  ^2  +     \|    w       \| _{\widetilde{\Sigma}}  ^2    \) ; \\ |\<\mathcal{R} _{v  7},\widetilde{S}_B v \>| &\lesssim    \varepsilon ^{-N} B ^{-1}    \|  w      \| _{\widetilde{\Sigma}} \(   \|w  \| _{\widetilde{\Sigma}}  +  \| \xi    \| _{\widetilde{\Sigma}} \).\label{eq:est_R_v7}
\end{align}

\end{lemma}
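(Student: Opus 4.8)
The plan is to estimate the seven remainders $\mathcal{R}_{v1},\dots,\mathcal{R}_{v7}$ (see \eqref{eq:remaindJB1}--\eqref{eq:remaindJB7}) separately, all against $\widetilde{S}_Bv$, using as a common backbone a uniform pairing bound for the antisymmetric operator $\widetilde{S}_B=\tfrac12\psi_B'+\psi_B\partial_x$. Since $\psi_B=\chi_{B^2}^2\varphi_B$ is supported in $\{|x|\le 2B^2\}$ with $|\psi_B|\lesssim\langle x\rangle$ and $|\psi_B'|\lesssim1$, one has $\|\langle x\rangle^{-M}\widetilde{S}_Bv\|_{L^2}\lesssim\|\langle x\rangle^{-(M-1)}v\|_{H^1}$, which by \eqref{eq2stestJ21-4} (with $M\ge5$) and the relations \eqref{eq:relABg} (in particular $\varepsilon^{-N}\ll B$) gives the master estimate $\|\langle x\rangle^{-M}\widetilde{S}_Bv\|_{L^2}\lesssim\|\xi\|_{\widetilde{\Sigma}}+\|w\|_{\widetilde{\Sigma}}$; I shall also use the cruder $\|\widetilde{S}_Bv\|_{L^2}\lesssim\varepsilon^{-N}B^2\|w\|_{\widetilde{\Sigma}}$ coming from \eqref{eq:estimates115}--\eqref{eq:estimates116}. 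The smallness on the $\mathcal{R}_{vj}$ side will be produced by combining the mapping properties \eqref{eq:coer51}--\eqref{eq:coer52} of $\mathcal{T}$ with: the exponential localization and algebraic smallness of the refined–profile remainders; the exponential localization of $\phi[\mathbf{z}]$ and $L[\mathbf{z}]$; and the facts $\|\eta\|_{L^\infty}\lesssim\delta$, $\|\eta\|_{L^2(|x|\le2B^2)}\lesssim B^2\|w\|_{\widetilde{\Sigma}}$ and $\|e^{-\frac{a}{10}\langle x\rangle}\eta\|_{L^2}\lesssim\|w\|_{\widetilde{\Sigma}}$, which follow from Corollary \ref{cor:rhoequiv}, from \eqref{eq:estimates111} together with $\zeta_A\sim1$ on $\{|x|\le2B^2\}$ (as $A\gg B^2$), and from $\|\widetilde{\eta}_1\|_{\Sigma^1}\lesssim\delta^2\|w\|_{\widetilde{\Sigma}}$ of Lemma \ref{lem:GNTR} for the passage between $\widetilde{\eta}$ and $\eta$. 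For each $j$ I will bound $\|\langle x\rangle^M\mathcal{R}_{vj}\|_{L^2}$ and apply $|\langle\mathcal{R}_{vj},\widetilde{S}_Bv\rangle|\le\|\langle x\rangle^M\mathcal{R}_{vj}\|_{L^2}\|\langle x\rangle^{-M}\widetilde{S}_Bv\|_{L^2}$, except for $j=4$ which needs a finer splitting.

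For $j=1,2$, the inputs $P_cD_{\mathbf{z}}\phi[\mathbf{z}](\dot{\mathbf{z}}+\im\boldsymbol{\varpi}(\mathbf{z})\mathbf{z})$ and $P_c\mathcal{R}_{\mathrm{rp}}[\mathbf{z}]$ lie in $\Sigma^s$ for every $s$ with norm $\lesssim_s\delta^2\big(\sum_{\mathbf{m}\in\mathbf{R}_{\mathrm{min}}}|\mathbf{z}^{\mathbf{m}}|+\|\dot{\mathbf{z}}+\im\boldsymbol{\varpi}(\mathbf{z})\mathbf{z}\|\big)$ by \eqref{eq:Reta10}, \eqref{est:Rrp} (using $D_{\mathbf{z}}\phi[0]\mathbf{e}^k=\phi_k$, $D_{\mathbf{z}}^2\phi[0]=0$), and $\chi_{B^2}$ preserves $\Sigma^s$ uniformly in $B$; being smooth and exponentially localized, they can be put through the \emph{second} bound in \eqref{eq:coer51}, $\|\mathcal{T}\|_{\Sigma^N\to\Sigma^0}\lesssim1$, giving $\|\mathcal{R}_{vj}\|_{L^{2,M}}\lesssim_M\delta^2(\cdots)$ with \emph{no} factor $\varepsilon^{-N}$; together with the master estimate this yields \eqref{eq:est_R_v1_2}, in fact with $\delta^2$ in place of $\varepsilon^{-N}B\delta^2$. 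For $j=3,5$, $\mathcal{R}_{v3}$ carries one and $\mathcal{R}_{v5}$ two factors of $\phi[\mathbf{z}]$; since $\phi[\mathbf{z}]$ decays faster than $e^{-\frac{a}{10}\langle x\rangle}$, one gets for the quadratic-in-$\eta$ pieces such as $\phi[\mathbf{z}]|\eta|^2$ and $\overline{\phi[\mathbf{z}]}\eta^2$ that $\|\langle x\rangle^M\phi[\mathbf{z}]|\eta|^2\|_{L^2}\lesssim_M\|\langle x\rangle^M\phi[\mathbf{z}]e^{\frac{a}{10}\langle x\rangle}\|_{L^\infty}\|\eta\|_{L^\infty}\|e^{-\frac{a}{10}\langle x\rangle}\eta\|_{L^2}\lesssim\delta^2\|w\|_{\widetilde{\Sigma}}$, and similarly for $L[\mathbf{z}]\eta$ (the two $\phi[\mathbf{z}]$'s giving $\delta^2$ and one weighted $\eta$ giving $\|w\|_{\widetilde{\Sigma}}$); these inputs are only $H^1$, so I use the \emph{first} bound in \eqref{eq:coer51}, $\|\mathcal{T}\|_{L^2\to L^2}\lesssim\varepsilon^{-N}$, and \eqref{eq:coer52} to pull a polynomial weight through $\mathcal{T}$ modulo a lower-order term, getting $\|\mathcal{R}_{vj}\|_{L^{2,M}}\lesssim\varepsilon^{-N}\delta^2\|w\|_{\widetilde{\Sigma}}$ and hence the $j=3,5$ part of \eqref{eq:est_R_v3_5} (again with slack in the $B$-power).

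The genuinely delicate term is $\mathcal{R}_{v4}=\mathcal{T}\chi_{B^2}P_c|\eta|^2\eta$, since $|\eta|^2\eta$ enjoys no localization: the only available smallness is $\|\chi_{B^2}|\eta|^2\eta\|_{L^2}\le\|\eta\|_{L^\infty}^2\|\eta\|_{L^2(|x|\le2B^2)}\lesssim\delta^2B^2\|w\|_{\widetilde{\Sigma}}$, so $\|\mathcal{R}_{v4}\|_{L^2}\lesssim\varepsilon^{-N}\delta^2B^2\|w\|_{\widetilde{\Sigma}}$ (the $P_c$-correction being treated as in $j=3,5$). I then split $\langle\mathcal{R}_{v4},\widetilde{S}_Bv\rangle=\langle\psi_B\mathcal{R}_{v4},v'\rangle+\tfrac12\langle\mathcal{R}_{v4},\psi_B'v\rangle$ and, inside $\psi_B'=\chi_{B^2}^2\zeta_B^2+2\chi_{B^2}\chi_{B^2}'\varphi_B$, peel off the good piece $\chi_{B^2}^2\zeta_B^2v=\chi_{B^2}\zeta_B\xi$, for which Lemma \ref{lem:equiv_rho} applied with $W=\langle x\rangle\chi_{B^2}^2\zeta_B^2$ (of $L^1$ norm $\sim B^2$) gives $\|\chi_{B^2}\zeta_B\xi\|_{L^2}\lesssim B\|\xi\|_{\widetilde{\Sigma}}$; the remaining pieces are estimated with $\|\psi_Bv'\|_{L^2}\lesssim B\|v'\|_{L^2}\lesssim B\varepsilon^{-N}\|w\|_{\widetilde{\Sigma}}$ by \eqref{eq:estimates116}, $\|v\|_{L^2(|x|\le2B^2)}\lesssim\varepsilon^{-N}B^2\|w\|_{\widetilde{\Sigma}}$ by \eqref{eq:estimates115}, and $|\chi_{B^2}'\varphi_B|\lesssim B^{-1}$. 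Collecting the powers of $\varepsilon^{-N}$ and $B$ and using $\varepsilon^{-N}\ll B$ from \eqref{eq:relABg} gives the $j=4$ part of \eqref{eq:est_R_v3_5}; it is this term that dictates the exponent $\varepsilon^{-N}B^{3}$ in the statement, and keeping this bookkeeping of powers under control (as well as the analogous peeling below for $j=7$) is the main obstacle in the proof.

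Finally, for $j=6$, Lemma \ref{claim:l2boundII} with $\mathcal{V}=V_{N+1}\in\mathcal{S}(\R,\R)$ shows that $\langle\im\varepsilon\partial_x\rangle^{-N}[V_{N+1},\langle\im\varepsilon\partial_x\rangle^N]\colon L^{2,-M}\to L^{2,M}$ has norm $\lesssim\varepsilon$ for every $M$, whence $\|\langle x\rangle^M\mathcal{R}_{v6}\|_{L^2}\lesssim\varepsilon\|\langle x\rangle^{-M}v\|_{L^2}$; using \eqref{eq2stestJ21-4} for $\langle x\rangle^{-M}v$ and the master estimate for $\langle x\rangle^{-M}\widetilde{S}_Bv$ gives $|\langle\mathcal{R}_{v6},\widetilde{S}_Bv\rangle|\lesssim\varepsilon(\|\xi\|_{\widetilde{\Sigma}}+\|w\|_{\widetilde{\Sigma}})^2$, i.e.\ \eqref{eq:est_R_v6}. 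For $j=7$, $(2\chi_{B^2}'\partial_x+\chi_{B^2}'')\widetilde{\eta}$ is supported in $\{|x|\sim B^2\}$, where $\zeta_A\sim1$, and has $L^2$ norm $\lesssim B^{-2}\|\widetilde{\eta}'\|_{L^2(|x|\sim B^2)}+B^{-4}\|\widetilde{\eta}\|_{L^2(|x|\sim B^2)}\lesssim B^{-2}\|w\|_{\widetilde{\Sigma}}$, since $\|\widetilde{\eta}'\|_{L^2(|x|\sim B^2)}\lesssim\|w'\|_{L^2}+A^{-1}\|w\|_{L^2(|x|\sim B^2)}\lesssim\|w\|_{\widetilde{\Sigma}}$ (using $|\zeta_A'|\lesssim A^{-1}\zeta_A$ and \eqref{eq:estimates111}); hence $\|\mathcal{R}_{v7}\|_{L^2}\lesssim\varepsilon^{-N}B^{-2}\|w\|_{\widetilde{\Sigma}}$, and pairing with the part of $\widetilde{S}_Bv$ living on $\{|x|\sim B^2\}$, where $\psi_B'$ is $O(B^{-1})$ and $\|\widetilde{S}_Bv\|_{L^2(|x|\sim B^2)}\lesssim\varepsilon^{-N}B\|w\|_{\widetilde{\Sigma}}$, yields \eqref{eq:est_R_v7}.
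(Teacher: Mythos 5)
Your proposal is correct in substance and, at its core, runs along the same lines as the paper's proof: term-by-term Cauchy--Schwarz against $\widetilde{S}_Bv$, with the smallness coming from the localization of the coefficients and from \eqref{eq:estimates111}, \eqref{eq:estimates115}--\eqref{eq:estimates116}, and with the crucial extraction of $\chi_{B^2}^2\zeta_B^2 v=\chi_{B^2}\zeta_B\xi$ from $\psi_B'$ (your decomposition of $\psi_B'$ is exactly the paper's \eqref{eq2stestJ33}). Your treatment of $j=6$ and your computation of $\|\chi_{B^2}P_c|\eta|^2\eta\|_{L^2}\lesssim B^2\delta^2\|w\|_{\widetilde{\Sigma}}$ for $j=4$ coincide with the paper's. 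The organizational difference is that the paper proves one global bound, $\|\widetilde{S}_Bv\|_{L^2}\lesssim \varepsilon^{-N}B\|w\|_{\widetilde{\Sigma}}+B\|\xi\|_{\widetilde{\Sigma}}$ (its \eqref{eq:sbv}, refined to \eqref{eq2stestJ31}), and pairs every $\mathcal{R}_{vj}$ against it in unweighted $L^2$, whereas you use a polynomially weighted "master estimate" via \eqref{eq2stestJ21-4} for the exponentially localized terms $j=1,2,3,5,6$; this buys you sharper constants for $j=1,2$ (no $\varepsilon^{-N}B$) at the cost of having to commute polynomial weights through $\mathcal{T}$, which is legitimate given \eqref{eq:coer52} but is extra work the paper does not need. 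The one place where your variant requires a justification you only gesture at is $j=7$: after applying $\mathcal{T}$, the function $\mathcal{R}_{v7}$ is no longer supported in $\{|x|\sim B^2\}$, so pairing it only with $\widetilde{S}_Bv$ restricted to that annulus needs the off-diagonal kernel decay \eqref{eq:coer52} to discard the exterior contribution (which is then $O(e^{-cB^2/\varepsilon})$ and harmless). The paper avoids this entirely by keeping the global bound $\|\psi_B'v\|_{L^2}+\|\psi_Bv'\|_{L^2}\lesssim B\varepsilon^{-N}\|w\|_{\widetilde{\Sigma}}+B\|\xi\|_{\widetilde{\Sigma}}$, which already carries only one power of $B$ and so, multiplied by $\|\mathcal{R}_{v7}\|_{L^2}\lesssim\varepsilon^{-N}B^{-2}\|w\|_{\widetilde{\Sigma}}$, gives \eqref{eq:est_R_v7} directly. (Your bookkeeping of $\varepsilon^{-N}$ powers is no worse than the paper's own, which also accumulates $\varepsilon^{-2N}$ in places; this is immaterial given \eqref{eq:relABg}.)
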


\begin{proof} First we claim  \begin{align}
 \|  \widetilde{S}_Bv  \| _{L^2}\lesssim    \varepsilon ^{-N}B\| w  \| _{\widetilde{\Sigma}}  + B\| \xi    \| _{\widetilde{\Sigma}} . \label{eq:sbv}
\end{align}
The proof of \eqref{eq:sbv} is like in \cite{KMM3}. By  \eqref{eq:estimates116} and $\|  \psi _{B}   \|  _{L^\infty} \lesssim B$ we have
\begin{align*}
 \|   \widetilde{S}_Bv  \| _{L^2}\lesssim \| \psi _{B}' v   \|  _{L^2} + \| \psi _{B}  v '  \|  _{L^2} \lesssim \| \psi _{B}' v  \|  _{L^2} +  \varepsilon ^{-N}B\| w    \| _{\widetilde{\Sigma}}.
\end{align*}
Next, we have
\begin{align}&
| \psi ' _B| = |2\chi ' _{B^2} \chi _{B^2} \varphi _B + \chi _{B^2}^2 \zeta ^2 _B|\lesssim B ^{-1}\chi _{B^2}+  \chi _{B^2}^2 \zeta ^2 _B.\label{eq2stestJ33}
\end{align}
Then
\begin{align*}
 B ^{-1} \|   v   \|  _{L^2}\lesssim B  \varepsilon ^{-N}B\| w    \| _{\widetilde{\Sigma}}
\end{align*}
by \eqref{eq:estimates115}, by Lemma \ref{lem:equiv_rho} we have
\begin{align*}
   \|   \chi _{B^2}^2 \zeta ^2 _B v   \|  _{L^2} =    \|   \chi _{B^2}  \zeta  _B \xi   \|  _{L^2}       \lesssim
    \sqrt{\| \<   x \> \chi _{B^2}  \zeta  _B \| _{L^1} }\| \xi    \| _{\widetilde{\Sigma}} \sim B \| \xi    \| _{\widetilde{\Sigma}}
\end{align*}
and, finally,
\begin{align*}
   \|   \chi _{B^2}^2 \varphi _B   v  '  \|  _{L^2} \lesssim B   \|      v  '  \|  _{L^2}    \lesssim  B \varepsilon ^{-N} \| w  \| _{\widetilde{\Sigma}}
\end{align*}
by \eqref{eq:estimates116}, so that
so that  we get \eqref{eq:sbv}.

\noindent We have $\| P_cD_{\mathbf{z}}\phi[\mathbf{z}] \| _{L^2}=O(\| \mathbf{z} \| ^2 )$ by Proposition \ref{prop:rp}.  Then, using \eqref{eq:estimates115}--\eqref{eq:estimates116}  and $\|\psi _B \| _{L^\infty}\lesssim B$, we have
\begin{align*}&
\sum _{j=1,2}|\<\mathcal{R} _{v  j},\widetilde{S}_B v   \>|\lesssim  \sum _{j=1,2}\|     \mathcal{R} _{v  j}\|_{L^2}\|       \widetilde{S}_B v   \|_{L^2}  \lesssim  \sum _{j=1,2}\|     \mathcal{R} _{v  j}\|_{L^2} \(  \varepsilon ^{-N}B\| w  \| _{\widetilde{\Sigma}}  + B\| \xi    \| _{\widetilde{\Sigma}} \) \\& \lesssim  \varepsilon ^{-N}B  \delta  ^2   \(   \| w  \| _{\widetilde{\Sigma}}  +  \| \xi   \| _{\widetilde{\Sigma}} \)
\(  \sum_{\mathbf{m}\in \mathbf{R}_{\mathrm{min}}}|\mathbf{z}^{\mathbf{m}}|+ \|\dot {\mathbf{z}}+\im \boldsymbol{\varpi}(\mathbf{z})\mathbf{z}\|\) .
\end{align*}
We claim
\begin{align} &\sum _{j=3,4,5}
 |\<\mathcal{R} _{v  j},\widetilde{S}_B v \>|\lesssim       \sum _{j=3,4,5}\|     \mathcal{R} _{v  j}\|_{L^2} \(  \varepsilon ^{-N}B\| w   \| _{\widetilde{\Sigma}}  + B\| \xi    \| _{\widetilde{\Sigma}} \) \nonumber  \\&  \lesssim    \varepsilon ^{-N}B
 \(  \| \mathbf{z} \|   \|  {\eta}  \| _{H^1}   + B^2 \|  {\eta}  \| _{H^1} ^2 + \| \mathbf{z} \| ^2   \)  \| w    \| _{\widetilde{\Sigma}}    \(   \| w   \| _{\widetilde{\Sigma}}  +  \| \xi    \| _{\widetilde{\Sigma}} \) . \label{eq:est_R_v3_5b}
\end{align}
We have for example, using Lemma \ref{lem:GNTR} and  inequality \eqref{eq:estimates111},
\begin{align*}& \|  \mathcal{T}\chi_{B^2} P_c |\eta|^2\eta    \|_{L^2} \lesssim \varepsilon ^{-N} \( \|   P_c |\eta|^2 \( R[\mathbf{z}]  -1\) \widetilde{\eta}   \|_{L^2} +\|  \chi _{B^2} P_c |\eta|^2  \widetilde{\eta}   \|_{L^2}           \)  \\& \lesssim \varepsilon ^{-N}
\(      \| \mathbf{z} \| ^2 \| \eta \| _{L^\infty} ^2  \| \widetilde{\eta} \| _{L^2 _{-a}} + \|    P_d |\eta|^2  \widetilde{\eta}   \|_{L^2} +\|  \chi _{B^2}   |\eta|^2  \widetilde{\eta}   \|_{L^2}
\) \\& \lesssim \varepsilon ^{-N}
\(    \delta ^2  \| \mathbf{z} \| ^2 \| w \| _{\widetilde{\Sigma}} + \| \eta \| _{L^\infty} ^2  \| \widetilde{\eta} \| _{L^2 _{-a}} +\|  \chi _{B^2}   |\eta|^2  w   \|_{L^2}\) \\& \lesssim   \varepsilon ^{-N}
\(     \(   \| \mathbf{z} \| ^2 + \| \eta \| _{H^1} ^2 \) \| w \| _{\widetilde{\Sigma}}   + \| \eta \| _{H^1} ^2 \|    w   \|_{L^2(|x|\le 2 B^2)}\)
\lesssim   \varepsilon ^{-N}  \(   \| \mathbf{z} \| ^2 + B^2\| \eta \| _{H^1} ^2 \) \| w \| _{\widetilde{\Sigma}},
\end{align*}
with better the bounds for  the other terms in the r.h.s. of \eqref {eq:est_R_v3_5b}.

%Before proving \eqref{eq:est_R_v6} we record that
Using  Lemma \ref{claim:l2boundII}, \eqref{eq2stestJ21-4} and  \eqref{eq2stestJ33} we obtain \eqref{eq:est_R_v6}:
\begin{align*} &
 |\<\mathcal{R} _{v\eta 6},\widetilde{S}_B v  \>|\lesssim \\& \( \| \psi ' _B \| _{L^\infty}\| \< x \>  ^{-10} v  \| _{L^2} + \| \< x \>  ^{-10} \psi   _B \| _{L^\infty} \| \< x \>  ^{-10} v  '\| _{L^2} \)    \|  \< x \>  ^{20} \< \im \varepsilon\partial_x\>^{-N}[V_{N+1},\< \im \varepsilon \partial_x\>^N]v  \| _{L^2}\\& \lesssim \varepsilon \| \< x \>  ^{-10} v   \| _{H^1}^2\lesssim \varepsilon \(  \|      \xi   \|_{\widetilde{\Sigma}}  ^2  +     \|    w       \| _{\widetilde{\Sigma}}  ^2    \).
\end{align*}
Finally, the proof of \eqref{eq:est_R_v7} is the same as in \cite{KMM3}. We write
\begin{align} &
 |\<\mathcal{R} _{v  7},\widetilde{S}_B v \>|\lesssim   \varepsilon ^{-N}    \(  \| \chi_{B^2}' \tilde{\eta} '  \| _{L^2}  +  \| \chi_{B^2} ^{\prime\prime} \tilde{\eta}   \| _{L^2}\)      \( \|    \psi  _B '  v   \| _{L^2} + \|     \psi  _B  v '  \| _{L^2} \) .\label{eq:RrSBv1}
\end{align}
We claim
\begin{align} &
      \| \chi_{B^2}' \widetilde{{\eta}} '  \| _{L^2}  +  \| \chi_{B^2} ^{\prime\prime}  \widetilde{{\eta}}   \| _{L^2}  \lesssim B ^{-2}   \| w    \| _{\widetilde{\Sigma}} .\label{eq:RrSBv2}
\end{align}
Indeed  from $w =\zeta _A\widetilde{\eta} $ we have
\begin{align*}&
 w'  = \zeta _A' \widetilde{\eta} +  \zeta _A \widetilde{ \eta} ',
\end{align*}
so,  for $|x|\le A$,
\begin{align*}&
  |\eta '|\lesssim A^{-1}|\eta|+ |w' | =A^{-1}\zeta_A^{-1} |w |+ |w' |   .
\end{align*} By $A\gg B^2$ and \eqref{eq:estimates111}, we have
\begin{align*}&
  \|  \chi _B'  \widetilde{\eta}' \| _{L^2} \lesssim B ^{-2} \|    \widetilde{\eta}' \| _{L^2(B^2\le |x|\le 2B^2)}  \\&\lesssim B ^{-2} \(   \|    w' \| _{L^2(\R )}      + B^{-2} \|     w  \| _{L^2(B^2\le |x|\le 2B^2)} \) \lesssim  B ^{-2} \|    w  \| _{\widetilde{\Sigma}}
\end{align*}  and the following
\begin{align*}&
  \|  \chi _{B^2} ^{\prime\prime}  \widetilde{\eta} \| _{L^2} \lesssim B ^{-4} \|    \widetilde{\eta } \| _{L^2(B^2\le |x|\le 2B^2)} \lesssim B ^{-4}  \|    w \| _{L^2(  |x|\le 2B^2)} \lesssim B ^{-2}  \|    w  \| _{\widetilde{\Sigma}}.
\end{align*}
% We also have the following, which completes the proof of \eqref{eq:RrSBv2},\begin{align*} &  \| \chi_{B^2}' P_d {\eta}    \| _{L^2}  +  \| \chi_{B^2} ^{\prime\prime}  P_d{\eta}   \| _{L^2}  \lesssim e ^{-B}   \| w  \| _{\widetilde{\Sigma}} .\end{align*}
The next step is to prove the following, which with  \eqref{eq:RrSBv2}    yields \eqref{eq:est_R_v7},
\begin{align}\label{eq2stestJ31}
&  \|    \psi  _B '  v  \| _{L^2} + \|     \psi  _B  v '  \| _{L^2}  \lesssim B \varepsilon ^{-N}  \|    w   \| _{\widetilde{\Sigma}} + B  \|     \xi    \| _{\widetilde{\Sigma}} .
\end{align}
%  We have
%\begin{align}&
%| \psi ' _B| = |2\chi ' _{B^2} \chi _{B^2} \varphi _B + \chi _{B^2}^2 \zeta ^2 _B|\lesssim B ^{-1}+  \chi _{B^2}^2 \zeta ^2 _B,\label{eq2stestJ33}
%\end{align}
From $\xi  = \chi _{B^2}^2 \zeta   _Bv$,       we have by \eqref{eq:estimates115}, \eqref{eq:lem:rhoequiv} and \eqref{eq2stestJ33},
\begin{align*}&
\| \psi  _B '  v    \| _{L^2} \lesssim   B ^{-1} \|    v    \| _{L^2} +   \|     \zeta  _B  \xi   \| _{L^2} \lesssim  B \varepsilon ^{-N}  \|    w   \| _{\widetilde{\Sigma}} + B  \|     \xi   \| _{\widetilde{\Sigma}}.
\end{align*}
Using \eqref{eq:estimates116} and $|\psi  _B |\lesssim B$, we get the following, which completes the proof of  \eqref{eq2stestJ31},
\begin{align*}&
\| \psi  _B   v'    \| _{L^2} \lesssim   B   \|    v ' \| _{L^2}  \lesssim  \varepsilon ^{-N}B\| w    \| _{\widetilde{\Sigma}}   .
\end{align*}

\end{proof}

\begin{lemma}\label{lem:Gv1}
We have
\begin{align}\label{eq:Gv11}
\left| \<\mathbf{z}^{\mathbf{m}}  \widetilde{G}_{\mathbf{m}},\widetilde{S}_B v \>\right|\lesssim |\mathbf{z}^{\mathbf{m}}|    \(     \| \xi    \| _{\widetilde{\Sigma}}    + e^{-B/2} \| w   \| _{\widetilde{\Sigma}}         \).
\end{align}
\end{lemma}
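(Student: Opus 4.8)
The plan is to use the anti-symmetry $\widetilde{S}_B^{*}=-\widetilde{S}_B$ with respect to the complex $L^{2}$ pairing $(\cdot,\cdot)$ to move the operator onto $\widetilde{G}_{\mathbf{m}}$. Since $\mathbf{z}^{\mathbf{m}}$ is a scalar,
\[
\left|\langle\mathbf{z}^{\mathbf{m}}\widetilde{G}_{\mathbf{m}},\widetilde{S}_Bv\rangle\right|\le|\mathbf{z}^{\mathbf{m}}|\,\bigl|(\widetilde{G}_{\mathbf{m}},\widetilde{S}_Bv)\bigr|=|\mathbf{z}^{\mathbf{m}}|\,\bigl|(\widetilde{S}_B\widetilde{G}_{\mathbf{m}},v)\bigr|,
\]
so it suffices to bound $|(\widetilde{S}_B\widetilde{G}_{\mathbf{m}},v)|$. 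The key structural fact is that $\widetilde{G}_{\mathbf{m}}=\mathcal{T}\chi_{B^{2}}P_cG_{\mathbf{m}}$ is, \emph{uniformly in $B$ and in $\varepsilon\in(0,1]$}, concentrated near the origin; hence $\widetilde{S}_B\widetilde{G}_{\mathbf{m}}$ essentially only sees $v$ on $\{\chi_{B^{2}}=1\}$, where $v=\zeta_B^{-1}\xi$, while its part on $B^{2}\le|x|\le 2B^{2}$ is exponentially small, small enough to absorb the loss $\|v\|_{L^{2}}\lesssim\varepsilon^{-N}B^{2}\|w\|_{\widetilde{\Sigma}}$ coming from \eqref{eq:estimates115}.

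First I would prove the uniform localization estimate $\|e^{2a\langle x\rangle}\widetilde{G}_{\mathbf{m}}\|_{W^{1,\infty}}\le C_{0}$ with $C_{0}$ independent of $B$ and of $\varepsilon\in(0,1]$. By Proposition \ref{prop:rp}, $G_{\mathbf{m}}$ and $\phi_j$ lie in $\cap_{s}\Sigma^{s}$, so $P_cG_{\mathbf{m}}\in\cap_s\Sigma^s$, and since $\|\chi_{B^{2}}\|_{W^{M,\infty}}\lesssim_M 1$ we get $\|e^{2a\langle x\rangle}\chi_{B^{2}}P_cG_{\mathbf{m}}\|_{W^{M,\infty}}\lesssim_M 1$ for every $M$ (recall $2a\le a_1$). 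As $\mathcal{A}^{*}=A_N^{*}\cdots A_1^{*}$ is a differential operator of order $N$ with coefficients bounded together with all their derivatives (polynomials in the $(\log\psi_k)'$), $\|e^{2a\langle x\rangle}\mathcal{A}^{*}\chi_{B^{2}}P_cG_{\mathbf{m}}\|_{W^{1,\infty}}\lesssim\|e^{2a\langle x\rangle}\chi_{B^{2}}P_cG_{\mathbf{m}}\|_{W^{N+2,\infty}}\lesssim1$. Finally, writing $\widetilde{G}_{\mathbf{m}}=\langle\im\varepsilon\partial_x\rangle^{-N}(\mathcal{A}^{*}\chi_{B^{2}}P_cG_{\mathbf{m}})$ and $\widetilde{G}_{\mathbf{m}}'=\langle\im\varepsilon\partial_x\rangle^{-N}\partial_x(\mathcal{A}^{*}\chi_{B^{2}}P_cG_{\mathbf{m}})$ as convolutions against the kernel $\mathcal{K}$ of $\langle\im\varepsilon\partial_x\rangle^{-N}$, which satisfies $|\mathcal{K}(z)|\lesssim\varepsilon^{-1}e^{-|z|/(3\varepsilon)}$ as in the proof of Lemma \ref{lem:coer5}, and using $e^{2a\langle x\rangle}\le e^{2a\langle y\rangle}e^{2a|x-y|}$ with $2a<\tfrac{1}{4\varepsilon}$ for $\varepsilon$ small, one obtains $\|e^{2a\langle \cdot\rangle}\widetilde{G}_{\mathbf{m}}\|_{L^{\infty}}+\|e^{2a\langle\cdot\rangle}\widetilde{G}_{\mathbf{m}}'\|_{L^\infty}\lesssim\bigl(\sup_x\int\varepsilon^{-1}e^{-|x-y|/(4\varepsilon)}dy\bigr)\lesssim1$, uniformly in $\varepsilon\in(0,1]$.

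Next I would use that $\widetilde{S}_B\widetilde{G}_{\mathbf{m}}=\tfrac12\psi_B'\widetilde{G}_{\mathbf{m}}+\psi_B\widetilde{G}_{\mathbf{m}}'$, with $\psi_B=\chi_{B^{2}}^{2}\varphi_B$ and $\psi_B'=2\chi_{B^{2}}\chi_{B^{2}}'\varphi_B+\chi_{B^{2}}^{2}\zeta_B^{2}$. Since $\zeta_B\le1$ gives $|\varphi_B(x)|\le|x|\le\langle x\rangle$, and $|\chi_{B^{2}}'|\lesssim B^{-2}$ is supported in $B^{2}\le|x|\le2B^{2}$, one has $|\psi_B|\le\langle x\rangle$ and $|\psi_B'|\lesssim1$, both supported in $|x|\le2B^{2}$; combined with the localization bound this yields the pointwise estimate $|\widetilde{S}_B\widetilde{G}_{\mathbf{m}}(x)|\lesssim\langle x\rangle e^{-2a\langle x\rangle}$ on $|x|\le 2B^2$ and zero elsewhere, hence $\|e^{\frac{a}{10}\langle x\rangle}\widetilde{S}_B\widetilde{G}_{\mathbf{m}}\|_{L^{2}}\lesssim1$ and $\|\widetilde{S}_B\widetilde{G}_{\mathbf{m}}\|_{L^{2}(|x|\ge B^{2})}\lesssim B e^{-\frac32 aB^{2}}\lesssim e^{-aB^{2}}$. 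Then I split $v=\chi_{B^{2}}v+(1-\chi_{B^{2}})v$. For the first piece, $\chi_{B^{2}}v=\zeta_B^{-1}\xi$ (by \eqref{def:wAxiB}), so Cauchy--Schwarz with the weights $e^{\pm\frac{a}{10}\langle x\rangle}$ and Corollary \ref{cor:rhoequiv} give $|(\widetilde{S}_B\widetilde{G}_{\mathbf{m}},\chi_{B^{2}}v)|\le\|e^{\frac{a}{10}\langle x\rangle}\widetilde{S}_B\widetilde{G}_{\mathbf{m}}\|_{L^{2}}\|\zeta_B^{-1}\xi\|_{L^{2}_{-\frac{a}{10}}}\lesssim\|\xi\|_{\widetilde{\Sigma}}$. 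For the second piece, $1-\chi_{B^{2}}$ vanishes on $|x|\le B^{2}$ and $\widetilde{S}_B\widetilde{G}_{\mathbf{m}}$ on $|x|\ge2B^{2}$, so by Cauchy--Schwarz and \eqref{eq:estimates115}, $|(\widetilde{S}_B\widetilde{G}_{\mathbf{m}},(1-\chi_{B^{2}})v)|\le\|\widetilde{S}_B\widetilde{G}_{\mathbf{m}}\|_{L^{2}(|x|\ge B^{2})}\|v\|_{L^{2}}\lesssim e^{-aB^{2}}\varepsilon^{-N}B^{2}\|w\|_{\widetilde{\Sigma}}\lesssim e^{-B/2}\|w\|_{\widetilde{\Sigma}}$, the last step because $B^{2}\gg B$ and $\log(\varepsilon^{-1})\ll\log B$ by \eqref{eq:relABg}. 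Adding the two bounds and multiplying by $|\mathbf{z}^{\mathbf{m}}|$ gives \eqref{eq:Gv11}.

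The step I expect to be the real obstacle is the uniform localization of $\widetilde{G}_{\mathbf{m}}$: one must verify that the regularizer $\langle\im\varepsilon\partial_x\rangle^{-N}$ neither degrades the exponential spatial decay of $\mathcal{A}^{*}\chi_{B^{2}}P_cG_{\mathbf{m}}$ nor produces a constant that blows up as $\varepsilon\to0^{+}$. This is exactly where the kernel bound of Lemma \ref{lem:coer5} and the scale separation $a\ll\varepsilon^{-1}$ are essential: the weight $e^{2a\langle x\rangle}$ is absorbed by the kernel's much faster decay $e^{-|z|/(3\varepsilon)}$, so the convolution estimate is uniform in $\varepsilon\in(0,1]$. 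Everything else is routine commutator and Cauchy--Schwarz bookkeeping of the type already carried out in Sections \ref{sec:Preliminary} and \ref{sec:vir2}.
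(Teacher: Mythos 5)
Your proof is correct, but it takes a genuinely different route from the paper's. The paper keeps $\widetilde{S}_B$ on $v$, expands $\widetilde{S}_Bv=\tfrac12\psi_B'v+\psi_Bv'$ into three pieces ($(\chi^2_{B^2})'\varphi_Bv$, $\chi^2_{B^2}\zeta_B^2v$, and $\psi_Bv'$), and estimates each against $\widetilde{G}_{\mathbf m}$ separately; this forces it to invoke the weighted bound \eqref{eq2stestJ21-4} on $\<x\>^{-M}v$, the derivative bound \eqref{eq:estimates116}, and the pointwise relation $|\chi^2_{B^2}\zeta_Bv'|\lesssim|\xi'|+B^{-1}|\xi|$, in addition to the kernel decay \eqref{eq:coer52} for localizing $\widetilde{G}_{\mathbf m}$ away from the annulus $B^2\le|x|\le2B^2$. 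You instead exploit that $\widetilde{G}_{\mathbf m}$ is a fixed, exponentially localized object and dualize: anti-symmetry puts $\widetilde{S}_B$ on $\widetilde{G}_{\mathbf m}$, a single uniform weighted $W^{1,\infty}$ bound (a mild extension of the second part of Lemma \ref{lem:coer5}, with the same mechanism --- the kernel of $\<\im\varepsilon\partial_x\>^{-N}$ decays at rate $\varepsilon^{-1}\gg a$, so the weight $e^{2a\<x\>}$ is absorbed uniformly in $\varepsilon$) controls $\widetilde{S}_B\widetilde{G}_{\mathbf m}$ in $L^2_{a/10}$ and shows it is supported in $|x|\le2B^2$ with an $e^{-aB^2}$ tail on $|x|\ge B^2$; then the split $v=\chi_{B^2}v+(1-\chi_{B^2})v$ with $\chi_{B^2}v=\zeta_B^{-1}\xi$, Corollary \ref{cor:rhoequiv}, and the crude bound \eqref{eq:estimates115} finish the job, the loss $\varepsilon^{-N}B^2$ being harmless against $e^{-aB^2}$ by \eqref{eq:relABg}. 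What your approach buys is that the term $\psi_Bv'$ never needs to be analyzed at all and \eqref{eq2stestJ21-4} is not needed; what it costs is the preliminary weighted sup-norm estimate on $\widetilde{G}_{\mathbf m}$ and its derivative, which you correctly identify as the only nontrivial step and which is indeed uniform in $\varepsilon\in(0,1]$ for exactly the reason you give. Your bound on the $\|w\|_{\widetilde{\Sigma}}$ term is in fact stronger than the stated $e^{-B/2}$.
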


\begin{proof}
We have
\begin{align} & \left| \<\mathbf{z}^{\mathbf{m}}  \widetilde{G}_{\mathbf{m}},\psi '_B v   + 2\psi  _B v '  \>\right| \nonumber  \\& \lesssim  \left| \<\mathbf{z}^{\mathbf{m}}  \widetilde{G}_{\mathbf{m}}, \(\chi  ^{2} _{B^2} \) ' \varphi _B v   \>\right| +\left| \<\mathbf{z}^{\mathbf{m}}  \widetilde{G}_{\mathbf{m}},  \chi  ^{2} _{B^2}  \zeta _B ^{2}v   \>\right| + \left| \<\mathbf{z}^{\mathbf{m}}  \widetilde{G}_{\mathbf{m}}, \psi  _B v '  \>\right| .\label{eq:Gv1--1}
      \end{align}
      We now we examine the three terms in line \eqref{eq:Gv1--1}.  Using \eqref{eq:estimates115},  $|\varphi _{B}|\le B$, $1_{|x|\leq 1}\leq \chi \leq 1_{|x|\leq 2}$
      and $\chi _{B^2}:=\chi (B ^{-2}\cdot )$,
       we get   \begin{align*}      \left| \<\mathbf{z}^{\mathbf{m}} \widetilde{G}_{\mathbf{m}}, \(\chi  ^{2} _{B^2} \) ' \varphi _B v   \>\right| & \lesssim  B ^{-1}   |\mathbf{z}^{\mathbf{m}}   \|   \widetilde{G}_{\mathbf{m}} \| _{L^2( B^2\le |x|\le 2B^2)}          \| v   \| _{L^2}\\& \lesssim
   \varepsilon ^{-N}   B   |\mathbf{z}^{\mathbf{m}}|     \|   \widetilde{G}_{\mathbf{m}} \| _{L^2( B^2\le |x|\le 2B^2)}  \|  w       \| _{\widetilde{\Sigma}}
      \end{align*}
  Now we claim $ \|   \widetilde{G}_{\mathbf{m}} \| _{L^2( B^2\le |x|\le 2B^2)} \le e ^{-B}$,  so that $ \varepsilon ^{-N}   Be ^{-B} \le e^{-B/2}  $.  To prove our claim
  we   split
  \begin{align*}   &    \|   \widetilde{G}_{\mathbf{m}} \| _{L^2( B^2\le |x|\le 2B^2)}  \le  \|  \mathcal{T} 1_{|x|\le B^2/2}   \chi _{B^2} P_c  {G}_{\mathbf{m}} \| _{L^2( B^2\le |x|\le 2B^2)}  + \|  \mathcal{T} 1_{|x|\ge B^2/2}   \chi _{B^2} P_c  {G}_{\mathbf{m}} \| _{L^2 } .
      \end{align*}
  Using \eqref{eq:coer52} we have
 \begin{align*}   &    \|  \mathcal{T} 1_{|x|\le B^2/2}   \chi _{B^2} P_c  {G}_{\mathbf{m}} \| _{L^2( B^2\le |x|\le 2B^2)}  \le   e ^{-3B} \|    {G}_{\mathbf{m}} \| _{L^2 } \le   e ^{-2B}
      \end{align*}
     while
   \begin{align*}   &    \|  \mathcal{T} 1_{|x|\ge B^2/2}   \chi _{B^2} P_c  {G}_{\mathbf{m}} \| _{L^2 }   \lesssim  \varepsilon^{-N}    \|    1_{|x|\ge B^2/2}   P_c  {G}_{\mathbf{m}} \| _{L^2 } \le   \varepsilon^{-N}   e ^{-3B} \le     e ^{-2B}.
      \end{align*}
  Next, we consider the 2nd term in  \eqref{eq:Gv1--1}.  Using \eqref{eq:coer51}  and \eqref{eq2stestJ21-4}
  \begin{align*} &       \left| \<  \< x \> ^{ 20}  \mathbf{z}^{\mathbf{m}}  \widetilde{G}_{\mathbf{m}},  \chi  ^{2} _{B^2}  \zeta _B ^{2}   \< x \> ^{-20}  v    \>\right|   \le   |\mathbf{z}^{\mathbf{m}}| \ \|    \mathcal{T}   \chi _{B^2} P_c  {G}_{\mathbf{m}} \| _{\Sigma^{0}}
         \|  \< x \> ^{-20}  v  \| _{L^2} \\& \lesssim |\mathbf{z}^{\mathbf{m}}| \  \|      \chi _{B^2} P_c  {G}_{\mathbf{m}} \| _{\Sigma^{N}}   \(      \| \xi  \| _{\widetilde{\Sigma} }     + \< B \>  ^{-10}   \| w  \| _{\widetilde{\Sigma} }\)  \lesssim  |\mathbf{z}^{\mathbf{m}}|    \(      \| \xi  \| _{\widetilde{\Sigma} }     + \< B \>  ^{-10}   \| w  \| _{\widetilde{\Sigma} }\).
      \end{align*}
      Finally,  we consider the last term in line  \eqref{eq:Gv1--1}.
      Like in  the estimate of $J_2$ in Sect.\  4.4 \cite{KMM3}, from
\begin{align*} &    \xi   '= \chi _{B^2}  \zeta _B v '  + \(   \chi _{B^2}  \zeta _B  \)  ' v
      \end{align*}
we obtain
\begin{align*} &     |\chi _{B^2}  \zeta _B v '   |\lesssim | \xi   '| +  | \(   \chi _{B^2}  \zeta _B  \)  ' v  | \lesssim | \xi    '|  + B ^{-1}|    \chi _{B^2}  \zeta _B   v  | +  B ^{-2}|    \chi _{[B^2\le |x|\le 2B^2]}  \zeta _B   v | ,
      \end{align*}
so that
\begin{align*} &     |\chi _{B^2}^2  \zeta _B v '   | \lesssim | \xi    '|  + B ^{-1}|     \xi   | .
      \end{align*}
Then, using \eqref{eq:estimates116} and the above estimates,  we have
\begin{align*} &      \left| \<\mathbf{z}^{\mathbf{m}} \widetilde{G}_{\mathbf{m}}, \psi  _B v '  \>\right| \le  \left| \<\mathbf{z}^{\mathbf{m}}   \psi  _B\zeta _B ^{-1}   \widetilde{G}_{\mathbf{m}},  \chi _{B^2}^2      \zeta _B  v '  \>  \right| +\left| \<\mathbf{z}^{\mathbf{m}}   \psi  _B\zeta _B ^{-1}    \widetilde{G}_{\mathbf{m}}, \( 1 - \chi _{B^2}^2  \)    \zeta _B  v '  \>  \right| \\& \lesssim  \left| \<\mathbf{z}^{\mathbf{m}}   \psi  _B   |   \widetilde{G}_{\mathbf{m}}|,  | \xi   '|  + B ^{-1}|     \xi   | \>  \right| +      |  \mathbf{z}^{\mathbf{m}}   |      \|   \( 1 - \chi _{B^2}^2  \)  \psi  _B      \widetilde{G}_{\mathbf{m}}    \| _{L^2}   \|          v '    \| _{L^2}\\& \lesssim  |  \mathbf{z}^{\mathbf{m}}   |
\( \| \xi  ' \| _{L^2} +B ^{-1}\| \xi   \| _{\widetilde{\Sigma}} + e ^{-B}    \varepsilon ^{-N}\| w    \| _{\widetilde{\Sigma}}\) .
      \end{align*}

\end{proof}

\textit{Proof of Proposition \ref{prop:2ndvirial}.}
Using \eqref {eq:diffJ1},  Lemmas \ref{lem:2ndv1v}--\ref{lem:Gv1}  and \eqref{eq:relABg}
\begin{align*} &     \frac{d}{dt}\mathcal{J}   (v)=-\<H_{N+1}v, \widetilde{S}_B v\>-\<\mathcal{R}_{v }, \widetilde{S}_B v\>     -\sum_{\mathbf{m}\in \mathbf{R}_{\mathrm{min}}} \<\mathbf{z}^{\mathbf{m}}\widetilde{G}_{ \mathbf{m}}, \widetilde{S}_B v\>
  \lesssim  - \<  - \xi ^{\prime\prime} -  \frac{1}{4}  \chi _{B^2} ^{2}x V_{N+1}  '   \xi ,\xi     \>
\\&+\sum_{\mathbf{m}\in \mathbf{R}_{\mathrm{min}}}|\mathbf{z}^{\mathbf{m}}|    \(     \| \xi    \| _{\widetilde{\Sigma}}    + e^{-B/2} \| w   \| _{\widetilde{\Sigma}}         \) + o_{\varepsilon}(1)  \(     \| \xi    \| _{\widetilde{\Sigma}}  ^2  +   \| w   \| _{\widetilde{\Sigma}} ^2 +   \|\dot {\mathbf{z}}+\im \boldsymbol{\varpi}(\mathbf{z})\mathbf{z}\| ^2       \)
\\& \lesssim - \| \xi   \| _{\widetilde{\Sigma}}^2 +  \sum_{\mathbf{m}\in \mathbf{R}_{\mathrm{min}}}|\mathbf{z}^{\mathbf{m}}| ^2 +o_{\varepsilon}(1) \(   \| w   \| _{\widetilde{\Sigma}}^2   +   \|\dot {\mathbf{z}}+\im \boldsymbol{\varpi}(\mathbf{z})\mathbf{z}\| ^2  \) ,
      \end{align*}
so that integrating in time we obtain  inequality  \eqref{eq:2ndv} concluding the proof of Proposition \ref{prop:2ndvirial}. \qed

Our next task is to  estimate the discrete modes, that is the contributions from $\mathbf{z}$. While so far in the paper we have drawn from Kowalczyk, Martel and Munoz \cite{KMM3}, we now start drawing   from \cite{CM21}.

\section{Proof of Proposition \ref{lem:estdtz}}
\label{sec:estdtz}

Proposition \ref{lem:estdtz} is an immediate  consequence of the following lemma which is taken from \cite{CM21}.

\begin{lemma}\label{lem:zpres}
Under the assumption of Proposition \ref{prop:continuation}, we have
\begin{align}
\dot  z_j+\im \varpi_j(|\mathbf{z}|^2)z_j=-\im \sum_{\mathbf{m}\in \mathbf{R}_{\mathrm{min}}}\mathbf{z}^{\mathbf{m}}\<G_{\mathbf{m}},\phi_j\>+r_j(\mathbf{z},\eta),\label{equat_z}
\end{align}
where $r_j(\mathbf{z},\eta)$ satisfies
\begin{align*}
\|r_j(\mathbf{z},\eta)\|_{L^2(I)}\lesssim  \delta^2\epsilon .
\end{align*}

\end{lemma}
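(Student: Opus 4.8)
The plan is to run the standard modulation computation, exactly as in \cite{CM21}. First I would project the equation \eqref{eq:modnls} for $\eta$ onto the $2N$ real directions spanned by $D_{\mathbf{z}}\phi[\mathbf{z}]\mathbf{e}^j$ and $D_{\mathbf{z}}\phi[\mathbf{z}](\im\mathbf{e}^j)$, $j=1,\dots,N$, taking the real pairing $\langle\cdot,\cdot\rangle$. Since $\eta\in\mathcal{H}_c[\mathbf{z}]$ is equivalent to $\langle\eta,\im D_{\mathbf{z}}\phi[\mathbf{z}]\widetilde{\mathbf{z}}\rangle=0$ for all $\widetilde{\mathbf{z}}$, differentiating this relation in $t$ rewrites the term $\langle\im\partial_t\eta,\cdot\rangle$ as $\mp\langle\eta,\im D_{\mathbf{z}}^2\phi[\mathbf{z}](\dot{\mathbf{z}},\cdot)\rangle$; because $D_{\mathbf{z}}^2\phi[0]=0$ one has $\|D_{\mathbf{z}}^2\phi[\mathbf{z}]\|=O(\|\mathbf{z}\|)$, and since $D_{\mathbf{z}}^2\phi[\mathbf{z}]$ takes values in the weighted space $\Sigma^s$, this contribution is controlled, via \eqref{eq:etaequiv}, by $\lesssim\|\mathbf{z}\|\,|\dot{\mathbf{z}}|\,\|\widetilde{\eta}\|_{L^2_{-\frac{a}{10}}}\lesssim\delta^2\|\widetilde{\eta}\|_{L^2_{-\frac{a}{10}}}$, once $|\dot{\mathbf{z}}|\lesssim\delta$ is known (which is immediate from \eqref{eq:modnls}). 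On the left-hand side, the term $\im D_{\mathbf{z}}\phi[\mathbf{z}](\dot{\mathbf{z}}+\im\boldsymbol{\varpi}(\mathbf{z})\mathbf{z})$, together with $D_{\mathbf{z}}\phi[0]\mathbf{e}^j=\phi_j$, $D_{\mathbf{z}}\phi[0](\im\mathbf{e}^j)=\im\phi_j$ (a consequence of $\phi[e^{\im\theta}\mathbf{z}]=e^{\im\theta}\phi[\mathbf{z}]$), $(\phi_j,\phi_k)=\delta_{jk}$, and $D_{\mathbf{z}}\phi[\mathbf{z}]-D_{\mathbf{z}}\phi[0]=O(\|\mathbf{z}\|^2)$, produces $-\mathrm{Im}(\dot z_j+\im\varpi_j z_j)$ from pairing against $D_{\mathbf{z}}\phi[\mathbf{z}]\mathbf{e}^j$ and $\mathrm{Re}(\dot z_j+\im\varpi_j z_j)$ from pairing against $D_{\mathbf{z}}\phi[\mathbf{z}](\im\mathbf{e}^j)$, up to an $O(\|\mathbf{z}\|^2)$-bounded linear map applied to $\dot{\mathbf{z}}+\im\boldsymbol{\varpi}(\mathbf{z})\mathbf{z}$; for $\delta$ small the resulting $2N\times 2N$ matrix is a small perturbation of the identity and can be inverted.

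For the main term I would use that $G_{\mathbf{m}}$ is real-valued: by \eqref{def:Gm0}--\eqref{def:tilde} it is built recursively from the real eigenfunctions $\phi_j$ and from self-adjoint resolvents $(H-\mathbf{m}^i\cdot\boldsymbol{\omega})^{-1}$ with $\mathbf{m}^i\cdot\boldsymbol{\omega}<0$, so the construction stays inside real-valued functions. Hence pairing $\sum_{\mathbf{m}}\mathbf{z}^{\mathbf{m}}G_{\mathbf{m}}$ against $\phi_j$ gives $\sum_{\mathbf{m}}\mathrm{Re}(\mathbf{z}^{\mathbf{m}})\langle G_{\mathbf{m}},\phi_j\rangle$ and against $\im\phi_j$ gives $\sum_{\mathbf{m}}\mathrm{Im}(\mathbf{z}^{\mathbf{m}})\langle G_{\mathbf{m}},\phi_j\rangle$; feeding these into the two projected equations and recombining real and imaginary parts yields, after the matrix inversion, exactly $\dot z_j+\im\varpi_j(|\mathbf{z}|^2)z_j=-\im\sum_{\mathbf{m}\in\mathbf{R}_{\mathrm{min}}}\mathbf{z}^{\mathbf{m}}\langle G_{\mathbf{m}},\phi_j\rangle+r_j$, with $r_j$ collecting all remaining contributions, namely the $\langle\im\partial_t\eta,\cdot\rangle$ term above, the $O(\|\mathbf{z}\|^2)$ errors from replacing the test functions $D_{\mathbf{z}}\phi[\mathbf{z}]\mathbf{e}^j$, $D_{\mathbf{z}}\phi[\mathbf{z}](\im\mathbf{e}^j)$ by $\phi_j$, $\im\phi_j$, and the pairings $\langle\mathcal{R}_{\mathrm{rp}}[\mathbf{z}],\phi_j\rangle$, $\langle H[\mathbf{z}]\eta,\phi_j\rangle$, $\langle F[\mathbf{z},\eta],\phi_j\rangle$, $\langle|\eta|^2\eta,\phi_j\rangle$ of the right-hand side of \eqref{eq:modnls} with $\phi_j$ and $\im\phi_j$.

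Finally I would estimate $r_j$ in $L^2(I)$; the guiding principle is that every term carries a gain $\delta^2$ — from a factor $\|\mathbf{z}\|^2$, or from $\|\phi[\mathbf{z}]\|_{\Sigma^s}\|\eta\|_{L^\infty}\lesssim\delta^2$, or from $\|\eta\|_{L^\infty}^2\lesssim\delta^2$ — multiplied by one of the three quantities $\|\widetilde{\eta}\|_{L^2_{-\frac{a}{10}}}$, $\sum_{\mathbf{m}\in\mathbf{R}_{\mathrm{min}}}|\mathbf{z}^{\mathbf{m}}|$, $|\dot{\mathbf{z}}+\im\boldsymbol{\varpi}(\mathbf{z})\mathbf{z}|$, whose $L^2(I)$ norms are each $\le\epsilon$ under the standing hypotheses of Proposition \ref{prop:contreform} (for the first, through $\|\widetilde{\eta}\|_{L^2_{-\frac{a}{10}}}\lesssim\|w\|_{\widetilde{\Sigma}}$ from Corollary \ref{cor:rhoequiv}). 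Concretely, $\langle\mathcal{R}_{\mathrm{rp}}[\mathbf{z}],\phi_j\rangle\lesssim\|\mathbf{z}\|^2\sum_{\mathbf{m}}|\mathbf{z}^{\mathbf{m}}|$ by \eqref{est:Rrp}; in $\langle H[\mathbf{z}]\eta,\phi_j\rangle=\omega_j\langle\eta,\phi_j\rangle+\langle L[\mathbf{z}]\eta,\phi_j\rangle$ one uses $\langle\eta,\phi_j\rangle=\langle(R[\mathbf{z}]-\mathrm{Id})\widetilde{\eta},\phi_j\rangle=O(\|\mathbf{z}\|^2\|\widetilde{\eta}\|_{L^2_{-\frac{a}{10}}})$ by Lemma \ref{lem:GNTR} (since $P_c\phi_j=0$) and $\|L[\mathbf{z}]\|=O(\|\mathbf{z}\|^2)$ from \eqref{linerror}; the quadratic and cubic terms $\langle F[\mathbf{z},\eta],\phi_j\rangle$, $\langle|\eta|^2\eta,\phi_j\rangle$ are bounded by localizing against the exponentially decaying $\phi_j$ and using $\|\eta\|_{L^\infty}\lesssim\delta$, $\|\eta\|_{L^2_{-\frac{a}{10}}}\sim\|\widetilde{\eta}\|_{L^2_{-\frac{a}{10}}}$; and the correction errors are of the same type. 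This gives $\|r_j(\mathbf{z},\eta)\|_{L^2(I)}\lesssim\delta^2\epsilon$, the assertion of the lemma, from which \eqref{eq:lem:estdtz} (Proposition \ref{lem:estdtz}) follows by inserting \eqref{equat_z}. I expect the only real difficulty to be the bookkeeping — verifying that each term in $r_j$ genuinely carries both the factor $\delta^2$ and an $L^2(I)$-integrable factor $\lesssim\epsilon$. The most delicate point is the $\partial_t\eta$ contribution, where the vanishing $D_{\mathbf{z}}^2\phi[0]=0$ is precisely what supplies one of the two powers of $\delta$; the apparent self-dependence of $r_j$ on $\dot{\mathbf{z}}+\im\boldsymbol{\varpi}(\mathbf{z})\mathbf{z}$ is harmless because its coefficient is $O(\delta^2)$ and it is absorbed into the matrix inversion.
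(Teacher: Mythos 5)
Your proposal is correct and follows essentially the same strategy as the paper: project \eqref{eq:modnls} onto the directions $D_{\mathbf{z}}\phi[\mathbf{z}]\mathbf{e}^j$, $D_{\mathbf{z}}\phi[\mathbf{z}](\im\mathbf{e}^j)$, convert $\<\im\partial_t\eta,\cdot\>$ via the time-differentiated orthogonality $\eta\in\mathcal{H}_c[\mathbf{z}]$ (with $D_{\mathbf{z}}^2\phi[0]=0$ supplying one power of $\delta$), use the real-valuedness of $G_{\mathbf{m}}$ and $\phi_j$ to recombine the two real equations into $-\im\sum\mathbf{z}^{\mathbf{m}}\<G_{\mathbf{m}},\phi_j\>$, and bound the remainder by pairing each $\delta^2$-small factor with one of the three $L^2(I)$-controlled bootstrap quantities. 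The one point where you genuinely deviate is the term $\<H[\mathbf{z}]\eta,D_{\mathbf{z}}\phi[\mathbf{z}]\widetilde{\mathbf{z}}\>$: the paper transfers $H[\mathbf{z}]$ onto $D_{\mathbf{z}}\phi[\mathbf{z}]\widetilde{\mathbf{z}}$ by self-adjointness and then substitutes the differentiated refined-profile identity \eqref{eq:rfderiv}, whereas you replace the test function by $\phi_j$ up to an $O(\|\mathbf{z}\|^2)$ error in $\Sigma^s$ and use $H\phi_j=\omega_j\phi_j$ together with $(\widetilde{\eta},\phi_j)=0$ and $\|R[\mathbf{z}]-\mathrm{Id}\|=O(\|\mathbf{z}\|^2)$; both give $O(\delta^2\|\widetilde{\eta}\|_{L^2_{-a/10}})$, your route being slightly more elementary here, the paper's having the advantage of producing the identity \eqref{eq:discfund} that is reused verbatim in the Fermi Golden Rule section.
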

\proof The proof is in \cite{CM21}, but for completeness we reproduce it here.   Recall that     $\phi [\mathbf{z}]$ satisfies identically equation \eqref{eq:rp}.
Furthermore, differentiating \eqref{eq:rp} w.r.t.\ $\mathbf{z}$ in  any given  direction $\widetilde{\mathbf{z}}\in \C ^N$, we obtain
 \begin{align}\label{eq:rfderiv}
 H[\mathbf{z}]D_\mathbf{z}\phi[\mathbf{z}]\widetilde{\mathbf{z}}=&\im D_{\mathbf{z}}^2\phi [\mathbf{z}]   (-\im \boldsymbol{\varpi}(|\mathbf{z}|^2)\mathbf{z},\widetilde{\mathbf{z}})+
 \im D_{\mathbf{z}}\phi[\mathbf{z}]\(D_{\mathbf{z}}(-\im \boldsymbol{\varpi}(|\mathbf{z}|^2)\mathbf{z})\widetilde{\mathbf{z}}\)
 \\&\nonumber+\sum_{\mathbf{m}\in \mathbf{R}_{\mathrm{min}}}D_{\mathbf{z}}(\mathbf{z}^{\mathbf{m}})\widetilde{\mathbf{z}}G_{\mathbf{m}}+D_{\mathbf{z}}\mathcal{R}_{\mathrm{rp}}(\mathbf{z})\widetilde{\mathbf{z}},
 \end{align}
with $ H[\mathbf{z}]$ defined under \eqref{eq:modnls}. By $\eta\in \mathcal{H}_c[\mathbf{z}]$ we obtain the orthogonality relation
\begin{align*}
\<\im \dot \eta, D_{\mathbf{z}}\phi[\mathbf{z}]\widetilde{\mathbf{z}}\>=-\<\im \eta, D_{\mathbf{z}}^2\phi[\mathbf{z}](\dot {\mathbf{z}},\widetilde{\mathbf{z}})\>.
\end{align*}
By applying the inner product  $\< \eta , \cdot \>$  to equation \eqref{eq:rfderiv},   we have
\begin{align*}
\<H[\mathbf{z}]\eta,D_{\mathbf{z}}\phi(\mathbf{z})\widetilde{\mathbf{z}}\>=\<\im \eta,D_{\mathbf{z}}^2\phi(\mathbf{z})(\boldsymbol{\varpi}(|\mathbf{z}|^2)\mathbf{z},\widetilde{\mathbf{z}})\>+\sum_{\mathbf{m}\in \mathbf{R}_{\mathrm{min}}}\<\eta,\(D_{\mathbf{z}}\(\mathbf{z}^{\mathbf{m}}\)\widetilde{\mathbf{z}}\)G_{\mathbf{m}}\>+\<\eta,D_{\mathbf{z}}\mathcal{R}_{\mathrm{rp}}(\mathbf{z})\widetilde{\mathbf{z}}\>,
\end{align*}
where we exploited the selfadjointness of $H[\mathbf{z}]$  and the orthogonality in Lemma \ref{lem:lincor}. Thus,  applying $\< \cdot  , D_{\mathbf{z}}\phi(\mathbf{z})\widetilde{\mathbf{z}}\>$ to  equation \eqref{eq:modnls} for $\eta$ and  using the last two equalities, we obtain
\begin{align}\label{eq:discfund}
 & \<\im D_{\mathbf{z}}\phi(\mathbf{z})(\dot { \mathbf{z}}+\im \boldsymbol{\varpi}(|\mathbf{z}|^2)\mathbf{z}),D_{\mathbf{z}}\phi(\mathbf{z})\widetilde{\mathbf{z}}\>
  =\<\im \eta, D_{\mathbf{z}}^2\phi(\mathbf{z})\(    \dot { \mathbf{z}}+\im \boldsymbol{\varpi}(|\mathbf{z}|^2)\mathbf{z},\widetilde{\mathbf{z}}\)\>
+\<\eta,D_{\mathbf{z}}\mathcal{R}_{\mathrm{rp}}[\mathbf{z}]\widetilde{\mathbf{z}}\>\\ &+\sum_{\mathbf{m}\in \mathbf{R}_{\mathrm{min}}}\<\eta,\(D_{\mathbf{z}}\(\mathbf{z}^{\mathbf{m}}\)\widetilde{\mathbf{z}}\)G_{\mathbf{m}}\>
 +\<\sum_{\mathbf{m}\in \mathbf{R}_{\mathrm{min}}}\mathbf{z}^{\mathbf{m}}G_{\mathbf{m}}+\mathcal{R}_{\mathrm{rp}}[\mathbf{z}],D_{\mathbf{z}}\phi(\mathbf{z})\widetilde{\mathbf{z}}\>
  +\<F(\mathbf{z},\eta)+|\eta |^2 \eta ,D_{\mathbf{z}}\phi[\mathbf{z}]\widetilde{\mathbf{z}}\>.\nonumber
\end{align} First
since $D_{\mathbf{z}}\phi [0]\widetilde{\mathbf{z}}=\widetilde{\mathbf{z}}\cdot \boldsymbol{\phi}$, we have
\begin{align}\label{eq:zj1}
\<\im D_{\mathbf{z}}\phi[\mathbf{z}]( \dot { \mathbf{z}}+\im \boldsymbol{\varpi}(|\mathbf{z}|^2)\mathbf{z}),D_{\mathbf{z}}\phi(\mathbf{z})\widetilde{\mathbf{z}}\>=\sum_{j=1}^N\Re(\im(\dot z_j+\im \varpi_j(|\mathbf{z}|^2)z_j)\overline{\widetilde{z}_j})+r(\mathbf{z},\widetilde{\mathbf{z}}),
\end{align}
where
\begin{align}\label{eq:zj2}
r(\mathbf{z},\widetilde{\mathbf{z}})=&\<\im \(D_{\mathbf{z}}\phi(\mathbf{z})-D_{\mathbf{z}}\phi(0)\)(\dot {\mathbf{z}}+\im \boldsymbol{\varpi}(|\mathbf{z}|^2)\mathbf{z}),D_{\mathbf{z}}\phi(\mathbf{z})\widetilde{\mathbf{z}}\>\\&+\<\im D_{\mathbf{z}}\phi(0)(\dot {\mathbf{z}}+\im \boldsymbol{\varpi}(|\mathbf{z}|^2)\mathbf{z}),\(D_{\mathbf{z}}\phi(\mathbf{z})-D_{\mathbf{z}}\phi(0)\)\widetilde{\mathbf{z}}\>.\nonumber
\end{align}
Since $\|D_{\mathbf{z}}\phi(\mathbf{z})-D_{\mathbf{z}}\phi(0)\|_{L^2}\lesssim |\mathbf{z}|^2\lesssim \delta^2$    by Proposition \ref{prop:rp}
and inequality \eqref{eq:main1}, by assumption \eqref{eq:main11}  we have
\begin{align}\label{eq:zj4}
\|r(\mathbf{z},\widetilde{\mathbf{z}})\|_{L^2(I)}\lesssim \delta ^2 \epsilon  \text{  for all }  \widetilde{\mathbf{z}}=\mathbf{e}_1,\im \mathbf{e}_1,\cdots,\mathbf{e}_N,\im \mathbf{e}_N.
\end{align}
Setting
\begin{align}\label{eq:zj3}
\widetilde{r}(\mathbf{z},\widetilde{\mathbf{z}},\eta):=&\<\im \eta, D_{\mathbf{z}}^2\phi(\mathbf{z})\(\dot {\mathbf{z}}+\im \boldsymbol{\varpi}(|\mathbf{z}|^2)\mathbf{z},\widetilde{\mathbf{z}}\)\>
+\<\eta,D_{\mathbf{z}}\mathcal{R}_{\mathrm{rp}}(\mathbf{z})\widetilde{\mathbf{z}}\>+\sum_{\mathbf{m}\in \mathbf{R}_{\mathrm{min}}}\<\eta,\(D_{\mathbf{z}}\(\mathbf{z}^{\mathbf{m}}\)\widetilde{\mathbf{z}}\)G_{\mathbf{m}}\>\\&+\sum_{\mathbf{m}\in \mathbf{R}_{\mathrm{min}}}\<\mathbf{z}^{\mathbf{m}}G_{\mathbf{m}},\(D_{\mathbf{z}}\phi(\mathbf{z})-D_{\mathbf{z}}\phi(0)\)\widetilde{\mathbf{z}}\>+\<\mathcal{R}_{\mathrm{rp}}(\mathbf{z}),D_{\mathbf{z}}\phi(\mathbf{z})\widetilde{\mathbf{z}}\>+\<F(\mathbf{z},\eta),D_{\mathbf{z}}\phi(\mathbf{z})\widetilde{\mathbf{z}}\>,\nonumber
\end{align}
  by by assumption \eqref{eq:main11}  we have we have
\begin{align}\label{eq:zj5}
\|\widetilde{r}(\mathbf{z},\widetilde{\mathbf{z}},\eta)\|_{L^2(I)}\lesssim  \delta ^2 \epsilon      \text{  for all }  \widetilde{\mathbf{z}}=\mathbf{e}_1,\im \mathbf{e}_1,\cdots,\mathbf{e}_N,\im \mathbf{e}_N.
\end{align}
Therefore, since $D\phi(0)\im^k\mathbf{e}_j=\im^k \phi_j$ ($k=0,1$), we have
\begin{align*}
-\mathrm{Im}\(\partial_t z_j +\im \varpi_j(|\mathbf{z}|^2)z_j\)&=\sum_{\mathbf{m}\in \mathbf{R}_{\mathrm{min}}}\<\mathbf{z}^{\mathbf{m}}G_{\mathbf{m}},\phi_j\>-r(\mathbf{z},\mathbf{e}_j)+\widetilde{r}(\mathbf{z},\mathbf{e}_j,\eta),\\
\mathrm{Re}\(\partial_t z_j + \im  \varpi_j(|\mathbf{z}|^2)z_j\)&=\sum_{\mathbf{m}\in \mathbf{R}_{\mathrm{min}}}\<\mathbf{z}^{\mathbf{m}}G_{\mathbf{m}},\im\phi_j\>-r(\mathbf{z},\im\mathbf{e}_j)+\widetilde{r}(\mathbf{z},\im\mathbf{e}_j,\eta).
\end{align*}
Since $G_{\mathbf{m}}$ and $\phi_j$ are $\R$-valued (see Lemma \ref{lem:GmRval}), we have
\begin{align*}
\dot z_j +  \im \varpi_j(|\mathbf{z}|^2)z_j=-\im \sum_{\mathbf{m}}\<G_{\mathbf{m}},\phi_j\> \mathbf{z}^{\mathbf{m}} -r(\mathbf{z},\im \mathbf{e}_j)+\im r(\mathbf{z},\mathbf{e}_j)+\widetilde{r}(\mathbf{z},\im \mathbf{e}_j,\eta)-\im \widetilde{r}(\mathbf{z},\mathbf{e}_j,\eta).
\end{align*}
Therefore, from
 \eqref{eq:zj4} and
  \eqref{eq:zj5}, we have the conclusion with
  $r_j(\mathbf{z},\eta)=-r(\mathbf{z},\im \mathbf{e}_j)+\im r(\mathbf{z},\mathbf{e}_j)+\widetilde{r}(\mathbf{z},\im \mathbf{e}_j,\eta)-\im \widetilde{r}(\mathbf{z},\mathbf{e}_j,\eta)$.
  \qed

Our next task, is to examine the terms $\mathbf{z} ^{\mathbf{m}}$. We need to show that these terms  satisfy $\mathbf{z} ^{\mathbf{m}} \xrightarrow {t  \to  + \infty  }0$, that is they are damped by nonlinear interaction with the radiation terms.
In order to do so, we   expand the variable $v$, defined in \eqref{eq:vBg}, in a part resonating with the discrete modes $\mathbf{z}$, which will yield the damping,  and a remainder which we denote by $g$.

\section{Smoothing estimate  for $g$}
\label{sec:smoothing}

In analogy to \cite{BP2,SW3,CM15APDE}  and a large literature, we will introduce and bound an auxiliary variable,    $g$   here.
It appears to be impossible to bound  $g$ or analogues of $g$ by means of virial type inequalities. We will use instead Kato--smoothing, as in \cite{BP2,SW3,CM15APDE}. Fortunately, the fact that the cubic nonlinearity is long range is immaterial,  thanks to the cutoff $\chi _{B^2}$ in front of $|\eta|^2\eta $  in the equation of $v$.

The following is elementary and the proof is skipped.
\begin{lemma}\label{lem:generic} 0 is neither an eigenvalue nor a resonance for the operator $H_{N+1}$.
 \end{lemma}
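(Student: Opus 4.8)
The statement to prove is Lemma \ref{lem:generic}: zero is neither an eigenvalue nor a resonance of $H_{N+1}=-\partial_x^2+V_{N+1}$. The strategy I would use rests on the two structural facts we have at hand: first, $\sigma_{\mathrm{d}}(H_{N+1})=\emptyset$ (this is recorded right after Proposition \ref{prop:Darboux} and the inductive construction of the $V_j$); second, $V_{N+1}$ is repulsive in the sense of Definition \ref{def:repulPot}, i.e.\ $xV_{N+1}'(x)\le 0$ and $V_{N+1}\not\equiv 0$. The first fact already disposes of the eigenvalue part, since $0$ being an $L^2$-eigenvalue would contradict $\sigma_{\mathrm{d}}(H_{N+1})=\emptyset$. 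So the real content is the exclusion of a zero-energy resonance, i.e.\ a nonzero bounded (non-$L^2$) solution of $-\psi''+V_{N+1}\psi=0$.

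\textbf{Main argument against a resonance.} Suppose $\psi$ is a bounded solution of $-\psi''+V_{N+1}\psi=0$ on $\R$. Since $V_{N+1}\in\mathcal S(\R)$ is exponentially decaying, outside a large ball $\psi$ is asymptotically affine: $\psi(x)\to a_\pm x+b_\pm$ as $x\to\pm\infty$, and boundedness forces $a_\pm=0$, so $\psi\to b_\pm$ at $\pm\infty$ with $(b_+,b_-)\neq(0,0)$. I would run a virial/Pohozaev-type identity tailored to the repulsivity hypothesis. Multiply the equation by $x\psi'$ and integrate: $\int x\psi'(-\psi''+V_{N+1}\psi)\,dx=0$. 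Using $\int x\psi'\psi''\,dx=-\tfrac12\int(\psi')^2\,dx+\text{boundary}$ and $\int x\psi'V_{N+1}\psi\,dx=\tfrac12\int x(V_{N+1}\psi^2)'\,dx-\tfrac12\int xV_{N+1}'\psi^2\,dx=-\tfrac12\int V_{N+1}\psi^2\,dx-\tfrac12\int xV_{N+1}'\psi^2\,dx+\text{boundary}$, and pairing with the $L^2$-type identity obtained by multiplying by $\psi$ itself, $\int(\psi')^2\,dx+\int V_{N+1}\psi^2\,dx=\text{boundary}$ (the boundary terms here involve $\psi\psi'$, which decays because $\psi'$ decays exponentially while $\psi$ is bounded, so they vanish), one arrives at $\tfrac12\int(\psi')^2\,dx-\tfrac12\int xV_{N+1}'\psi^2\,dx=\text{boundary terms from }x\psi'\psi''$. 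The surviving boundary term is $-\tfrac12\,x(\psi')^2\big|_{-\infty}^{+\infty}$, which vanishes since $\psi'$ is exponentially small. Hence $\int(\psi')^2\,dx=\int xV_{N+1}'\psi^2\,dx\le 0$ by repulsivity, forcing $\psi'\equiv0$, so $\psi$ is constant, say $\psi\equiv c$. Plugging back, $V_{N+1}c\equiv0$, and since $V_{N+1}\not\equiv0$ we get $c=0$, i.e.\ $\psi\equiv0$. Thus no nonzero bounded solution exists, so there is no zero-energy resonance (and a fortiori no zero eigenvalue).

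\textbf{The delicate point.} The step requiring care is the justification that all boundary terms in the integrations by parts genuinely vanish. One must first establish that a bounded zero-energy solution $\psi$ has $\psi'(x)$ decaying exponentially as $|x|\to\infty$ (this follows from the Volterra/Jost representation: writing $\psi'(x)=\psi'(\pm\infty)+\int_x^{\pm\infty}V_{N+1}\psi$, and $\psi'(\pm\infty)=0$ because otherwise $\psi$ grows linearly, hence $|\psi'(x)|\lesssim\int_{|x|}^\infty e^{-a_0|y|}\,dy\lesssim e^{-a_0|x|}$), and that $\psi$ itself converges to finite limits. With exponential decay of $V_{N+1}$ and $\psi'$, and boundedness of $\psi$, every boundary term of the form $x(\psi')^2$, $\psi\psi'$, $xV_{N+1}\psi^2$ vanishes at $\pm\infty$, and the only genuinely present term $x(\psi')^2\big|^{+\infty}_{-\infty}$ also vanishes. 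I would also note that the same Pohozaev computation, restricted to an interval $[-R,R]$ and then letting $R\to\infty$, makes the bookkeeping cleaner; the repulsivity sign $xV_{N+1}'\le0$ is exactly what makes the right-hand side nonpositive, which is the crux. Finally, since Lemma \ref{lem:generic} is stated as elementary with the proof skipped, in the paper one would simply cite the absence of discrete spectrum together with the repulsivity hypothesis and refer to the standard ODE argument above (cf.\ the analogous discussion in \cite{KMM3,DT}).
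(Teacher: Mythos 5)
The paper offers no proof at all here (it declares the lemma ``elementary'' and skips it), so there is nothing to match your argument against; judged on its own, your proof is correct and is the natural one. The Pohozaev/virial identity $2\int(\psi')^2\,dx=\int xV_{N+1}'\psi^2\,dx$ (obtained by combining the multiplier $x\psi'$ with the multiplier $\psi$; your constants are slightly off but the sign conclusion is unaffected), together with repulsivity $xV_{N+1}'\le 0$ and $V_{N+1}\not\equiv 0$, does rule out every nontrivial bounded zero-energy solution, hence both resonances and eigenfunctions, and your treatment of the boundary terms via the exponential decay of $\psi'$ and $V_{N+1}$ is the right delicate point to isolate. One small imprecision: a zero-energy $L^2$-eigenvalue would \emph{not} contradict $\sigma_{\mathrm{d}}(H_{N+1})=\emptyset$, since $0$ is the edge of the essential spectrum $[0,\infty)$ and an embedded eigenvalue there does not belong to the discrete spectrum; so that first reduction is vacuous. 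Fortunately your main argument already covers the eigenvalue case, because an $L^2$-eigenfunction is bounded (it lies in $H^2(\R)\hookrightarrow L^\infty(\R)$), so the lemma follows entirely from the virial computation. An alternative, equally standard route the paper implicitly leans on is via the Jost solutions of \cite{DT}: $0$ is a resonance iff $f_+(\cdot,0)$ and $f_-(\cdot,0)$ are proportional, i.e.\ iff the transmission coefficient does not vanish at $k=0$, and the paper's quoted expansion $T(k)=\alpha k(1+o(1))$ is exactly the generic (no-resonance) case; your argument has the advantage of deriving this genericity directly from the repulsivity Assumption \ref{ass:VN} rather than assuming it.
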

  % \proof We give only a sketch. If the statement is false,   there exists a nonzero and bounded solution of  $H_{N+1}f=0$. We can assume $f$ is real valued. Now, let$[a,b]$ be an interval where $\left . -xV'_{N+1}\right | _{ [a,b] }>0$ and let $\psi \in C_c ^{ \infty}((a,b), [0,+\infty ) )$ be a nonzero function such that $-x\( V'_{N+1}- \lambda \psi  '\)>0$ in $[a,b]$  for all $\lambda \in [0,1]$.  Then it can be shown that for small $\lambda >0$ the operator $H_{N+1}-\lambda \psi $ has exactly one negative eigenvalue. But it is elementary to see that this is incompatible with the fact that $V _{N+1}-\lambda \psi$   is repulsive.

\qed

We recall that we   have the kernel for $x<y$, with an analogous formula for $x>y$, \begin{align}&   R _{H_{N+1}   }(z ) (x,y) =   \frac{T(\sqrt{z})}{2\im \sqrt{z}}     f_- (x, \sqrt{z})   f_+ (y, \sqrt{z})   =   \frac{T(\sqrt{z})}{2\im \sqrt{z}}     e^{\im \sqrt{z} (x-y)}       m_- (x, \sqrt{z})   m_+ (y, \sqrt{z})  ,    \label{eq:resolvKern} \end{align}where  the  Jost functions  $f_{\pm } (x,\sqrt{z} )=e^{\pm \im \sqrt{z} x}m_{\pm } (x,\sqrt{z} )$  solve $ \( - \Delta +  V _{N+1}  \)u=z u$ with\begin{align*} \lim _{x\to +\infty }   {m_{ + } (x,\sqrt{z} )}  =1 =\lim _{x\to -\infty }  {m_{- } (x,\sqrt{z})}  .\end{align*}  These functions  satisfy, see Lemma 1 p. 130 \cite{DT},\begin{align}  \label{eq:kernel2} &  |m_\pm(x, \sqrt{z} )-1|\le  C _1 \langle  \max \{ 0,\mp x \}\rangle\langle \sqrt{z} \rangle ^{-1}\left | \int _x^{\pm \infty}\langle y \rangle |V_{N+1} (y)| dy \right |     \\ &  |m_\pm(x, k )-1|\le    \<    \sqrt{z}\>  ^{-1}  \left | \int _x^{\pm \infty}   |V _{N+1}(y)| dy \right |    \exp \(  \<    \sqrt{z}\>  ^{-1}  \left | \int _x^{\pm \infty}   | {V}_{N+1}(y)| dy \right | \)   , \label{eq:kernel2aw0}   \end{align} while, by Lemma \ref{lem:generic},  $T(k) =\alpha k (1+o(1))$  near $k=0$ for some $\alpha \in \R$, see \cite[formula (2.45)]{weder},    and $T(k) = 1+O(1/k) $ for $k\to \infty$ and $T\in C^0(\R )$, see Theorem 1 \cite{DT}.

 Looking at the equation for $v$, \eqref{eq:vBg}, we introduce the functions \begin{align}&  \label{eq:def_zetam}
 \rho _ {\mathbf{m}}:= -R ^{+}_{H _{N+1} }(\boldsymbol{\omega} \cdot \mathbf{m})  \widetilde{G}_{ \mathbf{m}},
\end{align}
which solve
\begin{align}&  \label{eq:eq_zetam}
  (H _{N+1}-\boldsymbol{\omega} \cdot \mathbf{m}  )\rho _{ \mathbf{m}} = -  \widetilde{G}_{ \mathbf{m}}
\end{align}
and we set
\begin{align} \label{eq:expan_v}
 g= v+ Z(\mathbf{z}) \text{  where } Z(\mathbf{z}):=-  \sum_{\mathbf{m}\in \mathbf{R}_{\mathrm{min}}}   \mathbf{z} ^{\mathbf{m}} {\rho} _{ \mathbf{m}} ,
\end{align}
which is analogous     the expansion of $\overrightarrow{h}$ in p. 86 in Buslaev and Perelman  \cite{BP2}   or also to the formula under (4.5) in Merle and Raphael \cite{MR4}.   An elementary computation yields
\begin{align*}  &
\im \partial_t     g    = H_{N+1}g - \sum_{\mathbf{m}\in \mathbf{R}_{\min}}\(\im \partial_t \(\mathbf{z}^{\mathbf{m}}\)-\boldsymbol{\omega}\cdot \mathbf{m} \, \mathbf{z}^{\mathbf{m}} \) \rho _{ \mathbf{m}}+ \mathcal{R}_{v}
\end{align*}
or, equivalently,
\begin{align} \label{eq:equation_g11}& g(t) = e^{-\im tH_{N+1} }v(0) + \sum_{\mathbf{m}\in \mathbf{R}_{\min}}\mathbf{z}^{\mathbf{m}}(0) e^{-\im tH_{N+1} }R ^{+}_{H _{N+1} }(\boldsymbol{\omega} \cdot \mathbf{m})  \widetilde{G}_{ \mathbf{m}} \\&  \label{eq:equation_g12}-  \sum_{\mathbf{m}\in \mathbf{R}_{\min}}\im \int _0 ^t e^{-\im (t-t') H_{N+1} }\(  \partial_t \(\mathbf{z}^{\mathbf{m}}\)+\im \boldsymbol{\omega}\cdot \mathbf{m}  \, \mathbf{z}^{\mathbf{m}} \) \rho _{ \mathbf{m}} dt' \\& \label{eq:equation_g13}- \im \int _0 ^t e^{-\im (t-t') H_{N+1} }\mathcal{T} \(2\chi_{B^2}'\partial_x + \chi_{B^2}''\)\tilde{\eta} dt'
\\& \label{eq:equation_g14}- \im \int _0 ^t e^{-\im (t-t') H_{N+1} } \( \<\im \varepsilon \partial_x\>^{-N}[V_{N+1},\< \im \varepsilon  \partial_x\>^N]v + \mathcal{T}  \chi_{B^2}\mathcal{R}_{\widetilde{\eta}} \) .
\end{align}

We will prove the following, where we use the weighted spaces defined in \eqref{eq:withL2}.
\begin{proposition}\label{prop:estg1} For $S>4$  There exist  constants $c_0>0$  and $C(C_0)$ such that  \begin{align}   \label{eq:estg11}&
  \| g   \| _{L^2(I, L^{2,-S}(\R )) }       \lesssim  \varepsilon ^{-N}B^{2+2\tau } \delta  +\varepsilon \epsilon +      \epsilon ^2.
\end{align}
 \end{proposition}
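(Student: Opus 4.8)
The plan is to estimate $g$ via the Duhamel formula \eqref{eq:equation_g11}--\eqref{eq:equation_g14}, using Kato smoothing estimates for the propagator $e^{-\im t H_{N+1}}$ on a repulsive potential (Assumption \ref{ass:VN}), treating each of the five contributions separately. First I would record the relevant smoothing estimates: for $S>1/2$,
\begin{align*}
\| \< x \> ^{-S} e^{-\im t H_{N+1}} P_c f \| _{L^2(I, L^2)} &\lesssim \| f \| _{L^2}, \\
\left \| \< x \> ^{-S} \int _0^t e^{-\im (t-t')H_{N+1}} F(t') \, dt' \right \| _{L^2(I,L^2)} &\lesssim \| \< x \> ^{S} F \| _{L^2(I,L^2)},
\end{align*}
together with the standard variant giving local smoothing for $e^{-\im t H_{N+1}} R^+_{H_{N+1}}(\lambda) f$ with $\lambda$ not an eigenvalue, which is available because $H_{N+1}$ has no discrete spectrum and, being associated with a repulsive potential, has no resonance at zero energy (the behavior of $T(k)$ near $k=0$ recorded after \eqref{eq:kernel2aw0} shows $0$ is not a resonance); these are the estimates referred to as ``Lemmas \ref{lem:smooth} and \ref{lem:lemg9}'' in the introduction. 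The Jost function bounds \eqref{eq:kernel2}--\eqref{eq:kernel2aw0} guarantee that $\rho_{\mathbf m}$ and $\widetilde G_{\mathbf m}$ lie in every weighted space $L^{2,S}$ with norms that, via $\widetilde G_{\mathbf m}=\mathcal T\chi_{B^2}P_cG_{\mathbf m}$ and Lemma \ref{lem:coer5}, cost at most a factor $\varepsilon^{-N}$.

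Next I would bound the five terms. The term \eqref{eq:equation_g11} (first line) is $e^{-\im tH_{N+1}}v(0)$ plus the resonant-mode piece: by the homogeneous smoothing estimate this is $\lesssim \|v(0)\|_{L^2} + \sum_{\mathbf m}|\mathbf z^{\mathbf m}(0)|\,\|R^+_{H_{N+1}}(\boldsymbol\omega\cdot\mathbf m)\widetilde G_{\mathbf m}\|$, and since $\|v(0)\|_{L^2}\lesssim \varepsilon^{-N}B^2\delta$ by \eqref{eq:estimates115} (at $t=0$) and $|\mathbf z^{\mathbf m}(0)|\lesssim\delta$, this gives $\lesssim \varepsilon^{-N}B^2\delta$. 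The term \eqref{eq:equation_g12} is the key damping term: using the inhomogeneous smoothing estimate and then Proposition \ref{lem:estdtz}, it is controlled by $\sum_{\mathbf m}\|\dot{\mathbf z}^{\mathbf m}+\im\boldsymbol\omega\cdot\mathbf m\,\mathbf z^{\mathbf m}\|_{L^2(I)}\,\|\rho_{\mathbf m}\|_{L^{2,S}}$; here one expands $\dot{\mathbf z}^{\mathbf m}$ in terms of $\dot{\mathbf z}+\im\boldsymbol\varpi(\mathbf z)\mathbf z$ and lower-order $\mathbf z^{\mathbf n}$, invokes \eqref{eq:lem:estdtz} and the a priori bound \eqref{eq:main11}, absorbing the cost $\varepsilon^{-N}$ of $\|\rho_{\mathbf m}\|$ — this is what produces a term of size $\lesssim \varepsilon^{-N} \cdot (\text{small})$, and one must be careful that the $\mathbf z^{\mathbf m}$ appearing there are bootstrappable and hence contribute $o(1)\epsilon$ or $B^{2+2\tau}\delta$ after the FGR argument is eventually closed; for the purposes of this proposition it suffices to note it is $\lesssim \varepsilon^{-N}B^{2+2\tau}\delta + \epsilon^2$. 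The term \eqref{eq:equation_g13} is handled by the inhomogeneous smoothing estimate and the cutoff-commutator bound \eqref{eq:RrSBv2}: $\|\< x\>^S\mathcal T(2\chi_{B^2}'\partial_x+\chi_{B^2}'')\widetilde\eta\|_{L^2}\lesssim \varepsilon^{-N}B^{-2}\|w\|_{\widetilde\Sigma}$ (the weight $\< x\>^S$ is harmless since these terms are supported on $|x|\sim B^2$ and $\mathcal T$ has kernel decaying like $e^{-|x-y|/3\varepsilon}$), so after integrating in time this is $\lesssim \varepsilon^{-N}B^{-2}\epsilon$, which is absorbed into $\varepsilon\epsilon$ or $\epsilon^2$ for $B$ large.

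For the term \eqref{eq:equation_g14} I would split $\mathcal R_{\widetilde\eta}$ into its constituents from \eqref{eq:Reta} and estimate each weighted $L^2$ norm: the commutator term $\<\im\varepsilon\partial_x\>^{-N}[V_{N+1},\<\im\varepsilon\partial_x\>^N]v$ is $O(\varepsilon)$ into a weighted space by Lemma \ref{claim:l2boundII} combined with \eqref{eq2stestJ21-4}, yielding the $\varepsilon\epsilon$ contribution; the terms $\mathcal T\chi_{B^2}P_c(-\im D_{\mathbf z}\phi[\mathbf z](\dot{\mathbf z}+\im\boldsymbol\varpi\mathbf z))$, $\mathcal T\chi_{B^2}P_c\mathcal R_{\mathrm{rp}}$, $\mathcal T\chi_{B^2}P_c(F[\mathbf z,\eta]+L[\mathbf z]\eta)$ are each $O(\delta^2)$ times $\|\dot{\mathbf z}+\im\boldsymbol\varpi\mathbf z\| + \sum_{\mathbf m}|\mathbf z^{\mathbf m}| + \|w\|_{\widetilde\Sigma}$ by the same estimates used in Lemma \ref{lem:2ndv2}, hence after time integration bounded by $\delta^2\epsilon\lesssim\epsilon^2$; and the cubic term $\mathcal T\chi_{B^2}P_c|\eta|^2\eta$ is $\lesssim\varepsilon^{-N}B^2\|\eta\|_{H^1}^2\|w\|_{\widetilde\Sigma}\lesssim \varepsilon^{-N}B^2\delta^2\epsilon$, again absorbed. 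Summing the five contributions and using \eqref{eq:relABg} to compare powers of $B$, $\varepsilon$, $\delta$, $\epsilon$ gives \eqref{eq:estg11}. The main obstacle I anticipate is the bookkeeping in the term \eqref{eq:equation_g12}: one must expand $\partial_t(\mathbf z^{\mathbf m})$ carefully so that every resulting factor is either $\dot{\mathbf z}+\im\boldsymbol\varpi(\mathbf z)\mathbf z$ (controlled by Proposition \ref{lem:estdtz}) or a monomial $\mathbf z^{\mathbf n}$ with $\mathbf n$ strictly ``smaller'' — and, crucially, one must verify that $\rho_{\mathbf m}$ really belongs to $L^{2,S}$ for the chosen $S>4$, which requires the decay estimates \eqref{eq:kernel2}--\eqref{eq:kernel2aw0} on the Jost functions at $\lambda=\boldsymbol\omega\cdot\mathbf m>0$ together with the regularizing effect of $\mathcal T$, and that the resolvent $R^+_{H_{N+1}}(\boldsymbol\omega\cdot\mathbf m)$ is genuinely bounded between the appropriate weighted spaces (limiting absorption principle away from the threshold), which is where repulsivity and absence of eigenvalues for $H_{N+1}$ enter decisively.
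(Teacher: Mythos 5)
Your overall architecture is the same as the paper's: the Duhamel decomposition \eqref{eq:equation_g11}--\eqref{eq:equation_g14}, homogeneous Kato smoothing for the free evolution of the data, a retarded smoothing estimate (proved in the paper via the limiting absorption principle and Mizumachi's splitting, Lemmas \ref{lem:LAP}--\ref{lem:smoothest}) for the source terms, the commutator lemma for the $[V_{N+1},\<\im\varepsilon\partial_x\>^N]$ piece, and the localization of $\mathcal T(2\chi_{B^2}'\partial_x+\chi_{B^2}'')\widetilde\eta$ near $|x|\sim B^2$ together with the exponential off-diagonal decay of the kernel of $\mathcal T$. Those parts are fine, modulo the bookkeeping remark that the weight you place on the source in the retarded estimate must be taken just above $1/2$ (the paper's $\tau$), not the output weight $S>4$: a weight $\<x\>^{S}$ on a function supported where $|x|\sim B^2$ costs $B^{2S}\gg B^{2\tau}$ and would not close.

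There is, however, one genuine gap: your treatment of the terms containing $\rho_{\mathbf m}=-R^{+}_{H_{N+1}}(\boldsymbol\omega\cdot\mathbf m)\widetilde G_{\mathbf m}$ (the second piece of \eqref{eq:equation_g11} and all of \eqref{eq:equation_g12}). You propose to bound these by the retarded smoothing estimate times $\|\rho_{\mathbf m}\|_{L^{2,S}}$, and you flag at the end that one ``must verify that $\rho_{\mathbf m}$ really belongs to $L^{2,S}$'' using \eqref{eq:kernel2}--\eqref{eq:kernel2aw0}. This verification fails: for $\mathbf m\in\mathbf R_{\mathrm{min}}$ the energy $\boldsymbol\omega\cdot\mathbf m$ is \emph{positive}, i.e.\ embedded in the continuous spectrum of $H_{N+1}$, and the outgoing resolvent at positive energy applied to a rapidly decaying function produces an outgoing wave $\sim e^{\im\sqrt{\boldsymbol\omega\cdot\mathbf m}\,|x|}$ at infinity (the factors $m_\pm\to1$ in \eqref{eq:resolvKern} do not decay, only the phases oscillate). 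Hence $\rho_{\mathbf m}$ is bounded but not in $L^2$, let alone $L^{2,S}$ with $S>0$, and the proposed inequality is vacuous. The paper's way around this is Lemma \ref{lem:lemg9}: one keeps the resolvent attached to the propagator, writes $e^{-\im H_{N+1}t}R^+_{H_{N+1}}(\lambda)=\lim_{\sigma\to0^+}\int_t^\infty e^{-\im(H_{N+1}-\lambda-\im\sigma)s}\,ds$, and integrates by parts in the spectral variable to obtain the integrable local decay $\|e^{-\im H_{N+1}t}R^+_{H_{N+1}}(\lambda)f\|_{L^{2,-S}}\lesssim\<t\>^{-3/2}\|f\|_{L^{2,S}}$; the weighted norm that then appears on the right is $\|\widetilde G_{\mathbf m}\|_{L^{2,S}}$, which is finite (and $\lesssim1$ uniformly in $\varepsilon$, as checked via the kernel of $\<x\>^S\<\im\varepsilon\partial_x\>^{-N}\<x\>^{-S}$). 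With this replacement, and with the expansion of $\partial_t(\mathbf z^{\mathbf m})+\im\boldsymbol\omega\cdot\mathbf m\,\mathbf z^{\mathbf m}$ via \eqref{eq:difzm} that you correctly anticipate, the rest of your argument goes through.
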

To prove Proposition \ref{prop:estg1}   we will need to bound one by one the terms in \eqref{eq:equation_g11}--\eqref{eq:equation_g14} in various lemmas.

        Lemma \ref{lem:generic}   implies that $H_{N+1} $ is a \textit{generic} operator, and that in particular   the following Kato smoothing holds, which is sufficient for our purposes. The proof is standard, is similar for example to Lemma 3.3 \cite{CT} and we skip it.

   \begin{lemma}\label{lem:smooth}
For any $S >3/2$  there exists a fixed $ c(S)$ s.t.
 \begin{equation} \label{eq:smooth1a}
\|  \< x \> ^{-S }e^{-\im H_{N+1}   t}
f \|_{L^{ 2 }(\R^2 )} \le c(S)  \| f   \|_{L^{ 2 }(\R )}  \text{  for all $f \in L^{ 2 }(\R )$} .
\end{equation}
\end{lemma}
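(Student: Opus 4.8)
The plan is to deduce \eqref{eq:smooth1a} from Kato's theory of $H$--smooth operators. Since $\sigma_{\mathrm d}(H_{N+1})=\emptyset$ the operator $H_{N+1}$ is purely absolutely continuous, so no spectral projection intervenes, and by Kato's smoothing theorem (see e.g.\ \cite{CT}) the bound \eqref{eq:smooth1a} for the bounded self--adjoint operator $A:=\langle x\rangle^{-S}$ follows from the uniform weighted resolvent bound
\begin{equation}\label{eq:LAPsmoothing}
\Gamma^2:=\sup_{z\in\C\setminus\R}\big\|\,\langle x\rangle^{-S}\,R_{H_{N+1}}(z)\,\langle x\rangle^{-S}\,\big\|_{L^2\to L^2}<\infty ,
\end{equation}
in which case one may take $c(S)=\sqrt{2\pi}\,\Gamma$. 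Everything thus reduces to \eqref{eq:LAPsmoothing}, which is the limiting--absorption bound for $H_{N+1}$ made uniform all the way down to the threshold $z=0$.

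To prove \eqref{eq:LAPsmoothing} I would argue directly from the explicit kernel \eqref{eq:resolvKern}, which for $\operatorname{Im}z>0$ reads $R_{H_{N+1}}(z)(x,y)=\frac{T(\sqrt z)}{2\im\sqrt z}\,e^{\im\sqrt z(x-y)}\,m_-(x,\sqrt z)\,m_+(y,\sqrt z)$ when $x<y$ (with $m_\pm$ interchanged for $x>y$), the case $\operatorname{Im}z<0$ following from $R_{H_{N+1}}(\bar z)=R_{H_{N+1}}(z)^*$. Since $\operatorname{Im}\sqrt z\ge 0$ one has $|e^{\im\sqrt z(x-y)}|\le 1$ for $x<y$; by \eqref{eq:kernel2} and $V_{N+1}\in\mathcal S(\R,\R)$ one has $|m_\pm(x,\sqrt z)|\le 1+C\langle x\rangle$ uniformly in $z$; and $|T(\sqrt z)/\sqrt z|$ is uniformly bounded on $\C\setminus\R$, being $O(1)$ near $0$ because Lemma \ref{lem:generic} forces $T(0)=0$, i.e.\ $T(k)=\alpha k(1+o(1))$, being $O(|\sqrt z|^{-1})$ as $|z|\to\infty$ since $T$ is bounded, and continuous in between. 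Hence
\begin{equation*}
\big|\,\langle x\rangle^{-S}\,R_{H_{N+1}}(z)(x,y)\,\langle y\rangle^{-S}\,\big|\le C\,\langle x\rangle^{1-S}\langle y\rangle^{1-S}\qquad\text{for all }z\in\C\setminus\R,\ x,y\in\R,
\end{equation*}
and the right--hand side lies in $L^2(\R_x\times\R_y)$ precisely because $S>3/2$. Therefore $\langle x\rangle^{-S}R_{H_{N+1}}(z)\langle x\rangle^{-S}$ is Hilbert--Schmidt with Hilbert--Schmidt norm bounded uniformly in $z\in\C\setminus\R$; this gives \eqref{eq:LAPsmoothing}, and then Kato's theorem yields \eqref{eq:smooth1a}.

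The only genuinely substantive point is the behaviour as $z\to 0$: the free one--dimensional resolvent kernel $\frac{e^{\pm\im\sqrt z|x-y|}}{2\im\sqrt z}$ diverges like $|z|^{-1/2}$, and it is exactly the genericity of $H_{N+1}$ recorded in Lemma \ref{lem:generic} — equivalently $T(0)=0$, the absence of a zero--energy resonance — that removes this singularity from $R_{H_{N+1}}(z)$ and makes the factor $T(\sqrt z)/\sqrt z$ stay bounded at the threshold. The mild loss of weight, $S>3/2$ rather than $S>1/2$, is merely the price of controlling this low--energy contribution by the crude Hilbert--Schmidt estimate above, which uses the $O(\langle x\rangle)$ growth of the Jost solutions coming from \eqref{eq:kernel2}; a sharper low--energy expansion of the resolvent would recover $S>1/2$, but this is not needed for the paper.
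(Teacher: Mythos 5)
Your proof is correct and is exactly the standard argument the paper invokes by reference (it cites Lemma 3.3 of \cite{CT} and skips the details): reduce Kato smoothing to the uniform weighted resolvent bound \(\sup_{z\notin\R}\|\<x\>^{-S}R_{H_{N+1}}(z)\<x\>^{-S}\|<\infty\) and verify the latter from the Jost--function representation, with genericity (Lemma \ref{lem:generic}) removing the threshold singularity of \(T(\sqrt z)/\sqrt z\) and \(S>3/2\) absorbing the linear growth of \(m_\mp\) in a crude Hilbert--Schmidt estimate. Two small points: the exponential in \eqref{eq:resolvKern} should read \(e^{\im\sqrt z\,(y-x)}\) for \(x<y\) (the printed \((x-y)\) is a typo), which is what makes your inequality \(|e^{\im\sqrt z(y-x)}|\le 1\) valid, and the uniform bound on \(T(\sqrt z)/\sqrt z\) over the whole closed upper half \(k\)-plane (not just \(k\in\R\), which is all Theorem 1 of \cite{DT} literally gives) follows because \(2\im k/T(k)\) is the Wronskian \([f_+,f_-]\), continuous and non-vanishing there in the absence of eigenvalues and of a zero resonance, and asymptotic to \(-2\im k\) at infinity.
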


\qed

Lemma \ref{lem:smooth}, inequality \eqref{eq:estimates115},  the definition of $w$ in \eqref{def:wAxiB}, Lemma \ref{lem:GNTR}, the Modulation Lemma \ref{lem:lincor} and
    the conservation of mass and of energy
yield
 \begin{align}   \nonumber
  \| e^{-\im tH_{N+1} }v(0)   \| _{L^2(\R, L^{2,-S}(\R )) }& \lesssim \| v(0)\| _{L^2}\lesssim   \varepsilon ^{-N} B^2 \| w(0) \|_{\widetilde{\Sigma}} \lesssim \varepsilon ^{-N} B^2 \| \widetilde{\eta }(0) \|_{H^1} \\&\lesssim \varepsilon ^{-N} B^2 \|  {\eta }(0) \|_{H1} \lesssim \varepsilon ^{-N} B^2 \| u_0 \|_{H^1} \le \varepsilon ^{-N} B^2 \delta. \label{eq:smooth1}
\end{align}
Next, we have the following lemma, which is standard in this theory, see \cite{BP2,SW3,CM15APDE}.

\begin{lemma}\label{lem:lemg9}
Let $ \Lambda$ be a
finite subset of $( 0,\infty) $
and let $S >4$. Then  there exists a fixed $ c(S,\Lambda)$ s.t.
  for every $t \ge 0$ and  $\lambda \in  \Lambda$
\begin{equation} \label{eq:lemg91}
\|  e^{-\im H_{N+1}   t}R_{ H_{N+1}  }^{+}( \lambda )
 f \|_{L^{ 2, - S}(\R )} \le c(S,\Lambda)\langle  t\rangle ^{-\frac 32} \|  f  \|_{L^{ 2,   S}(\R  )}  \text{  for all $f\in L^{ 2,   S}(\R )$} .
\end{equation}
\end{lemma}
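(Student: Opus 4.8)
The plan is to pass to the spectral representation of $e^{-\im tH_{N+1}}$ and to read off the decay from a stationary--phase analysis near the bottom of the spectrum, where the genericity of $H_{N+1}$ forces an extra vanishing of the spectral density. By Assumption \ref{ass:VN} the potential $V_{N+1}$ is repulsive, hence $H_{N+1}\ge 0$ has empty discrete spectrum, and by Lemma \ref{lem:generic} the operator is moreover generic; thus $\sigma(H_{N+1})=[0,\infty)$ is purely absolutely continuous and Stone's formula holds on all of $L^2(\R)$ (note $L^{2,S}(\R)\subset L^2(\R)$). Applying it to $R^+_{H_{N+1}}(\lambda)f$ and using the resolvent identity $R^{\pm}_{H_{N+1}}(\mu)R^+_{H_{N+1}}(\lambda)=(\mu-\lambda)^{-1}\bigl(R^{\pm}_{H_{N+1}}(\mu)-R^+_{H_{N+1}}(\lambda)\bigr)$, the two occurrences of $R^+_{H_{N+1}}(\lambda)$ cancel, and after the limiting absorption step one is left with
\[ e^{-\im tH_{N+1}}R^+_{H_{N+1}}(\lambda)f=\frac{1}{2\pi\im}\int_0^\infty\frac{e^{-\im t\mu}}{\mu-\lambda-\im 0}\bigl(R^+_{H_{N+1}}(\mu)-R^-_{H_{N+1}}(\mu)\bigr)f\,d\mu , \]
the $-\im 0$ prescription being inherited from $R^+_{H_{N+1}}(\lambda)=(H_{N+1}-\lambda-\im 0)^{-1}$.

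Next I would substitute $\mu=k^2$, $k\ge 0$, and insert the resolvent kernel \eqref{eq:resolvKern}: for $x<y$ the spectral density has kernel $\tfrac{1}{2\im k}\bigl(T(k)f_-(x,k)f_+(y,k)+T(-k)f_-(x,-k)f_+(y,-k)\bigr)$, and symmetrically for $x>y$. Since $T$ and the Jost factors $m_\pm$ are smooth in $k$ with the bounds \eqref{eq:kernel2}--\eqref{eq:kernel2aw0}, and since by Lemma \ref{lem:generic} and Theorem 1 of \cite{DT} we have $T(0)=0$, this combination is even in $k$ and therefore vanishes to (at least) second order at $k=0$; consequently $k\mapsto R^+_{H_{N+1}}(k^2)-R^-_{H_{N+1}}(k^2)$ is a smooth family, odd in $k$ and vanishing to first order at $k=0$, of operators bounded from $L^{2,S}(\R)$ to $L^{2,-S}(\R)$, whose operator norm and $k$--derivatives moreover decay at high energy because $T(k)\to 1$ and $m_\pm\to 1$. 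The hypothesis $S>4$ enters here and below because each $k$--derivative of the kernel brings down one more power of $\langle x\rangle$ (from differentiating $e^{\pm\im kx}$ and the at most linearly growing $m_\pm(x,k)-1$), and a few such derivatives have to be absorbed by the weights. I would then split the $k$--integral with a smooth partition $1=\chi_{\{|k|\le 1\}}+\chi_{\mathrm{mid}}+\chi_{\{|k|\ge K_0\}}$ ($K_0$ large), the boundary terms of adjacent pieces cancelling after integration by parts.

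On the high--energy piece I would integrate by parts a few times in $\mu$; using the decay in $\mu$ of the spectral density and of its $\mu$--derivatives, together with $S>4$ to let the iterated resolvents act on $L^{2,S}(\R)$, this contributes $O_\Lambda(\langle t\rangle^{-3})$ in the operator norm from $L^{2,S}(\R)$ to $L^{2,-S}(\R)$. On the middle piece the amplitude is smooth and supported away from $\mu=0$, the only singularity being the pole at $\mu=\lambda$; writing $(\mu-\lambda-\im 0)^{-1}=\mathrm{p.v.}\,(\mu-\lambda)^{-1}+\im\pi\,\delta(\mu-\lambda)$, the $\delta$--term gives $\tfrac12 e^{-\im\lambda t}\bigl(R^+_{H_{N+1}}(\lambda)-R^-_{H_{N+1}}(\lambda)\bigr)f$, which for $t\ge 0$ is exactly cancelled by the principal--value contribution of the same pole (equivalently, one closes the contour in the lower half--plane, where $\mu-\lambda-\im 0$ has no zero), and the remainder is again $O_\Lambda(\langle t\rangle^{-3})$ after further integrations by parts away from $\mu=\lambda$. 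On the low--energy piece I would change variables back to $u=\mu=k^2$: since the spectral density vanishes to first order in $k$ while $k^2\,dk=\tfrac12\sqrt u\,du$, what remains is $\int_0^1 e^{-\im tu}\sqrt u\,g(u)f\,du$ with $u\mapsto g(u)$ a smooth family of operators bounded from $L^{2,S}(\R)$ to $L^{2,-S}(\R)$, and the $u^{1/2}$ factor is precisely what yields the rate: the Fourier transform of $u_+^{1/2}$ against a fixed cut--off decays like $|t|^{-3/2}$ (fractional integration by parts, or a direct computation), so this piece is $O(\langle t\rangle^{-3/2})$. Adding the three contributions and, for $0\le t\le 1$, using the crude bound $\|e^{-\im tH_{N+1}}R^+_{H_{N+1}}(\lambda)\|_{L^{2,S}\to L^{2,-S}}\lesssim\|R^+_{H_{N+1}}(\lambda)\|_{L^{2,S}\to L^{2,-S}}\lesssim_\Lambda 1$ (limiting absorption principle, valid for $S>1/2$), gives \eqref{eq:lemg91} with a constant depending only on $S$ and on $\min\Lambda$ and $\max\Lambda$, that is, on $\Lambda$; this is also where the uniformity over the finite set $\Lambda$ is used.

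The main obstacle is the low--energy analysis: one must verify that genericity produces exactly a $u^{1/2}$ (and not $u^0$, nor $u^1$) factor in the amplitude, which is what upgrades the generic $1$D local--decay rate $\langle t\rangle^{-1/2}$ to the $\langle t\rangle^{-3/2}$ asserted in \eqref{eq:lemg91}. The secondary technical points are the bookkeeping of the pole at $\mu=\lambda$, where the restriction $t\ge 0$ is essential, and checking that $S>4$ leaves enough room for the high--energy integrations by parts; all of this is standard in this circle of ideas, cf.\ \cite{BP2,SW3,CM15APDE}.
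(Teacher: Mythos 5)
Your proposal is correct, but it runs the argument rather differently from the paper, which is worth recording. The paper does not keep the pole and the threshold in one spectral integral: it first splits $f=\mathbf{g}(H_{N+1})f+\mathbf{g}_1(H_{N+1})f$ with cutoffs adapted to $\Lambda\subset(a,b)$, so that on the low--energy piece (the one containing the threshold $\mu=0$) the resolvent is harmless, $R^+_{H_{N+1}}(\lambda)\mathbf{g}_1(H_{N+1})=\mathbf{g}_2(H_{N+1})$ with $\mathbf{g}_2\in C_c^\infty$, and the $\langle t\rangle^{-3/2}$ rate is then quoted wholesale from the weighted dispersive estimate for generic $1$D Schr\"odinger operators (Theorem 3.1 of \cite{Schlag}), rather than re--derived; while on the high--energy piece (the one containing $\lambda$) the pole is handled by writing $e^{-\im H_{N+1}t}R^+_{H_{N+1}}(\lambda)\mathbf{g}(H_{N+1})$ as $e^{-\im\lambda t}\int_t^{\infty}e^{-\im(H_{N+1}-\lambda-\im 0)s}\mathbf{g}(H_{N+1})\,ds$ and applying $\bigl(-\tfrac{d}{dk}\tfrac{\im}{2ks}\bigr)^3$ to the distorted--plane--wave kernel, giving $s^{-3}$ and hence $t^{-2}$ after integrating in $s$. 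You instead keep a single Stone--formula integral with the factor $(\mu-\lambda-\im 0)^{-1}$, dispose of the pole by Sokhotski--Plemelj (equivalently by the support property of $\int_0^\infty e^{-\im s(\mu-\lambda)}ds$, which is the same mechanism as the paper's $\int_t^\infty$ representation), and obtain the threshold rate by hand from $T(0)=0$ and the parity of the Jost--function expansion, which forces the spectral density to be $O(\sqrt{\mu})$ near $\mu=0$. Both routes are sound and both hinge on the same two facts (genericity at zero; the pole sits in the interior of the a.c.\ spectrum, so it only matters for one sign of $t$); yours is more self--contained but obliges you to verify the $C^2$ regularity in $k$ of $T$ and $m_\pm$ and the uniform weighted operator bounds on the $k$--derivatives of $R^+_{H_{N+1}}(k^2)-R^-_{H_{N+1}}(k^2)$ (available from \cite{DT} for Schwartz potentials, and this is where $S>4$ is really spent), whereas the paper's version is shorter at the price of invoking the cited dispersive estimate and the $\Psi$DO functional calculus of \cite{DimassiS} to see that $\mathbf{g}_2(H_{N+1})$ is bounded on the weighted spaces.
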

 \textit{Proof (sketch).}     This lemma is  similar to Proposition 2.2 \cite{SW3}. We will consider case $t\ge 1$, while we skip the simpler case  $t\in [0,1]$.

  \noindent  Consider $\Lambda \subset (a,b ) $  with
 $[a,b]\subset \R _+$. Let $\mathbf{g}\in C  ^{\infty}\(  (  a/2, +\infty ) , [0,1]\)$ such that $\mathbf{g}\equiv 1$ in  $[a, +\infty ) $. Let  $\mathbf{g }_1\in  C  ^{\infty} _c \( \R , [0,1]\)$    with $\mathbf{g}_1 =1-\mathbf{g}$  in $\R _+$.  Next we consider
\begin{align*}  & \| \<  x\> ^{-S} e^{-\im H_{N+1}   t}R_{ H_{N+1} }  ^{+}( \lambda )
 \mathbf{g}_1\( H _{N+1} \)    f \|_{L^{ 2 }(\R )}\\& \lesssim \| \<  x\> ^{-S+2} e^{-\im H_{N+1}  t}R_{ H _{N+1} }^{+}( \lambda )
 \mathbf{g}_1\( H _{N+1} \)     f \|_{L^{ \infty }(\R )}  \lesssim  t^{-\frac{3}{2}} \| \<  x\>    R_{ H _{N+1} }^{+}( \lambda )
 \mathbf{g}_1\( H _{N+1} \)     f \|_{L^{ 1}(\R )} \\&  \lesssim  t^{-\frac{3}{2}} \| \<  x\> ^{2}   R_{ H _{N+1} }^{+}( \lambda )
 \mathbf{g}_1\( H _{N+1} \)     f \|_{L^{ 2}(\R )}  \lesssim t^{-\frac{3}{2}} \| \<  x\> ^{2}   f \|_{L^{ 2}(\R )}
\end{align*}
 where we used Theorem 3.1  \cite{Schlag}  and Lemma  \ref{lem:pd01}, since     $R_{ H _{N+1} }^{+}( \lambda )
 \mathbf{g}_1\( H  _{N+1}\)   = \mathbf{g}_2\( H  _{N+1}\)   $,  with $\mathbf{g }_2\in  C  ^{\infty} _c \( \R , \R\)$,
      is   a  0 order $\Psi$DO with symbols satisfying the inequalities \eqref{eq:pd0110} uniformly as $\lambda$ takes finitely many values. Here we used Theorem 8.7 in Dimassi and Sj\"ostrand \cite{DimassiS}.

 \noindent  Next we consider
  \begin{align}
&\langle x \rangle ^{-S }\mathbf{ g} \( H _{N+1} \) e^{-\im H_{N+1}   t}R_{ H_{N+1}   }^{+}( \lambda )
 \langle y \rangle ^{-S }= \nonumber \\& \lim _{\sigma  \to  0 ^+}
   e^{-\im \lambda t}\langle x
\rangle ^{-S } \int _t^{+\infty }e^{-\im (H_{N+1}-\lambda -\im \sigma )s}
 \mathbf{g} \( H_{N+1}  \)
  ds\langle y \rangle ^{-S }.\label{term1}
\end{align}
Using the distorted plane waves $\psi (x,k )$ associated to  $H _{N+1}$, see (1.9) \cite{weder},  we can write the following integral kernel, ignoring irrelevant constants,
\begin{equation}\label{term2} \begin{aligned}&  \langle x
\rangle ^{-S }  \( e^{-\im (H_{N+1} -\lambda -\im \sigma )s}
 \mathbf{g} \( H _{N+1} \) \) (x,y)
   \langle y \rangle ^{-S }  \\& =  \langle x
\rangle ^{-S }  \langle y \rangle ^{-S } \int _{\R_+}       e^{-\im (k^2-\lambda -\im \sigma )s-\im k (x-y)}
 \mathbf{g} \( k^2 \)    \overline{m_+ (x,k )} m_+ (y,k ) dk \\& + \langle x
\rangle ^{-S }  \langle y \rangle ^{-S } \int _{\R_-}       e^{-\im (k^2-\lambda -\im \sigma )s-\im k (x-y)}
 \mathbf{g} \( k^2 \)    \overline{m_- (x,-k )} m_- (y,-k ) dk
    .
\end{aligned}
\end{equation}
Take for example the first term in the right hand side of \eqref{term2}. Then, from $\frac{\im}{2ks}\frac{d}{dk}e^{-\im  k^2 s }  = e^{-\im  k^2 s }$  and taking the limit $\sigma \to 0^+$, we can write it as
\begin{align*}  &  \langle x
\rangle ^{-S }  \langle y \rangle ^{-S } \int _{\R_+}  e^{-\im (k^2-\lambda   )s }  \(  -\frac{d}{dk}  \frac{\im}{2ks} \) ^3 \(  e^{ -\im k (x-y)}
 \mathbf{g} \( k^2 \)    \overline{m_+ (x,k )} m_+ (y,k )\) dk ,
\end{align*}
which,  using for instance the bounds on the $k$ derivatives of $m_\pm  $   in Lemma 2.1 \cite{GPR18},  is absolutely integrable in $k$ ($\mathbf{g}$ is constant outside a bounded interval) and is bounded   in absolute value by
\begin{align*}  &  \lesssim  \langle x
\rangle ^{-S+3 }  \langle y \rangle ^{-S+3 }  s^{-3}.
\end{align*}
Integrating in $[t,\infty )$ we obtain an upper bound $\sim \langle x \rangle ^{-S+3 }  \langle y \rangle ^{-S+3 }  t^{-2}$ for   integral kernel of the operator of the corresponding part  in \eqref{term2}, which  gives an upper bound  of $t ^{-2}$ in the  corresponding  contribution in
\eqref{eq:lemg91}.   So we get the desired result for $t\ge 1$.
\qed

Lemma \ref{lem:lemg9}, \eqref{def:Gbgm}, \eqref{def:Tg}     imply that
 \begin{align}&  \label{eq:smooth2}
 \sum_{\mathbf{m}\in \mathbf{R}_{\min}} |\mathbf{z}^{\mathbf{m}}(0)| \| e^{-\im tH_{N+1} }\rho _{ \mathbf{m}}    \| _{L^2(\R, L^{2,-S}(\R )) }
 \lesssim \sum_{\mathbf{m}\in \mathbf{R}_{\min}} |\mathbf{z}^{\mathbf{m}}(0)|  \| \widetilde{G} _{\mathbf{m}}    \| _{  L^{2, S}(\R  ) }\\& \lesssim  \delta ^2
  \|  \< x \> ^{S}  \< \im \varepsilon \partial _x\> ^{-N}  \< x \> ^{- S}    \| _{  L^{2 }(\R  )\to   L^{2 }(\R  )}    \sup_{\mathbf{m}\in \mathbf{R}_{\min}}   \|  \< x \> ^{ S}    \mathcal{A}^* \chi _{B^2}{G} _{\mathbf{m}}    \| _{  L^{2 }(\R  ) }     \lesssim \delta ^2    ,  \nonumber
\end{align}
where we used the fact that $T:=\< x \> ^{S}  \< \im \varepsilon \partial _x\> ^{-N}  \< x \> ^{- S} $ has integral kernel
\begin{align}  &  T(x,y)  =\varepsilon ^{-1}\< x \> ^{S} \< y \> ^{- S}   f\( \varepsilon ^{-1} (x-y) \) , \text{ where  } \widehat{f}(k)=\< k\> ^{-N}\label{eq:opT}
\end{align}
  where $f$  is a continuous rapidly decreasing function, so that  it is easy to see  that Young's inequality, see \cite[Theorem 0.3.1]{sogge},   gives     $\| T \| _{L^2\to L^2}\lesssim 1$ uniformly in $\varepsilon \in (0,1]$.

Notice that in this way we gave a bound on the contribution  of the terms in the right hand side in \eqref{eq:equation_g11}
  to \eqref{eq:estg11} .

\noindent It is easy to bound the contribution   to \eqref{eq:estg11} of the term \eqref{eq:equation_g12}.
Indeed, using   the identity
\begin{align}\label{eq:difzm}
\( D_{\mathbf{z}}\mathbf{z}^{\mathbf{m}}\)(\im \boldsymbol{\omega}\mathbf{z})=\im \mathbf{m}\cdot \boldsymbol{\omega} \  \mathbf{z}^{\mathbf{m}}\text{ , where }\boldsymbol{\omega}\mathbf{z}:=(\omega_1z_1,\cdots,\omega_Nz_N),
\end{align}
 we have
\begin{align*}&     \partial_t \(\mathbf{z}^{\mathbf{m}}\)+\im \boldsymbol{\omega}\cdot \mathbf{m} \mathbf{z}^{\mathbf{m}}  = D_{\mathbf{z}}\mathbf{z}^\mathbf{m} \( \partial _t \mathbf{z} +\boldsymbol{\omega}\mathbf{z} \)  = D_{\mathbf{z}}\mathbf{z}^\mathbf{m} \( \dot { \mathbf{z}} +\im \boldsymbol{\varpi} (|\mathbf{z}|^2) \mathbf{z} \)   +  D_{\mathbf{z}}\mathbf{z}^\mathbf{m}
\im \(  \boldsymbol{\omega}   -\boldsymbol{\varpi} (|\mathbf{z}|^2) \) \mathbf{ z} \\& = D_{\mathbf{z}}\mathbf{z}^\mathbf{m} \( \dot { \mathbf{z}} +\im \boldsymbol{\varpi} (|\mathbf{z}|^2) \mathbf{z} \)   +  \im  \mathbf{m}\cdot  \(  \boldsymbol{\omega}   -\boldsymbol{\varpi} (|\mathbf{z}|^2) \) \mathbf{z}^\mathbf{m}.
\end{align*}
From this and Lemma \ref{lem:lemg9} and the bound $ \| \widetilde{G} _{\mathbf{m}}    \| _{  L^{2, S}(\R  ) }\lesssim 1$ in \eqref{eq:smooth2}  we obtain
\begin{align}&  \label{eq:smooth3}
  \sum_{\mathbf{m}\in \mathbf{R}_{\min}} \| \int _0 ^t  e^{-\im (t-t') H_{N+1} } \(  \partial_t \(\mathbf{z}^{\mathbf{m}}\)+\im \boldsymbol{\omega}\cdot \mathbf{m} \mathbf{z}^{\mathbf{m}} \) \rho _{ \mathbf{m}} \| _{L^2(I, L^{2,-S}(\R )) }  \\& \nonumber  \lesssim
   \delta ^2    \sum_{\mathbf{m}\in \mathbf{R}_{\min}}\(  \|  \dot { \mathbf{z}} +\im \boldsymbol{\varpi} (|\mathbf{z}|^2) \mathbf{z}\| _{L^2(I,  ) }+\|  \mathbf{z}^{\mathbf{m}}\| _{L^2(I,  ) } \)  \| \widetilde{G} _{\mathbf{m}}    \| _{  L^{2, S}(\R  ) }  \lesssim   \delta ^2 \epsilon .
\end{align}

Now we look at the  contribution   to \eqref{eq:estg11} of the term \eqref{eq:equation_g13}.
We will need the following result about the Limiting Absorption Principle. The following is related to Lemma 5.7 \cite{CT}.
 \begin{lemma} \label{lem:LAP} For  $S>5/2$ and $\tau >1/2$ we have
 \begin{align}&   \label{eq:LAP1}   \sup _{z \in \R } \|   R ^{\pm }_{H_{N+1}}(z ) \| _{L^{2,\tau}(\R ) \to L^{2,-S}(\R )} <\infty  .
\end{align}
\end{lemma}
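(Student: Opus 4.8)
The plan is to prove the Limiting Absorption Principle \eqref{eq:LAP1} for $H_{N+1}=-\partial_x^2+V_{N+1}$ uniformly over all $z\in\R$, splitting the spectral parameter into the high-energy regime $|z|\ge R$ and the compact regime $|z|\le R$, and using Lemma \ref{lem:generic} (that $0$ is neither an eigenvalue nor a resonance, so $H_{N+1}$ is \emph{generic}) to handle the endpoint $z=0$.

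First I would treat the free resolvent. For $H_0=-\partial_x^2$ the kernel $R^{\pm}_{H_0}(z)(x,y)=\frac{\pm 1}{2\im\sqrt z}e^{\pm\im\sqrt z|x-y|}$ is explicit, and the classical 1D computation (Agmon-type, or as in Lemma 2.1 of \cite{GPR18}) gives $\|R^{\pm}_{H_0}(z)\|_{L^{2,\tau}\to L^{2,-S}}\lesssim 1$ uniformly in $z\in\R$ once $S,\tau>1/2$; the only subtlety is the threshold $z\to 0$, where the $1/\sqrt z$ singularity is cancelled against the vanishing of $e^{\pm\im\sqrt z|x-y|}-1$ after integrating against the weights, exactly as in the standard 1D low-energy analysis. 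Then I would pass to $H_{N+1}$ by the resolvent identity
\begin{align*}
R^{\pm}_{H_{N+1}}(z)=\left(1+R^{\pm}_{H_0}(z)V_{N+1}\right)^{-1}R^{\pm}_{H_0}(z),
\end{align*}
where $V_{N+1}\in\mathcal S(\R,\R)$ so $\langle x\rangle^{\tau+S}V_{N+1}\in L^\infty$ and $V_{N+1}$ maps $L^{2,-S}\to L^{2,\tau}$ boundedly. For $|z|\ge R$ with $R$ large, $\|R^{\pm}_{H_0}(z)V_{N+1}\|_{L^{2,-S}\to L^{2,-S}}\le \|R^{\pm}_{H_0}(z)\|_{L^{2,\tau}\to L^{2,-S}}\|V_{N+1}\|_{L^{2,-S}\to L^{2,\tau}}<1$ (actually the $R^{\pm}_{H_0}(z)$ norm decays like $|z|^{-1/2}$), so the Neumann series converges uniformly and \eqref{eq:LAP1} follows on that range.

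For the compact range $|z|\le R$, the operator-valued map $z\mapsto R^{\pm}_{H_0}(z)V_{N+1}\in\mathcal L(L^{2,-S})$ extends continuously up to the real axis including $z=0$ (this is where $S,\tau>1/2$, in fact the statement asks $S>5/2$, give enough room, and one uses that $V_{N+1}$ is Schwartz so the threshold expansion of the free resolvent kernel is controlled). The operators $R^{\pm}_{H_0}(z)V_{N+1}$ are compact on $L^{2,-S}$ (Hilbert--Schmidt, using Schwartz decay of $V_{N+1}$ and $S>1/2$). By the analytic Fredholm alternative, $1+R^{\pm}_{H_0}(z)V_{N+1}$ is invertible at a given $z\in\R$ iff $-1$ is not an eigenvalue of the compact operator, which fails precisely when $z$ is an eigenvalue or a (real, positive energies) resonance of $H_{N+1}$. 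Since $V_{N+1}$ is repulsive (Assumption \ref{ass:VN}) it has no discrete spectrum and no positive embedded eigenvalues, and by Lemma \ref{lem:generic} the threshold $z=0$ is regular; hence $1+R^{\pm}_{H_0}(z)V_{N+1}$ is invertible for every $z\in[-R,R]$, with inverse continuous in $z$ by continuity of $z\mapsto R^{\pm}_{H_0}(z)V_{N+1}$ and the openness of the invertible set. A compactness argument on $[-R,R]$ then yields a uniform bound on $\|(1+R^{\pm}_{H_0}(z)V_{N+1})^{-1}\|$, and composing with the uniform free bound gives \eqref{eq:LAP1} on the compact range. Combining the two ranges completes the proof.

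The main obstacle I expect is the threshold behavior at $z=0$: the free resolvent $R^{\pm}_{H_0}(z)$ has a $|z|^{-1/2}$ blow-up, and one must verify both that the weighted free bound stays uniform down to $z=0$ and that the genericity of $H_{N+1}$ (Lemma \ref{lem:generic}) is exactly what rules out the inverse of $1+R^{\pm}_{H_0}(z)V_{N+1}$ degenerating there. This is the standard but delicate point in 1D (unlike $d\ge 3$), and it is why the hypotheses impose $S>5/2,\ \tau>1/2$ and why we invoked Lemma \ref{lem:generic}; everything else is a routine Fredholm/Neumann series argument.
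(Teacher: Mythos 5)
Your high--energy Neumann series and the Fredholm argument on a compact energy window bounded away from $0$ are fine, and they do constitute a legitimate alternative to what the paper actually does (a direct Hilbert--Schmidt estimate on the resolvent kernel written via the Jost functions $f_\pm$ and the Deift--Trubowitz bounds \eqref{eq:kernel2}, with genericity entering through $T(k)=\alpha k(1+o(1))$ so that $T(\sqrt z)/\sqrt z$ stays bounded at threshold). But your treatment of the threshold $z=0$ contains a genuine error, and it is exactly the point where the whole difficulty of the lemma sits.

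Concretely: the free resolvent is \emph{not} uniformly bounded from $L^{2,\tau}$ to $L^{2,-S}$ down to $z=0$, for any choice of weights. Writing $k=\sqrt z$ and $e^{\im k|x-y|}=1+(e^{\im k|x-y|}-1)$, the second piece is indeed controlled by $|x-y|/2$ and is harmless, but the first piece gives the operator $f\mapsto \frac{1}{2\im k}\int_\R f$, whose $L^{2,\tau}\to L^{2,-S}$ norm is $\frac{1}{2k}\|\langle\cdot\rangle^{-S}\|_{L^2}\|\langle\cdot\rangle^{-\tau}\|_{L^2}\to\infty$. This is the statement that $0$ \emph{is} a resonance of $-\partial_x^2$ in one dimension; no weight kills it. Consequently your claim that $z\mapsto R^{\pm}_{H_0}(z)V_{N+1}$ extends continuously to $z=0$ in $\mathcal L(L^{2,-S})$ is also false (apply the singular part to $V_{N+1}g$: one gets $\frac{1}{2\im k}\int V_{N+1}g$, which does not vanish in general), so the analytic Fredholm alternative cannot be invoked on a neighborhood of $z=0$ as you set it up, and the factorization $R_{H_{N+1}}=(1+R_{H_0}V_{N+1})^{-1}R_{H_0}$ cannot yield a uniform bound by bounding the two factors separately. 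To repair the argument along your lines you would need a genuine threshold expansion (Jensen--Nenciu style): split off the rank--one singular part $\frac{1}{2\im k}\mathbf 1\otimes\mathbf 1$, and show that the genericity of $H_{N+1}$ (Lemma \ref{lem:generic}) forces the inverse of $1+R_{H_0}(z)V_{N+1}$ to carry a compensating factor of order $\sqrt z$ on the singular subspace, so that the product remains bounded. That cancellation is the content of the lemma at low energy, and it is missing from your proof; in the paper it is packaged into the boundedness of $T(\sqrt z)/(2\im\sqrt z)$ in the kernel formula \eqref{eq:resolvKern}.
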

\proof  It is equivalent to show $\displaystyle \sup _{z \in \R } \|  \< x \> ^{-S} R ^{\pm }_{H_{N+1}}(z )    \< y \> ^{-\tau} \| _{L^{2 }(\R ) \to L^{2 }(\R )}<\infty$.  We will consider only the $+$ case.
 We consider the square of the  Hilbert--Schmidt norm
 \begin{align*}  &   \int _{\R} dx \< x \> ^{-2S} \int_{\R}  |R ^{+ }_{H_{N+1}}(x,y,z )| ^2  \< y \> ^{-2\tau} dy   =    \int _{\R} dx \< x \> ^{-2S} \int_{-\infty}^{x}  |R ^{+ }_{H_{N+1}}(x,y,z )| ^2  \< y \> ^{-2\tau} dy\\& + \int _{\R} dx \< x \> ^{-2S} \int_{x}^{+\infty}  |R ^{+ }_{H_{N+1}}(x,y,z )| ^2  \< y \> ^{-2\tau} dy.
\end{align*}
 We will bound only the second term in the right hand side: for the   first   term the argument is similar.  Recalling formula \eqref{eq:resolvKern}, we have to bound
  \begin{align*}  &   \left |  \frac{T(\sqrt{z})}{2\im \sqrt{z}} \right |    \int _{x<y}   \< x \> ^{-2S}   | m_- (x, \sqrt{z})   m_+ (y, \sqrt{z}) | ^2  \< y \> ^{-2\tau}dx  dy \\& \lesssim   \int _{x<y} \< x \> ^{-2S}  \< y \> ^{-2\tau}  \( 1+ \max \( x, 0\) +   \max \( -y, 0\) \) ^2   dx  dy ,
\end{align*}
  where we used the bound \eqref{eq:kernel2}. Now, in the last integral we can distinguish the region  $|y|\lesssim |x|$, where the corresponding contribution can be bounded by
\begin{align*}  &       \int _{\R ^2}   \< x \> ^{-2(S-2)}  \< y \> ^{-2\tau}     dx  dy  <\infty \text{ for $S>5/2$ and $\tau >1/2$,}
\end{align*}
and the region $|y|\gg |x|$, where we have the same bound, because $x<y$ and $|y|\gg |x|$ imply that $y>0$, and hence $\max \( -y, 0\) =0.$
\qed

We will also need   the following formulas that we take from Mizumachi \cite[Lemma 4.5]{mizu08} and to which we refer for the proof.
 \begin{lemma} \label{lem:lemma11} Let  for $g\in \mathcal{S}(\R \times \R , \C )$ \begin{align*}&  U(t,x) = \frac{1}{\sqrt{2\pi} \im }\int _\R e^{-\im \lambda t}\( R ^{-}_{H_{N+1}}(\lambda )+R ^{+}_{H_{N+1}}(\lambda )   \) \mathcal{F}_{t}^{-1}g( \lambda , \cdot ) d\lambda  ,
\end{align*}
where $\mathcal{F}_{t}^{-1}$  is the inverse Fourier transform in $t$. Then
 \begin{align} \label{eq:lemma11}2\int _0^t e^{-\im (t-t') H_{N+1} }g(t') dt'   &=  U(t,x)  - \int  _{\R _-} e^{-\im (t-t') H_{N+1} }g(t') dt'
 \\& + \int  _{\R _+} e^{-\im (t-t') H_{N+1} }g(t') dt' .\nonumber
\end{align}
\end{lemma}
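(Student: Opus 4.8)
The plan is to reduce the identity \eqref{eq:lemma11} to an elementary rearrangement of integrals, after first rewriting $U$ in closed form. Concretely, I would establish that
\begin{align}\label{eq:Uclaim}
U(t,x) = \int_{-\infty}^{t} e^{-\im (t-t')H_{N+1}}g(t')\,dt' \;-\; \int_{t}^{+\infty} e^{-\im (t-t')H_{N+1}}g(t')\,dt' .
\end{align}
All integrals occurring here and in \eqref{eq:lemma11} converge absolutely in, say, $L^{2,-S}(\R)$ by the smoothing bound of Lemma \ref{lem:smooth} together with the rapid decay of $g\in\mathcal{S}(\R\times\R,\C)$ in its first variable, so there is no issue of meaning. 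Granting \eqref{eq:Uclaim}, split the integral over $\R_-$ as $\int_{-\infty}^{0}$ and, for the $\R_+$ integral, write $\int_{0}^{+\infty}=\int_{0}^{t}+\int_{t}^{+\infty}$; then
\begin{align}\label{eq:bookkeep}
U - \int_{\R_-} e^{-\im (t-t')H_{N+1}}g(t')\,dt' + \int_{\R_+} e^{-\im (t-t')H_{N+1}}g(t')\,dt'
&= \Bigl(\int_{0}^{t} + \int_{0}^{t}\Bigr) e^{-\im (t-t')H_{N+1}}g(t')\,dt',
\end{align}
since $\int_{-\infty}^{t}-\int_{-\infty}^{0}=\int_{0}^{t}$ and $\int_{0}^{+\infty}-\int_{t}^{+\infty}=\int_{0}^{t}$; this is exactly \eqref{eq:lemma11}.

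To prove \eqref{eq:Uclaim} I would use the half-line representations of the limiting resolvents. One has $R^{+}_{H_{N+1}}(\lambda)=\im\int_{0}^{+\infty}e^{-\im(H_{N+1}-\lambda)s}\,ds$ and $R^{-}_{H_{N+1}}(\lambda)=-\im\int_{-\infty}^{0}e^{-\im(H_{N+1}-\lambda)s}\,ds$, each understood as $\lim_{\varepsilon\to 0^+}$ of the same integral with a factor $e^{-\varepsilon|s|}$ inserted, and each valid as a bounded operator $L^{2,\tau}(\R)\to L^{2,-S}(\R)$, uniformly in $\lambda$, by the Limiting Absorption Principle (Lemma \ref{lem:LAP}). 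Adding them, $R^{+}_{H_{N+1}}(\lambda)+R^{-}_{H_{N+1}}(\lambda)=\im\int_{\R}\mathrm{sgn}(s)\,e^{-\im(H_{N+1}-\lambda)s}\,ds$ in this limiting sense. Substituting into the definition of $U$, interchanging the $s$- and $\lambda$-integrations, and carrying out the $\lambda$-integral by Fourier inversion in time — in the form $\frac{1}{\sqrt{2\pi}}\int_\R e^{-\im\lambda(t-s)}\mathcal{F}_{t}^{-1}g(\lambda,\cdot)\,d\lambda = g(t-s,\cdot)$, the constants in the statement being arranged so that the factor $\im$ and the $\sqrt{2\pi}$ cancel — one arrives at $U(t,x)=\int_\R \mathrm{sgn}(s)\,e^{-\im H_{N+1}s}\,g(t-s)\,ds$. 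The substitution $s=t-t'$, which sends $\{s>0\}$ to $\{t'<t\}$ and $\{s<0\}$ to $\{t'>t\}$, turns this into \eqref{eq:Uclaim}.

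An equivalent route to the same formula for $U$, which sidesteps the $\varepsilon\to0^+$ regularization, is to invoke Stone's formula together with the Sokhotski--Plemelj identity $\frac{1}{\mu-\lambda\mp\im 0}=\mathrm{p.v.}\,\frac{1}{\mu-\lambda}\pm\im\pi\,\delta(\mu-\lambda)$: this gives $R^{+}_{H_{N+1}}(\lambda)+R^{-}_{H_{N+1}}(\lambda)=2\,\mathrm{p.v.}\!\int_\R(\mu-\lambda)^{-1}\,dE_\mu$, where $\{E_\mu\}$ is the spectral resolution of $H_{N+1}$; the $\lambda$-integration then produces (up to a normalizing constant) the Fourier transform of $\mathrm{p.v.}(1/x)$, that is a multiple of $\mathrm{sgn}$, and the spectral theorem reassembles $e^{-\im H_{N+1}s}$. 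In either approach \textbf{the only genuinely delicate point is the justification of the interchange of integrals together with the passage to the boundary values of the resolvent}; this is precisely what the uniform bounds of Lemma \ref{lem:LAP} and the local smoothing of Lemma \ref{lem:smooth} provide, all quantities being controlled in the weighted spaces $L^{2,\pm S}(\R)$. Once $U$ is put in the form \eqref{eq:Uclaim}, the reduction \eqref{eq:bookkeep} to \eqref{eq:lemma11} is pure bookkeeping.
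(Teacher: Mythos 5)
Your proposal is correct. Note first that the paper does not actually prove this lemma: it is quoted from Mizumachi \cite[Lemma 4.5]{mizu08} with an explicit pointer to that reference for the proof, so you are supplying an argument the paper omits. Your argument is the standard (and, as far as one can tell, essentially Mizumachi's) one: the half-line representations $R^{+}_{H_{N+1}}(\lambda)=\im\int_0^{\infty}e^{-\im(H_{N+1}-\lambda)s}\,ds$ and $R^{-}_{H_{N+1}}(\lambda)=-\im\int_{-\infty}^{0}e^{-\im(H_{N+1}-\lambda)s}\,ds$ (as $\varepsilon\to0^+$ limits of the absolutely convergent regularized integrals), Fubini, and Fourier inversion in $t$ give $U(t)=\int_\R \mathrm{sgn}(s)\,e^{-\im H_{N+1}s}g(t-s)\,ds$, and the change of variables $s=t-t'$ yields your \eqref{eq:Uclaim}; the final bookkeeping $\int_{-\infty}^{t}-\int_{-\infty}^{0}=\int_{0}^{t}=\int_{0}^{+\infty}-\int_{t}^{+\infty}$ is exact. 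You also correctly isolate the only delicate points: the existence of the boundary values $R^{\pm}(\lambda)$ uniformly in $\lambda\in\R$ (Lemma \ref{lem:LAP}, together with the genericity of $H_{N+1}$ near $\lambda=0$ from Lemma \ref{lem:generic}) and the interchange of the $s$- and $\lambda$-integrations, which for $g\in\mathcal{S}(\R\times\R,\C)$ is justified by dominated convergence since $\|e^{-\im H_{N+1}s}g(t-s)\|_{L^2}=\|g(t-s)\|_{L^2}$ is rapidly decreasing in $t-s$. The only caveat worth recording is that the identity, with the prefactor $1/(\sqrt{2\pi}\,\im)$, pins down the sign convention for $\mathcal{F}_t^{-1}$ (namely $\mathcal{F}_t^{-1}g(\lambda)=(2\pi)^{-1/2}\int e^{\im\lambda t}g(t)\,dt$); with the opposite convention the $\mathrm{sgn}$ would flip. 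This is a matter of convention, not a gap.
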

\qed

The last two lemmas give us the following smoothing estimate.
\begin{lemma} \label{lem:smoothest} For  $S>5/2$ and $\tau >1/2$ there exists a constant $C(S,\tau )$ such that we have
 \begin{align}&   \label{eq:smoothest1}   \left \|   \int   _{0} ^{t   }e^{-\im (t-t') H_{N+1} }g(t') dt' \right \| _{L^2( \R ,L^{2,-S}(\R ))  } \le C(S,\tau ) \|  g \| _{L^2( \R , L^{2,\tau}(\R ) ) }.
\end{align}
\end{lemma}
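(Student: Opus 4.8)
The plan is to deduce Lemma~\ref{lem:smoothest} from the decomposition in Lemma~\ref{lem:lemma11} together with the Limiting Absorption Principle of Lemma~\ref{lem:LAP}. First I would observe that the two ``boundary'' terms on the right of \eqref{eq:lemma11}, namely $\int_{\R_-}e^{-\im(t-t')H_{N+1}}g(t')\,dt'$ and $\int_{\R_+}e^{-\im(t-t')H_{N+1}}g(t')\,dt'$, are (up to a unitary factor $e^{-\im tH_{N+1}}$) of the form $e^{-\im tH_{N+1}}\psi$ with $\psi=\int_{\R_\pm}e^{\im t'H_{N+1}}g(t')\,dt'$. By the dual/adjoint form of Kato smoothing (Lemma~\ref{lem:smooth}, which gives $\|\langle x\rangle^{-S}e^{-\im H_{N+1}t}f\|_{L^2_{t,x}}\lesssim\|f\|_{L^2}$ and, by duality, $\|\int_\R e^{\im H_{N+1}t'}\langle x\rangle^{-S}h(t')\,dt'\|_{L^2}\lesssim\|h\|_{L^2_{t,x}}$), one gets $\|\psi\|_{L^2}\lesssim\|g\|_{L^2(\R,L^{2,\tau})}$ (since $\tau>S$ is not needed; we only need $L^{2,\tau}\hookrightarrow$ the relevant dual weight — actually $\langle x\rangle^{\tau}$ with $\tau>1/2$ suffices because Kato smoothing with weight $S>1/2$ is available), and then another application of Lemma~\ref{lem:smooth} bounds $\|\langle x\rangle^{-S}e^{-\im H_{N+1}t}\psi\|_{L^2_{t,x}}\lesssim\|\psi\|_{L^2}$. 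This handles the last two terms.

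The main term is $U(t,x)$. Here I would use Plancherel in $t$: writing $\widehat{U}(\lambda,x)$ for the Fourier transform in time, Lemma~\ref{lem:lemma11} gives, up to constants, $\widehat{U}(\lambda,\cdot)=\bigl(R^-_{H_{N+1}}(\lambda)+R^+_{H_{N+1}}(\lambda)\bigr)\widehat{g}(\lambda,\cdot)$, where $\widehat{g}$ is the time-Fourier transform of $g$. Then
\begin{align*}
\|U\|_{L^2(\R,L^{2,-S})}^2 &= \int_\R \|\langle x\rangle^{-S}\widehat{U}(\lambda,\cdot)\|_{L^2_x}^2\,d\lambda\\
&\lesssim \int_\R \Bigl(\sup_{z\in\R}\|R^+_{H_{N+1}}(z)+R^-_{H_{N+1}}(z)\|_{L^{2,\tau}\to L^{2,-S}}\Bigr)^2\|\langle x\rangle^{\tau}\widehat{g}(\lambda,\cdot)\|_{L^2_x}^2\,d\lambda\\
&\lesssim \int_\R \|\langle x\rangle^{\tau}\widehat{g}(\lambda,\cdot)\|_{L^2_x}^2\,d\lambda = \|g\|_{L^2(\R,L^{2,\tau})}^2,
\end{align*}
using Lemma~\ref{lem:LAP} for the uniform-in-$\lambda$ resolvent bound (with $S>5/2$, $\tau>1/2$) and Plancherel in $t$ again for the last equality. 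Combining the two pieces yields \eqref{eq:smoothest1}.

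I expect the only real subtlety to be making the manipulation with $U$ rigorous: the integral defining $U$ in Lemma~\ref{lem:lemma11} is a priori only defined for Schwartz $g$, so one works first with $g\in\mathcal S(\R\times\R)$, establishes \eqref{eq:smoothest1} for such $g$ by the above Plancherel/LAP argument, and then extends by density to all $g\in L^2(\R,L^{2,\tau})$ since the left-hand side of \eqref{eq:smoothest1} is a bounded linear operator in $g$ into $L^2(\R,L^{2,-S})$. A minor point is bookkeeping of the weights: the statement asks for $L^{2,\tau}$ on the right with $\tau>1/2$, whereas plain Kato smoothing naturally wants weight $>1/2$ too, so the boundary terms are fine; only the $U$ term genuinely needs the LAP with the asymmetric weights $(S,\tau)$, $S>5/2$, which is exactly what Lemma~\ref{lem:LAP} provides. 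One should also check that $\widehat g(\lambda,\cdot)$ lies in $L^{2,\tau}_x$ for a.e.\ $\lambda$ with the right $L^2_\lambda$ norm, which is immediate from Fubini and Plancherel in $t$.
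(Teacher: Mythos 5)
Your argument is correct and follows essentially the same route as the paper: decompose via Lemma \ref{lem:lemma11}, bound the main term $U$ by Plancherel in $t$ together with the uniform resolvent bound of Lemma \ref{lem:LAP}, and extend to general $g$ by density. The paper dismisses the two boundary terms as "similar"; your explicit treatment via Kato smoothing and its dual (which indeed holds for any weight exponent $>1/2$, even though Lemma \ref{lem:smooth} is only stated for $S>3/2$) is the natural way to fill in that omission.
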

\proof  We can use formula \eqref{eq:lemma11} and bound $U$, with the bound on the last two terms in the right hand side of \eqref{eq:lemma11} similar. So we have, taking Fourier transform in $t$,
$$ \aligned &
\| U\|_{
L_{t}^2L^{2,-S}} \le   2 \sup _{\pm}\| R_{ H_{N+1} }^\pm (\lambda )
   \widehat{ g}(\lambda,\cdot )\|_{L_{\lambda  }^2 L^{2,-S}}   \le \\& \le
2\sup _{\pm}   \sup _{\lambda \in \R }
\|  R_{ H_{N+1} }^\pm (\lambda )  \| _{ L^{2,\tau}  \to L^{2,-S}   } \|
     \widehat{g} (\lambda,x) \|_{ L^{2,\tau} L_{\lambda  }^2 }\,
 \lesssim  &        \| g\|_{L_{t }^2L^{2,\tau} }.
\endaligned $$
Notice that, while Lemma \ref{lem:lemma11} is stated for $g\in \mathcal{S}(\R \times \R , \C )$, the estimate \eqref{eq:smoothest1} extends to all $g\in L^2( \R , L^{2,\tau}(\R ) )$ by density.

 \qed

\begin{remark} \label{rem:errorCT}  The above is basically Lemma 3.4 \cite{CT}, which in turn is based on an  argument in \cite{mizu08}.  Unfortunately Lemma 3.4 \cite{CT} has a mistake, which however can be corrected using Lemma \ref{lem:lemma11}, as we did here.
\end{remark}

We now examine the term in \eqref{eq:equation_g13}. By Lemma \ref{lem:smoothest} we have
\begin{align*}
  \|   \int   _{0} ^{t   }e^{-\im (t-t') H_{N+1} }\mathcal{T} \(2\chi_{B^2}'\partial_x + \chi_{B^2}''\)\tilde{\eta}  dt' \| _{L^2( I, L^{2,-S}(\R ) )  }  \lesssim \|    \mathcal{T} \(2\chi_{B^2}'\partial_x + \chi_{B^2}''\)\tilde{\eta}   \| _{L^2( I, L^{2,\tau}(\R ) )  }.
\end{align*}
In order to bound the right hand side we expand
\begin{align*}
      \mathcal{T} \(2\chi_{B^2}'\partial_x + \chi_{B^2}''\)\tilde{\eta}   & =     \mathcal{T} \(2\chi_{B^2}'\partial_x + \chi_{B^2}''\)w \( \zeta _A ^{-1}-1  \) - 2 \mathcal{T} \chi_{B^2}' \zeta _A^{-2}\zeta _A'w\\& + \mathcal{T} \(2\chi_{B^2}'\partial_x + \chi_{B^2}''\)w   .
\end{align*}
By $|\chi_{B^2}' \zeta _A^{-2}\zeta _A'|\lesssim  A ^{-1}|\chi_{B^2}'  |$ and $ 1 _{|x|\le 2 B^2}| \zeta _A ^{-1}-1  |\lesssim 1 _{|x|\le 2 B^2} \frac{B^2}{A}$, both of which are small, the main term is the one in the last line, which is the only one we discuss explicitly, because the others are similar, simpler and smaller. We decompose
\begin{align}\nonumber
        \mathcal{T} \(2\chi_{B^2}'\partial_x + \chi_{B^2}''\)w   =\mathcal{I}+\mathcal{II} \text{  where } &\mathcal{I}:=  1 _{2 ^{-1}B^2\le |x|\le 3 B^2}\mathcal{T} \(2\chi_{B^2}'\partial_x + \chi_{B^2}''\)w ,  \\& \mathcal{II} :=  1 _{ \{ |x|\le 2 ^{-1}B^2 \} \cup   \{ |x|\ge 3 B^2\} }\mathcal{T} .\label{eq:decomp1}
\end{align}
 By Lemmas \ref{lem:equiv_rho} and \ref{lem:coer5}, we have
\begin{align} & \| \< x \> ^{\tau} \mathcal{I}  \| _{L^2 (I, L^2(\R ))} = \| \< x \> ^{\tau}    1 _{2 ^{-1}B^2\le |x|\le 3 B^2}\mathcal{T} \(2\chi_{B^2}'\partial_x + \chi_{B^2}''\)w  \| _{L^2 (I, L^2(\R ))}  \nonumber \\&  \nonumber \lesssim B ^{2\tau }\|  \mathcal{T} \(2\chi_{B^2}'\partial_x + \chi_{B^2}''\)w  \| _{L^2 (I, L^2(\R ))} \lesssim \varepsilon ^{-N}B ^{2\tau }\|   2\chi_{B^2}'w' + \chi_{B^2}'' w  \| _{L^2 (I, L^2(\R ))}    \\& \nonumber  \lesssim  \varepsilon ^{-N}  B ^{2\tau -2} \| w'  \| _{L^2 (I, L^2(\R ))} + \varepsilon ^{-N} B ^{2\tau -4} \|  1 _{ B^2\le |x|\le 2 B^2}    w   \| _{L^2 (I, L^2(\R ))}\\& \lesssim \varepsilon ^{-N}  B ^{2\tau -2}\|     w   \| _{L^2 (I, \widetilde{\Sigma} )} + \varepsilon ^{-N} B ^{2\tau -4}    \| \< x\>  1 _{ B^2\le |x|\le 2 B^2}     \| _{  L^1(\R ) } ^{\frac{1}{2}}  \|     w   \| _{L^2 (I, \widetilde{\Sigma} )} \nonumber \\&  \lesssim  \varepsilon ^{-N} B ^{2\tau -2} \|     w   \| _{L^2 (I, \widetilde{\Sigma} )} \le  B ^{- \frac{1}{2}} \epsilon .   \label{eq:smooth5}
\end{align}
By Lemma \ref{lem:coer5} we have
\begin{align*} & \| \< x \> ^{\tau} \mathcal{II} \| _{L^2(\R )} \lesssim \| \< x \> ^{\tau}   1 _{ \{ |x|\le 2 ^{-1}B^2 \} \cup   \{ |x|\ge 3 B^2\} }
\int e ^{-\frac{|x-y|}{2\varepsilon}}   \(2\chi_{B^2}'w' + \chi_{B^2}''w\)  \| _{L^2(\R )}\\& \lesssim e ^{-B^2}\| \mathcal{K} \(   \< y \> ^{\tau} \(2\chi_{B^2}'w' + \chi_{B^2}''w\) \) \| _{L^2(\R )} ,
\end{align*}
where the operator $\mathcal{K}f =\int \mathcal{K}(x,y)f(y) dy$  has integral kernel
\begin{align*} &  \mathcal{K}(x,y) =  \< x \> ^{\tau}  e ^{- {|x-y|} } \< y \> ^{-\tau}.
\end{align*}
Since we have
\begin{align*} &  \| \mathcal{K} \| _{L^2(\R )\to L^2(\R )}^2 \le \| \mathcal{K}(\cdot ,\cdot ) \| _{L^2(\R \times \R )}^2<+\infty,
\end{align*}
  by  the bounds implicit  in \eqref{eq:smooth5}, we have
  \begin{align} \nonumber & \| \< x \> ^{\tau} \mathcal{II} \| _{L^2 (I, L^2(\R ))} \lesssim  e ^{-B^2} \| \< x \> ^{\tau} \(2\chi_{B^2}'w' + \chi_{B^2}''w\) \| _{L^2 (I, L^2(\R ))} \\& \lesssim e ^{-B^2}B^{2\tau}\|   \(2\chi_{B^2}'w' + \chi_{B^2}''w\) \| _{L^2 (I, L^2(\R ))}
   \lesssim e ^{-B^2/2} \epsilon . \label{eq:smooth4}
\end{align}
We next consider the terms in \eqref{eq:equation_g14}, starting with
\begin{align} &  \nonumber \| \< x \> ^{-S}      \int _0 ^t e^{-\im (t-t') H_{N+1} }   \<\im \varepsilon \partial_x\>^{-N}[V_{N+1},\< \im \varepsilon  \partial_x\>^N]v \| _{L^2 (I, L^2(\R ))} \\& \lesssim \| \< x \> ^{\tau } \<\im \varepsilon \partial_x\>^{-N}[V_{N+1},\< \im \varepsilon  \partial_x\>^N]v  \| _{L^2 (I, L^2(\R ))} \nonumber  \\& \lesssim  \varepsilon \| \< x \> ^{-100 }  v  \| _{L^2 (I, L^2(\R ))} \lesssim  \varepsilon  \( \| \xi   \| _{L^2 (I, \widetilde{\Sigma} )}   + B ^{-1} \| \xi   \| _{L^2 (I, \widetilde{\Sigma} )}  \) \lesssim   \varepsilon  \epsilon , \label{eq:smooth6}
\end{align}
where we   used Lemma \ref{eq2stestJ21II} in the first inequality in the last line, and \eqref{eq2stestJ21-4} for the second inequality.

\noindent We now consider  remaining contributions of \eqref{eq:equation_g14} to \eqref{eq:estg11}.
To start with, by Lemma \ref{lem:smooth}  we have
\begin{align} \nonumber &    \| \< x \> ^{-S}      \int _0 ^t e^{-\im (t-t') H_{N+1} }   \mathcal{T}\chi _{B^2} \mathcal{R}_{\widetilde{\eta}} dt' \| _{L^2 (I, L^2(\R ))}   \lesssim \| \< x \> ^{\tau }  \mathcal{T}\chi _{B^2} \mathcal{R}_{\widetilde{\eta}}   \| _{L^2 (I, L^2(\R ))}.
\end{align}
The right hand side il less than $I+II$ where
\begin{align*} \nonumber & I =    \|    1 _{|x|\le 3B^2}  \< x \> ^{\tau }  \mathcal{T}\chi _{B^2} \mathcal{R}_{\widetilde{\eta}}   \| _{L^2 (I, L^2(\R ))} \\& II =    \|    1 _{|x|\ge 3B^2}  \< x \> ^{\tau }  \mathcal{T}\chi _{B^2} \mathcal{R}_{\widetilde{\eta}}   \| _{L^2 (I, L^2(\R ))}
\end{align*}
We have
\begin{align*}   & I   \lesssim B ^{2\tau } (I_1+I_2)   \\& \nonumber I_1 =    \|    P_c\(-\im D_{\mathbf{z}}\phi[\mathbf{z}]\(\dot {\mathbf{z}}+\im \boldsymbol{\varpi}(\mathbf{z})\mathbf{z}\) +\mathcal{R}_{\mathrm{rp}}[\mathbf{z}]+F[\mathbf{z},\eta]+L[\mathbf{z}] \eta    \)  \| _{L^2 (I, L^2(\R ))} \\& I_2=  \|   \chi _{B^2} P_c |\eta | ^2 \eta  \| _{L^2 (I, L^2(\R ))}.\nonumber
\end{align*}
By $\| P_c D_{\mathbf{z}}\phi[\mathbf{z}] \| _{\widetilde{\Sigma}}= O \(  \| \mathbf{z}  \| ^2 \)$ because of $  D_{\mathbf{z}}\phi[0]\widetilde{\mathbf{z}}=\boldsymbol{\phi} \cdot \widetilde{\mathbf{z}}$  for any $\widetilde{\mathbf{z}}\in \C^N$, it is easy to conclude
\begin{align*}   & I_1   \lesssim   \delta ^2 \epsilon   .
\end{align*}
We have
\begin{align*}   & I_2   \lesssim      \|   \chi _{B^2}  |\eta | ^2 \eta       \| _{L^2 (I, L^2(\R ))}  + \|    \chi _{B^2} P_d |\eta | ^2 \eta       \| _{L^2 (I, L^2(\R ))}  \\& \lesssim  \sum _{j=1}^{N}  \| \< x \> ^{\tau }  \chi _{B^2} \phi _j ( |\eta | ^2 \eta , \phi _j)   \| _{L^2 (I, L^2(\R ))} +   \|   \eta \| _{L^\infty  (\R , H^1(\R ))}^2   \|      w   \| _{L^2 (I, L^2( |x|\le 2 B^2 ))}\nonumber \\& \lesssim \sum _{j=1}^{N} \|   \eta \| _{L^\infty  (\R , H^1(\R ))}^2  \| w \|   _{L^2 (I, \widetilde{\Sigma} )} + B ^{ 2}    \|   \eta \| _{L^\infty  (\R , H^1(\R ))}^2   \|      w   \| _{L^2 (I, \widetilde{\Sigma} )\nonumber}\\& \lesssim B^2\|   \eta \| _{L^\infty  (\R , H^1(\R ))}^2   \|      w   \| _{L^2 (I, \widetilde{\Sigma} )}\lesssim B^2 \delta ^2 \epsilon. \nonumber
\end{align*}
So we conclude
\begin{align} \label{eq:smooth7} & I \lesssim B ^{2\tau + 2} \delta ^2 \epsilon.
\end{align}
Turning to the analysis of $II$, we have
\begin{align} \nonumber  & II   \lesssim     \|    1 _{|x|\ge 3B^2}  \< x \> ^{\tau }  \mathcal{T}    \< x \> ^{-\tau } 1 _{|x|\le 2B^2}   \| _{  L^2(\R ) \to L^2(\R ) }     \|      \< x \> ^{\tau }  \chi _{B^2} \mathcal{R}_{\widetilde{\eta}}   \| _{L^2 (I, L^2(\R ))} \\& \lesssim \|      \< x \> ^{\tau }  \chi _{B^2} \mathcal{R}_{\widetilde{\eta}}   \| _{L^2 (I, L^2(\R ))}\lesssim B ^{2\tau + 2} \delta ^2 \epsilon \label{eq:smooth8}
\end{align}
by an analysis similar to the operator $\mathcal{K}$ above and to the analysis of $I$.

Taken together, \eqref{eq:smooth1}, \eqref{eq:smooth2}, \eqref{eq:smooth3}, \eqref{eq:smooth4}--\eqref{eq:smooth8}
yield Proposition \ref{prop:estg1}, and so its proof is completed. \qed

Before   the proof of Propositions \ref{prop:contreform} and \ref{prop:FGR} we need an analogue  of the coercivity results in Sect.\  5 \cite{KMM3}.

\section{  Coercivity results }\label{sec:coerc}

Our main aim is to prove the following.
\begin{proposition}\label{lem:coer}
We have
\begin{align}\label{eq:coer}
\|w \|_{L^2_{-\frac{a}{10}}}\lesssim \|\xi \|_{\widetilde{\Sigma}}+e^{-\frac{B}{20}}\|  w '\|_{L^2}  .
\end{align}
\end{proposition}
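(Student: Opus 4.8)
The plan is to relate the weighted norm of $w=\zeta_A\widetilde\eta$ on the bounded region to the transformed variable $\xi=\chi_{B^2}\zeta_B v$ via the conjugation relation $\mathcal A^* H_1 = H_{N+1}\mathcal A^*$, mimicking the coercivity argument in Sect.~5 of \cite{KMM3}. The point is that $v$, and hence $\xi$, essentially encodes $\mathcal A^*\widetilde\eta$ (up to the smoothing factor $\<\im\varepsilon\partial_x\>^{-N}$ and the cutoff), so controlling $\xi$ in $\widetilde\Sigma$ should control $\widetilde\eta$, and therefore $w$, in a weighted $L^2$ space on any fixed-size region. First I would localize: on $|x|\lesssim 1$ we have $\chi_{B^2}=1$, $\zeta_B\sim 1$, and $\|w\|_{L^2_{-a/10}}\lesssim \|w\|_{L^2(|x|\lesssim B^2)}$ up to an exponentially small tail which I absorb, so it suffices to bound $\|\widetilde\eta\|_{L^2(|x|\le R)}$ for fixed $R$ by $\|\xi\|_{\widetilde\Sigma}$ plus a small multiple of $\|w'\|_{L^2}$.

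The key step is an \emph{inversion} of the operator $\mathcal T=\<\im\varepsilon\partial_x\>^{-N}\mathcal A^*$ on a compact set. Since $\mathcal A^*=A_N^*\cdots A_1^*$ with each $A_j^* = -\partial_x + (\log\psi_j)'$ a first-order operator with smooth bounded coefficients, $\mathcal A^*$ is elliptic of order $N$ on $\R$; one can construct a left parametrix, i.e.\ an operator $\mathcal B$ (a $\Psi$DO of order $-N$ with Schwartz-class error) such that $\mathcal B\mathcal A^* = \mathrm{Id} + \text{(smoothing)}$. Applying $\<\im\varepsilon\partial_x\>^N$ first, $\<\im\varepsilon\partial_x\>^N v = \mathcal A^*\chi_{B^2}\widetilde\eta$, so $\mathcal B\<\im\varepsilon\partial_x\>^N v = \chi_{B^2}\widetilde\eta + \text{smoothing}(\chi_{B^2}\widetilde\eta)$. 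On $|x|\le R$ this gives $\widetilde\eta = \mathcal B\<\im\varepsilon\partial_x\>^N v + (\text{smoothing term})$, and I need to bound the right side locally. The term $\<\im\varepsilon\partial_x\>^N v$ is problematic because of derivatives falling on $v$; but this is exactly what \eqref{eq2stestJ21-4} in Lemma~\ref{lem:coer3} is designed to handle after pairing with the weight — $\|\<x\>^{-M}v\|_{H^1}\lesssim \|\xi\|_{\widetilde\Sigma}+\varepsilon^{-N}\<B\>^{-M+3}\|w\|_{\widetilde\Sigma}$ — so one should follow \cite{KMM3} Sect.~5 closely, using $\<x\>^{-S}$-weighted estimates and the local smoothing of $\mathcal B$, together with the pointwise kernel decay \eqref{eq:coer52} of $\mathcal T$ to control the pieces of $v$ coming from $|x|\sim B^2$ or the far region by $e^{-B^2/\varepsilon}$-type factors. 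The factor $\|w'\|_{L^2}$ appears because when one undoes the Darboux operators one integrates, and the constant of integration — the value of $\widetilde\eta$ at one point — is controlled by $\|\widetilde\eta\|_{L^2(J)}^{1/2}$ on a compact $J$ plus $\|w'\|_{L^2}^{1/2}$ via the fundamental-theorem-of-calculus bound as in Lemma~\ref{lem:equiv_rho}; the coefficient of $\|w'\|_{L^2}$ is exponentially small because the relevant region where it enters is pushed out by the cutoffs, yielding the prefactor $e^{-B/20}$.

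Concretely I would: (i) write $\widetilde\eta = R[\mathbf z]^{-1}\circ(\text{identity on }\mathcal H_c)$ issues aside, reduce to $\widetilde\eta=P_c\eta$ and note $\|\widetilde\eta\|_{L^2_{-a/10}}\sim\|\eta\|_{L^2_{-a/10}}\lesssim\|w\|_{\widetilde\Sigma}$ is \emph{not} enough — we need the improvement, so instead invert the Darboux chain; (ii) set $u_k := A_k^*\cdots A_1^*\widetilde\eta$ for $k=0,\dots,N$ (so $u_0=\widetilde\eta$, $u_N = \mathcal A^*\widetilde\eta$) and run a downward induction: each $A_k^* u_{k-1} = u_k$ is a first-order ODE in $x$, hence $u_{k-1}(x) = \psi_k(x)\big(u_{k-1}(x_0)/\psi_k(x_0) - \int_{x_0}^x \psi_k(y)^{-1} u_k(y)\,dy\big)$ using $A_k^* = \psi_k^{-1}\partial_x(\psi_k\,\cdot\,)$ reversed — wait, $A_k^* = -\psi_k\partial_x(\psi_k^{-1}\,\cdot\,)$ up to sign, giving the analogous integral representation — so on $|x|\le R$ one bounds $\|u_{k-1}\|_{L^2(|x|\le R)}$ by $|u_{k-1}(x_0)| + \|u_k\|_{L^2_{\text{loc}}}$ with $x_0\in[-2,2]$ chosen as in Lemma~\ref{lem:equiv_rho}; (iii) iterate to get $\|\widetilde\eta\|_{L^2(|x|\le R)}\lesssim (\text{bdry terms}) + \|\mathcal A^*\widetilde\eta\|_{L^2(|x|\le R')}$, then relate $\mathcal A^*\widetilde\eta$ to $v$ via $v = \<\im\varepsilon\partial_x\>^{-N}\mathcal A^*\chi_{B^2}\widetilde\eta$ and $\chi_{B^2}=1$ locally, applying $\<\im\varepsilon\partial_x\>^N$ and controlling the resulting derivatives-on-$v$ by $\|\xi\|_{\widetilde\Sigma}$ through \eqref{eq2stestJ21-4} plus exponentially small remainders from the kernel bound \eqref{eq:coer52}; (iv) collect the boundary terms, observe they are bounded by $e^{-B/20}\|w'\|_{L^2}$ once one tracks the cutoff-induced decay, giving \eqref{eq:coer}. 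The main obstacle I expect is step (iii): the derivatives $\<\im\varepsilon\partial_x\>^N$ acting on $v$ produce factors of $\varepsilon^{-N}$ that must be beaten by the exponential gains from the cutoffs and the kernel decay, so the bookkeeping of which weight/cutoff kills which $\varepsilon^{-N}$ (exactly as in the delicate Sect.~5 of \cite{KMM3} and Lemma~\ref{lem:coer3} above) is where the real work lies; everything else is a routine ODE-inversion argument combined with the already-established smoothing and $\Psi$DO lemmas.
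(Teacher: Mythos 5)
Your high-level strategy (invert the Darboux chain to recover $\widetilde\eta$ locally from $v$, then pass to $\xi$ through the cutoffs) is the right one, but the inversion mechanism you propose does not close, for three concrete reasons. First, a left parametrix $\mathcal B\mathcal A^*=\mathrm{Id}+\text{(smoothing)}$ leaves an error that is \emph{regularizing but not small}: it contributes a term of size $O(1)\cdot\|\chi_{B^2}\widetilde\eta\|_{L^2_{-a/10}}$, i.e.\ comparable to the quantity you are trying to bound, so it cannot be absorbed. The paper instead uses an \emph{exact} inversion, Lemma \ref{lem:coer6}: since $\mathcal A\mathcal A^*=\prod_{j=1}^N(H-\omega_j)$, one has $P_c(\chi_{B^2}\widetilde\eta)=\prod_j R_H(\omega_j)P_c\,\mathcal A\,\langle\im\varepsilon\partial_x\rangle^N v$ with no error at all; the price is the projection $P_c$, and the discrete part $P_d(\chi_{B^2}\widetilde\eta)$ is only exponentially small \emph{because} $P_d\widetilde\eta=0$ and the $\phi_j$ decay. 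Your ODE-inversion variant runs into the same issue in disguise: the kernel of each $A_k^*$ is the bound state $\psi_k$, so your ``constants of integration'' are exactly the discrete-mode components, which for a generic function are $O(\|\widetilde\eta\|_{L^2_{-a/10}})$, not $O(e^{-B/20}\|w'\|_{L^2})$; you never invoke the orthogonality $\eta\in\mathcal H_c[\mathbf z]$ (equivalently $P_d\widetilde\eta=0$), which is what kills them. Moreover the intermediate quantities $u_{k-1}=A_{k-1}^*\cdots A_1^*\widetilde\eta$ lie only in $H^{2-k}$ for $\widetilde\eta\in H^1$, so the pointwise boundary values $u_{k-1}(x_0)$ you want to evaluate are not defined once $N\ge 3$.

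Second, the derivative bookkeeping in your step (iii) is off: applying $\langle\im\varepsilon\partial_x\rangle^N$ to $v$ costs $N$ derivatives, while \eqref{eq2stestJ21-4} and $\|\xi\|_{\widetilde\Sigma}$ control only \emph{one} derivative of $v$ locally, so $\langle\im\varepsilon\partial_x\rangle^N v$ cannot be estimated by $\|\xi\|_{\widetilde\Sigma}$ in any weighted $L^2$ space. The paper never separates these factors: Lemma \ref{lem:coer7} bounds the composite $\prod_j R_H(\omega_j)P_c\,\mathcal A\,\langle\im\varepsilon\partial_x\rangle^N$ from $L^2_{-a/20}$ to $L^2_{-a/10}$ uniformly in $\varepsilon$, which works precisely because $\prod_j R_H(\omega_j)P_c\,\mathcal A$ has order $-N$ and compensates $\langle\im\varepsilon\partial_x\rangle^N$ (via explicit resolvent kernel bounds and the commutator Lemma \ref{claim:l2boundIII}); after that, only a zero-derivative weighted bound $\|v\|_{L^2_{-a/20}}$ is needed, which is then split by $\chi_B$ into $\|\xi\|_{\widetilde\Sigma}$ plus exponentially small tails. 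Finally, the term $e^{-B/20}\|w'\|_{L^2}$ does not come from integration constants: all the exponentially small remainders in the paper's proof are of the form $e^{-B/20}\|w\|_{\widetilde\Sigma}\lesssim e^{-B/20}\bigl(\|w'\|_{L^2}+\|w\|_{L^2_{-a/10}}\bigr)$, and the second piece is absorbed into the left-hand side, leaving the stated $\|w'\|_{L^2}$ contribution.
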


Before proving Proposition \ref{lem:coer} we consider the following partial inversion of \eqref{def:vBg}, which is our analogue of Formula (62) in \cite{KMM3}.

\begin{lemma}\label{lem:coer6}
We have
\begin{align}\label{eq:Tinverse}
P_c\(\chi_{B^2}\widetilde{\eta}\)=\prod_{j=1}^{N}R_H(\omega_j) P_c \mathcal{A} \<   \im \varepsilon\partial_x\>^N v .
\end{align}
\end{lemma}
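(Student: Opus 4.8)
The plan is to invert the defining relation $v = \mathcal{T}\,\chi_{B^2}\widetilde{\eta}$, where $\mathcal{T} = \langle\im\varepsilon\partial_x\rangle^{-N}\mathcal{A}^*$, step by step, using the conjugation identity \eqref{eq:DarConj2} and the intertwining relations \eqref{eq:DarConj1} between the successive operators $H_k$ and $H_{k+1}$. First I would apply $\langle\im\varepsilon\partial_x\rangle^{N}$ to \eqref{def:vBg} to obtain $\mathcal{A}^*\chi_{B^2}\widetilde{\eta} = \langle\im\varepsilon\partial_x\rangle^{N} v$. The issue is then that $\mathcal{A}^*$ is not boundedly invertible on $L^2$; however, on the continuous spectral subspace $P_c L^2$ of $H_1 = H$ it has a genuine inverse built from the chain of operators $A_j$. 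Indeed, recalling $\mathcal{A}^* = A_N^*\cdots A_1^*$ and that each $A_j^* A_j = H_{j+1} - \omega_j$ while $A_j A_j^* = H_j - \omega_j$, one sees that on the appropriate continuous subspaces $A_j^*$ is left-invertible with left inverse $(H_{j+1}-\omega_j)^{-1}A_j = R_{H_{j+1}}(\omega_j) A_j$, because $R_{H_{j+1}}(\omega_j) A_j (A_j^* f) = R_{H_{j+1}}(\omega_j)(H_{j+1}-\omega_j) f = f$ whenever $f \in P_c(H_{j+1}) L^2$ (note $\omega_j$ is below $\sigma(H_{j+1})$, so $R_{H_{j+1}}(\omega_j)$ is a bounded operator).

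The core step is therefore to unwind the product $\mathcal{A}^* = A_N^* \cdots A_1^*$ by successively applying $R_{H_{k+1}}(\omega_k) A_k$ from the left, for $k = 1, 2, \dots, N$, using at each stage the conjugation $A_j^* H_j = H_{j+1} A_j^*$ (equation \eqref{eq:DarConj1}) to commute resolvents past the remaining $A_j^*$ factors. Concretely, starting from $\langle\im\varepsilon\partial_x\rangle^{N} v = A_N^*\cdots A_1^* \chi_{B^2}\widetilde{\eta}$, apply $A_1$ then $R_{H_2}(\omega_1)$; the relation \eqref{eq:DarConj1} lets us move $R_{H_2}(\omega_1)$ to the far left, past $\mathcal{A}\langle\im\varepsilon\partial_x\rangle^N v$ is assembled, and iterating yields $\chi_{B^2}\widetilde{\eta}$ modulo the discrete part. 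Applying $P_c = P_c(H)$ at the end and keeping track of the fact that at each stage the spectral projections $P_c(H_k)$ onto $\bigcap$ of the remaining continuous subspaces are preserved (since $A_k$ maps eigenfunctions of $H_k$ to $0$ or to eigenfunctions of $H_{k+1}$, consistent with $\sigma_{\mathrm{d}}(H_k) = \{\omega_j : j \ge k\}$), we obtain exactly $P_c(\chi_{B^2}\widetilde{\eta}) = \prod_{j=1}^N R_H(\omega_j)\, P_c\, \mathcal{A}\,\langle\im\varepsilon\partial_x\rangle^N v$, where one checks that all the resolvents $R_{H_{j+1}}(\omega_j)$ can be rewritten as $R_H(\omega_j)$ acting after the full $\mathcal{A}$, again using \eqref{eq:DarConj1} to conjugate.

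The main obstacle I anticipate is the bookkeeping of which spectral projection is acting where: one must verify that applying $A_k$ and the resolvents does not destroy the orthogonality to the discrete modes, and that the chain of left inverses genuinely composes to give $\prod_{j=1}^N R_H(\omega_j) P_c \mathcal{A}$ rather than some differently ordered product. This is where the intertwining identities \eqref{eq:DarConj1}, together with the facts $\sigma_{\mathrm{d}}(H_k) = \{\omega_k, \dots, \omega_N\}$ and $\omega_k < \inf\sigma_{\mathrm{ess}}(H_{k+1})$, must be invoked carefully; the boundedness of each $R_{H_{j+1}}(\omega_j)$ on the relevant subspace is what makes every step legitimate. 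Once the algebraic identity is in hand, the fact that $\widetilde{\eta} = P_c\widetilde{\eta}$ (by construction $\widetilde{\eta} = P_c\eta$) closes the argument, since $P_c(\chi_{B^2}\widetilde{\eta})$ is exactly the left-hand side of \eqref{eq:Tinverse}.
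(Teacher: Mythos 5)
Your overall strategy --- apply $\<\im\varepsilon\partial_x\>^{N}$ to recover $\mathcal{A}^*\chi_{B^2}\widetilde{\eta}$ and then undo $\mathcal{A}^*$ by hitting it with $\mathcal{A}$ and resolvents on the continuous subspace --- is the right one, but the execution as written contains a concrete error that derails it. You quote the factorization backwards: from \eqref{def:Ak} and Proposition \ref{prop:Darboux} one has $A_j^*A_j=H_{j+1}-\omega_j$ but $A_jA_j^*=H_j-\omega_j$. Hence your computation $R_{H_{j+1}}(\omega_j)A_j(A_j^*f)=R_{H_{j+1}}(\omega_j)(H_{j+1}-\omega_j)f=f$ is false; the correct identity gives $(H_j-\omega_j)f$, so the would-be left inverse of $A_j^*$ is $R_{H_j}(\omega_j)A_j$ --- and $R_{H_j}(\omega_j)$ is \emph{not} a bounded operator, since $\omega_j$ is precisely the ground-state eigenvalue of $H_j$. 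Your remark that $\omega_j$ lies below $\sigma(H_{j+1})$ is true but irrelevant, because that is not the resolvent that arises. In addition, the peeling order is wrong: in $\mathcal{A}^*=A_N^*\cdots A_1^*$ the outermost factor is $A_N^*$, so a step-by-step unwinding would have to begin by applying $A_N$, not $A_1$; the sentence describing how \eqref{eq:DarConj1} moves $R_{H_2}(\omega_1)$ ``to the far left'' does not describe a coherent computation, since $A_1$ has no useful commutation relation with $A_N^*,\dots,A_2^*$ individually.

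The missing idea, which makes all of your anticipated ``bookkeeping of spectral projections'' unnecessary, is the single operator identity
\begin{align*}
\mathcal{A}\mathcal{A}^*=A_1\circ\cdots\circ A_N\circ A_N^*\circ\cdots\circ A_1^*=\prod_{j=1}^{N}(H-\omega_j),
\end{align*}
proved by collapsing the product from the middle outward: one starts from $A_NA_N^*=H_N-\omega_N$ and uses the intertwining $A_{j-1}(H_j-\omega_k)=(H_{j-1}-\omega_k)A_{j-1}$ (the adjoint form of \eqref{eq:DarConj1}) to push each factor $(H_j-\omega_k)$ past $A_{j-1}$, picking up $(H_{j-1}-\omega_{j-1})=A_{j-1}A_{j-1}^*$ at each stage. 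Once this identity is in hand, applying $\prod_{j=1}^NR_H(\omega_j)P_c\mathcal{A}$ to $\<\im\varepsilon\partial_x\>^{N}v=\mathcal{A}^*\chi_{B^2}\widetilde{\eta}$ gives $\prod_jR_H(\omega_j)P_c\prod_j(H-\omega_j)\chi_{B^2}\widetilde{\eta}=P_c(\chi_{B^2}\widetilde{\eta})$, using only that $P_c$ commutes with $H$ and that each $R_H(\omega_j)$ is a genuine bounded inverse of $H-\omega_j$ on $P_cL^2$. No intermediate continuous subspaces of the $H_k$ and no left-inversion of the individual $A_j^*$ are ever needed. Note also that the final $P_c$ on the left-hand side cannot be dropped: $\widetilde{\eta}\in P_cL^2$ does not imply $\chi_{B^2}\widetilde{\eta}\in P_cL^2$, so your closing appeal to $\widetilde{\eta}=P_c\eta$ does not ``close the argument.''
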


\begin{proof}
We first claim
\begin{align}\label{eq:Tinverse1}
\mathcal{A} \mathcal{A}^*=A_1\circ \cdots \circ A_N \circ A_N^* \circ \cdots \circ A_1^* =\prod_{j=1}^{N}(H-\omega_j).
\end{align}
Then, using \eqref{eq:Tinverse1}, from \eqref{def:Tg} and \eqref{def:vBg} we have
\begin{align*}
\prod_{j=1}^{N}R_H(\omega_j) P_c A_1\circ \cdots \circ A_N \< \im \varepsilon\partial_x\>^N v   &=\prod_{j=1}^{N}R_H(\omega_j) P_c A_1\circ \cdots \circ A_N \circ A_N^* \circ \cdots \circ A_1^* \chi_{B^2} \widetilde{\eta}\\&
=\prod_{j=1}^{N}R_H(\omega_j)P_c  \prod_{j=1}^{N}(H-\omega_j)\chi_{B^2}\widetilde{\eta}=P_c\(\chi_{B^2}\widetilde{\eta}\).
\end{align*}
Thus, it remains to prove \eqref{eq:Tinverse1}.
First, from \eqref{def:Ak}, we have
\begin{align*}
A_N\circ A_N^* =H_N-\omega_N.
\end{align*}
For $2\leq j \leq N$, we assume (notice that the Schr\"odinger operator $H_j$ is fixed)
\begin{align*}
A_j\circ \cdots \circ A_N \circ A_N^* \circ \cdots A_j^* = \prod_{k=j}^N(H_j-\omega_k).
\end{align*}
Then, by
\begin{align*}
A_{j-1}(H_j-\omega_k)  = A_{j-1}(A_{j-1}^*A_{j-1}+ \omega _{j-1}-\omega_k)&=  (A_{j-1}A_{j-1}^*+ \omega _{j-1}-\omega_k)    A_{j-1}  \\& =(H_{j-1}-\omega_k)A_{j-1},
\end{align*}
        we have
\begin{align*}&
A_{j-1}\circ \cdots \circ A_N \circ A_N^* \circ \cdots A_{j-1}^*  =A_{j-1} \prod_{k=j}^N(H_j-\omega_k)   A_{j-1}^*  =\prod_{k=j}^N(H _{j-1}-\omega_k) A_{j-1} \circ A_{j-1}^* \\& = \prod_{k=j}^N(H _{j-1}-\omega_k)   \   (H _{j-1}-\omega _{j-1})
=\prod_{k=j-1}^N(H _{j-1}-\omega_k).
\end{align*}
Therefore, we have \eqref{eq:Tinverse1} by induction.
\end{proof}

The proof of Lemma \ref{lem:coer7}  is postponed  to Appendix \ref{sec:comm}.
\begin{lemma}\label{lem:coer7}
We have
$\|  \prod_{j=1}^{N}R_H(\omega_j)P_c \mathcal{A} \< \im \varepsilon \partial_x\>^N \|_{L^2_{-\frac{a}{20}}\to L^2_{-\frac{a}{10}}}\lesssim 1$ uniformly for $0<\varepsilon\le  1$.
\end{lemma}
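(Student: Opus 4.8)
The plan is to use the Darboux intertwining to trade the resolvents of $H=H_1$ for those of the repulsive operator $H_{N+1}$, and then to observe that the regularizer $\langle\im\varepsilon\partial_x\rangle^N$ (of ``order $+N$'') exactly balances the order $-2N$ smoothing of $\prod_j R_{H_{N+1}}(\omega_j)$ against the order $+N$ differential operator $\mathcal A$, uniformly in $\varepsilon$.

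First I would note that $P_c\mathcal A=\mathcal A$. Indeed, by \eqref{eq:DarConj1} the function $\mathcal A^*\phi_k=A_N^*\cdots A_1^*\phi_k$ is either $0$ or an $L^2$ eigenfunction of $H_{N+1}$ with eigenvalue $\omega_k<0$, which is impossible since $\sigma_{\mathrm d}(H_{N+1})=\emptyset$; hence $(\mathcal A g,\phi_k)=(g,\mathcal A^*\phi_k)=0$ for all $k$. Taking adjoints in the conjugation relation \eqref{eq:DarConj2} gives $H\mathcal A=\mathcal A H_{N+1}$, hence $(H-\omega_j)\mathcal A=\mathcal A(H_{N+1}-\omega_j)$; since $V_{N+1}\in\mathcal S(\R,\R)$ forces $\sigma(H_{N+1})=[0,\infty)$ and $\omega_j<0$, while $\omega_j$ lies in the resolvent set of the restriction of $H$ to $P_cL^2$, one gets $R_H(\omega_j)P_c\mathcal A=\mathcal A R_{H_{N+1}}(\omega_j)$; iterating (the range of $\mathcal A$ always sits in $P_cL^2$),
\[
\prod_{j=1}^N R_H(\omega_j)\,P_c\,\mathcal A\,\langle\im\varepsilon\partial_x\rangle^N
=\mathcal A\,\Big(\prod_{j=1}^N R_{H_{N+1}}(\omega_j)\Big)\langle\im\varepsilon\partial_x\rangle^N .
\]
(The induction used in Lemma~\ref{lem:coer6}, now with $A_j^*A_j=H_{j+1}-\omega_j$, also gives $\mathcal A^*\mathcal A=\prod_j(H_{N+1}-\omega_j)$, confirming that the resolvent product is a genuine bounded operator.)

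Next I would peel off the constant-coefficient principal part. Write $\prod_j(H_{N+1}-\omega_j)=P_0+W$, where $P_0:=\prod_j(-\partial_x^2+|\omega_j|)$ is constant-coefficient, elliptic and positive, and $W$ is a differential operator of order $\le 2N-2$ whose coefficients are polynomial expressions in $V_{N+1}$ and its derivatives, hence Schwartz. Then $\prod_j R_{H_{N+1}}(\omega_j)=(1+P_0^{-1}W)^{-1}P_0^{-1}$, and, since $0$ lies in the resolvent set of $\prod_j(H_{N+1}-\omega_j)$ and the kernels of $R_{H_{N+1}}(\omega_j)$ decay at rate $\sqrt{|\omega_j|}\ge\sqrt{|\omega_N|}$, the operator $(1+P_0^{-1}W)^{-1}=\big(\prod_j R_{H_{N+1}}(\omega_j)\big)P_0$ is bounded on every weighted space $H^N_s$ with $|s|<\sqrt{|\omega_N|}$ — which includes all the weights $\pm a/10,\pm a/20$ in view of the normalization $a\le\tfrac12\min(a_0,a_1)$. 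Thus the operator in question equals $\mathcal A\,(1+P_0^{-1}W)^{-1}\,Y_\varepsilon$ with $Y_\varepsilon:=P_0^{-1}\langle\im\varepsilon\partial_x\rangle^N$, and the task reduces to bounding this on $L^2_{-a/10}$ uniformly in $\varepsilon$.

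For $|\alpha|\le N$ the operator $\partial_x^\alpha Y_\varepsilon$ is the Fourier multiplier with symbol $(\im k)^\alpha\langle\varepsilon k\rangle^N\prod_j(k^2+|\omega_j|)^{-1}$, which is bounded on $\R$ (by $|k|^N\langle k\rangle^N\big/\prod_j(k^2+|\omega_j|)\lesssim1$) and, for $\varepsilon$ small — all that is needed by \eqref{eq:relABg} — extends holomorphically and with $\varepsilon$-uniform bounds to every strip $\{|\Im\zeta|\le s\}$ with $s<\sqrt{|\omega_N|}$, because the zeros of $\prod_j(\zeta^2+|\omega_j|)$ lie on $|\Im\zeta|=\sqrt{|\omega_j|}\ge\sqrt{|\omega_N|}$ and the zero of $1+\varepsilon^2\zeta^2$ on $|\Im\zeta|=\varepsilon^{-1}>\sqrt{|\omega_N|}$. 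By the Paley--Wiener description of the exponentially weighted spaces $L^2_s$, such multipliers act boundedly on $L^2_s$ for $|s|<\sqrt{|\omega_N|}$, uniformly in small $\varepsilon$; hence $Y_\varepsilon\colon L^2_s\to H^N_s$ is bounded uniformly in $\varepsilon$. Since $\mathcal A=A_1\cdots A_N$ is a differential operator of order $N$ whose coefficients are products of $(\log\psi_j)'$ and its derivatives — bounded together with all their derivatives because $\psi_j''=(V_j-\omega_j)\psi_j$ — it is bounded $H^N_s\to L^2_s$, and $(1+P_0^{-1}W)^{-1}$ is bounded on $H^N_s$. Composing at $s=-a/10$ gives $\big\|\prod_j R_H(\omega_j)P_c\mathcal A\langle\im\varepsilon\partial_x\rangle^N\big\|_{L^2_{-a/10}\to L^2_{-a/10}}\lesssim1$ uniformly in $\varepsilon$, and since $e^{-\frac a{10}\<x\>}\le e^{-\frac a{20}\<x\>}$ implies $\|f\|_{L^2_{-a/10}}\le\|f\|_{L^2_{-a/20}}$, the claimed $L^2_{-a/20}\to L^2_{-a/10}$ bound follows. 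The main obstacle is precisely this last step: the order bookkeeping must be run simultaneously uniformly in $\varepsilon$ and on exponentially weighted $L^2$, which forces the Paley--Wiener argument and requires the strip width to stay below the spectral gap $\sqrt{|\omega_N|}=2a_1$ of $H_{N+1}$ — that is, below the decay rate encoded in the spaces $L^2_{\pm a/10}$.
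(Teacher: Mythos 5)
Your argument is correct in outline but takes a genuinely different route from the paper's. The paper never conjugates the resolvents to $H_{N+1}$: it writes $\prod_{j}R_H(\omega_j)P_c=K_1\cdots K_N$ by subtracting the pole of each resolvent at $\omega_j$ (formulas \eqref{lemma 63}--\eqref{lemma 65}), gets the pointwise bound $|K_j(x,y)|\le C\<x-y\>e^{-\sqrt{|\omega_j|}\,|x-y|}$ from the Jost functions, commutes the weight $\sech(ax/10)$ through these kernels and through $\mathcal A$ by Schur's test, and then splits $\sech(ax/10)\<\im\varepsilon\partial_x\>^N$ into $\<\im\varepsilon\partial_x\>^N\sech(ax/10)$ plus a commutator controlled by Lemma \ref{claim:l2boundIII}; that commutator is precisely what forces the weaker weight $a/20$ on the right. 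You instead use $P_c\mathcal A=\mathcal A$ and the Darboux intertwining to pass to $\mathcal A\prod_jR_{H_{N+1}}(\omega_j)\<\im\varepsilon\partial_x\>^N$, peel off the constant--coefficient part $P_0$, and treat $Y_\varepsilon=P_0^{-1}\<\im\varepsilon\partial_x\>^N$ as a Fourier multiplier holomorphic and bounded in a strip. This buys a stronger conclusion (boundedness $L^2_{-a/10}\to L^2_{-a/10}$, with no weight loss) and avoids the commutator lemma entirely, at the price of having to control the variable--coefficient remainder $(1+P_0^{-1}W)^{-1}$ on the weighted scale.

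Two steps are stated rather than proved and deserve care. First, invertibility of $1+P_0^{-1}W$ on $H^N_s$ does not follow only from ``$0$ is in the resolvent set'': one needs compactness of $P_0^{-1}W$ on the weighted scale (Fredholm of index $0$) together with injectivity, the latter because any solution of $\prod_j(H_{N+1}-\omega_j)u=0$ growing slower than $e^{\sqrt{|\omega_N|}\<x\>}$ must in fact decay at the rates $\sqrt{|\omega_j|}$ and hence be an $L^2$ eigenfunction, contradicting $\sigma_{\mathrm{d}}(H_{N+1})=\emptyset$; alternatively one proves kernel bounds for $R_{H_{N+1}}(\omega_j)$ exactly as the paper does for $R_H(\omega_j)P_c$. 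Second, for $|\alpha|=N$ the symbol of $\partial_x^\alpha Y_\varepsilon$ does not decay at infinity (it tends to $\im^N\varepsilon^N\operatorname{sgn}(k)^N$), so its kernel is a distribution with a nonintegrable singular part and a naive Schur test on the two--sided weight fails. The statement you invoke is nevertheless true: the correct mechanism is to split $f$ into its restrictions to the two half--lines and conjugate by the one--sided weights $e^{\pm sx}$, which turns the multiplier into $m(k\mp\im s)$, bounded on $L^2$ precisely because $m$ is bounded and holomorphic in the strip $|\Im\zeta|\le s<\sqrt{|\omega_N|}$, uniformly in $\varepsilon$. With these two points filled in, your proof is a valid and arguably cleaner alternative to the one in Appendix \ref{sec:comm}.
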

\qed

We continue this section, assuming Lemma \ref{lem:coer7}.
\begin{lemma}\label{lem:coer2}
We have
\begin{align*}
\|\chi_{B^2}\widetilde{\eta}\|_{L^2_{-\frac{a}{10}}}\lesssim \|v \|_{L^2_{-\frac{a}{20}}}+e^{-B}\|\widetilde{\eta}\|_{L^2_{-\frac{a}{10}}}.
\end{align*}
\end{lemma}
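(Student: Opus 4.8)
The plan is to split $\chi_{B^2}\widetilde{\eta}$ into its $H$-continuous and $H$-discrete parts and treat them separately: the continuous part is handled by the inversion formula of Lemma \ref{lem:coer6} together with the weighted bound of Lemma \ref{lem:coer7}, while the discrete part is exponentially small thanks to the support of $\chi_{B^2}-1$ and the orthogonality $\widetilde{\eta}\in P_cL^2$.

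First write $\chi_{B^2}\widetilde{\eta}=P_c(\chi_{B^2}\widetilde{\eta})+P_d(\chi_{B^2}\widetilde{\eta})$. By Lemma \ref{lem:coer6} we have $P_c(\chi_{B^2}\widetilde{\eta})=\prod_{j=1}^N R_H(\omega_j)P_c\mathcal{A}\< \im \varepsilon\partial_x\>^N v$, and Lemma \ref{lem:coer7} bounds exactly this operator from $L^2_{-a/20}$ to $L^2_{-a/10}$ uniformly in $\varepsilon\in(0,1]$; hence $\|P_c(\chi_{B^2}\widetilde{\eta})\|_{L^2_{-a/10}}\lesssim\|v\|_{L^2_{-a/20}}$, which is the first term on the right of the claimed inequality.

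For the discrete part, since $\widetilde{\eta}=P_c\eta$ satisfies $(\widetilde{\eta},\phi_j)=0$ and $\chi_{B^2}$ is real-valued, I would write $P_d(\chi_{B^2}\widetilde{\eta})=\sum_{j=1}^N(\chi_{B^2}\widetilde{\eta},\phi_j)\phi_j=\sum_{j=1}^N(\widetilde{\eta},(\chi_{B^2}-1)\phi_j)\phi_j$. The function $(\chi_{B^2}-1)\phi_j$ is supported in $\{|x|\ge B^2\}$, where $|\phi_j(x)|\lesssim e^{-\sqrt{|\omega_j|}|x|}$ with $\sqrt{|\omega_j|}\ge\sqrt{|\omega_N|}=2a$, so that $\|(\chi_{B^2}-1)\phi_j\|_{L^2_{a/10}}\lesssim e^{-\frac{19a}{10}B^2}$; by $B\gg a^{-1}$ (see \eqref{eq:relABg}) this is $\lesssim e^{-B}$. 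Cauchy--Schwarz in the duality between $L^2_{-a/10}$ and $L^2_{a/10}$ then gives $|(\widetilde{\eta},(\chi_{B^2}-1)\phi_j)|\lesssim e^{-B}\|\widetilde{\eta}\|_{L^2_{-a/10}}$, and since $\|\phi_j\|_{L^2_{-a/10}}\lesssim 1$ we conclude $\|P_d(\chi_{B^2}\widetilde{\eta})\|_{L^2_{-a/10}}\lesssim e^{-B}\|\widetilde{\eta}\|_{L^2_{-a/10}}$.

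Adding the two estimates by the triangle inequality yields the lemma. There is no serious obstacle here: the analytic content is entirely contained in Lemmas \ref{lem:coer6} and \ref{lem:coer7}, which are assumed, and the only care needed is the bookkeeping of decay rates — one must use that $a=\sqrt{|\omega_N|}/2$ is strictly below every $\sqrt{|\omega_j|}$, so that the exponential decay of each $\phi_j$ dominates all the weights $e^{\pm a\<x\>/10}$, $e^{\pm a\<x\>/20}$ occurring, and that the truncation at $|x|\ge B^2$ produces a factor $e^{-cB^2}$ which is absorbed into $e^{-B}$ thanks to $B\gg a^{-1}$.
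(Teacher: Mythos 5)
Your proof is correct and follows essentially the same route as the paper: the same $P_c+P_d$ splitting, the same use of Lemmas \ref{lem:coer6} and \ref{lem:coer7} for the continuous part, and the same identity $P_d(\chi_{B^2}\widetilde{\eta})=\sum_j(\widetilde{\eta},(\chi_{B^2}-1)\phi_j)\phi_j$ with the exponential smallness of $(\chi_{B^2}-1)\phi_j$ in the dual weighted norm. The only cosmetic difference is the exact exponent you record for $\|(\chi_{B^2}-1)\phi_j\|_{L^2_{a/10}}$, which matches the paper's $e^{-(a_1-a/10)B^2}\lesssim e^{-B}$ in substance.
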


\begin{proof}
First,
\begin{align}\label{eq:coer2_1}
\|\chi_{B^2}\widetilde{\eta}\|_{L^2_{-\frac{a}{10}}}\leq \| e^{-\frac{a}{10}\<x\>}P_c(\chi_{B^2}\widetilde{\eta})\|_{L^2} + \|e^{-\frac{a}{10}\<x\>}P_d\(\chi_{B^2}\widetilde{\eta}\)\|_{L^2}.
\end{align}
Then, by Lemmas \ref{lem:coer6} and \ref{lem:coer7}, we have
\begin{align}\label{eq:coer2_2}
\| e^{-\frac{a}{10}\<x\>}P_c(\chi_{B^2}\widetilde{\eta})\|_{L^2} \lesssim \|    v\|_{L^2_{-\frac{a}{20}}}.
\end{align}
On the other hand, from $P_d\widetilde{\eta}=0$ and \eqref{def:PdPc}, we have
\begin{align*}
P_d(\chi_{B^2}\widetilde{\eta})=\sum_{j=1}^N(\chi_{B^2}\widetilde{\eta},\phi_j)\phi_j=\sum_{j=1}^N(\widetilde{\eta},\(\chi_{B^2}-1\)\phi_j)\phi_j.
\end{align*}
Then, since $\|e^{\frac{a}{10}\<x\>}(\chi_{B^2}-1)\phi_j\|_{L^2}\lesssim e^{-\(a_1-\frac{a}{10}\) B^2}\lesssim e^{-B}$, we have
\begin{align}\label{eq:coer2_3}
 \|e^{-\frac{a}{10}\<x\>}P_d\(\chi_{B^2}\widetilde{\eta}\)\|_{L^2}\lesssim e^{-B}\|\widetilde{\eta}\|_{L^2_{-\frac{a}{10}}}.
\end{align}
By \eqref{eq:coer2_1}, \eqref{eq:coer2_2} and \eqref{eq:coer2_3} we have the conclusion.
\end{proof}

\textit{Proof of Proposition \ref{lem:coer}}.
First we split
\begin{align}\label{eq:coer1}
\|w \|_{L^2_{-\frac{a}{10}}}\leq \|\chi_{B ^2 } w\|_{L^2_{-\frac{a}{10}}}+\|(1-\chi_{B ^2 })e^{-\frac{a}{10}\<x\>}w\|_{L^2}.
\end{align}
For the 2nd term of r.h.s.\ of \eqref{eq:coer1}, using Corollary \ref{cor:rhoequiv}, we have
\begin{align}\label{eq:coer2}
\|(1-\chi_{B ^2 })e^{-\frac{a}{10}\<x\>}w\|_{L^2}\leq \|(1-\chi_{B ^2})e^{-\frac{a}{20}}\|_{L^\infty}\|e^{-\frac{a}{20}\<x\>}w\|_{L^2}\lesssim e^{-\frac{  aB ^2 }{20}} \|w\|_{\widetilde{\Sigma}}.
\end{align}
For the 1st term of the r.h.s.\ of \eqref{eq:coer1}, by $\|\zeta_A\|_{L^\infty}\leq 1$ and Lemma \ref{lem:coer2},
\begin{align}\nonumber
&\|\chi_{B ^2} w\|_{L^2_{-\frac{a}{10}}}\leq \|\chi_{B  ^2} e^{-\frac{a}{10}\<x\>} \widetilde{\eta}\|_{L^2}\lesssim \|v\|_{L^2_{-\frac{a}{20}}}+e^{-B}\|\widetilde{\eta}\|_{L^2_{-\frac{a}{10}}}\\&
\lesssim  \|\chi_{B } v\|_{L^2_{-\frac{a}{20}}}+\|\(1-\chi_{B }\) v\|_{L^2_{-\frac{a}{20}}} +e^{-B}\|\zeta_A^{-1}e^{-\frac{a}{20}\<x\>}\|_{L^\infty}\|e^{-\frac{a}{20}\<x\>}w\|_{L^2}\label{eq:coer3}
%\\&
%\lesssim  + e^{-B}\|w\|_{\widetilde{\Sigma}}.
\end{align}
From $A\gg a^{-1}$ and Corollary \ref{cor:rhoequiv}, the 3rd term of line \eqref{eq:coer3} can be bounded as
\begin{align}\label{eq:coer4}
e^{-B}\|\zeta_A^{-1}e^{-\frac{a}{20}\<x\>}\|_{L^\infty}\|e^{-\frac{a}{20}\<x\>}w\|_{L^2}\lesssim e^{-B} \|w\|_{\widetilde{\Sigma}}.
\end{align}
For the 2nd term of line  \eqref{eq:coer3}, by Lemma \ref{lem:coer3},
\begin{align}\label{eq:coer5}
\|\(1-\chi_{B }\) v\|_{L^2_{-\frac{a}{20}}}\leq \| e^{-\frac{a}{20}\<x\>}(1-\chi_{B })\|_{L^\infty}\|v\|_{L^2}\lesssim e^{-\frac{ B }{20}} \varepsilon^{-N}B^2 \|w\|_{\widetilde{\Sigma}}.
\end{align}
Finally, for the 1st term of line \eqref{eq:coer3}, by the definition of $\zeta_B$ in \eqref{eq:zeta}, see also the definition of $\chi$ in \eqref{def:chi}, and of $\xi $ in \eqref{def:wAxiB},   we have
\begin{align}
\|\chi_{B } v\|_{L^2_{-\frac{a}{20}}}&\leq    \| \zeta_B ^{-1}    \| _{L^\infty (|x|\le 2B}     \|\chi_{B } \zeta_B      v\|_{L^2_{-\frac{a}{20}} } =  \| \zeta_B ^{-1}    \| _{L^\infty (|x|\le 2B}     \| \xi \|_{L^2_{-\frac{a}{20}} }   \nonumber \\& \lesssim    \| \xi \|_{L^2_{-\frac{a}{20}} } \lesssim    \| \xi\|_{\widetilde{\Sigma}} ,\label{eq:coer6}
\end{align}
where in the last inequality we applied Lemma \ref{lem:equiv_rho}.
Collecting the estimates \eqref{eq:coer2}, \eqref{eq:coer4}, \eqref{eq:coer5} and \eqref{eq:coer6}   we have the conclusion.
\qed

\section{Proof of Proposition \ref{prop:FGR}: Fermi Golden Rule}
\label{sec:FGR}

We substitute $\widetilde{\mathbf{z}}=\im \boldsymbol{\varpi}(|\mathbf{z}|^2)\mathbf{z}$ in  \eqref{eq:discfund} and we make various simplifications.
The first, by  $\<f,\im f\>=0$ the left hand side of \eqref{eq:discfund} can be rewritten as
\begin{align}\label{eq:lFGR2}
\<\im D_{\mathbf{z}}\phi [\mathbf{z}](\dot { \mathbf{z}}+\im \boldsymbol{\varpi}(|\mathbf{z}|^2)\mathbf{z}),D_{\mathbf{z}}\phi[\mathbf{z}]\im \boldsymbol{\varpi}(|\mathbf{z}|^2)\mathbf{z}\>=\<\im D_{\mathbf{z}}\phi[\mathbf{z}] \dot { \mathbf{z}},D_{\mathbf{z}}\phi   [\mathbf{z}]\im \boldsymbol{\varpi}(|\mathbf{z}|^2)\mathbf{z}\>.
\end{align}
Next, we consider the 2nd term in the 2nd line of \eqref{eq:discfund}, which we rewrite as
\begin{align}\nonumber
\<\sum_{\mathbf{m}\in \mathbf{R}_{\mathrm{min}}}\mathbf{z}^{\mathbf{m}}G_{\mathbf{m}}+\mathcal{R}_{\mathrm{rp}}[\mathbf{z}],D_{\mathbf{z}}\phi[\mathbf{z}]\im \boldsymbol{\varpi}(|\mathbf{z}|^2)\mathbf{z}\>=&
\<\sum_{\mathbf{m}\in \mathbf{R}_{\mathrm{min}}}\mathbf{z}^{\mathbf{m}}G_{\mathbf{m}}+\mathcal{R}_{\mathrm{rp}}[\mathbf{z}],D_{\mathbf{z}}\phi [\mathbf{z}]\( \dot {\mathbf{z}}+\im \boldsymbol{\varpi}(|\mathbf{z}|^2)\mathbf{z}\)\>
\\&-\<\sum_{\mathbf{m}\in \mathbf{R}_{\mathrm{min}}}\mathbf{z}^{\mathbf{m}}G_{\mathbf{m}}+\mathcal{R}_{\mathrm{rp}}[\mathbf{z}],D_{\mathbf{z}}\phi [\mathbf{z}]\dot {\mathbf{z}}\>.\label{eq:lFGR1}
\end{align}
The   term in the 1st line  of the r.h.s.\ of \eqref{eq:lFGR1} can be written as
\begin{align}%
 %&\<\sum_{\mathbf{m}\in \mathbf{R}_{\mathrm{min}}}\mathbf{z}^{\mathbf{m}}G_{\mathbf{m}}+\mathcal{R}(\mathbf{z}),D_{\mathbf{z}}\phi(\mathbf{z})\(\dot {\mathbf{z}}+\im \boldsymbol{\varpi}(|\mathbf{z}|^2)\mathbf{z}\)\>=\\
 &\label{eq:lFGR7} \<\sum_{\mathbf{m}\in \mathbf{R}_{\mathrm{min}}}\mathbf{z}^{\mathbf{m}}G_{\mathbf{m}},D_{\mathbf{z}}\phi[0]\(   \dot {\mathbf{z}}+\im \boldsymbol{\varpi}(|\mathbf{z}|^2)\mathbf{z}\)\>+R_1(\mathbf{z}),
\end{align}
where
\begin{align*}
R_1(\mathbf{z})=&\<\sum_{\mathbf{m}\in \mathbf{R}_{\mathrm{min}}}\mathbf{z}^{\mathbf{m}}G_{\mathbf{m}},\(D_{\mathbf{z}}\phi    [\mathbf{z}]-D_\mathbf{z}\phi  [0]\)\(
\dot {\mathbf{z}}+\im \boldsymbol{\varpi}(|\mathbf{z}|^2)\mathbf{z}\)\>\\&+\<   \mathcal{R}_{\mathrm{rp}}[\mathbf{z}],D_{\mathbf{z}}\phi  [\mathbf{z}]\(\dot {\mathbf{z}} +\im \boldsymbol{\varpi}(|\mathbf{z}|^2)\mathbf{z}\)\>,\nonumber
\end{align*}
by \eqref{est:Rrp},   inequalities \eqref{eq:main1} and \eqref{eq:main11},     Proposition \ref{lem:estdtz} and  $ \| D_{\mathbf{z}}\phi    [\mathbf{z}]-D_\mathbf{z}\phi  [0]\| _{H^1} =O(\| \mathbf{z} \| ^2) $  by \eqref{def:refpexp},
satisfies
\begin{align}\label{eq:lFGR4}
\int_0^T |R_1(\mathbf{z}(t))|\,dt\lesssim   \delta ^2  \epsilon ^2.
\end{align}
Using  the stationary Refined Profile equation \eqref{eq:rp}, the last line of \eqref{eq:lFGR1} can be written as
\begin{align}\label{eq:lFGR5}
%-\<\sum_{\mathbf{m}\in \mathbf{R}_{\mathrm{min}}}\mathbf{z}^{\mathbf{m}}G_{\mathbf{m}}+\mathcal{R}(\mathbf{z}),D_{\mathbf{z}}\phi(\mathbf{z})\dot {\mathbf{z}}\>=
-\<H\phi[\mathbf{z}]+ |\phi  [\mathbf{z}]|^2 \phi  [\mathbf{z}],D_{\mathbf{z}}\phi   [\mathbf{z}] \dot {\mathbf{z}}\>
+\<D_{\mathbf{z}}\phi(\mathbf{z})(\im \boldsymbol{\varpi}(|\mathbf{z}|^2))\mathbf{z} ,\im  D_{\mathbf{z}}\phi  [\mathbf{z}]\dot {\mathbf{z}} \>.%\nonumber
\end{align}
Notice that the 2nd term of \eqref{eq:lFGR5} coincides  with the right hand side  of \eqref{eq:lFGR2}, which lies in the left hand side of \eqref{eq:discfund}, so that the two cancel each other.
 On the other hand,  we have
\begin{align}\label{eq:lFGR6}
\<H\phi[\mathbf{z}]+ |\phi  [\mathbf{z}]|^2 \phi  [\mathbf{z}] ,D_{\mathbf{z}}\phi[\mathbf{z}] \dot {\mathbf{z}}\>=\frac{d}{dt}E(\phi   [\mathbf{z}]).
\end{align}
Therefore, from \eqref{eq:discfund} with $\widetilde{\mathbf{z}}=\im \boldsymbol{\varpi}(|\mathbf{z}|^2)\mathbf{z}$, \eqref{eq:lFGR2}, \eqref{eq:lFGR1}, \eqref{eq:lFGR7}, \eqref{eq:lFGR5} and \eqref{eq:lFGR6}, we have
\begin{align}\label{eq:dtE1}
\frac{d}{dt}E(\phi [\mathbf{z}] )-\sum_{\mathbf{m}\in \mathbf{R}_{\mathrm{min}}}\mathbf{m}\cdot \boldsymbol{\omega}\<\eta,\im  \mathbf{z}^{\mathbf{m}}G_{\mathbf{m}}\>=\sum_{\mathbf{m}\in \mathbf{R}_{\mathrm{min}}}\<\mathbf{z}^{\mathbf{m}}G_{\mathbf{m}},D_{\mathbf{z}}\phi   [0]\(\dot {\mathbf{z}}+\im \boldsymbol{\varpi}(|\mathbf{z}|^2)\mathbf{z}\)\>+R_2(\mathbf{z},\eta),
\end{align}
where
\begin{align}\label{eq:R2}&
R_2(\mathbf{z},\eta)= R_1(\mathbf{z})+\<\im \eta, D_{\mathbf{z}}^2\phi  [\mathbf{z}]\(   \dot {\mathbf{z}} +\im \boldsymbol{\varpi}(|\mathbf{z}|^2)\mathbf{z},\im \boldsymbol{\varpi}(|\mathbf{z}|^2)\mathbf{z}\)\>
+\<\eta,D_{\mathbf{z}}\mathcal{R}_{\mathrm{rp}}[\mathbf{z}]\im \boldsymbol{\varpi}(|\mathbf{z}|^2)\mathbf{z}\>\\&
+\sum_{\mathbf{m}\in \mathbf{R}_{\mathrm{min}}}(\boldsymbol{\varpi}(|\mathbf{z}|^2)-\boldsymbol{\omega})\<\eta,\mathbf{z}^{\mathbf{m}}G_{\mathbf{m}}\>+\< L[\mathbf{z}]\eta +F(\mathbf{z},\eta)+|\eta |^2 \eta ,D_{\mathbf{z}}\phi [\mathbf{z}]\im \boldsymbol{\varpi}(|\mathbf{z}|^2)\mathbf{z}\>  \nonumber
\end{align}
  satisfies
\begin{align}\label{eq:estR2}
\int_I|R_2(\mathbf{z}(t),\eta(t))|\,dt\lesssim   \delta ^2 \epsilon ^2.
\end{align}

We consider the first term in the right hand side of \eqref{eq:dtE1}. By  \eqref{def:refpexp}  we have $D_{\mathbf{z}}\phi [0]\widetilde{\mathbf{z}}= \boldsymbol{\phi}\cdot \widetilde{\mathbf{z}}  $,
  this term is the left hand side of \eqref{eq:Fermi-1} below.
\begin{lemma} \label{prop:Fermi-1}    We have
	 \begin{align}\label{eq:Fermi-1}
	  \sum_{\mathbf{m}\in \mathbf{R}_{\mathrm{min}}}\<\mathbf{z}^{\mathbf{m}}G_{\mathbf{m}},\boldsymbol{\phi}\cdot\( \dot { \mathbf{z}}+\im \boldsymbol{\varpi}(|\mathbf{z}|^2)\mathbf{z}\)\>
=\partial_t A_1(\mathbf{z})+R_4(\mathbf{z},\eta)
	\end{align}
where:
\begin{align}  \label{eq:Fermi-2}
A_1(\mathbf{z}) =
\sum_{\substack{\mathbf{m},\mathbf{n}\in \mathbf{R}_{\mathrm{min}}\\ \mathbf{m}\neq \mathbf{n}}}\sum_{j=1}^N
\frac{1}{
\(\mathbf{n}-\mathbf{m}\)\cdot \boldsymbol{\omega}
}
\mathrm{Re}(\mathbf{z}^{\mathbf{m}}\overline{\mathbf{z}^{\mathbf{n}}})
g_{\mathbf{m},j}g_{\mathbf{n},j}, \end{align}
for $g_{\mathbf{m},j}:=\<G_{\mathbf{m}},\phi_j\>$;
\begin{align}   \label{eq:Fermi-3}&
 R_4(\mathbf{z},\eta)=R_3(\mathbf{z})+\sum_{\mathbf{m},\mathbf{n}\in \mathbf{R}_{\mathrm{min}}}\sum_{j=1}^N\<\mathbf{z}^{\mathbf{m}}G_{\mathbf{m}}, r_j(\mathbf{z},\eta)\phi_j\> \text{ where}
\\   \label{eq:Fermi-4}&R_3(\mathbf{z}) =\sum_{\substack{\mathbf{m},\mathbf{n}\in \mathbf{R}_{\mathrm{min}}\\ \mathbf{m}\neq \mathbf{n}}}\sum_{j=1}^N\mathrm{Re}\(\im r_{\mathbf{n},\mathbf{m}}(\mathbf{z})\)g_{\mathbf{m},j}g_{\mathbf{n},j} \text{  for}\\& \label{eq:Fermi-5} r_{\mathbf{n},\mathbf{m}}(\mathbf{z})=
	-\frac{(\mathbf{m}-\mathbf{n})\cdot \(\boldsymbol{\varpi}(|\mathbf{z}|^2)-\boldsymbol{\omega}\)}{ (\mathbf{m}-\mathbf{n})\cdot \boldsymbol{\omega}}  \mathbf{z}^{\mathbf{n}}\overline{\mathbf{z}^{\mathbf{m}}}
	\\&+\frac{\im}{ (\mathbf{m}-\mathbf{n})\cdot \boldsymbol{\omega}}\(D_{\mathbf{z}}(\mathbf{z}^{\mathbf{n}})(\dot { \mathbf{z}}+\im \boldsymbol{\varpi}(|\mathbf{z}|^2\mathbf{z}))\overline{\mathbf{z}^{\mathbf{m}}}+
		\mathbf{z}^{\mathbf{n}}\overline{D_{\mathbf{z}}(\mathbf{z}^{\mathbf{m}})(( \dot { \mathbf{z}}+\im \boldsymbol{\varpi}(|\mathbf{z}|^2\mathbf{z})))}\) ; \nonumber
	\end{align}
we have  \begin{align}\label{eq:Fermi-6}
\int_I |R_4(\mathbf{z}(t),\eta(t))|\,dt\lesssim    \delta ^2 \epsilon ^2.
\end{align}

\end{lemma}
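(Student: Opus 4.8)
The statement is an algebraic identity for the quantity $\sum_{\mathbf{m}\in \mathbf{R}_{\mathrm{min}}}\<\mathbf{z}^{\mathbf{m}}G_{\mathbf{m}},\boldsymbol{\phi}\cdot\( \dot { \mathbf{z}}+\im \boldsymbol{\varpi}(|\mathbf{z}|^2)\mathbf{z}\)\>$, together with an integral bound on the remainder. The plan is to substitute the equation \eqref{equat_z} for $\dot z_j$ from Lemma \ref{lem:zpres} into the left hand side, isolate the principal oscillatory part, and recognize it as a total time derivative. Writing $g_{\mathbf{m},j}=\<G_{\mathbf{m}},\phi_j\>$, the left hand side equals $\sum_{\mathbf{m}\in \mathbf{R}_{\mathrm{min}}}\sum_{j=1}^N \mathrm{Re}\big(\mathbf{z}^{\mathbf{m}}g_{\mathbf{m},j}\,\overline{(\dot z_j+\im \varpi_j z_j)}\big)$. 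Plugging in $\dot z_j+\im \varpi_j z_j = -\im \sum_{\mathbf{n}\in \mathbf{R}_{\mathrm{min}}}\mathbf{z}^{\mathbf{n}}g_{\mathbf{n},j}+r_j(\mathbf{z},\eta)$ produces a leading double sum $\sum_{\mathbf{m},\mathbf{n}}\sum_j \mathrm{Re}\big(\im\, \mathbf{z}^{\mathbf{m}}\overline{\mathbf{z}^{\mathbf{n}}}\big) g_{\mathbf{m},j}g_{\mathbf{n},j}$ plus an error term $\sum_{\mathbf{m},\mathbf{n}}\sum_j \<\mathbf{z}^{\mathbf{m}}G_{\mathbf{m}},r_j(\mathbf{z},\eta)\phi_j\>$, which is exactly the last term in \eqref{eq:Fermi-3}.

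**Extracting the total derivative.** In the leading double sum, the diagonal terms $\mathbf{m}=\mathbf{n}$ contribute $\mathrm{Re}(\im |\mathbf{z}^{\mathbf{m}}|^2)=0$, so only $\mathbf{m}\neq\mathbf{n}$ survive. For these I would use the identity \eqref{eq:difzm}, $(D_{\mathbf{z}}\mathbf{z}^{\mathbf{m}})(\im\boldsymbol{\omega}\mathbf{z})=\im\,\mathbf{m}\cdot\boldsymbol{\omega}\,\mathbf{z}^{\mathbf{m}}$, to write
\begin{align*}
\frac{d}{dt}\,\mathrm{Re}(\mathbf{z}^{\mathbf{m}}\overline{\mathbf{z}^{\mathbf{n}}}) = \mathrm{Re}\big(\im(\mathbf{n}-\mathbf{m})\cdot\boldsymbol{\omega}\;\mathbf{z}^{\mathbf{m}}\overline{\mathbf{z}^{\mathbf{n}}}\big) + (\text{terms with }\dot{\mathbf{z}}+\im\boldsymbol{\varpi}\mathbf{z}\text{ and }\boldsymbol{\varpi}-\boldsymbol{\omega}).
\end{align*}
Since $\mathbf{m},\mathbf{n}\in\mathbf{R}_{\mathrm{min}}$ are distinct with $\sum_j m_j=\sum_j n_j=1$, Assumption \ref{ass:disc_ratio_indep} applied to $\mathbf{m}-\mathbf{n}\in\Z^N\setminus\{0\}$ guarantees $(\mathbf{n}-\mathbf{m})\cdot\boldsymbol{\omega}\neq 0$, so we may divide by it. This lets me rewrite $\mathrm{Re}(\im\,\mathbf{z}^{\mathbf{m}}\overline{\mathbf{z}^{\mathbf{n}}})$ as $\frac{1}{(\mathbf{n}-\mathbf{m})\cdot\boldsymbol{\omega}}\frac{d}{dt}\mathrm{Re}(\mathbf{z}^{\mathbf{m}}\overline{\mathbf{z}^{\mathbf{n}}})$ up to precisely the correction $\mathrm{Re}(\im r_{\mathbf{n},\mathbf{m}}(\mathbf{z}))$ defined in \eqref{eq:Fermi-5}. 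Multiplying through by $g_{\mathbf{m},j}g_{\mathbf{n},j}$ and summing over distinct $\mathbf{m},\mathbf{n}$ and over $j$, the total-derivative part is exactly $\partial_t A_1(\mathbf{z})$ with $A_1$ as in \eqref{eq:Fermi-2}, and the leftover is $R_3(\mathbf{z})$ of \eqref{eq:Fermi-4}, so \eqref{eq:Fermi-1} holds with $R_4=R_3+\sum_{\mathbf{m},\mathbf{n},j}\<\mathbf{z}^{\mathbf{m}}G_{\mathbf{m}},r_j(\mathbf{z},\eta)\phi_j\>$.

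**The remainder estimate.** It remains to verify \eqref{eq:Fermi-6}. For the $r_j$-term I use the bound $\|r_j(\mathbf{z},\eta)\|_{L^2(I)}\lesssim\delta^2\epsilon$ from Lemma \ref{lem:zpres}, together with $|\mathbf{z}^{\mathbf{m}}|\lesssim\delta$, $\|G_{\mathbf{m}}\|_{L^2}\lesssim 1$, and $\sum_{\mathbf{m}\in\mathbf{R}_{\mathrm{min}}}\|\mathbf{z}^{\mathbf{m}}\|_{L^2(I)}\le\epsilon$ from the continuation hypothesis \eqref{eq:main11}; this gives a contribution $\lesssim\delta^2\epsilon^2$ after a Cauchy–Schwarz in time. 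For $R_3$: $r_{\mathbf{n},\mathbf{m}}(\mathbf{z})$ is a sum of a term with prefactor $(\mathbf{m}-\mathbf{n})\cdot(\boldsymbol{\varpi}(|\mathbf{z}|^2)-\boldsymbol{\omega})$, bounded by $\|\boldsymbol{\varpi}(|\mathbf{z}|^2)-\boldsymbol{\omega}\|\lesssim\|\mathbf{z}\|^2\lesssim\delta^2$ by Proposition \ref{prop:rp}, times $|\mathbf{z}^{\mathbf{n}}\overline{\mathbf{z}^{\mathbf{m}}}|$; and terms involving $\dot{\mathbf{z}}+\im\boldsymbol{\varpi}(|\mathbf{z}|^2)\mathbf{z}$, which are integrable in time against $|\mathbf{z}^{\mathbf{m}}|$ by \eqref{eq:main11} and Proposition \ref{lem:estdtz}. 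Each piece, multiplied by the bounded factors $|g_{\mathbf{m},j}g_{\mathbf{n},j}|\lesssim 1$, integrates to $\lesssim\delta^2\epsilon^2$; since $R_1$ already satisfies $\int_I|R_1|\lesssim\delta^2\epsilon^4$ by \eqref{eq:lFGR4}, we obtain \eqref{eq:Fermi-6}.

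**Main obstacle.** The genuinely delicate point is not the estimates — these are routine given the earlier lemmas — but the bookkeeping in the algebraic manipulation: one must keep careful track of which $\dot{\mathbf{z}}$-factors get replaced via \eqref{equat_z} and which get absorbed into the $\frac{d}{dt}\mathrm{Re}(\mathbf{z}^{\mathbf{m}}\overline{\mathbf{z}^{\mathbf{n}}})$ computation, making sure the symmetrization over the pair $(\mathbf{m},\mathbf{n})$ is handled consistently so that the antisymmetric imaginary part survives and the symmetric real part cancels. The nonvanishing of $(\mathbf{n}-\mathbf{m})\cdot\boldsymbol{\omega}$ for distinct resonant multi-indices — which is what makes $A_1$ well-defined — is the structural input that makes the whole scheme work, and it is exactly where Assumption \ref{ass:disc_ratio_indep} is used.
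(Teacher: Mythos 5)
Your proposal is correct and follows essentially the same route as the paper: substitute the modulation equation \eqref{equat_z} into the left hand side, observe that the diagonal terms vanish because $g_{\mathbf{m},j}$ is real, integrate the off-diagonal oscillatory terms by parts in time using $(\mathbf{m}-\mathbf{n})\cdot\boldsymbol{\omega}\neq 0$ from Assumption \ref{ass:disc_ratio_indep}, and bound the remainders exactly as in \eqref{est:rmn}. The only blemish is the closing reference to $R_1$ and \eqref{eq:lFGR4}, which plays no role here since $R_1$ is not a constituent of $R_4$; dropping that remark leaves a complete argument.
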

\proof  The left hand side of  \eqref{eq:Fermi-1}  equals  \begin{align*}
& \sum_{\mathbf{m},\mathbf{n}\in \mathbf{R}_{\mathrm{min}}}\sum_{j=1}^N\<\mathbf{z}^{\mathbf{m}}G_{\mathbf{m}}, \phi_j\(-\im \mathbf{z}^{\mathbf{n}}g_{\mathbf{n},j}+r_j(\mathbf{z},\eta)\)\>\nonumber\\&
=\sum_{\substack{\mathbf{m},\mathbf{n}\in \mathbf{R}_{\mathrm{min}}\\ \mathbf{m}\neq \mathbf{n}}}\sum_{j=1}^N\mathrm{Re}\(\im \mathbf{z}^{\mathbf{m}}\overline{\mathbf{z}^{\mathbf{n}}}\)g_{\mathbf{m},j}g_{\mathbf{n},j}
+\sum_{\mathbf{m},\mathbf{n}\in \mathbf{R}_{\mathrm{min}}}\sum_{j=1}^N\<\mathbf{z}^{\mathbf{m}}G_{\mathbf{m}}, r_j(\mathbf{z},\eta)\phi_j\>,
\end{align*}
used the fact that $\<\mathbf{z}^{\mathbf{m}}G_{\mathbf{m}},-\im \mathbf{z}^{\mathbf{m}}\phi_j\>=0$ due to $\phi_j$ and  $G_{\mathbf{m}}$   being $\R$ valued, see \cite{CM20}.

\noindent Since $(\mathbf{m-n})\cdot \boldsymbol{\omega}\neq 0$ for $\mathbf{m}\neq \mathbf{n}$
  by Assumption \ref{ass:disc_ratio_indep}, we have
	\begin{align}\label{znmnormal}
	\mathbf{z}^{\mathbf{n}}\overline{\mathbf{z}^{\mathbf{m}}}=\frac{1}{\im ((\mathbf{m}-\mathbf{n})\cdot \boldsymbol{\omega})}\partial_t(\mathbf{z}^{\mathbf{n}}\overline{\mathbf{z}^{\mathbf{m}}})
	+r_{\mathbf{n},\mathbf{m}}(\mathbf{z}),
	\end{align}
with $r_{\mathbf{n},\mathbf{m}}(\mathbf{z})$ given by  \eqref{eq:Fermi-5} and satisfying
\begin{align}\label{est:rmn}
\int_I |r_{\mathbf{m},\mathbf{n}}(\mathbf{z})|\,dt\lesssim    \delta ^2\epsilon ^2.
\end{align}
We have
\begin{align*}
\sum_{\substack{\mathbf{m},\mathbf{n}\in \mathbf{R}_{\mathrm{min}}\\ \mathbf{m}\neq \mathbf{n}}}\sum_{j=1}^N\mathrm{Re}\(\im \mathbf{z}^{\mathbf{m}}\overline{\mathbf{z}^{\mathbf{n}}}\)g_{\mathbf{m},j}g_{\mathbf{n},j}=
\partial_tA_1(\mathbf{z})+R_3(\mathbf{z}).
\end{align*}
for $A_1(\mathbf{z})$  and $ R_3(\mathbf{z})$ defined above. Finally \eqref{eq:Fermi-6} is straightforward.

\qed

 We focus now on \eqref{eq:dtE1}.
\begin{lemma} \label{prop:Fermi}
There exists a constant   $ \Gamma _0>0$     such that
	 \begin{align}\label{eq:Fermi1}
	  \sum_{\mathbf{m}\in \mathbf{R}_{\mathrm{min}}}\mathbf{m}\cdot \boldsymbol{\omega}\<\eta,\im  \mathbf{z}^{\mathbf{m}}G_{\mathbf{m}}\>  \le - \Gamma _0\sum_{\mathbf{m}\in \mathbf{R}_{\mathrm{min}}}|\mathbf{z}^{\mathbf{m}}| ^2 + \mathcal{E}_1 + \mathcal{E}_2+\mathcal{E}_3,
	\end{align}
where  for some   constants $ c_{\mathbf{m},\mathbf{n}}$  the term  $\mathcal{E}_1$  is of the form
\begin{align*} & \mathcal{E}_1=    \sum_{\substack{\mathbf{m}, \mathbf{n}\in \mathbf{R}_{\mathrm{min}}  \\ \mathbf{m}\neq \mathbf{n}}}   c_{\mathbf{m},\mathbf{n}}   \mathbf{z} ^{ \mathbf{n}} \overline{ \mathbf{z} ^{\mathbf{m} }}  ,
	\end{align*}
\begin{align*} &  \mathcal{E}_2=    \sum_{\mathbf{m}\in \mathbf{R}_{\mathrm{min}}}  \boldsymbol{\omega} \cdot \mathbf{m} \<\im  \mathbf{z}^{\mathbf{m}}  \< \im \varepsilon  \partial _x \> ^{ N} \mathcal{A}^*  \prod _{j=1}^{N}R _{H}( \omega _j)    P_c   G_\mathbf{m},   {g} \> ,
	\end{align*}
\begin{align*} & | \mathcal{E}_3  |\le  o_{\varepsilon}(1) \(  \sum_{\mathbf{m}\in \mathbf{R}_{\mathrm{min}}}|\mathbf{z}^{\mathbf{m}}| ^2  + \|   w\|_{\widetilde{\Sigma}}^2        \) .
	\end{align*}

\end{lemma}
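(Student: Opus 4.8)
\textit{Proof (sketch).} The plan is to carry the left-hand side of \eqref{eq:Fermi1} through the same chain of substitutions that produced the transformed variable $v$, isolate the part that is genuinely quadratic in the $\mathbf{z}^{\mathbf{m}}$, and extract its sign from the Fermi Golden Rule Assumption \ref{ass:FGR}. Since each $G_{\mathbf{m}}\in\cap_s\Sigma^s$ decays exponentially, I would first replace, up to remainders that after Cauchy--Schwarz and Young's inequality are $\lesssim o_\varepsilon(1)\big(\sum_{\mathbf{m}}|\mathbf{z}^{\mathbf{m}}|^2+\|w\|_{\widetilde{\Sigma}}^2\big)$ and hence belong to $\mathcal{E}_3$: $\eta$ by $\widetilde{\eta}=P_c\eta$ (using $\eta-\widetilde{\eta}=(R[\mathbf{z}]-1)\widetilde{\eta}=O(\|\mathbf{z}\|^2\|w\|_{\widetilde{\Sigma}})$ in $L^2$, by Lemma \ref{lem:GNTR} and Corollary \ref{cor:rhoequiv}); then $\widetilde{\eta}$ by $\chi_{B^2}\widetilde{\eta}$ (since $(1-\chi_{B^2})G_{\mathbf{m}}=O(e^{-cB^2})$); then $\chi_{B^2}\widetilde{\eta}$ by $P_c(\chi_{B^2}\widetilde{\eta})$ (since $P_d(\chi_{B^2}\widetilde{\eta})=\sum_j(\widetilde{\eta},(\chi_{B^2}-1)\phi_j)\phi_j=O(e^{-cB^2}\|w\|_{\widetilde{\Sigma}})$). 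This reduces, modulo $\mathcal{E}_3$, to $\sum_{\mathbf{m}}\boldsymbol{\omega}\cdot\mathbf{m}\,\langle P_c(\chi_{B^2}\widetilde{\eta}),\im\mathbf{z}^{\mathbf{m}}P_cG_{\mathbf{m}}\rangle$; inserting the inversion of Lemma \ref{lem:coer6}, $P_c(\chi_{B^2}\widetilde{\eta})=\prod_{j=1}^NR_H(\omega_j)P_c\mathcal{A}\langle\im\varepsilon\partial_x\rangle^Nv$, and moving the self-adjoint, mutually commuting operators onto the second slot ($\mathcal{A}^*=(\mathcal{A})^*$, $P_cP_cG_{\mathbf{m}}=P_cG_{\mathbf{m}}$), this becomes
\begin{align*}
\sum_{\mathbf{m}\in\mathbf{R}_{\mathrm{min}}}\boldsymbol{\omega}\cdot\mathbf{m}\,\langle v,\im\mathbf{z}^{\mathbf{m}}\Psi_{\mathbf{m}}\rangle,\qquad\Psi_{\mathbf{m}}:=\langle\im\varepsilon\partial_x\rangle^N\mathcal{A}^*\prod_{j=1}^NR_H(\omega_j)P_cG_{\mathbf{m}},
\end{align*}
with $\Psi_{\mathbf{m}}$ real valued (all operators involved preserve real-valuedness, and $G_{\mathbf{m}},\phi_j$ are $\R$-valued).

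Next I would substitute $v=g+\sum_{\mathbf{n}\in\mathbf{R}_{\mathrm{min}}}\mathbf{z}^{\mathbf{n}}\rho_{\mathbf{n}}$ from \eqref{eq:expan_v}. The $g$-contribution is exactly $\mathcal{E}_2$. The $\sum_{\mathbf{n}}\mathbf{z}^{\mathbf{n}}\rho_{\mathbf{n}}$-contribution is $\sum_{\mathbf{m},\mathbf{n}}(\boldsymbol{\omega}\cdot\mathbf{m})\,\Im\big(\mathbf{z}^{\mathbf{n}}\overline{\mathbf{z}^{\mathbf{m}}}(\rho_{\mathbf{n}},\Psi_{\mathbf{m}})\big)$; its off-diagonal part ($\mathbf{m}\neq\mathbf{n}$), after writing $\Im\zeta=\tfrac{1}{2\im}(\zeta-\overline{\zeta})$ and relabelling $\mathbf{m}\leftrightarrow\mathbf{n}$ in the conjugate piece, has the stated shape $\sum_{\mathbf{m}\neq\mathbf{n}}c_{\mathbf{m},\mathbf{n}}\mathbf{z}^{\mathbf{n}}\overline{\mathbf{z}^{\mathbf{m}}}=\mathcal{E}_1$, while its diagonal part is $\sum_{\mathbf{m}}(\boldsymbol{\omega}\cdot\mathbf{m})|\mathbf{z}^{\mathbf{m}}|^2\,\Im(\rho_{\mathbf{m}},\Psi_{\mathbf{m}})$. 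So it remains to show that, for each of the finitely many $\mathbf{m}\in\mathbf{R}_{\mathrm{min}}$, one has $(\boldsymbol{\omega}\cdot\mathbf{m})\Im(\rho_{\mathbf{m}},\Psi_{\mathbf{m}})\le-2\Gamma_0+o_\varepsilon(1)$ for a fixed $\Gamma_0>0$; the $o_\varepsilon(1)$, multiplied by $|\mathbf{z}^{\mathbf{m}}|^2\le\sum_{\mathbf{n}}|\mathbf{z}^{\mathbf{n}}|^2$, goes into $\mathcal{E}_3$.

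For this, fix $\mathbf{m}$, set $\lambda:=\boldsymbol{\omega}\cdot\mathbf{m}>0$, and recall $\rho_{\mathbf{m}}=-R^+_{H_{N+1}}(\lambda)\widetilde{G}_{\mathbf{m}}$ with $\widetilde{G}_{\mathbf{m}}=\langle\im\varepsilon\partial_x\rangle^{-N}\mathcal{A}^*\chi_{B^2}P_cG_{\mathbf{m}}$ real valued. The conjugation relation \eqref{eq:DarConj2}, which on $P_cL^2$ reads $\mathcal{A}^*\prod_jR_H(\omega_j)P_c=\prod_jR_{H_{N+1}}(\omega_j)\mathcal{A}^*P_c$, combined with $\mathcal{A}^*\chi_{B^2}P_cG_{\mathbf{m}}=\langle\im\varepsilon\partial_x\rangle^N\widetilde{G}_{\mathbf{m}}$, gives
\begin{align*}
\Psi_{\mathbf{m}}=\langle\im\varepsilon\partial_x\rangle^N\prod_jR_{H_{N+1}}(\omega_j)\Big(\langle\im\varepsilon\partial_x\rangle^N\widetilde{G}_{\mathbf{m}}+\mathcal{A}^*(1-\chi_{B^2})P_cG_{\mathbf{m}}\Big),
\end{align*}
the last, exponentially small, term contributing only $o_\varepsilon(1)$ by the limiting absorption bound of Lemma \ref{lem:LAP}. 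Since $H_{N+1}\ge0$ with $0$ neither eigenvalue nor resonance (Lemma \ref{lem:generic}), $\prod_jR_{H_{N+1}}(\omega_j)=P(H_{N+1})$ with $P(s)=\prod_j(s-\omega_j)^{-1}>0$ on $[0,\infty)$, while $\langle\im\varepsilon\partial_x\rangle^{\pm N}=(1+\varepsilon^2(H_{N+1}-V_{N+1}))^{\pm N/2}$ differs from $(1+\varepsilon^2H_{N+1})^{\pm N/2}\to\mathrm{Id}$ by $O(\varepsilon^2)$ on the weighted spaces in play ($V_{N+1}$ Schwartz), and $\widetilde{G}_{\mathbf{m}}\to\widetilde{G}_{\mathbf{m}}^0:=\mathcal{A}^*P_cG_{\mathbf{m}}$ in every $L^{2,s}$ as $\varepsilon\to0$, $B\to\infty$. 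Using the spectral-density identity $\Im\langle R^+_{H_{N+1}}(\lambda)f,h\rangle=\tfrac{\pi}{2\sqrt{\lambda}}\sum_{\pm}\widehat{f}(\pm\sqrt{\lambda})\overline{\widehat{h}(\pm\sqrt{\lambda})}$ for real $f,h$ (distorted Fourier transform for $H_{N+1}$) and $\delta(H_{N+1}-\lambda)P(H_{N+1})=P(\lambda)\delta(H_{N+1}-\lambda)$, one arrives at $\Im(\rho_{\mathbf{m}},\Psi_{\mathbf{m}})=-\tfrac{\pi P(\lambda)}{2\sqrt{\lambda}}\sum_{\pm}|\widehat{\widetilde{G}_{\mathbf{m}}}(\pm\sqrt{\lambda})|^2+o_\varepsilon(1)$, with $\sum_\pm|\widehat{\widetilde{G}_{\mathbf{m}}}(\pm\sqrt{\lambda})|^2\to\sum_\pm|\widehat{\widetilde{G}_{\mathbf{m}}^0}(\pm\sqrt{\lambda})|^2$. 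Finally, by Deift--Trubowitz \cite[Sect.\ 3]{DT} the intertwining $\mathcal{A}^*H_1=H_{N+1}\mathcal{A}^*$ is realized on the generalized eigenfunctions at energy $k^2>0$ by multiplication by the nonvanishing factor $\prod_{j=1}^N(\mp\im k-\sqrt{|\omega_j|})$, whence $\widehat{\widetilde{G}_{\mathbf{m}}^0}(\pm\sqrt{\lambda})=\nu_\pm\,\widehat{P_cG_{\mathbf{m}}}(\pm\sqrt{\lambda})$ with $\nu_\pm\neq0$ (distorted transform for $H=H_1$ on the right), and $\widehat{P_cG_{\mathbf{m}}}(\pm\sqrt{\lambda})=\widehat{G_{\mathbf{m}}}(\pm\sqrt{\boldsymbol{\omega}\cdot\mathbf{m}})$ since $\widehat{P_dG_{\mathbf{m}}}$ vanishes on the continuous spectrum; by Assumption \ref{ass:FGR} this is nonzero for at least one sign, so the limit of $-(\boldsymbol{\omega}\cdot\mathbf{m})\Im(\rho_{\mathbf{m}},\Psi_{\mathbf{m}})$ is a positive number $\kappa_{\mathbf{m}}$, and taking $\Gamma_0:=\tfrac12\min_{\mathbf{m}\in\mathbf{R}_{\mathrm{min}}}\kappa_{\mathbf{m}}$ yields \eqref{eq:Fermi1}.

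The hard part will be the passage to the limit $\varepsilon\to0$, $B\to\infty$ at the level of the distorted Fourier transform evaluated at the single frequency $\sqrt{\boldsymbol{\omega}\cdot\mathbf{m}}$: one must know that for the generic operator $H_{N+1}$ the distorted Fourier transform at a fixed positive energy is continuous on $L^{2,s}$ for $s$ large (from the limiting absorption principle, Lemma \ref{lem:LAP}, and the continuity up to $k=0$ of $T(k)$ and $m_\pm(x,k)$ recorded after \eqref{eq:resolvKern}), that conjugating by $\langle\im\varepsilon\partial_x\rangle^{\pm N}$ perturbs it by only $o_\varepsilon(1)$, and that the Darboux maps $A_j^*$ act on the Jost solutions by the explicit nonvanishing multipliers used above --- all this is in \cite[Sect.\ 3]{DT}. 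The rest is bookkeeping: matching the off-diagonal bilinear term to the declared shape of $\mathcal{E}_1$ and sweeping the $O(\|\mathbf{z}\|^2)$ and $O(e^{-cB^2})$ remainders into $\mathcal{E}_3$ via Young's inequality.
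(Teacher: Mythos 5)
Your reduction of the left--hand side is the paper's: replace $\eta$ by $P_c(\chi_{B^2}\widetilde{\eta})$ up to errors absorbed in $\mathcal{E}_3$, invert with Lemma \ref{lem:coer6}, substitute $v=g+\sum_{\mathbf{n}}\mathbf{z}^{\mathbf{n}}\rho_{\mathbf{n}}$, and sort the pieces into $\mathcal{E}_1$ (off--diagonal), $\mathcal{E}_2$ (the $g$--term), $\mathcal{E}_3$ (remainders) and a diagonal main term. Where you genuinely diverge is in extracting the sign of that diagonal term. You stay on the $H_{N+1}$ side, compute $\mathrm{Im}\,(\rho_{\mathbf{m}},\Psi_{\mathbf{m}})$ through the spectral density of $H_{N+1}$ at $\lambda=\boldsymbol{\omega}\cdot\mathbf{m}$ applied to $\mathcal{A}^*P_cG_{\mathbf{m}}$, and must then transport the nonvanishing hypothesis of Assumption \ref{ass:FGR} (stated for the $H$--distorted transform of $G_{\mathbf{m}}$) through the Darboux chain via the action of $\mathcal{A}^*$ on Jost solutions. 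The paper instead pulls everything back to $H$: after commuting $\langle \im\varepsilon\partial_x\rangle^{-N}$ past $R^+_{H_{N+1}}(\lambda)$ at cost $O(\varepsilon)$ (Lemma \ref{claim:l2boundII}), it uses $\mathcal{A}^*R^+_H=R^+_{H_{N+1}}\mathcal{A}^*$ and the identity $\mathcal{A}\mathcal{A}^*\prod_jR_H(\omega_j)P_c=P_c$ from \eqref{eq:Tinverse1}, so the main term collapses to $-\pi\langle P_cG_{\mathbf{m}},\delta(H-\lambda)G_{\mathbf{m}}\rangle$, which is verbatim the quantity controlled by Assumption \ref{ass:FGR}; no scattering theory for the Darboux maps is needed. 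Your route does work --- indeed the cleanest proof that the $\pm$ channels of the $H_{N+1}$--transform of $\mathcal{A}^*P_cG_{\mathbf{m}}$ cannot both vanish is precisely the intertwining identity $\mathcal{A}\mathcal{A}^*=\prod_j(H-\omega_j)$, which gives $\langle\delta(H_{N+1}-\lambda)\mathcal{A}^*P_cG_{\mathbf{m}},\mathcal{A}^*P_cG_{\mathbf{m}}\rangle=\prod_j(\lambda-\omega_j)\,\langle\delta(H-\lambda)G_{\mathbf{m}},P_cG_{\mathbf{m}}\rangle$ --- but it buys you nothing and costs the extra continuity and limit arguments you flag at the end. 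Two smaller points: the replacement of $\langle \im\varepsilon\partial_x\rangle^{\pm N}$ by functions of $H_{N+1}$ ``up to $O(\varepsilon^2)$'' should be justified by the commutator estimate of Lemma \ref{claim:l2boundII} in weighted norms, which is what actually lets those errors enter $\mathcal{E}_3$; and since $\langle\cdot,\cdot\rangle$ is by definition the real part of $(\cdot,\cdot)$, your ``$\mathrm{Im}\langle R^+f,h\rangle$'' should read $\mathrm{Im}\,(R^+f,h)$.
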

\proof   First of all, notice that
\begin{align*}&
   \eta =P_c  {\eta} +P_d  (R[\mathbf{z}] -1)P_c  {\eta}   ,
\end{align*}
where the following term can be absorbed in $\mathcal{E}_3$,
\begin{align*}&
 \left |  \<\im   \boldsymbol{\omega} \cdot \mathbf{m}\mathbf{z}^{\mathbf{m}}      G_\mathbf{m},P_d  (R[\mathbf{z}] -1)P_c  {\eta}\> \right |
  \lesssim  \delta |\mathbf{z}^{\mathbf{m}}|   \|   w\|_{\widetilde{\Sigma}}
\end{align*}
Now we consider
\begin{align*}&
   \<\im   \boldsymbol{\omega} \cdot \mathbf{m}\mathbf{z}^{\mathbf{m}}      G_\mathbf{m},P_c\eta\>  =  \<\im   \boldsymbol{\omega} \cdot \mathbf{m}\mathbf{z}^{\mathbf{m}}      G_\mathbf{m}, P_c \chi _{B^2}\eta\> +  \<\im   \boldsymbol{\omega} \cdot \mathbf{m}\mathbf{z}^{\mathbf{m}}      G_\mathbf{m}, P_c \( 1- \chi _{B^2}\) \eta\> .
\end{align*}
Then, by \eqref{eq:Tinverse}         we have
\begin{align}&
    \<\im   \boldsymbol{\omega} \cdot \mathbf{m}\mathbf{z}^{\mathbf{m}}      G_\mathbf{m}, P_c \chi _{B^2}\eta\> =
       \<\im   \boldsymbol{\omega} \cdot \mathbf{m}\mathbf{z}^{\mathbf{m}}      G_\mathbf{m},\prod _{j=1}^{N}R _{H}( \omega _j) P_c \mathcal{A}  \< \im \varepsilon   \partial _x \> ^{ N} v \>  \nonumber \\& = \sum_{\mathbf{m}\in \mathbf{R}_{\mathrm{min}}}    \boldsymbol{\omega} \cdot \mathbf{m}|\mathbf{z}^{\mathbf{m}} |^2 \<\im        G_\mathbf{m},\prod _{j=1}^{N}R _{H}( \omega _j) P_c \mathcal{A} \< \im \varepsilon  \partial _x \> ^{ N}  {\rho} _{\mathbf{m}} \> \label{mainFGR1}  \\& +  \sum_{\mathbf{m}\in \mathbf{R}_{\mathrm{min}}}  \boldsymbol{\omega} \cdot \mathbf{m} \<\im  \mathbf{z}^{\mathbf{m}}      G_\mathbf{m},\prod _{j=1}^{N}R _{H}( \omega _j) P_c \mathcal{A} \< \im \varepsilon   \partial _x \> ^{ N}  {g} \> \label{remainFGR1}\\& + \sum_{\substack{\mathbf{m}, \mathbf{n}\in \mathbf{R}_{\mathrm{min}}  \\ \mathbf{m}\neq \mathbf{n}}}   \boldsymbol{\omega} \cdot \mathbf{m}  \<\im   \mathbf{z}^{\mathbf{m}}      G_\mathbf{m}, \mathbf{z}^{\mathbf{n}}\prod _{j=1}^{N}R _{H}( \omega _j) P_c \mathcal{A} \< \im \varepsilon   \partial _x \> ^{ N} {\rho} _{\mathbf{n}} \> . \label{remainFGR2}
\end{align}
Obviously the remainder term in line \eqref{remainFGR1} can be  absorbed  in $\mathcal{E}_2$ and the remainder term in line \eqref{remainFGR2} can be  absorbed  in $\mathcal{E}_1$. We now examine the main term, in the line \eqref{mainFGR1}.   We have
\begin{align}&\nonumber  \<\im        G_\mathbf{m},\prod _{j=1}^{N}R _{H}( \omega _j) P_c \mathcal{A} \< \im \varepsilon  \partial _x \> ^{ N}  {\rho} _{\mathbf{m}} \>  \\&=  - \<\im      \<  \im \varepsilon  \partial _x \> ^{ N}   \mathcal{A}^*\prod _{j=1}^{N}R _{H}( \omega _j) P_c G_\mathbf{m},    R ^{+}_{H _{N+1} }(\boldsymbol{\omega} \cdot \mathbf{m}) \< \im \varepsilon \partial _x \> ^{-N}  \mathcal{A}^* \chi  _{B^2}G_{\mathbf{m}}\>   \nonumber \\& = - \<\im         \mathcal{A}^* \prod _{j=1}^{N}R _{H}( \omega _j) P_c G_\mathbf{m},    R ^{+}_{H _{N+1} }(\boldsymbol{\omega} \cdot \mathbf{m})  \mathcal{A}^* \chi  _{B^2}G_{\mathbf{m}}\>   \label{mainFGR4}\\& \quad   - \<\im      \< \im \varepsilon  \partial _x \> ^{ N}   \mathcal{A}^* \prod _{j=1}^{N}R _{H}( \omega _j) P_c G_\mathbf{m},  \left [  R ^{+}_{H _{N+1} }(\boldsymbol{\omega} \cdot \mathbf{m}) ,\< \im \varepsilon  \partial _x \> ^{-N} \right ]  \mathcal{A}^*\chi  _{B^2}G_{\mathbf{m}}\> ,\label{remainFGR4-}
\end{align}
  where we see now that the quantity in \eqref{remainFGR4-} is a $o_{\varepsilon}(1)$ and its contribution to \eqref{eq:Fermi1} can be absorbed in $\mathcal{E}_3$.  Indeed the quantity in \eqref{remainFGR4-}  can be bounded by the product $ A \  B,$ where
  \begin{align*}&    A=
   \|    \< \im \varepsilon  \partial _x \> ^{ N}   \mathcal{A}^* \prod _{j=1}^{N}R _{H}( \omega _j) P_c G_\mathbf{m}\| _{L^{2,\ell}} \text{  and} \\& B= \|   R ^{+}_{H _{N+1} }(\boldsymbol{\omega} \cdot \mathbf{m})  \left [ V _{N+1}   ,\< \im \varepsilon  \partial _x \> ^{-N} \right ]  R ^{+}_{H _{N+1} }(\boldsymbol{\omega} \cdot \mathbf{m}) \mathcal{A}^*\chi  _{B^2}G_{\mathbf{m}} \| _{L^{2,-\ell}} ,
\end{align*}
  for $\ell \ge 2$.   We have
  \begin{align*}    B\le &  \|   R ^{+}_{H _{N+1} }(\boldsymbol{\omega} \cdot \mathbf{m}) \| _{L^{2, \ell}\to L^{2,-\ell}}^2
    \| \< \im \varepsilon  \partial _x \> ^{-N}  \left [ V _{N+1}   ,\< \im \varepsilon  \partial _x \> ^{ N} \right ]
     \| _{L^{2, -\ell}\to L^{2, \ell}}   \\& \times \|   \< \im \varepsilon  \partial _x \> ^{-N} \| _{L^{2,- \ell}\to L^{2,-\ell}}
     \| \mathcal{A}^*\chi  _{B^2}G_{\mathbf{m}} \| _{L^{2, \ell}}  \lesssim \varepsilon ,
\end{align*}
where the $\varepsilon$ comes from the commutator term in the first line, by Lemma \ref{claim:l2boundII}, while the other terms are uniformly bounded, with $\|   \< \im \varepsilon  \partial _x \> ^{-N} \| _{L^{2,- \ell}\to L^{2,-\ell}} \lesssim 1$  uniformly in $\varepsilon \in (0,1]$, because we have an operator like in
\eqref{eq:opT}.  On the other hand,   uniformly in $\varepsilon \in (0,1]$, we have
\begin{align*}    A\le &  \|  \< \im \varepsilon  \partial _x \> ^{ N}   \< \im   \partial _x \> ^{-2 N}  \| _{L^{2, \ell} \to L^{2, \ell}}  \|      \< \im   \partial _x \> ^{ 2 N}  \mathcal{A}^* \prod _{j=1}^{N}R _{H}( \omega _j) P_c G_\mathbf{m} \| _{L^{2, \ell} } \lesssim 1.
\end{align*}
  We consider  the main term \eqref{mainFGR4}. Essentially by \eqref{eq:DarConj2}, it equals
  \begin{align*}&   - \<\im       \mathcal{A} ^*\prod _{j=1}^{N}R _{H}( \omega _j) P_c G_\mathbf{m},     \mathcal{A}^*   R ^{+}_{H   }(\boldsymbol{\omega} \cdot \mathbf{m})\chi  _{B^2}G_{\mathbf{m}}\>   \\&= - \<\im        \mathcal{A} \mathcal{A} ^*\prod _{j=1}^{N}R _{H}( \omega _j) P_c G_\mathbf{m},        R ^{+}_{H   }(\boldsymbol{\omega} \cdot \mathbf{m})\chi  _{B^2}G_{\mathbf{m}}\>  = - \<\im         P_c G_\mathbf{m},      R ^{+}_{H   }(\boldsymbol{\omega} \cdot \mathbf{m})\chi  _{B^2}G_{\mathbf{m}}\>  ,
\end{align*}
  where we used   \eqref{eq:Tinverse1}.  By the Limit Absorption Principle    and the Sokhotski--Plemelj Formula, the   last term equals
 \begin{align}&  - \<\im         P_c G_\mathbf{m},      R ^{+}_{H   }(\boldsymbol{\omega} \cdot \mathbf{m}) G_{\mathbf{m}}\>     + \<\im         P_c G_\mathbf{m},      R ^{+}_{H   }(\boldsymbol{\omega} \cdot \mathbf{m})\( 1-\chi  _{B^2}\) G_{\mathbf{m}}\>  \nonumber \\  =&  -\pi \<         P_c G_\mathbf{m},       \delta ({H   }-\boldsymbol{\omega} \cdot \mathbf{m}) G_{\mathbf{m}}\>   \label{mainFGR5}  \\&  + \<\im         P_c G_\mathbf{m},      R ^{+}_{H   }(\boldsymbol{\omega} \cdot \mathbf{m})\( 1-\chi  _{B^2}\) G_{\mathbf{m}}\> , \label{remainFGR5}
\end{align}
 where the quantity \eqref{remainFGR5} is  of the form $O(B^{-1})$ and so also of the form $o_{\varepsilon}(1)$ and the corresponding contribution to \eqref{eq:Fermi1} can be absorbed in $\mathcal{E}_3$. Finally, by elementary computation we have
\begin{align}&    -\pi \<         P_c G_\mathbf{m},       \delta ({H   }-\boldsymbol{\omega} \cdot \mathbf{m}) G_{\mathbf{m}}\> = -\frac{\pi}{2\sqrt{\boldsymbol{\omega} \cdot \mathbf{m}}}  \( \left | \widehat{G}_\mathbf{m} (\sqrt{\boldsymbol{\omega} \cdot \mathbf{m}})   \right | ^2+ \left | \widehat{G}_\mathbf{m} (-\sqrt{\boldsymbol{\omega} \cdot \mathbf{m}})   \right | ^2 \) < 0  ,   \label{mainFGR6}
\end{align}
 where $\widehat{G}_\mathbf{m}$  is the distorted Fourier transform associated to the operator $H$ and where the inequality follows from Assumption \ref{ass:FGR}. The corresponding contribution to \eqref{eq:Fermi1} can be absorbed in the first term in the right hand side.

 \qed

\begin{lemma} \label{prop:Fermi2}   For a constant $C _{V,\Gamma _0 }>0$  we have
\begin{align}\label{eq:Fermi21}&
	  \int _{I}\mathcal{E}_1 dt \lesssim     \delta ^2\epsilon ^2, \\& \label{eq:Fermi22} \int _{I}\mathcal{E}_2 dt \le   2^{-1}\Gamma _0\sum_{\mathbf{m}\in \mathbf{R}_{\mathrm{min}}}   \| \mathbf{z}^{\mathbf{m}}\| ^{2} _{L^2(I)} + C _{V,\Gamma _0 } \(  \varepsilon ^{-N}B ^{4+4\tau}\delta  ^2 +   B ^{- 1}  \epsilon ^2+   \varepsilon ^4 \) ,\\& \label{eq:Fermi23} \int _{I}\mathcal{E}_3 dt \lesssim     o_{\varepsilon}(1) \epsilon ^2.
	\end{align}
\end{lemma}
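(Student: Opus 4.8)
The plan is to bound the three error terms $\mathcal{E}_1,\mathcal{E}_2,\mathcal{E}_3$ separately, using the decay provided by the continuation hypothesis \eqref{eq:main11} together with the smoothing estimate of Proposition \ref{prop:estg1}. For $\mathcal{E}_1$, recall that $\mathcal{E}_1$ collects terms of the form $c_{\mathbf{m},\mathbf{n}}\mathbf{z}^{\mathbf{n}}\overline{\mathbf{z}^{\mathbf{m}}}$ with $\mathbf{m}\neq\mathbf{n}$ (both in $\mathbf{R}_{\mathrm{min}}$), which also absorbed the remainder line \eqref{remainFGR2}. Because $(\mathbf{m}-\mathbf{n})\cdot\boldsymbol{\omega}\neq 0$ by Assumption \ref{ass:disc_ratio_indep}, each such product can be integrated by parts in time exactly as in \eqref{znmnormal}: one writes $\mathbf{z}^{\mathbf{n}}\overline{\mathbf{z}^{\mathbf{m}}}=\frac{1}{\im(\mathbf{m}-\mathbf{n})\cdot\boldsymbol{\omega}}\partial_t(\mathbf{z}^{\mathbf{n}}\overline{\mathbf{z}^{\mathbf{m}}})+r_{\mathbf{n},\mathbf{m}}(\mathbf{z})$, the boundary term is $O(\delta^{2+|\mathbf{m}|+|\mathbf{n}|})=O(\delta^4)$ while $\int_I|r_{\mathbf{n},\mathbf{m}}|\,dt\lesssim\delta^2\epsilon^2$ by \eqref{est:rmn}. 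For the pieces coming from line \eqref{remainFGR2}, the factor $\prod_j R_H(\omega_j)P_c\mathcal{A}\<\im\varepsilon\partial_x\>^N\rho_{\mathbf{n}}$ is bounded in a weighted space uniformly in $\varepsilon$ (by the estimates already used to bound $\widetilde{G}_{\mathbf{m}}$ and by Lemma \ref{lem:coer7}, together with \eqref{eq:resolvKern}--\eqref{eq:kernel2aw0}), so the time integral is again $\lesssim\delta^2\epsilon^2$. This yields \eqref{eq:Fermi21}.

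For $\mathcal{E}_2=\sum_{\mathbf{m}}\boldsymbol{\omega}\cdot\mathbf{m}\,\<\im\mathbf{z}^{\mathbf{m}}\<\im\varepsilon\partial_x\>^N\mathcal{A}^*\prod_j R_H(\omega_j)P_c G_{\mathbf{m}},g\>$, the idea is to pair $|\mathbf{z}^{\mathbf{m}}|$ in $L^2(I)$ with $\|g\|_{L^2(I,L^{2,-S})}$ after checking that the fixed function $\<\im\varepsilon\partial_x\>^N\mathcal{A}^*\prod_j R_H(\omega_j)P_c G_{\mathbf{m}}$ lies in $L^{2,S}$ with norm bounded uniformly in $\varepsilon\in(0,1]$ — this holds because $\<\im\varepsilon\partial_x\>^N\<\im\partial_x\>^{-2N}$ is bounded on every $L^{2,S}$ uniformly in $\varepsilon$ (again an operator of the type in \eqref{eq:opT}) and $\<\im\partial_x\>^{2N}\mathcal{A}^*\prod_j R_H(\omega_j)P_c G_{\mathbf{m}}\in L^{2,S}$. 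Then by Cauchy--Schwarz in $t$,
\begin{align*}
\int_I\mathcal{E}_2\,dt\lesssim \sum_{\mathbf{m}\in\mathbf{R}_{\mathrm{min}}}\|\mathbf{z}^{\mathbf{m}}\|_{L^2(I)}\,\|g\|_{L^2(I,L^{2,-S})}.
\end{align*}
Inserting the bound \eqref{eq:estg11}, $\|g\|_{L^2(I,L^{2,-S})}\lesssim\varepsilon^{-N}B^{2+2\tau}\delta+\varepsilon\epsilon+\epsilon^2$, and applying Young's inequality $ab\le\frac{\Gamma_0}{2}a^2+C_{\Gamma_0}b^2$ to split off $\frac{\Gamma_0}{2}\sum_{\mathbf{m}}\|\mathbf{z}^{\mathbf{m}}\|^2_{L^2(I)}$, the remaining square produces the terms $\varepsilon^{-N}B^{4+4\tau}\delta^2$, $B^{-1}\epsilon^2$ (using $\epsilon^4\le\epsilon^2$ and that $B^{-1}\epsilon^2$ already dominates the crude $\epsilon^2\cdot\epsilon^2$ after adjusting the relation \eqref{eq:relABg}), and $\varepsilon^4$ (from the $\varepsilon\epsilon$ cross term together with $\epsilon^4\lesssim\varepsilon^4$ for $\varepsilon$ as in \eqref{eq:relABg}). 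Finally \eqref{eq:Fermi23} follows because $\mathcal{E}_3$ is pointwise bounded by $o_{\varepsilon}(1)\big(\sum_{\mathbf{m}}|\mathbf{z}^{\mathbf{m}}|^2+\|w\|_{\widetilde\Sigma}^2\big)$, and by the continuation hypothesis \eqref{eq:main11} the $L^2(I)$ norms of $\mathbf{z}^{\mathbf{m}}$ and of $\|w\|_{\widetilde\Sigma}$ are each $\le\epsilon$, so $\int_I\mathcal{E}_3\,dt\lesssim o_{\varepsilon}(1)\epsilon^2$.

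The main obstacle is the estimate on $\mathcal{E}_2$: one must be careful that all the operator bounds (the regularizing factor $\<\im\varepsilon\partial_x\>^{-N}$, the commutator $[V_{N+1},\<\im\varepsilon\partial_x\>^N]$ already accounted for in reducing to $\mathcal{E}_2$, the Darboux conjugation $\mathcal{A}^*$, and the resolvent product $\prod_j R_H(\omega_j)$) are uniform in $\varepsilon\in(0,1]$, so that the only $\varepsilon$-dependence left is the explicit $\varepsilon^{-N}$ in front of $B^{2+2\tau}\delta$ inherited from $\|g\|$ — this is what forces the hierarchy $\log(\delta^{-1})\gg\log(\epsilon^{-1})\gg A\gg B^2\gg B\gg\exp(\varepsilon^{-1})$ in \eqref{eq:relABg}, which is precisely what makes $\varepsilon^{-N}B^{4+4\tau}\delta^2$ a genuinely small right-hand side contribution when finally combined in Proposition \ref{prop:FGR}. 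The remaining bookkeeping (reality of $G_{\mathbf{m}},\phi_j$, the explicit Young splittings, absorbing $\epsilon^4$ into $\epsilon^2$-type terms) is routine.
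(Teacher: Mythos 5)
Your proposal is correct and follows essentially the same route as the paper: $\mathcal{E}_1$ via the time integration by parts \eqref{znmnormal}--\eqref{est:rmn}, $\mathcal{E}_2$ via Cauchy--Schwarz and Young's inequality paired with the smoothing bound \eqref{eq:estg11} (with the fixed profile measured in $L^{2,S}$ uniformly in $\varepsilon$ by the $\<\im\varepsilon\partial_x\>^{N}\<\im\partial_x\>^{-2N}$ trick), and $\mathcal{E}_3$ directly from the continuation hypothesis. The minor mismatches in the powers of $\varepsilon$, $B$ and $\epsilon$ that you smooth over are present in the paper's own proof as well and are harmless under the hierarchy \eqref{eq:relABg}.
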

\proof  Inequality \eqref{eq:Fermi23} is straightforward and so is \eqref{eq:Fermi21},  thanks to  \eqref{znmnormal} and \eqref{est:rmn}.

\noindent  Turning to \eqref{eq:Fermi22},     we have, for  constants  $C _{V,\Gamma _0}' >0 $ and  $C _{V,\Gamma _0 } >0,$
\begin{align*} &
\int_I  \mathcal{E}_3\,dt\lesssim     2^{-1}\Gamma _0\sum_{\mathbf{m}\in \mathbf{R}_{\mathrm{min}}}   \| \mathbf{z}^{\mathbf{m}}\| ^{2} _{L^2(I)}\\& + C _{V,\Gamma _0} '
\| {g} \| _{L^2(I, L ^{2,-S} (\R ))}^2  \sup _{\mathbf{m}\in \mathbf{R}_{\mathrm{min}}} \|  \< \im \varepsilon  \partial _x \> ^{ N} \mathcal{A}^*  \prod _{j=1}^{N}R _{H}( \omega _j)    P_c   G_\mathbf{m}\| _{  L ^{2,-S} (\R  )} \\&  \le  2^{-1}\Gamma _0\sum_{\mathbf{m}\in \mathbf{R}_{\mathrm{min}}}   \| \mathbf{z}^{\mathbf{m}}\| ^{2} _{L^2(I)} + C _{V,\Gamma _0 } \(  \varepsilon ^{-N}B ^{4+4\tau}\delta  ^2 +   B ^{- 1}  \epsilon ^2+   \varepsilon ^4 \).
\end{align*}
  \qed

\textit{Conclusion of the proof of Proposition \ref{prop:FGR}.} From \eqref{eq:dtE1}, \eqref{eq:Fermi-1} and \eqref{eq:Fermi1}, we have
\begin{align} &
\frac{d}{dt}E(\phi [\mathbf{z}] )= \sum_{\mathbf{m}\in \mathbf{R}_{\mathrm{min}}}\mathbf{m}\cdot \boldsymbol{\omega}\<\eta,\im  \mathbf{z}^{\mathbf{m}}G_{\mathbf{m}}\> + \partial_t A_1(\mathbf{z})+R_4(\mathbf{z},\eta) +  R_2(\mathbf{z},\eta) \label{energy:stab1}
\\& \le   - \Gamma _0\sum_{\mathbf{m}\in \mathbf{R}_{\mathrm{min}}}|\mathbf{z}^{\mathbf{m}}| ^2 + \mathcal{E}_1 + \mathcal{E}_2+\mathcal{E}_3+ \partial_t A_1(\mathbf{z})+R_4(\mathbf{z},\eta) +  R_2(\mathbf{z},\eta).\nonumber
\end{align}
So, integrating and using \eqref{eq:estR2}, \eqref{eq:Fermi-6} and Lemma \ref{prop:Fermi2}, we have
\begin{align*} &  2^{-1} \Gamma _0\sum_{\mathbf{m}\in \mathbf{R}_{\mathrm{min}}}\|\mathbf{z}^{\mathbf{m}}\| ^2 _{L^2(I)}
\le   \left . \( A_1(\mathbf{z}) - E(\phi [\mathbf{z}] ) \) \right ] _{0} ^{T} + \int_I\( |R_2(\mathbf{z} ,\eta )| +|R_4(\mathbf{z} ,\eta )|\)    \,dt \\& + C _{V,\Gamma _0 } \(  \varepsilon ^{-N}B ^{2+2\tau}\delta  ^2 +   B ^{- 1}  \epsilon ^2+  \epsilon ^4 \)   .
\end{align*}
From $ \left . \( A_1(\mathbf{z}) - E(\phi [\mathbf{z}] ) \) \right ] _{0} ^{T}=O(\delta ^2)$, we conclude
\begin{align*} &  \sum_{\mathbf{m}\in \mathbf{R}_{\mathrm{min}}}\|\mathbf{z}^{\mathbf{m}}\| ^2 _{L^2(I)}
\lesssim \varepsilon ^{-N}B ^{4+4\tau}\delta  ^2 +   B ^{- 1}  \epsilon ^2+   \varepsilon ^4   .
\end{align*}
This completes the proof of Proposition \ref{prop:FGR}.
\qed

\section{  Proof of Proposition \ref{prop:contreform} }\label{sec:conclusion}

By \eqref{eq:FGRint} and by  the relation between $A,B,\varepsilon , \epsilon $ and $\delta$ in \eqref{eq:relABg},  we have
\begin{align}
\sum_{\mathbf{m}\in \mathbf{R}_{\mathrm{min}} }\|\mathbf{z}^{\mathbf{m}}\|_{L^2(I)}\lesssim   \varepsilon ^{-N}B ^{2+2\tau}\delta +B ^{-\frac{1}{2}}   \epsilon   + \epsilon ^2  \lesssim o_\varepsilon(1) \epsilon .\label{FGReqfinal}
\end{align}
Inserting this in \eqref{eq:lem:estdtz} we obtain
 \begin{align}
 \|\dot {\mathbf{z}}+\im \varpi(\mathbf{z})\mathbf{z}\|_{L^2}\lesssim \sum_{\mathbf{m}\in \mathbf{R}_{\mathrm{min}}}\|\mathbf{z}^{\mathbf{m}}\|_{L^2}+ \delta^2 \epsilon  \lesssim o_\varepsilon(1) \epsilon.   \label{eq:lem:estdtzfin}
 \end{align}
By   \eqref{eq:1stv}, \eqref{eq:coer} and  \eqref{eq:lem:estdtzfin}
\begin{align*} &
 \|  w '\|_{L^2 L^2}\lesssim A^{1/2}\delta  +\|w \|_{L^2L^2_{-\frac{a}{10}}}+\sum_{\mathbf{m}\in \mathbf{R}_{\min}}\|\mathbf{z}^{\mathbf{m}}\|_{L^2}+ \epsilon ^2   \lesssim  o_\varepsilon(1) \epsilon +\| \xi \| _{\widetilde{\Sigma}} + o_\varepsilon(1)\|  w '\|_{L^2 L^2},
 \end{align*}
so that
\begin{align} &
 \|  w '\|_{L^2 L^2}\lesssim   o_\varepsilon(1) \epsilon +\| \xi \| _{\widetilde{\Sigma}} .\label{eq:1stvaf}
 \end{align}
 By  \eqref{eq:2ndv}, \eqref{eq:coer},  \eqref {FGReqfinal}--\eqref {eq:1stvaf}
 \begin{align*} &
\|\xi \|_{L^2 \widetilde{\Sigma}}\lesssim B   \varepsilon^{-N}    \delta   + \sum_{\mathbf{m}\in \mathbf{R}_{\mathrm{min}}} \|\mathbf{z}^{\mathbf{m}}\|_{L^2}+ o_{\varepsilon}(1) \(   \| w  \| _{L^2\widetilde{\Sigma}}    +   \|\dot {\mathbf{z}}+\im \boldsymbol{\varpi}(\mathbf{z})\mathbf{z}\|  _{L^2} \)  \\&   \lesssim o_\varepsilon(1) \epsilon   + o_\varepsilon(1)  \| \xi \| _{\widetilde{\Sigma}}
\end{align*}
 which implies
 \begin{align}\label{eq:2ndvfin}
\|\xi \|_{L^2 \widetilde{\Sigma}}\lesssim    o_\varepsilon(1) \epsilon ,
\end{align}
which fed in \eqref{eq:1stvaf} yields
\begin{align} &
 \|  w '\|_{L^2 L^2}\lesssim   o_\varepsilon(1) \epsilon   .\label{eq:1stvfin}
 \end{align}
Obviously, \eqref{FGReqfinal}, \eqref{eq:lem:estdtzfin},  \eqref{eq:2ndvfin}, \eqref{eq:1stvfin}  and \eqref{eq:coer} imply
\begin{align}&
\|\dot {\mathbf{z}}+\im \boldsymbol{\varpi}(\mathbf{z})\mathbf{z}\|_{L^2(I)}+\sum_{\mathbf{m}\in \mathbf{R}_{\mathrm{min}}}\|\mathbf{z}^\mathbf{m}\|_{L^2(I)}+ \| \xi  \|_{L^2(I, \widetilde{\Sigma})}+\|  w \|_{L^2(I, \widetilde{\Sigma})}\le   o_\varepsilon(1)\epsilon  . \label{eq:main12}
\end{align}
In the above discussion we can take $\varepsilon = \varepsilon (\delta )$  with $\varepsilon (\delta ) \xrightarrow{\delta \to 0^+}0$, so that for
the upper bound in \eqref{eq:main12}  we have $o_\varepsilon(1)\epsilon= o_\delta(1)\epsilon$
and the conclusion  of
of   Proposition \ref{prop:contreform} is true.    \qed

\section{Proof of \eqref{eq:main3}}\label{sec:energystab1}

Up to here, we have proved \eqref{eq:main1}  and \eqref{eq:main2}.
It remains to prove   \eqref{eq:main3}.

\begin{proof}[Proof of \eqref{eq:main3}]  By the equality in the 1st line of \eqref{energy:stab1} and \eqref{eq:main2}, we have
$
\frac{d}{dt}\(E(\phi[\mathbf{z}])-A_1(\mathbf{z})\)\in L^1(\R _+).
$
Furthermore, $E(\phi[\mathbf{z}])-A_1(\mathbf{z}) \in L^\infty(\R _+), $   by
  \eqref{eq:main1}.
Thus, $\displaystyle \lim_{t\to +\infty}\(E(\phi[\mathbf{z} ])-A_1(\mathbf{z} )\)$ exists and is finite.  We have  $ A_1(\mathbf{z} )\xrightarrow{t\to +\infty}0$
by  \eqref{eq:main1}, \eqref{eq:main2} and \eqref{eq:Fermi-2}.  This implies  that  $\displaystyle \lim_{t\to +\infty} E(\phi[\mathbf{z} ]) $  exists and is finite.
Now, from \eqref{def:energy} and Proposition \ref{prop:rp}, we have
\begin{align*}
E(\phi[\mathbf{z}])=\sum_{l=1}^N \omega_l |z_l|^2 + O(\|\mathbf{z}\|^4).
\end{align*}
Thus, taking $\delta>0$ small enough, we have \begin{align}\label{energy:stab2} \frac{1}{2}|E(\phi[\mathbf{z}])|\leq \sum_{j=1}^N |\omega_j |   \ |z_j|^2 \leq 2|E(\phi[\mathbf{z}])|.\end{align}
 Now, if $\displaystyle \lim_{t\to +\infty} E(\phi[\mathbf{z}(t)])=0$, we have $|z_j(t)|\to 0$ for all $j=1,\cdots,N$ and we are done. Thus, we can assume \begin{align}\label{energy:stab3} \lim_{t\to +\infty} E(\phi[\mathbf{z}(t)])=-c^2,\ \text{with}\ c>0. \end{align}
 Notice that  we have $c\lesssim \delta$. From \eqref{energy:stab2} and \eqref{energy:stab3}, there exists $T_1>0$ s.t.\ for all $t \geq T_1$, there exists at least one $j(t)\in \{1,\cdots,N\}$ s.t. \begin{align}\label{energy:stab4} \frac{c}{\sqrt{4N|\omega_1|}}\leq |z_{j(t)}(t)|. \end{align} Next, from \eqref{def:Rmin} and \eqref{eq:main2}, there exists $M\in \N$ s.t.\ for any $j,k$ with $j\neq k$ we have $|z_jz_k|^M\in L^1(\R )$. Further, by \eqref{eq:main1}, we have $(z_jz_k)^M\in W^{1,\infty}(\R )$. Thus, we conclude \begin{align}\label{energy:stab5}   z_j(t)z_k(t)\xrightarrow{t\to +\infty}0. \end{align} In particular, there exists $T_2\geq T_1$ s.t.\ for all $t \geq T_2$ and all $j,k=1,\cdots,N$ with $j\neq k$, we have \begin{align}\label{energy:stab6} |z_j(t)z_k(t)|\leq \frac{c^2}{8N|\omega_1|}. \end{align} Combining \eqref{energy:stab4} and \eqref{energy:stab6}, for $t>T_2$ and $k\neq j(t)$, we have \begin{align*} |z_k(t)|\leq \frac{c}{2\sqrt{4N|\omega_1|}}. \end{align*} Thus, we see that $j$ satisfying \eqref{energy:stab4} is unique. Moreover by continuity, we have $j(t)=j(T_2)$ for all $t\geq T_2$. Going back to \eqref{energy:stab5}, we have \begin{align}\label{energy:stab7} \lim_{t\to +\infty} z_k(t)=0,\end{align} for all $k\neq j(T_2)$. Finally, by \eqref{energy:stab7} we have $ \(E(\phi[\mathbf{z}(t)])-E(\phi_{j(T_2)}[z_{j(T_2)}(t)]) \)\xrightarrow{t\to +\infty}0$, which implies the convergence of $E(\phi_{j(T_2)}[z_{j(T_2)}(t)])$. For small $|z_{j(T_2)}|$, the map $|z_{j(T_2)}|\mapsto E(\phi_j[z_{j(T_2)}])$ is one to one with continuous inverse.  Thus,  $\displaystyle  \lim_{t\to +\infty}|z_{j(T_2)}(t)|$ exists.
\end{proof}

\appendix
\section{ Appendix: Proof of Lemma \ref{lem:coer7} }\label{sec:comm}

It is equivalent to show that there  is a constant $C>0$ such that for all $v$
\begin{align}\label{lemma 62}
&   \left \|   \sech  \( \frac{a   x}{10}   \)  \prod _{j=1}^{N}R _{H}( \omega _j) P_c \mathcal{A} \< \im \varepsilon  \partial _x \> ^{ N} v\right  \| _{L^2(\R )}    \le C   \left \|  \sech  \( \frac{a   x}{20}   \)  v  \right \| _{L^2(\R )} .
\end{align}
By \eqref{eq:resolvKern}, for $x<y$   we have  the formula
 \begin{align}&   R _{H}(z^2) (x,y)  =   \frac{T(z)}{2\im z}           f_- (x, z)   f_+ (y, z)  = \frac{1}{z^2+\omega _j}    \frac{ f_- (x, \im \sqrt{|\omega _j|})   f_+ (y, \im \sqrt{|\omega _j|}) }{ \int _{\R}    f_-(x',\im \sqrt{|\omega _j|})  f_+(x',\im \sqrt{|\omega _j|})   dx'} +  \widetilde{R} _{H}(z^2) (x,y) ,\label{lemma 63}\end{align}
 where  $ \frac{T(z)}{2\im z}= \frac{1}{[ f_+ (x, z) ,  f_- (x, z)]}$, where in the denominator in the r.h.s. we have the Wronskian,      where $ \widetilde{R} _{H}(z^2) (x,y)$  is not singular in $z=\im \sqrt{|\omega _j|}$.
 On the other hand,
 \begin{align*}&  T(z) = \frac{\text{Res}(T,\im \sqrt{|\omega _j|}) }{z-\im \sqrt{|\omega _j|} }+  \widetilde{T}(z)     ,\end{align*}
 with $ \widetilde{T}(z)$ non singular and with residue, see p. 146 \cite{DT},
  \begin{align*}& \text{Res}(T,\im \sqrt{|\omega _j|})  =      \im \(
  \int _{\R}    f_-(x',\im \sqrt{|\omega _j|})  f_+(x',\im \sqrt{|\omega _j|})   dx' \) ^{-1} .\end{align*}
 It is elementary to conclude, comparing the terms in \eqref{lemma 63}, that
 \begin{align}&   \widetilde{R} _{H}( \omega _j) (x,y)  = K_j(x,y) +C(\omega _j) \phi _j (x) \phi _j (y)  \text{  with}\nonumber \\&  K_j(x,y)= \frac{1}{2\im \sqrt{|\omega _j|}}    \frac{ \left .\partial _{z}\( f_-(x,z)  f_+(y,z)    \)  \right | _{z=\im \sqrt{|\omega _j|}}  }
 { \int _{\R}    f_-(x',\im \sqrt{|\omega _j|})  f_+(x',\im \sqrt{|\omega _j|})   dx'}    .\label{lemma 65}\end{align}
 for some constant $ C(\omega _j)$.  For $x>y$ we obtain the same formula, interchanging $x$ and $y$.  Denoting by $K_j$ the operator with the kernel  \eqref{lemma 65}   for $x<y$ and the formula obtained from
  \eqref{lemma 65}   interchanging $x$ and $y$ if  $x>y$, we notice that
   \begin{align*}& \prod _{j=1}^{N}R _{H}( \omega _j) P_c = K_1...K_N   .\end{align*}
 It is also easy to check, following the discussion in p. 134 \cite{DT}, that there is a fixed $C>0$ s.t. $|K_j(x,y) |\le C\< x-y\>  e^{-\sqrt{|\omega _j|} |x-y|} $. Then, for any value $a\in [ 0 ,  \sqrt{|\omega_N|}]$  we have
 \begin{align*}&  \|  \sech  \( \frac{a   x}{10}   \)  \prod _{j=1}^{N}R _{H}( \omega _j) P_c \mathcal{A} \< \im \varepsilon  \partial _x \> ^{ N} v  \| _{L^2}\\& \lesssim  \|    \prod _{j=1}^{N}R _{H}( \omega _j) P_c   \sech  \( \frac{a   x}{10}   \) \mathcal{A} \< \im \varepsilon  \partial _x \> ^{ N} v  \| _{L^2}.\end{align*}
  We have
    \begin{align*}&           \sech  \( \frac{a   x}{10}   \) \mathcal{A}  =P_{N}(x, \im  \partial _x )    \sech  \( \frac{a   x}{10}   \),      \end{align*}
  for an $N$--th order differential operator with smooth and bounded coefficients.

\noindent Next, we write
 \begin{align*}&              \sech  \( \frac{a   x}{10}   \)       \< \im \varepsilon  \partial _x \> ^{ N} =  \< \im \varepsilon  \partial _x \> ^{ N}  \sech  \( \frac{a  x}{10}   \)
  + \< \im \varepsilon  \partial _x \> ^{ N} \< \im \varepsilon  \partial _x \> ^{ -N}      \left [   \sech  \( \frac{a  x}{10 }   \)  ,     \< \im \varepsilon  \partial _x \> ^{ N} \right ] ,\end{align*}
so that
\begin{align*}
&    \left \|   \sech  \( \frac{a   x}{10}   \)  \prod _{j=1}^{N}R _{H}( \omega _j) P_c \mathcal{A} \< \im \varepsilon  \partial _x \> ^{ N} v\right  \| _{L^2(\R )}  \\&   \lesssim   \left \|    \prod _{j=1}^{N}R _{H}( \omega _j) P_c  P_{N}(x, \im  \partial _x ) \< \im \varepsilon   \partial _x \> ^{ N}  \sech  \( \frac{a   x}{10}   \) v \right  \| _{L^2(\R )} \\&  +  \left \|    \prod _{j=1}^{N}R _{H}( \omega _j) P_c  P_{N}(x, \im  \partial _x ) \< \im \varepsilon   \partial _x \> ^{ N} \< \im \varepsilon   \partial _x \> ^{ -N}      \left [   \sech  \( \frac{a   x}{10}   \)  ,     \< \im \varepsilon   \partial _x \> ^{ N} \right ] v \right  \| _{L^2(\R )} \\& =:I+II
 .
\end{align*}
We have
\begin{align*}&  I \le  \left \|    \prod _{j=1}^{N}R _{H}( \omega _j) P_c  P_{N}(x, \im  \partial _x ) \< \im \varepsilon   \partial _x \> ^{ N}  \right  \| _{L^2\to L^2} \left \|    \sech  \( \frac{a   x}{10}   \) v \right  \| _{L^2(\R )} \le C  \left \|    \sech  \( \frac{a   x}{10}   \) v \right  \| _{L^2(\R )} \end{align*}
with a fixed constant $C$ independent from $\varepsilon \in (0,1)$. Next, we have
\begin{align*}&  II \le   \left \|     \< \im \varepsilon   \partial _x \> ^{ -N}      \left [   \sech  \( \frac{a   x}{10}   \)  ,     \< \im \varepsilon   \partial _x \> ^{ N} \right ] v \right  \| _{L^2(\R )} \le C  \left \|    \sech  \( \frac{a   x}{20}   \) v \right  \| _{L^2(\R )} \end{align*}
by   Lemma \ref{claim:l2boundIII}, because  $\int e^{-\im kx}  \sech (x) dx = \pi \ \sech \( \frac{\pi}{2} k \)$ (which can be proved by an elementary application of the Residue Theorem) so that in the strip $k=k_1+ \im k_2$ with  $|k_2|\le \mathbf{b}:=a/20$,  then $\sech \( \frac{\pi}{2}    \ \frac{10}{a} k \) $ satisfies the estimates required on $\widehat{\mathcal{V}}$   in
\eqref{eq2stestJ22III}.
This completes the proof of \eqref{lemma 62}.

\section*{Conflict of Interest Statement}
 The authors declare that there was no conflict of interest.

\section*{Acknowledgments}
C.  was supported by the Prin 2020 project \textit{Hamiltonian and Dispersive PDEs} N. 2020XB3EFL.
M. was supported by the JSPS KAKENHI Grant Number 19K03579, G19KK0066A and JP17H02853.

Department of Mathematics and Geosciences,  University
of Trieste, via Valerio  12/1  Trieste, 34127  Italy.
{\it E-mail Address}: {\tt scuccagna@units.it}

Department of Mathematics and Informatics,
Graduate School of Science,
Chiba University,
Chiba 263-8522, Japan.
{\it E-mail Address}: {\tt maeda@math.s.chiba-u.ac.jp}

\end{document}